\newtheorem{theorem}{Theorem}[section]
\newtheorem{lemma}[theorem]{Lemma}
\newtheorem{corollary}[theorem]{Corollary}
\newtheorem{conj}[theorem]{Conjecture}
\newtheorem{problem}[theorem]{Problem}
\newtheorem{definition}[theorem]{Definition}
\newtheorem{Ex}[theorem]{Example}
\newtheorem{prop}[theorem]{Proposition}
\newtheorem*{theorem*}{Theorem}
\newtheorem{remark}[theorem]{Remark}
\newcommand{\bl}[1]{{\color{blue} #1}}
\newcommand{\pp}[1]{{\color{purple} #1}}
\newcommandx{\nick}[2][1=]{\todo[linecolor=ForestGreen,backgroundcolor=ForestGreen!25,bordercolor=ForestGreen,#1]{#2 ---Nick}}
\newcommandx{\laura}[2][1=]{\todo[linecolor=purple,backgroundcolor=purple!25,bordercolor=purple,#1]{#2 ---Laura}}
\newcommand{\precdot}{\prec\mathrel{\mkern-5mu}\mathrel{\cdot}}
\newcommand{\preceqdot}{\mathrel{\mathpalette\pr@ceqd@t\relax}}
\newcommand{\pr@ceqd@t}[2]{%
  \begingroup
  \sbox\z@{$#1\prec$}\sbox\tw@{$#1\preceq$}%
  \dimen@=\dimexpr\ht\tw@-\ht\z@\relax
  {\preceq}%
  \mkern-5mu
  \raisebox{\dimen@}{$\m@th#1\cdot$}%
  \endgroup
}
\newcommand\Tableau[1]{%
  \begin{tikzpicture}[scale=0.5,draw/.append style={thick,black},baseline=4mm]
    \tableauRow=0
    \foreach \Row in {#1} {
       \tableauCol=1
       \foreach\k in \Row {
          \draw(\the\tableauCol,\the\tableauRow)+(-.5,-.5)rectangle++(.5,.5);
          \draw(\the\tableauCol,\the\tableauRow)node{\k};
          \global\advance\tableauCol by 1
       }
       \global\advance\tableauRow by -1
    }
  \end{tikzpicture}
}
\newcommand\Tabloid[1]{%
  \begin{tikzpicture}[scale=0.5,draw/.append style={thick,black},baseline=4mm]
    \tableauRow=0
    \foreach \Row in {#1} {
       \tableauCol=1
       \foreach\k in \Row {
          \draw($(\the\tableauCol,\the\tableauRow)+(-.5,-.5)$)--++(1,0);
          \draw($(\the\tableauCol,\the\tableauRow)+(-.5,.5)$)--++(1,0);
          \draw(\the\tableauCol,\the\tableauRow)node{\k};
          \global\advance\tableauCol by 1
       }
       \global\advance\tableauRow by -1
    }
  \end{tikzpicture}
}
\newcommand\ColumnTabloid[1]{%
  \begin{tikzpicture}[scale=0.5,draw/.append style={thick,black},baseline=4mm]
    \tableauRow=0
    \foreach \Row in {#1} {
       \tableauCol=1
       \foreach\k in \Row {
          \draw($(\the\tableauCol,\the\tableauRow)+(-.5,-.5)$)--++(0,1);
          \draw($(\the\tableauCol,\the\tableauRow)+(.5,-.5)$)--++(0,1);
          \draw(\the\tableauCol,\the\tableauRow)node{\k};
          \global\advance\tableauCol by 1
       }
       \global\advance\tableauRow by -1
    }
  \end{tikzpicture}
}
\tikzset{decorate sep/.style 2 args=
{decorate,decoration={shape backgrounds,shape=circle,shape size=#1,shape sep=#2}}}
\newcommand{\demph}[1]{\textcolor{RoyalBlue}{\emph{#1}}}
\newcommand{\memph}[1]{\mathcolor{RoyalBlue}{#1}}
\def\supp{\qopname\relax m{supp}}
\title{The quantum $k$-Bruhat order}
\begin{document}

\author[*]{Laura Colmenarejo}

\author[**]{Nicholas Mayers}

\affil[*]{Department of Mathematics, North Carolina State University, Raleigh, NC, 27605}

\affil[**]{Department of Mathematics, Kennesaw State University, Kennesaw, GA, 30144}

\maketitle

\begin{abstract}
   In this paper, we extend the study of the quantum $k$-Bruhat order initiated in the work of Benedetti, Bergeron, Colmenarejo, Saliola, and Sottile concerning the quantum Murnaghan-Nakayama rule. Specifically, identifying maximal chains in intervals of the quantum $k$-Bruhat order with sequences of transpositions, we investigate a naturally associated free monoid $\mathcal{F}_n^{\mathbf{q}}$ with an action on a $q$-extension of $S_n$, denoted $S_n[\mathbf{q}]$, which encodes the chain structure of the quantum $k$-Bruhat order. Aside from numerous structural results, our main contribution is an identification of a large family of equivalences satisfied by the elements of $\mathcal{F}_n^{\mathbf{q}}$ as operators on $S_n[\mathbf{q}]$. In fact, we conjecture that our list of equivalences is complete. As a consequence of the quantum Monk's rule, a complete understanding of such equivalences can be used to gain information about the multiplicative structure of quantum Schubert polynomials.   
\end{abstract}
\noindent {\bf Keywords:} Schubert polynomials, quantum Schubert polynomials, $k$-Bruhat order, quantum $k$-Bruhat order, poset, monoid

\section{Introduction}

Schubert~\cite{AssafSchu,BB93,BJS93,SF94,Schubert,McDonald,Winkel1,Winkel2} and quantum Schubert polynomials~\cite{Quantum,Kirillov,Kirillov2,qBP} are two families of polynomials indexed by permutations whose multiplicative structures are of interest. Starting in the classical setting, geometrically, Schubert polynomials are polynomial representatives of Schubert classes in the cohomology of a flag variety~\cite{Schubert}. Due to the nature of this relationship, the structure constants of Schubert polynomials are known to be nonnegative integers that correspond to values of geometric interest. Specifically, they enumerate flags in a suitable triple intersection of Schubert varieties. While combinatorial formulas for such structure constants have been found in some special cases~\cite{BS2,Schubert,Monk,Sottile,Winkel}, it is a longstanding open problem to find a general combinatorial formula. Shifting to the quantum setting, through the works of~\cite{Ciocan, Quantum, GK1, K1, K2, K3, KM, RT}, a quantum version of Schubert polynomials is defined with similar properties and problems of interest. Geometrically, quantum Schubert polynomials are polynomial representatives of Schubert classes in the (small) quantum cohomology of a flag variety. As in the case of classical Schubert polynomials, this relationship results in the structure constants of quantum Schubert polynomials being nonnegative integers with geometric significance. In particular, in the quantum setting, the structure constants are 3-point Gromov invariants~\cite{Quantum}. Also, as in the classical case, while some results are known~\cite{qkB1}, finding general combinatorial interpretations for the structure constants of quantum Schubert problems is an open problem. In fact, a solution in the quantum case would immediately provide one in the classical setting. Here, following the work of Bergeron and Sottile~\cite{BS1,BS2} in the classical setting as well as the work of Benedetti, Bergeron, Colmenarejo, Saliola, and Sottile~\cite{qkB1} in the quantum setting, we study the quantum $k$-Bruhat order, whose chain structure encodes information concerning the structure constants of quantum Schubert polynomials.
 
To understand the classical motivation for this work, we briefly outline the contributions of Bergeron and Sottile~\cite{BS1,BS2}; see Section~\ref{sec:classic story} for further details. In the classical setting, Monk's rule determines the multiplicative structure of Schubert polynomials. Given a permutation $u$, let $\mathfrak{S}_u$ denote the corresponding Schubert polynomial, $\ell(u)$ denote the length of $u$, and $s_{ij}$ denote the permutation exchanging $i$ and $j$. 
\begin{theorem}[Monk's rule~\cite{Macdonald}]\label{thm:MonkRule}
  For $u\in S_n$ and $1\le k<n$,
$$\mathfrak{S}_u\cdot \mathfrak{S}_{s_{k,k+1}}=\sum_{1\le i\le k<j\le n\atop l(us_{ij})=l(u)+1}\mathfrak{S}_{us_{ij}}.$$  
\end{theorem}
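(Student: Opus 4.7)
The plan is to prove Monk's rule via divided difference operators and induction on the codimension $\ell(w_0)-\ell(u)$, which is the most direct route once one has the standard recursive characterization of Schubert polynomials. The starting observation is that $\mathfrak{S}_{s_{k,k+1}}=x_1+x_2+\cdots+x_k$; this follows by beginning with $\mathfrak{S}_{w_0}=x_1^{n-1}x_2^{n-2}\cdots x_{n-1}$ and successively applying divided differences down to $s_{k,k+1}$. Consequently, the theorem is equivalent to the identity
\[
(x_1+x_2+\cdots+x_k)\,\mathfrak{S}_u \;=\; \sum_{\substack{1\le i\le k<j\le n\\ \ell(us_{ij})=\ell(u)+1}} \mathfrak{S}_{us_{ij}}.
\]

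For the induction, I would take the base case at $u=w_0$: here the right-hand side is empty (there are no upward covers from $w_0$ within $S_n$), and one checks directly that the left-hand side has no Schubert basis components indexed by permutations of $S_n$. For the inductive step, suppose $\ell(u)<\ell(w_0)$ and choose a simple reflection $s_r$ with $\ell(us_r)=\ell(u)+1$, so that $\mathfrak{S}_u=\partial_r \mathfrak{S}_{us_r}$. Applying $\partial_r$ to the (inductively known) identity for $us_r$ and invoking the Leibniz-type rule
\[
\partial_r(fg) \;=\; (\partial_r f)\,g + (s_r f)\,(\partial_r g),
\]
with $f=x_1+\cdots+x_k$, one has $\partial_r f=\delta_{r,k}$ and $s_r f$ equal to either $f$ (if $r\ne k$) or $x_1+\cdots+x_{k-1}+x_{k+1}$ (if $r=k$). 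Combined with $\partial_r \mathfrak{S}_w = \mathfrak{S}_{ws_r}$ when $\ell(ws_r)<\ell(w)$ and $0$ otherwise, these ingredients reorganize the inductive hypothesis for $us_r$ into the claimed identity for $u$.

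The main obstacle is the combinatorial bookkeeping required to match transpositions across the two sides of the inductive step: one must verify that the pairs $(i,j)$ with $i\le k<j$ contributing to the cover relation $us_r \precdot (us_r)\,s_{ij}$ correspond bijectively---under the natural reindexing induced by conjugation by $s_r$---to the pairs contributing to $u \precdot u\,s_{i'j'}$. The delicate case is $r=k$, where the extra term $(\partial_k f)\,\mathfrak{S}_{us_k}=\mathfrak{S}_{us_{k,k+1}}$ must be absorbed into the right-hand side as the contribution of the transposition $s_{k,k+1}$ itself, while the conjugated sum accounts for the swap of the roles of $k$ and $k+1$. An alternative route that sidesteps this case analysis is the pipe dream model: there Monk's rule reduces to exhibiting an explicit bijection between pairs consisting of a reduced pipe dream for $u$ together with a choice of row $m \le k$ and the disjoint union of reduced pipe dreams for the $us_{ij}$ appearing on the right, with the main technical step becoming the description of the local move that inserts the additional crossing.
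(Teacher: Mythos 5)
The paper does not actually prove Monk's rule; it is stated as background with a citation to Macdonald, so there is no ``paper's own proof'' to compare against. What follows is an assessment of your sketch on its own merits.

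Your strategy (divided differences and downward induction) is one of the standard ways to approach Monk's rule, but your sketch has two genuine gaps, both of which you gesture at but neither of which you resolve.

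First, the case $r=k$ of the inductive step does not work as described. Applying $\partial_k$ to the inductive identity for $v=us_k$ and using the Leibniz rule gives
\[
\partial_k\bigl(f\,\mathfrak{S}_v\bigr)=(\partial_k f)\,\mathfrak{S}_v+(s_k f)\,\partial_k\mathfrak{S}_v=\mathfrak{S}_{us_k}+(s_k f)\,\mathfrak{S}_u,
\]
which would equal the desired left-hand side $f\,\mathfrak{S}_u$ only if $\mathfrak{S}_{us_k}=(x_k-x_{k+1})\mathfrak{S}_u$. That identity is false in general (take $u=e$, $k=1$: then $\mathfrak{S}_{s_1}=x_1\neq x_1-x_2$). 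So the output of $\partial_k$ is an identity expressing $(s_k f)\,\mathfrak{S}_u$, not $f\,\mathfrak{S}_u$, and passing from one to the other requires an additional argument about $(x_k-x_{k+1})\mathfrak{S}_u$ that is not a matter of ``combinatorial bookkeeping'' -- it is the real content of the problem. You cannot simply ``absorb'' the extra $\mathfrak{S}_{us_k}$ and appeal to conjugation: after conjugating the transpositions the left-hand side is still wrong. A correct version of this induction usually passes through the transition equation $x_m\mathfrak{S}_u=\sum_{b>m}\mathfrak{S}_{us_{mb}}-\sum_{a<m}\mathfrak{S}_{us_{am}}$, where the interaction with $\partial_r$ is manageable, and then sums over $m\le k$; you should make that the central object if you want this route.

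Second, the base case is more problematic than you acknowledge. Monk's rule as written, with the index $j$ bounded by $n$, is not a polynomial identity: for $u=w_0\in S_n$ the right-hand side is empty while $(x_1+\cdots+x_k)\mathfrak{S}_{w_0}\neq 0$. (Indeed already $\mathfrak{S}_{231}\cdot\mathfrak{S}_{s_2}=\mathfrak{S}_{2413}$, and $2413\notin S_3$.) The true polynomial identity lets $j$ range over all integers greater than $k$, and what the paper's statement implicitly does is truncate to $j\le n$, i.e.\ work in the coinvariant ring. You observe that the product has no $S_n$-indexed components and treat that as a vacuous base case, but that means you are silently proving the truncated statement, and the divided-difference operators do \emph{not} preserve the decomposition into $S_n$-indexed and non-$S_n$-indexed Schubert polynomials (for example $\partial_n\mathfrak{S}_w$ for a suitable $w\in S_{n+1}\setminus S_n$ lands in $S_n$). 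So the ``modulo $S_n$'' version of the induction is not sound as stated. You need to either prove the $S_\infty$ identity (in which case downward induction from $w_0$ has no base case and a different anchor is required), or work carefully in a fixed large $S_N$ and keep track of which indices escape. The pipe dream alternative you mention is a legitimate proof route, but as written it is only a restatement of the problem: the entire content is the local move you defer.
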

\noindent
Using the indexing set of Monk's rule, one can define the $k$-Bruhat order on $S_n$, a poset whose interval structure encodes information concerning structure constants of Schubert polynomials. In particular, recall that the Schubert polynomial $\mathfrak{S}_v$ is a Schur polynomial whenever $v=(v_1,\hdots, v_n)\in S_n$ has a unique $k$ such that $v_k>v_{k+1}$. Then, letting $c_{uv}^w$ denote the coefficient of $\mathfrak{S}_w$ in $\mathfrak{S}_u\cdot \mathfrak{S}_v$, iterated forms of Monk's formula suggest a combinatorial interpretation of the form 
$$c_{uv}^w=\left|\left\{\text{maximal chains in}~[u,w]_k~\text{satisfying} \atop \text{\textit{some condition} imposed by}~v\right\}\right|,$$
where $[u,w]_k$ denotes an interval in the $k$-Bruhat order (see~\cite[Eq. (1.1.3)]{BS1}).

In a series of articles~\cite{BS1,BS2}, Bergeron and Sottile looked at the $k$-Bruhat order. First, in~\cite{BS1}, they present a compact means of comparing elements in the $k$-Bruhat order and then use it to relate the $k$-Bruhat order to what is called the Grassmannian Bruhat order; this latter ordering contains all intervals of the $k$-Bruhat order as intervals starting from a unique minimal element. Then, in a follow-up article~\cite{BS2}, they characterize a monoid $\mathcal{M}_n$ that encodes the chain structure of intervals in both the $k$-Bruhat and Grassmannian Bruhat orders on $S_n$. To describe the monoid $\mathcal{M}_n$, let $\lessdot_k$ denote the cover relation in the $k$-Bruhat order and $\mathcal{F}_n$ denote the collection of elements $\mathbf{v}_{ab}$ for $1\le a<b\le n$. Define an action of $\mathcal{F}_n$ together with $\mathbf{0}$ on the permutations of $S_n\cup\{0\}$ as follows: for $u\in S_n$, $$\mathbf{0}\bullet_ku=\mathbf{v_{ab}}\bullet_k0=0$$ and $$\mathbf{v}_{ab}\bullet_k w=\begin{cases}
    s_{ab}w,& \text{if}~w\lessdot_k s_{ab}w\\
    0, & otherwise.
\end{cases}$$
The monoid $\mathcal{M}_n$ is then the quotient of $\mathcal{F}_n\cup\{\mathbf{0}\}$ by the equivalence $\equiv$ defined by $\mathbf{v}\equiv \mathbf{u}$ if and only if $\mathbf{v}\bullet_kw=\mathbf{u}\bullet_kw$ for all $w\in S_n$ and all $k\in [n-1]$. After determining a complete list of equivalences defining $\mathcal{M}_n$~\cite{BS2}, they use their results to prove several formulas and relations for the structure constants of Schubert polynomials. 

Now, moving to our quantum inspiration, the story initially follows the same narrative with a quantum Monk's rule determining the multiplicative structure of quantum Schubert polynomials. Given a permutation $u$, let $\mathfrak{S}_u^q$ denote the corresponding quantum Schubert polynomial and let $\ast$ denote the quantum product.
\begin{theorem}[Quantum Monk's rule~\cite{Quantum}]
\label{thm:quantum-MonkRule}
For $u\in S_n$ and $1\le k<n$,   
$$\mathfrak{S}^{\bf q}_u\ast\mathfrak{S}^{\bf q}_{s_{k,k+1}}=\sum_{1\le i\le k<j\le n\atop \ell(us_{ij})=l(u)+1}\mathfrak{S}^{\bf q}_{us_{i,j}}+\sum_{1\le i\le k<j\le n\atop \ell(us_{i,j})+2(j-i)=\ell(u)+1}\mathbf{q_{ij}}\mathfrak{S}^{\bf q}_{us_{i,j}},
$$
where $\mathbf{q_{ij}} = q_i\cdots q_{j-1}$.
\end{theorem}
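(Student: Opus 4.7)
The plan is to establish the quantum Monk's rule by combining the classical Monk's rule (Theorem~\ref{thm:MonkRule}) with a careful analysis of the quantum correction terms. The starting point is the observation that $\mathfrak{S}^{\bf q}_{s_{k,k+1}} = x_1 + x_2 + \cdots + x_k$ coincides with its classical counterpart $\mathfrak{S}_{s_{k,k+1}}$, since the Fomin--Gelfand--Postnikov quantization map acts as the identity on polynomials of degree at most one. Consequently, the theorem reduces to computing $\mathfrak{S}^{\bf q}_u \ast (x_1 + \cdots + x_k)$ inside the quantum cohomology ring $QH^*(Fl_n) \cong \mathbb{Z}[x_1,\ldots,x_n][q_1,\ldots,q_{n-1}]/\tilde{I}_n$, where $\tilde{I}_n$ is generated by the quantum elementary symmetric polynomials.

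The terms indexed by $(i,j)$ with $\ell(us_{ij}) = \ell(u)+1$ are the \emph{classical} contributions: since quantum multiplication specializes to classical multiplication modulo $(q_1, \ldots, q_{n-1})$, Theorem~\ref{thm:MonkRule} accounts for these directly. The quantum contributions, indexed by $(i,j)$ with $\ell(us_{ij})+2(j-i) = \ell(u)+1$, can be understood geometrically via 3-point genus-zero Gromov--Witten invariants on the flag variety with curve class $d = e_i + e_{i+1} + \cdots + e_{j-1}$. Since $\int_d c_1(T Fl_n) = 2(j-i)$, the dimension constraint on a nonzero invariant with one input of codimension one forces the stated length condition, and the associated quantum-parameter weight is precisely $q_i q_{i+1} \cdots q_{j-1} = \mathbf{q_{ij}}$.

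The main technical obstacle is verifying that each quantum correction coefficient equals exactly $1$ rather than some larger nonnegative integer. Geometrically, this amounts to enumerating degree-$d$ rational curves in $Fl_n$ meeting three Schubert varieties in general position and showing that there is a unique such curve; this was the approach taken in \cite{Quantum}. A purely algebraic alternative proceeds by expanding the product $\mathfrak{S}_u \cdot (x_1 + \cdots + x_k)$ in the classical Schubert basis, applying the classical Monk's rule, and then reducing modulo $\tilde{I}_n$ while tracking how the quantum elementary symmetric polynomials introduce $\mathbf{q_{ij}}$-factors at each step of the reduction. Either approach exploits the stability of quantum Schubert polynomials under the embedding $S_n \hookrightarrow S_{n+1}$, enabling induction on $n$ and $\ell(u)$ that reduces the identification of coefficients to a direct computation in small rank.
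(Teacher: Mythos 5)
The paper does not prove this theorem; it states it as Theorem~\ref{thm:quantum-MonkRule} with a citation to~\cite{Quantum} (Fomin--Gelfand--Postnikov) and no proof, so there is no in-paper argument to compare your attempt against. Given that, I'll evaluate your proposal on its own merits.

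Your high-level outline is consistent with the known proofs of the quantum Monk's rule, and the opening observations are correct: $\mathfrak{S}^{\bf q}_{s_{k,k+1}} = x_1 + \cdots + x_k$ because the FGP quantization map is the identity in degree $\le 1$, the classical terms come directly from Theorem~\ref{thm:MonkRule} upon setting $\mathbf{q}=0$, and the curve-class weight $q_i\cdots q_{j-1}$ associated to $d = e_i + \cdots + e_{j-1}$ with $\int_d c_1 = 2(j-i)$ matching the degree shift is the right dimensional bookkeeping. However, as written the proposal is a sketch rather than a proof, and the gap you yourself flag --- showing that every nonzero quantum correction coefficient is exactly $1$ and that the surviving index pairs $(i,j)$ are precisely those satisfying $\ell(us_{ij}) + 2(j-i) = \ell(u)+1$ --- is where the entire content lies, and it is not resolved. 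Saying "either approach exploits stability ... enabling induction ... that reduces the identification of coefficients to a direct computation in small rank" asserts that a reduction exists without exhibiting it: stability alone does not tell you which small-rank computation suffices, nor does it supply the enumerative or algebraic argument that the relevant Gromov--Witten invariant is $1$ rather than $0$ or $2$. To turn this into a proof you would need either (a) to carry out the algebraic reduction in $\mathbb{Z}[x;q]/\widetilde{I}_n$ explicitly, using the quantum Giambelli formula and tracking how quantum elementary symmetric relations introduce $\mathbf{q}_{ij}$ when straightening $\mathfrak{S}_u\cdot(x_1+\cdots+x_k)$, or (b) to supply the actual geometric count of degree-$d$ lines through three general Schubert varieties (e.g.\ via the projection $Fl_n \to Fl(i,j;n)$ and an analysis of the associated correspondence), as in~\cite{Quantum} and in Ciocan-Fontanine's work. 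The dimension constraint you invoke only shows that the length condition is necessary for a nonzero invariant; establishing sufficiency and the value $1$ is the substance of the theorem.
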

\noindent
As in the classical case, one defines a poset structure using quantum Monk's rule known as the ``quantum $k$-Bruhat order", now with underlying set $$S_n[\mathbf{q}]=\{q_1^{\alpha_1}\cdots q_{n-1}^{\alpha_{n-1}} w~|~w\in S_n~\text{and}~(\alpha_1,\hdots,\alpha_{n-1})\in\mathbb{Z}_{\ge 0}^{n-1}\}.$$ Contrary to the classical case, far less is known about the quantum $k$-Bruhat order. In~\cite{qkB1}, the authors introduce the quantum $k$-Bruhat order and initiate an investigation into a collection of operators with an action on $S_n[\mathbf{q}]$ analogous to that of $\mathcal{F}_n\cup\{\mathbf{0}\}$ and its action on $S_n$. Specifically, letting $\lessdot_k^{\mathbf{q}}$ denote the cover relation in the quantum $k$-Bruhat order and $\mathcal{F}_n^{\mathbf{q}}$ denote the collection of elements $\mathbf{v}_{ab}$ for $a\neq b\in [n]$, they define an action of $\mathcal{F}_n^{\mathbf{q}}$ together with $\mathbf{0}$ on $S_n[\mathbf{q}]\cup\{0\}$ as follows: for $w\in S_n$,and $k\in [n-1]$, we set $\mathbf{v}_{ab}\bullet_k0=\mathbf{0}\bullet_kw=0$, 
$$\mathbf{v}_{ab}\bullet_ku=\begin{cases}
    s_{ab}u,& \text{if}~a<b~\text{and}~u\lessdot_k^{\bf q}s_{ab}u \\
    \mathbf{q_{ij}}s_{ab}u, & \text{if}~a>b~\text{and}~u\lessdot_k^{\bf q}\mathbf{q_{ij}}s_{ab}u~(u(i)=a,u(j)=b)\\
    0, & otherwise,
\end{cases}$$
and extend $q$-multiplicatively to the remainder of $S_n[\mathbf{q}]$. Now, in their investigation of $\mathcal{F}_n^{\mathbf{q}}\cup\{\mathbf{0}\}$ and its action on $S_n[\mathbf{q}]$, the authors of~\cite{qkB1} study a equivalence weaker than that of its classical analogue (see Remark~\ref{rem:equivcomp}). Using their results, they then establish a quantum Murnaghan-Nakayama rule, combinatorially describing the structure constants arising when one multiplies a quantum Schubert polynomial and a quantum power sum.

In this paper, our focus is on a quantum analogue of the equivalence $\equiv$ studied in~\cite{BS2}, which will encode the chain structure of intervals in the quantum $k$-Bruhat order. Specifically, extending $\equiv$ to be the equivalence on $\mathcal{F}_n^{\mathbf{q}}\cup\{\mathbf{0}\}$ defined by $\mathbf{v}\equiv \mathbf{u}$ if and only if $\mathbf{v}\bullet_kw=\mathbf{u}\bullet_kw$ for all $w\in S_n[\mathbf{q}]$ and all $k\in [n-1]$, our main problem is as follows.
\begin{problem}
    Characterize the set of equivalences $\mathbf{u}\equiv\mathbf{v}$ and $\mathbf{u}\equiv\mathbf{0}$ for $\mathbf{u},\mathbf{v}\in\mathcal{F}^{\mathbf{q}}_n$.
\end{problem}
\noindent
With regards to this problem, we determine all equivalences associated with compositions of at most four elements of $\mathcal{F}_n^{\mathbf{q}}$. In addition, we identify two families of equivalences consisting of compositions of an arbitrary number of elements of $\mathcal{F}_n^{\mathbf{q}}$. We conjecture that the equivalences found, in fact, constitute a complete collection.

The remainder of this paper is structured as follows. In Section~\ref{sec:prelim}, we introduce basic notions needed concerning permutations and poset theory. In Section~\ref{sec:classic story}, we summarize the study done by Bergeron and Sottile regarding the classical framework, including properties of the $k$-Bruhat and the Grassmannian Bruhat order as well as their study of the associated monoid $\mathcal{M}_n$. Then, in Sections~\ref{sec:quantumintro} --~\ref{sec:arblength}, we present our study on the quantum $k$-Bruhat order. Specifically, starting in Section~\ref{sec:quantumintro}, we introduce the quantum $k$-Bruhat order, the collection $\mathcal{F}_n^{\mathbf{q}}$ with its action on $S_n[\mathbf{q}]$, and the main problem of focus here. Following this, in Section~\ref{sec:ops}, we introduce and study equivalence preserving transformations on compositions of elements of $\mathcal{F}_n^{\mathbf{q}}$ which play a crucial role in what follows. Finally, in Sections~\ref{sec:ldequiv}  and~\ref{sec:arblength}, we identify defining equivalences satisfied by compositions of elements of $\mathcal{F}_n^{\mathbf{q}}$, with Section~\ref{sec:ldequiv} characterizing equivalences involving compositions of between 2 and 4 elements, and Section~\ref{sec:arblength} focusing on equivalences involving compositions of an arbitrary number of elements. As noted above, we conjecture that the equivalences found in these latter two sections form a complete collection needed to define a quantum analogue of $\mathcal{M}_n$.

\section{Preliminaries}\label{sec:prelim}

In this section, we cover the necessary concepts from the theories of permutations and posets.

Let $[n]:=\{1,2,\hdots,n\}$ for $n\in\mathbb{Z}_{>0}$. Recall that the \demph{symmetric group} $S_n$ consists of all bijections $w:[n]\to [n]$, called \demph{permutations}, with group operation defined by composition of functions. We denote by $e$ the identity element in $S_n$. Ongoing, we often express permutations $w\in S_n$ in one-line notation $w=(w(1),w(2),\ldots,w(n))\in [n]^n$. In our examples, we also write $w=w(1)w(2)\ldots w(n)$, where the numbers are listed without parentheses and commas. We reserve the notation $w_1,\ w_2,\ldots, w_k$ to denote sequences of permutations $w_i\in S_n$.

For $i,j\in [n]$ with $i<j$, we let $s_{i,j}=s_{ij}\in S_n$ denote the permutation for which $s_{ij}(i)=j$, $s_{ij}(j)=i$, and $s_{ij}(k)=k$ for $k\neq i,j$; such permutations are referred to as \demph{transpositions}. For $k\in [n-1]$, we denote $s_{k,k+1}\in S_n$ simply as $s_k$. Given a permutation $w=(w_1,\hdots,w_n)\in S_n$, we define its \demph{support} as the set of non-fixed points, $\supp(w):=\{i\in [n]~|~w(i)\neq i\}$, and its \demph{length} as the number of inversions, $\ell(w):=|\{(i,j)~|~1\leq i<j\leq n,~w(i)>w(j)\}|$.

As noted in the introduction, in this paper, we will be concerned with certain posets in $S_n$. Briefly, recall that a \demph{poset} (partial ordered set) $(\mathcal{P},\preceq_{\mathcal{P}})$ consists of a set $\mathcal{P}$ along with a partial order relation $\preceq_{\mathcal{P}}$ on $\mathcal{P}$, i.e., a binary relation which is reflexive, anti-symmetric, and transitive. We say that two posets $(\mathcal{P},\preceq_P)$ and $(\mathcal{Q},\preceq_Q)$ are \demph{isomorphic} if there exists an order-preserving bijection $f:\mathcal{P}\to\mathcal{Q}$ for which its inverse $f^{-1}$ is also order-preserving. When no confusion will arise, we simply denote a poset $(\mathcal{P}, \preceq_{\mathcal{P}})$ by $\mathcal{P}$, and $\preceq_{\mathcal{P}}$ by $\preceq$.

Given a poset $(\mathcal{P},\preceq)$ and $x,y\in\mathcal{P}$, we write $x\prec y$ if $x\preceq y$ and $x\neq y$. In the case that $x\prec y$ and there exists no $z\in \mathcal{P}$ satisfying $x\prec z\prec y$, we say that $x\prec y$ is a \demph{covering relation} and write $x\precdot y$. Note that, as a consequence of transitivity, the relations of $\mathcal{P}$ are completely determined by its collection of covering relations, i.e., we can define $\mathcal{P}$ by specifying the set $\mathcal{P}$ and along with the set of covering relations associated with $\preceq$. Covering relations are used to define a visual representation of $\mathcal{P}$ called the \demph{Hasse diagram} -- a graph whose vertices correspond to elements of $\mathcal{P}$ and whose edges correspond to covering relations (see Figure~\ref{fig:qkBru}). Given a subset $S\subseteq\mathcal{P}$, the \demph{induced subposet generated by $S$} is the poset $\mathcal{P}_S=(S,\preceq_S)$ on $S$, where $i\preceq_S j$ if and only if $i\preceq j$. A subset $\mathcal{C}\subseteq\mathcal{P}$ for which $\mathcal{P}_C$ is a total order, i.e., any two elements are related, is referred to as a \demph{chain}. We refer to a chain as \demph{maximal} if it is not contained in any strictly larger chain. Given $x,y\in \mathcal{P}$, we define the \demph{interval between $x$ and $y$}, denoted $[x,y]_{\preceq}$, to be the induced subposet of $\mathcal{P}$ corresponding to the subset $\{z\in \mathcal{P}~|~x\preceq z\preceq y\}$.

\section{The classical $k$-Bruhat order}\label{sec:classic story}

In this section, we outline the classical framework regarding the $k$-Bruhat order and its associated monoid; for more details, see~\cite{BS1, BS2}. 

As noted in the introduction, the $k$-Bruhat order is defined via the indexing set in Monk's rule (Theorem~\ref{thm:MonkRule}). The \demph{$k$-Bruhat order on $S_n$} is the poset $(S_n,\leq_k)$, where $\leq_k$ is defined by the following cover relations: $u \lessdot_k us_{ij}$ whenever $i\leq k < j$ and $\ell(us_{ij}) = \ell(u)+1$. Equivalently, $u \lessdot_k us_{ij}$ whenever $i\leq k < j$, $u(i) < u(j)$, and for all $l\in [i,j]$, $u(l)\notin [u(i),u(j)]$. We denote the intervals in the $k$-Bruhat order by $[u,v]_k$ for $u,v\in S_n$. In Figure~\ref{fig:kBru}~(a), we illustrate the $2$-Bruhat order on $S_3$.

\begin{figure}[H]
    \centering
    $$\scalebox{0.7}{\begin{tikzpicture}
    \node (1) at (0,0) {$12|3$};
    \node (2) at (-1,1.5) {$21|3$};
    \node (3) at (1,1.5) {$13|2$};
    \node (4) at (-1,3) {$31|2$};
    \node (5) at (1,3) {$23|1$};
    \node (7) at (0,4.5) {$32|1$};
    \draw (1)--(3);
    \draw (7)--(4)--(2)--(5)--(3);
    \node at (0,-1) {$(a)$};
\end{tikzpicture}}\quad\quad\quad\quad\quad\quad\quad\quad \scalebox{0.7}{\begin{tikzpicture}
    \node (1) at (0,0) {$123$};
    \node (2) at (-2,1.5) {$213$};
    \node (3) at (0,1.5) {$321$};
    \node (4) at (2,1.5) {$132$};
    \node (5) at (-2,3) {$312$};
    \node (6) at (2,3) {$321$};
    \draw (6)--(4)--(1)--(2)--(5);
    \draw (1)--(3);
    \node at (0,-1) {$(b)$};
\end{tikzpicture}}$$
    \caption{(a) 2-Bruhat order on $S_3$ and (b) Grassmannian Bruhat order on $S_3$}
    \label{fig:kBru}
\end{figure}
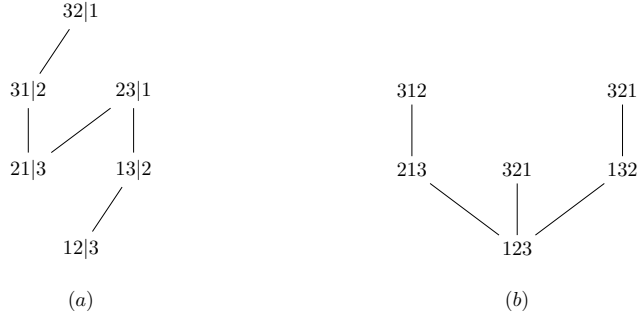

Bergeron and Sottile~\cite{BS1} studied the $k$-Bruhat order in detail and showed the following.
\begin{prop}[\cite{BS1}]\label{prop:BS}
    Let $n\geq 1$, $u,v\in S_n$, and $k,l\in [n-1]$. \begin{enumerate}
        \item[$(a)$] $u\le_kw$ if and only if for all $a,b\in[n]$, $(i)$ $a\le k\le b$ implies $u(a)\le w(a)$ and $u(b)\ge w(b)$, and $(ii)$ if $a<b$, $u(a)<u(b)$, and $w(a)>w(b)$, then $a\le k<b$.  
        \item[$(b)$] Suppose that $u\le_kw$, $x\le_lz$, and $wu^{-1}=zx^{-1}$. Then $v\longmapsto vu^{-1}x$ induces an isomorphism between $[u,w]_k$ and $[x,z]_l$. 
    \end{enumerate}
\end{prop}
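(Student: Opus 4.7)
I would prove (a) first and then leverage it for (b).

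For the forward direction of (a), the plan is to induct on the length of a saturated chain $u = u^{(0)} \lessdot_k u^{(1)} \lessdot_k \cdots \lessdot_k u^{(m)} = w$. Each cover has the form $v \lessdot_k vs_{ij}$ with $i \le k < j$ and $v(i) < v(j)$, modifying only the values at positions $i$ and $j$: the value at position $i \le k$ strictly increases (from $v(i)$ to $v(j)$) and the value at $j > k$ strictly decreases, so condition (i) propagates trivially. For condition (ii), given $a < b$ with $u(a) < u(b)$ and $u^{(m)}(a) > u^{(m)}(b)$, I would case-split on $\{a,b\} \cap \{i,j\}$. The case $\{a,b\} = \{i,j\}$ is immediate since $i \le k < j$. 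The remaining cases reduce to an application of the inductive hypothesis for $u \le_k v$, with the crucial extra input being the cover condition $v(\ell) \notin (v(i), v(j))$ for $\ell \in (i,j)$: for example, when $a = i$ and $b \in (i,j)$, combining $w(a) > w(b)$ (i.e., $v(j) > v(b)$) with the no-intermediate-value condition forces $v(b) < v(i)$, which then supplies the inversion $v(i) > v(b)$ needed to apply (ii) on $u \le_k v$.

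For the backward direction of (a), I would induct on $\ell(w) - \ell(u)$, constructing at each step a cover $u \lessdot_k us_{ab}$ with $us_{ab} \le_k w$ still satisfying (i) and (ii). If $u \ne w$, I take $a \le k$ minimal with $u(a) < w(a)$ (its existence follows from (i) together with the observation that, were (i) an equality throughout $a \le k$, the value sets would match and $u = w$ would follow). Then I choose $b > k$ so that $u(b)$ is the smallest value in $u((k,n])$ with $u(a) < u(b) \le w(a)$; this minimality guarantees both the cover condition (no value strictly between $u(a)$ and $u(b)$ appears in $u((a,b))$, since any such would contradict either the minimality or the bound $\le w(a)$) and the preservation of (i) after swapping. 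The main obstacle I anticipate is verifying (ii) for $us_{ab} \le_k w$: one must show that the transposition at position $a$ does not introduce a new "unstraddled" inversion relative to $w$, which I expect to follow from the minimality of both $a$ and $u(b)$.

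For (b), the map $\phi : v \mapsto vu^{-1}x$ is manifestly a bijection of $S_n$ with inverse $v' \mapsto v'x^{-1}u$, and the hypothesis $wu^{-1} = zx^{-1} =: \rho$ gives $\phi(u) = x$ and $\phi(w) = wu^{-1}x = zx^{-1}x = z$. The content is showing that $\phi$ restricts to an order isomorphism $[u,w]_k \to [x,z]_l$. My plan is to use (a) to reformulate comparability: for any $v_1, v_2 \in S_n$, writing $\pi = v_2 v_1^{-1}$, the relation $v_1 \le_k v_2$ becomes a condition depending on $\pi$ and on the subset $v_1(\{1,\ldots,k\}) \subseteq [n]$. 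Since $\phi(v_2)\phi(v_1)^{-1} = \pi$ as well, the condition $\phi(v_1) \le_l \phi(v_2)$ involves the same $\pi$ but with $\phi(v_1)(\{1,\ldots,l\})$ in place of $v_1(\{1,\ldots,k\})$. The task then becomes showing that these two subsets impose equivalent constraints on $\pi$ for every $v_1 \in [u,w]_k$. Applying (a) to the endpoints $u \le_k w$ and $x \le_l z$ (which share the same $\rho$) should force $u(\{1,\ldots,k\})$ and $x(\{1,\ldots,l\})$ to be constrained by $\rho$ in parallel ways, and a short combinatorial argument should extend this matching from $\rho$ to every "intermediate" $\pi = v_1 u^{-1}$. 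I expect this alignment — precisely tracking how the "free" and "forced" positions correspond across the two intervals — to be the main technical obstacle.
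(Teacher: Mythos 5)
The paper does not actually prove this proposition: it is stated with the attribution \textup{\cite{BS1}} and used as a black box, so there is no in-paper argument to compare your proposal against. What follows is therefore an assessment of your sketch on its own terms.

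For part~(a), your strategy is sound. The forward direction by induction along a saturated $\lessdot_k$-chain works: each cover changes exactly two entries, one at a position $\le k$ that strictly increases and one at a position $>k$ that strictly decreases, which propagates condition~(i); and the case split for~(ii), using the ``no intermediate value'' clause of the cover description, does close (one should explicitly also rule out the cases $a=j$ and $b=i$, which lead to contradictions via the inductive hypothesis applied to an auxiliary pair). The backward direction by constructing an ascending cover from a minimal position $a\le k$ with $u(a)<w(a)$ and a carefully chosen value for the swap partner is the standard approach; the details you flag as ``the main obstacle'' are indeed where the work lies, but the strategy is right. (Incidentally, the hypothesis in~(i) should read $a\le k<b$; as written, taking $a=b=k$ would force $u(k)=w(k)$, which already fails for $12|3\lessdot_2 13|2$. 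You implicitly used the corrected version.)

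For part~(b), there is a genuine gap. Your plan rests on the claim that, via~(a), the relation $v_1\le_k v_2$ ``becomes a condition depending on $\pi=v_2v_1^{-1}$ and on the subset $v_1(\{1,\dots,k\})$.'' That is true of condition~(i), which with $S=v_1([k])$ reads ``$\pi(s)\ge s$ for $s\in S$ and $\pi(t)\le t$ for $t\notin S$,'' but it is \emph{not} true of condition~(ii). Rewriting~(ii) in value coordinates, it says: for every $\pi$-inversion $s<t$ with $\pi(s)>\pi(t)$, \emph{if $v_1^{-1}(s)<v_1^{-1}(t)$} then $s\in S$ and $t\notin S$. The antecedent $v_1^{-1}(s)<v_1^{-1}(t)$ depends on the full permutation $v_1$, not merely on the set $S$, and under the map $\phi:v\mapsto vu^{-1}x$ positions transform by $x^{-1}u$, which is not order preserving. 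The ``short combinatorial argument'' you defer to is in fact the technical core of the statement. What one actually needs is the consequence of~(a) that, for $a$ in the support of $\zeta=wu^{-1}=zx^{-1}$, the inequality $u^{-1}(a)\le k$ holds if and only if $a<\zeta(a)$, and likewise $x^{-1}(a)\le l$ if and only if $a<\zeta(a)$; this pins down on which side of $k$ (resp.\ $l$) each moving value sits and is the starting point for showing that $x^{-1}u$ interacts correctly with the positions that matter for intermediate $v_1\in[u,w]_k$. Without surfacing that observation and carrying out the ensuing positional bookkeeping, the proposal does not establish~(b).
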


Note that covering relations $u\lessdot_k v$ correspond to transpositions that keep track of the positions (resp., values) exchanged, i.e., $v=us_{ij}$ (resp., $v=s_{ab}u$), which we refer to as the left (resp., right) transposition associated with $u\lessdot_k v$. Consequently, maximal chains in an interval $[u,v]_k$ correspond to sequences of left (resp., right) transpositions that one ``applies'' sequentially to $u$, and one can label the edges of the Hasse diagram accordingly. 
In~\cite{BS1,BS2}, the authors note that when considering intervals $[u,w]_k$ and $[x,z]_l$ as in Proposition~\ref{prop:BS}~(b), by the particular isomorphism identified, their Hasse diagrams have identical labelings by left transpositions; see Figures~\ref{fig:isoint}~(a) and (b) for an example. Note that this is not necessarily the case for labelings by right transpositions. 

Motivated by this phenomenon, Bergeron and Sottile were led to an additional poset structure on $S_n$. The \demph{Grassmannian Bruhat order} $(S_n,\preceq)$ is defined as follows: For $\eta, \xi \in S_n$, $\eta \preceq \xi$ if there exist $u\in S_n$ and $k\in [n-1]$ such that $u\leq_k \eta u \leq_k \xi u$. In Figure~\ref{fig:kBru}~(b), we illustrate the Grassmannian Bruhat order on $S_3$.

\begin{figure}[H]
    \centering
    $$\scalebox{0.7}{\begin{tikzpicture}[scale=0.8]
        \node at (0,-1) {(a) $\le_2$};
        \node (1) at (0,0) {$13|425$};
        \node (2) at (-2,3) {$14|325$};
        \node (3) at (2,3) {$23|415$};
        \node (4) at (-2,6) {$15|324$};
        \node (5) at (2,6) {$24|315$};
        \node (6) at (0,9) {$25|314$};
        \node at (-1.5,1.5) {$s_{34}$};
        \node at (1.5,1.5) {$s_{12}$};
        \node at (-2.5,4.5) {$s_{45}$};
        \node at (2.5,4.5) {$s_{34}$};
        \node at (0,5) {$s_{12}$};
        \node at (-1.5,7.5) {$s_{12}$};
        \node at (1.5,7.5) {$s_{45}$};
        \draw[red] (1)--(2);
        \draw (2)--(4)--(6);
        \draw[red] (6)--(5);
        \draw (5)--(3)--(1);
        \draw[red] (2)--(5);    \end{tikzpicture}}\quad\quad\quad\quad\scalebox{0.7}{\begin{tikzpicture}[scale=0.8]
        \node at (0,-1) {(b) $\le_3$};
        \node (1) at (0,0) {$613|425$};
        \node (2) at (-2,3) {$614|325$};
        \node (3) at (2,3) {$623|415$};
        \node (4) at (-2,6) {$615|324$};
        \node (5) at (2,6) {$624|315$};
        \node (6) at (0,9) {$625|314$};
        \node at (-1.5,1.5) {$s_{34}$};
        \node at (1.5,1.5) {$s_{12}$};
        \node at (-2.5,4.5) {$s_{45}$};
        \node at (2.5,4.5) {$s_{34}$};
        \node at (0,5) {$s_{12}$};
        \node at (-1.5,7.5) {$s_{12}$};
        \node at (1.5,7.5) {$s_{45}$};
        \draw (1)--(2)--(4)--(6)--(5)--(3)--(1);
        \draw (2)--(5);
    \end{tikzpicture}}\quad\quad\quad\quad\scalebox{0.7}{\begin{tikzpicture}[scale=0.8]
        \node at (0,-1) {(c) $\preceq$};
        \node (1) at (0,0) {$12345$};
        \node (2) at (-2,3) {$12435$};
        \node (3) at (2,3) {$21345$};
        \node (4) at (-2,6) {$12534$};
        \node (5) at (2,6) {$21435$};
        \node (6) at (0,9) {$21534$};
        \node at (-1.5,1.5) {$s_{34}$};
        \node at (1.5,1.5) {$s_{12}$};
        \node at (-2.5,4.5) {$s_{45}$};
        \node at (2.5,4.5) {$s_{34}$};
        \node at (0,5) {$s_{12}$};
        \node at (-1.5,7.5) {$s_{12}$};
        \node at (1.5,7.5) {$s_{45}$};
        \draw (1)--(2)--(4)--(6)--(5)--(3)--(1);
        \draw (2)--(5);
    \end{tikzpicture}}$$
    \caption{Isomoprhic intervals in (a) $\le_2$, (b) $\le_3$, and (c) $\preceq$}
    \label{fig:isoint}
\end{figure}
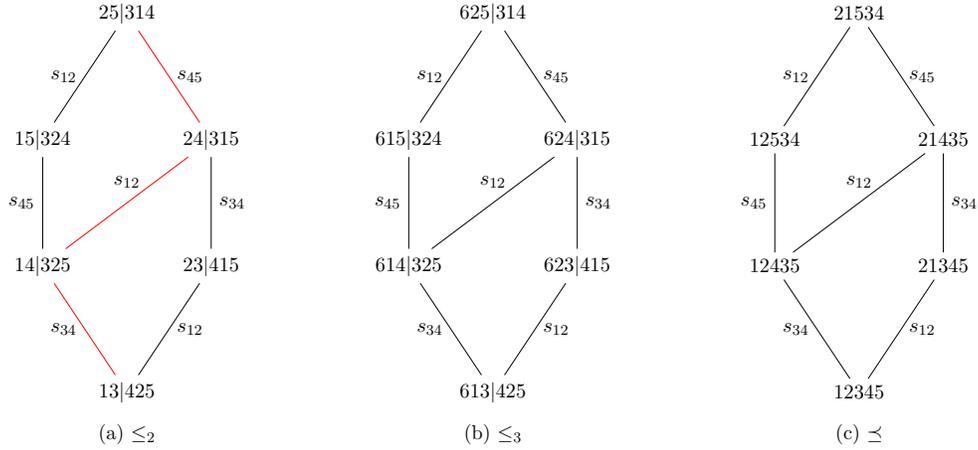

The following result shows that the Grassmannian Bruhat order provides a uniform means of studying the intervals of the $k$-Bruhat order.
\begin{theorem}[\cite{BS1}]\label{thm:BS1}
Let $u,\zeta\in S_n$ and $k\in[n-1]$ be such that $u\le_k\zeta u$. Then $\nu\longmapsto\nu u$ induces an isomorphism between $[e,\zeta]_\preceq$ and $[u,\zeta u]_k$.
\end{theorem}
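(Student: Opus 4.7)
The plan is to verify directly that right multiplication by $u$ restricts to an order-preserving bijection from $[e,\zeta]_{\preceq}$ onto $[u,\zeta u]_k$, with order-preserving inverse. Since $\nu \mapsto \nu u$ is already a bijection on all of $S_n$, this reduces to two claims: (i) $\nu \in [e,\zeta]_{\preceq}$ if and only if $\nu u \in [u,\zeta u]_k$, and (ii) for $\nu_1, \nu_2 \in [e,\zeta]_{\preceq}$, one has $\nu_1 \preceq \nu_2$ if and only if $\nu_1 u \leq_k \nu_2 u$. The key tool throughout is Proposition~\ref{prop:BS}(b), which allows one to transport $k$-Bruhat intervals between different ``base points'' provided the quantity $w u^{-1}$ is held fixed.

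For (i), the backward direction is immediate from the definition of $\preceq$: if $u \leq_k \nu u \leq_k \zeta u$, then the pair $(u,k)$ witnesses both $e \preceq \nu$ and $\nu \preceq \zeta$. For the forward direction, $\nu \preceq \zeta$ yields a witness $(u',k')$ with $u' \leq_{k'} \nu u' \leq_{k'} \zeta u'$. Since $(\zeta u')(u')^{-1} = \zeta = (\zeta u) u^{-1}$ and $u \leq_k \zeta u$ is given, Proposition~\ref{prop:BS}(b) furnishes an isomorphism $[u',\zeta u']_{k'} \to [u,\zeta u]_k$ sending $v \mapsto v (u')^{-1} u$. Evaluating at $v = \nu u'$ gives $\nu u \in [u,\zeta u]_k$, as required.

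For (ii), the backward direction again follows directly from the definition of $\preceq$, using the pair $(u,k)$ as a witness together with the chain $u \leq_k \nu_1 u \leq_k \nu_2 u$ (whose endpoints come from (i)). For the forward direction, suppose $\nu_1 \preceq \nu_2$; by definition there exists a witness $(u'',k'')$ with $u'' \leq_{k''} \nu_1 u'' \leq_{k''} \nu_2 u''$. Since $(\nu_2 u'')(u'')^{-1} = \nu_2 = (\nu_2 u) u^{-1}$ and $u \leq_k \nu_2 u$ holds by (i), Proposition~\ref{prop:BS}(b) produces an isomorphism $[u'', \nu_2 u'']_{k''} \to [u, \nu_2 u]_k$ under which $\nu_1 u''$ is sent to $\nu_1 u$. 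Hence $\nu_1 u \leq_k \nu_2 u$.

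The main obstacle is that the definition of $\preceq$ only asserts the \emph{existence} of some witness $(u',k')$, which need not be our given pair $(u,k)$; the entire argument therefore rests on being able to move freely between such witnesses. Proposition~\ref{prop:BS}(b) is tailor-made for this, since its hypothesis $w u^{-1} = z x^{-1}$ is exactly the invariant preserved by all candidate witnesses for a given Grassmannian relation, so applying it twice (once for the underlying set and once for the order) reduces the theorem to a routine check.
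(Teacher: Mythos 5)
The paper states this result as a citation from~\cite{BS1} and does not supply a proof of its own, so there is no internal argument to compare against. Your derivation from Proposition~\ref{prop:BS}~(b) is correct and complete: the reduction to claims (i) and (ii) properly captures what it means for $\nu\mapsto\nu u$ to restrict to an isomorphism of posets, the backward directions of both claims are immediate from the definition of $\preceq$ with $(u,k)$ serving as the witness, and the forward directions correctly invoke Proposition~\ref{prop:BS}~(b) with the observation that $(\zeta u')(u')^{-1}=\zeta=(\zeta u)u^{-1}$ (and similarly with $\nu_2$ in place of $\zeta$), so that the composite map $v\mapsto v(u')^{-1}u$ carries $\nu u'$ to $\nu u$ as needed. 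One small point you could have made explicit but which does not affect correctness: in the Grassmannian Bruhat order, $e$ is the unique minimum, so $\nu\in[e,\zeta]_\preceq$ is equivalent to $\nu\preceq\zeta$ alone; you implicitly use this by only needing the witness for $\nu\preceq\zeta$ in the forward direction of (i). This is a clean, self-contained argument that is in the spirit of the proof one would expect from~\cite{BS1}.
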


Analogously to the case of the $k$-Bruhat order, covering relations in the Grassmannian Bruhat order have associated left and right transpositions. Considering the isomorphism identified in Theorem~\ref{thm:BS1}, intervals $[e,\zeta]_\preceq$ and $[u,\zeta u]_k$ have identical labelings by left transpositions; see Figure~\ref{fig:isoint}~(c) for an example. 

\subsection*{The monoid structure of $S_n$}

In attempting to extract some of the multiplicative information of Schubert polynomials encoded by intervals of the $k$-Bruhat order, Bergeron and Sottile introduced a free monoid whose generators represent transpositions. 
Recall from above that, for an interval $[u,v]_k$ in the $k$-Bruhat order, there exists an isomorphic interval $[e,\zeta]_\preceq$ in the Grassmannian Bruhat order. Under this isomorphism, maximal chains are identified by their sequence of left transpositions; that is, ``applying'' such a sequence to $u$ and $e$ sequentially on the left results in $v$ and $\zeta$, respectively. 
Consequently, the chain structure of intervals in the $k$-Bruhat order on $S_n$ for all choices of $k\in [n-1]$ is uniformly encoded in that of the Grassmannian Bruhat order on $S_n$ via sequences of left transpositions. Replacing transpositions with ``operators'', we are led to the following. 

For $n\geq 1$, let \demph{$\mathcal{F}_n$} be the free monoid with generators $\mathbf{v}_{ab}$ for $a,b\in [n]$ with $a<b$ and let $\mathbf{0}$ be the zero element. Then, using the above poset structures on $S_n$, we define \demph{actions of $\mathcal{F}_n \cup \{\mathbf{0}\}$ on $S_n \cup \{0\}$} as follows: For $\mathbf{v}_{ab} \in \mathcal{F}_n$, $w\in S_n\cup\{0\}$, and $k\in [n-1]$, $$\mathbf{0} \bullet_k w=\mathbf{v}_{ab}\bullet_k0 = \mathbf{0} \cdot w=\mathbf{v}_{ab}\cdot 0 = 0,$$
$$
\begin{array}{llc}
\mathbf{v}_{ab}\bullet_k w=\begin{cases}
    s_{ab}w,& \text{if}~w\lessdot_k s_{ab}w\\
    0, & otherwise,
\end{cases}
&\qquad\text{and}\qquad \qquad &
\mathbf{v}_{ab}\cdot w=\begin{cases}
    s_{ab}w,& \text{if}~w\precdot s_{ab}w\\
    0, & otherwise.
\end{cases}
\end{array}
$$
We refer to a single generator of $\mathcal{F}_n$ as an \demph{operator} and to a product of generators as a \demph{composition of operators}. For a composition of operators $\mathbf{v}=\mathbf{v}_{a_rb_r}\cdots\mathbf{v}_{a_1b_1}\in\mathcal{F}_n$, we define the \demph{support} of $\mathbf{v}$ by $\supp(\mathbf{v})=\{a_1,b_1,\hdots,a_r,b_r\}$. Note that for $\mathbf{v}=\mathbf{v}_{a_rb_r}\cdots\mathbf{v}_{a_1b_1}\in\mathcal{F}_n$ and $u,w\in S_n$, we have $\mathbf{v}\bullet_k u=w$ (resp., $\mathbf{v}\cdot u=w$) if and only if there is maximal chain in the $k$-Bruhat order (resp., Grassmannian Bruhat order) from $u$ to $w$ whose cover relations, in order, have associated left transpositions $s_{a_ib_i}$ for $i\in [r]$. For example, since $\mathbf{v}=\mathbf{v}_{45}\mathbf{v}_{12}\mathbf{v}_{34}\in\mathcal{F}_5$ satisfies $\mathbf{v}\bullet_2 13425=25314$, $\mathbf{v}$ corresponds to the highlighted maximal chain in Figure~\ref{fig:isoint}~(a). Ongoing, we will identify maximal chains in $\preceq$ (resp., $\le_k$) with this associated sequence of left transposition.

Studying the actions of $\mathcal{F}_n\cup\{\mathbf{0}\}$ on $S_n\cup \{0\}$ defined above, Bergeron and Sottile were led to the following monoid.
\begin{definition}\label{def:monoid}
    Define $\mathcal{M}_n$ to be the monoid formed as the quotient of $\mathcal{F}_n\cup \{{\bf 0}\}$ by the relations
\begin{itemize}
    \item[\textup{(i)}] $\mathbf{v}_{bc}\mathbf{v}_{cd}\mathbf{v}_{ac}\equiv \mathbf{v}_{bd}\mathbf{v}_{ab}\mathbf{v}_{bc}$ \quad and \quad $\mathbf{v}_{ac}\mathbf{v}_{cd}\mathbf{v}_{bc}\equiv\mathbf{v}_{bc}\mathbf{v}_{ab}\mathbf{v}_{bd}$ \quad if $a<b<c<d$,
    \item[\textup{(ii)}]
    $\mathbf{v}_{ab}\mathbf{v}_{cd}\equiv \mathbf{v}_{cd}\mathbf{v}_{ab}$ \quad if $b<c$ or $a<c<d<b$,
    \item[\textup{(iii)}] $\mathbf{v}_{ac}\mathbf{v}_{bd}\equiv\mathbf{v}_{bd}\mathbf{v}_{ac}\equiv\mathbf{0}$ \quad if $a\le b < c\le d$,
    \item[\textup{(iv)}] $\mathbf{v}_{bc}\mathbf{v}_{ab}\mathbf{v}_{bc}\equiv\mathbf{v}_{ab}\mathbf{v}_{bc}\mathbf{v}_{ab}\equiv\mathbf{0}$ \quad if $a<b<c$.
\end{itemize}
\end{definition}
The monoids $\mathcal{F}_n$ and $\mathcal{M}_n$ are related to the $k$-Bruhat and the Grassmannian Bruhat orders in the following way.
\begin{theorem}[\cite{BS2}]\label{thm:BS2}
\begin{enumerate}
    \item[$(a)$] The map $\mathcal{F}_n\cup \{\mathbf{0}\} \to S_n\cup \{0\}$ defined by $\mathbf{v}\longmapsto\mathbf{v}\cdot e$ is surjective, factors through $\mathcal{M}_n$, and induces a bijection between $\mathcal{M}_n$ and $S_n\cup \{0\}$.
            
    \item[$(b)$] For $u,w\in S_n$ and $k\in [n-1]$, the set $\{\mathbf{v}\in\mathcal{F}_n~|~\mathbf{v}\bullet_k u=w\}$ is in bijection with the set of maximal chains in $[u,w]_k$ via the identification through sequences of left transpositions noted above. In fact, if $\mathbf{v},\mathbf{v}'$ belong to the aforementioned set, then they are related by a sequence of relations $(i)-(ii)$ in Definition~\ref{def:monoid}. Moreover, it is never possible to apply any of the relations $(iii)-(v)$ to obtain such operators.

    \item[$(c)$] For $u,w\in S_n$ and $k\in [n-1]$, if $\mathbf{v}\bullet_k u=w$ and $\mathbf{v'}$ is related to $\mathbf{v}$ by a sequence of relations $(i)-(ii)$ in Definition~\ref{def:monoid}, then $\mathbf{v'}\bullet_k u=w$.

    \item[$(d)$] For $\zeta\in S_n$, the set $\{\mathbf{v}\in\mathcal{F}_n~|~\mathbf{v}\cdot e=\zeta\}$ is in bijection with the set of maximal chains in $[e,\zeta]_{\preceq}$.
    \end{enumerate}
\end{theorem}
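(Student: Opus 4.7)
The plan is to prove the four parts in the order (d), the first half of (b), (c), the second half of (b), and finally (a), since the first two are essentially tautological, (c) underpins the deeper part of (b), and (a) is built from (d) and the analogue of (b) for $\preceq$. Parts (d) and the bijection half of (b) follow directly from the definitions of $\cdot$ and $\bullet_k$: a composition $\mathbf{v} = \mathbf{v}_{a_r b_r} \cdots \mathbf{v}_{a_1 b_1}$ satisfies $\mathbf{v} \cdot e = \zeta$ (respectively $\mathbf{v} \bullet_k u = w$) precisely when each partial product covers the previous one in $\preceq$ (respectively $\le_k$), so the sequence of partial products is a maximal chain whose covers carry the left transpositions $s_{a_i b_i}$.

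For part (c), I would check directly that each relation of type (i) or (ii) preserves the $\bullet_k$ action, i.e., that both sides send an arbitrary $w \in S_n$ to the same element (possibly $0$). Using the characterization of covers in $\le_k$ from Proposition~\ref{prop:BS}~(a), the check for (ii) reduces to ``disjoint'' or ``nested'' transpositions commuting through the cover condition, while the check for (i) takes place in the rank-$3$ interval of $\le_k$ determined by $w$ and the five transpositions appearing in the relation, and hinges on the position of $k$ relative to $w^{-1}(a), w^{-1}(b), w^{-1}(c), w^{-1}(d)$. This case analysis is tedious but finite.

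The hardest part is the second half of (b): that any two $\mathbf{v}, \mathbf{v}'$ with $\mathbf{v} \bullet_k u = \mathbf{v}' \bullet_k u = w$ are related by relations (i) and (ii). Using Theorem~\ref{thm:BS1}, I would reduce to the case $u = e$ in the Grassmannian Bruhat order, so that both chains lie in $[e, \zeta]_\preceq$ for $\zeta = w u^{-1}$. Then I would show that the ``chain graph'' of $[e, \zeta]_\preceq$ --- vertices are maximal chains and edges are single applications of (i) or (ii) --- is connected. The strategy is: take two chains, find the first position at which they diverge, locate a short sub-interval containing the divergence, and match the local rearrangement to either side of relation (ii) (when a rank-$2$ diamond suffices) or of relation (i) (when the rank-$3$ structure is needed). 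This requires a classification of rank-$2$ and rank-$3$ intervals in $\preceq$ by the cycle type of the top relative to the bottom, which is the main obstacle and the most calculation-heavy step. For the ``never possible to apply (iii)-(iv)'' statement, I would argue contrapositively: any $\mathbf{v}$ whose word contains the LHS of (iii) or (iv) must fail the cover condition of Proposition~\ref{prop:BS}~(a) at an internal step, so the whole composition evaluates to $0$ on every starting permutation.

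For part (a), surjectivity of $\mathbf{v} \mapsto \mathbf{v} \cdot e$ follows from (d), since $e$ is the minimum of $\preceq$ and every $\zeta$ admits at least one maximal chain from $e$. Factoring through $\mathcal{M}_n$ is (c) specialized to $\preceq$, together with direct verifications that the LHS of (iii) and (iv), applied on the left to any $u$, violates the cover condition and therefore yields $\mathbf{0}$. Injectivity of the induced map $\mathcal{M}_n \to S_n \cup \{0\}$ then splits into two cases: for $\zeta \neq 0$, two preimages are related by (i)-(ii) by the $\preceq$-analogue of the chain-graph connectedness argument just outlined; for the image $0$, I would prove by induction on the length of $\mathbf{v}$ that any $\mathbf{v}$ with $\mathbf{v} \cdot e = 0$ is equivalent to $\mathbf{0}$ in $\mathcal{M}_n$, by locating the earliest step at which the cover condition fails and using (i)-(ii) to slide that failure into a configuration directly matching the LHS of (iii) or (iv).
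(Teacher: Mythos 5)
This theorem is not proved in the paper: it is a background result cited from Bergeron and Sottile~\cite{BS2}, and the paper simply invokes it (for instance, immediately afterward in the proof of Theorem~\ref{thm:equivdef}) as a black box. So there is no proof of record here against which to compare. That said, your outline is a reasonable reconstruction of the known argument: parts (d) and the bijection half of (b) are definitional given the labeling of covers by left transpositions; part (c) is direct verification from the cover criterion in Proposition~\ref{prop:BS}~(a); and part (a) is bootstrapped from the $\preceq$-analogues of (b) and (c), using Theorem~\ref{thm:BS1} to reduce $[u,w]_k$ to $[e,\zeta]_\preceq$.

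The one place where your proposal glosses over the real difficulty is the connectedness of the chain graph of $[e,\zeta]_\preceq$ under moves (i)--(ii). A ``find-the-first-divergence-and-patch-locally'' strategy does not obviously terminate: two chains diverging at a vertex $v$ to distinct covers $v_1 \neq v_2$ need not reconverge inside a rank-2 or rank-3 subinterval, so confluence of this local rewriting system has to be established, not assumed. In~\cite{BS2} this is handled by identifying a canonical (lexicographically least) chain and proving, via an induction that exploits the structural theory of $\preceq$ developed in~\cite{BS1,BS2} (including the classification of small intervals you mention), that every chain can be moved to it. That structural input is genuinely nontrivial and is the content of the theorem; your sketch correctly flags it as ``the main obstacle'' but does not supply it. Also, your contrapositive argument that relations (iii)--(iv) can never be applied to a word with nonzero action is sound, but it presupposes a separate (if short) verification that the patterns in (iii)--(iv) do annihilate every starting permutation. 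Finally, a small reading note: the statement's reference to relations ``$(iii)$--$(v)$'' is a typo in the paper for ``$(iii)$--$(iv)$,'' since Definition~\ref{def:monoid} has only four relations; you correctly read it as (iii)--(iv).
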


As a consequence of the work in~\cite{BS2}, we have the following.

\begin{theorem}\label{thm:equivdef}
    Two elements $\mathbf{u},\mathbf{v}\in \mathcal{F}_n\cup\{\mathbf{0}\}$ belong to the same equivalence class of $\mathcal{M}_n$ if and only if $\mathbf{v}\bullet_kw=\mathbf{u}\bullet_kw$ for all $w\in S_n$ and all $k\in[n-1]$.
\end{theorem}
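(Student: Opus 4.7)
The plan is to derive Theorem~\ref{thm:equivdef} from Theorem~\ref{thm:BS2} by establishing a stronger structural fact: for any $\mathbf{u}\in\mathcal{F}_n\cup\{\mathbf{0}\}$, the entire collection of actions $\{\mathbf{u}\bullet_k w\}_{k,w}$ is determined by the single image $\mathbf{u}\cdot e\in S_n\cup\{0\}$. Concretely, writing $\zeta=\mathbf{u}\cdot e$, I would show
$$
\mathbf{u}\bullet_k w =
\begin{cases}
\zeta w & \text{if } \zeta\in S_n \text{ and } w\le_k \zeta w,\\
0 & \text{otherwise.}
\end{cases}
$$
Granting this, both directions of the theorem reduce at once to the bijection $\mathcal{M}_n\leftrightarrow S_n\cup\{0\}$ provided by Theorem~\ref{thm:BS2}(a).

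To prove the reduction, write $\mathbf{u}=\mathbf{v}_{a_rb_r}\cdots\mathbf{v}_{a_1b_1}$ and suppose first that $\zeta\in S_n$. By Theorem~\ref{thm:BS2}(d), $\mathbf{u}$ labels a maximal chain $e\precdot\nu_1\precdot\cdots\precdot\nu_r=\zeta$ in $[e,\zeta]_\preceq$ with associated left transpositions $s_{a_1b_1},\ldots,s_{a_rb_r}$. If additionally $w\le_k\zeta w$, Theorem~\ref{thm:BS1} gives an isomorphism $[e,\zeta]_\preceq\to[w,\zeta w]_k$ with identical left-transposition labels, so transporting this chain produces a maximal chain in $[w,\zeta w]_k$ along which each operator of $\mathbf{u}$ enacts a valid cover; hence $\mathbf{u}\bullet_k w=\zeta w$. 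For the converse direction of the reduction, if $\mathbf{u}\bullet_k w=y\neq 0$ for some $k,w$, then unwinding the action gives $y = s_{a_rb_r}\cdots s_{a_1b_1}w = \zeta w$ (in particular $\zeta\in S_n$), and Theorem~\ref{thm:BS2}(b) supplies a maximal chain in $[w,\zeta w]_k$, forcing $w\le_k\zeta w$. Contrapositively, in each remaining case $\mathbf{u}\bullet_k w=0$, simultaneously handling $\zeta=0$ and the $w\not\le_k\zeta w$ branch.

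The theorem now follows quickly. For $(\Rightarrow)$, Theorem~\ref{thm:BS2}(a) asserts that the map $\mathbf{u}\mapsto\mathbf{u}\cdot e$ factors through $\mathcal{M}_n$, so $\mathbf{u}\equiv\mathbf{v}$ forces $\mathbf{u}\cdot e=\mathbf{v}\cdot e$, and the reduction then yields $\mathbf{u}\bullet_k w=\mathbf{v}\bullet_k w$ for every $k$ and $w$. For $(\Leftarrow)$, assume the actions agree everywhere. If they are jointly nonzero for some $k_0,w_0$ with common value $x_0$, the reduction extracts $\mathbf{u}\cdot e=\mathbf{v}\cdot e=x_0 w_0^{-1}$. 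If they are identically zero, I argue contrapositively that $\mathbf{u}\cdot e=0=\mathbf{v}\cdot e$: were $\mathbf{u}\cdot e=\zeta\neq 0$, the relation $e\preceq\zeta$ (witnessed by the chain from Theorem~\ref{thm:BS2}(d)) together with the definition of the Grassmannian Bruhat order would produce $u,k$ with $u\le_k\zeta u$, whence the reduction would give $\mathbf{u}\bullet_k u=\zeta u\neq 0$, a contradiction. In either case $\mathbf{u}\cdot e=\mathbf{v}\cdot e$, and the bijection of Theorem~\ref{thm:BS2}(a) yields $\mathbf{u}\equiv\mathbf{v}$. The main obstacle is the bookkeeping in the key reduction: one must interleave Theorems~\ref{thm:BS1} and~\ref{thm:BS2}(b,d) so that the composition $\mathbf{u}$ indexes corresponding maximal chains on both sides of the Grassmannian/$k$-Bruhat isomorphism, rather than merely chains of the same length.
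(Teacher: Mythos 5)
Your proof is correct, but it takes a genuinely different organizational route from the paper's. The paper proves the two directions separately: for $(\Rightarrow)$ it invokes Theorem~\ref{thm:BS2}(c) directly (that relations (i)--(ii) preserve nonzero actions), and for $(\Leftarrow)$ it does a case split using Theorem~\ref{thm:BS2}(a,b) together with the definition of the Grassmannian Bruhat order and Theorem~\ref{thm:BS1}. You instead front-load the entire argument into one reduction lemma --- that the full collection of actions $\{\mathbf{u}\bullet_k w\}_{k,w}$ is determined by the single value $\mathbf{u}\cdot e$ --- and then both directions of the theorem fall out immediately from the bijection $\mathcal{M}_n\leftrightarrow S_n\cup\{0\}$ in Theorem~\ref{thm:BS2}(a). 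This is a cleaner and more structural packaging: it makes explicit the fact that the Grassmannian Bruhat action is the ``universal'' witness, and it avoids any reliance on Theorem~\ref{thm:BS2}(c) at the cost of having to prove the reduction itself (which uses the same ingredients --- Theorems~\ref{thm:BS1} and~\ref{thm:BS2}(b,d) --- that the paper uses in its $(\Leftarrow)$ direction). Neither approach is circular since all of Theorem~\ref{thm:BS2} is cited from~\cite{BS2}.

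One minor exposition issue in the ``otherwise'' branch of your reduction: when you write ``unwinding the action gives $y = s_{a_rb_r}\cdots s_{a_1b_1}w = \zeta w$ (in particular $\zeta\in S_n$)'', you are implicitly identifying $\zeta$ with the product of transpositions before you have established that $\mathbf{u}\cdot e\neq 0$. The correct order of logic is: the nonzero action $\mathbf{u}\bullet_k w = y$ produces a maximal chain in $[w,y]_k$ by definition; setting $\zeta' = yw^{-1} = s_{a_rb_r}\cdots s_{a_1b_1}$, this shows $w\le_k\zeta' w$, so $e\preceq\zeta'$; then Theorem~\ref{thm:BS1} transports the chain (with identical left-transposition labels) to a maximal chain in $[e,\zeta']_{\preceq}$, showing $\mathbf{u}\cdot e = \zeta'$, and \emph{only then} can you write $\zeta = \zeta'$. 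The substance is right; the sentence just needs to be reordered so the conclusion $\zeta\in S_n$ appears after, rather than before, the argument that establishes it.
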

\begin{proof}
\Ovalbox{$\Longrightarrow$} 
    Consider $w\in S_n$ and $k\in [n-1]$. It suffices to show that $\mathbf{u}\bullet_kw=\mathbf{v}\bullet_kw$. If $\mathbf{u}\bullet_kw=\mathbf{v}\bullet_kw=0$, then we are done. So, assume that $\mathbf{u}\bullet_kw=w^*\in S_n$. Then, applying Theorem~\ref{thm:BS2}~(c), we find that $\mathbf{v}\bullet_kw=w^*\in S_n$, as desired.

\Ovalbox{$\Longleftarrow$} 
    Suppose first that $\mathbf{u}$ and $\mathbf{v}$ have non-zero action on some permutation of $S_n$; that is, there exist $w,w'\in S_n$ and $k\in [n-1]$ for which $\mathbf{v}\bullet_kw=\mathbf{u}\bullet_kw=w'$. Then, by Theorem~\ref{thm:BS2}~(b), $\mathbf{v}$ and $\mathbf{u}$ correspond to maximal chains of $[w,w']_k$ and $\mathbf{u}\equiv\mathbf{v}$, as desired. Now, suppose that $\mathbf{v}\bullet_kw=\mathbf{u}\bullet_kw=0$ for all $w\in S_n$ and all $k\in[n-1]$. We claim that $\mathbf{v}\equiv\mathbf{0}\equiv\mathbf{u}$, and argue by contraction. Without loss of generality, assume that $\mathbf{v}\not\equiv\mathbf{0}$. Then, considering Theorem~\ref{thm:BS2}~(a), it follows that there exists $\zeta\in S_n$ for which $\mathbf{v}\cdot 1=\zeta$. By the definition of the Grassmannian Bruhat order, there exist $u\in S_n$ and $k\in [n-1]$ for which $u\le_k u\le_k\zeta u$. Thus, applying Theorem~\ref{thm:BS1}, we have that $[u,\zeta u]_k\cong [1,\zeta]_\preceq$. Moreover, considering the bijection provided in Theorem~\ref{thm:BS1}, it must be the case that $\mathbf{v}\bullet_k u=\zeta u$, which is a contradiction. The claim, and consequently the result, follows.
\end{proof}

For the ordering $\preceq$ (resp., $\le_k$), relations (i) and (ii) in the definition of $\mathcal{M}_n$ correspond to relations which can be applied to take any maximal chain between elements $u,v\in S_n$ to any other maximal chain in $[u,v]_k$ (resp., $[u,v]_{\preceq}$). On the other hand, relations (iii) and (iv) correspond to sequences of left transpositions that never occur in any maximal chain.

\section{The quantum $k$-Bruhat order}\label{sec:quantumintro}

In this section, mirroring the discussion in the classical case, we define the quantum version of the $k$-Bruhat order and introduce the quantum version of $\mathcal{F}_n$, whose study will occupy the rest of the paper. To start, we introduce the quantum $k$-Bruhat order.

In this case, the underlying set of our poset of interest is the following deformation of $S_n$. Consider the set of indeterminates $\memph{\mathbf{q}}:=\{q_1,q_2,\ldots,q_{n-1}\}$. We set $\displaystyle{\memph{\mathbf{q}^{\alpha}}:=\prod_{i=1}^{n-1}q_i^{\alpha_i}}$, for $\alpha=(\alpha_1,\hdots,\alpha_{n-1})\in\mathbb{Z}^{n-1}_{\ge 0}$, and $\displaystyle{\memph{\mathbf{q_{ij}}} := \prod_{k=i}^{j-1}q_k}$, for $1\le i<j\leq n-1$. Moreover, for $\alpha\in\mathbb{Z}^{n-1}_{\ge 0}$, we denote by $\deg \mathbf{q}^\alpha$ the usual (total) degree of the monomial $\mathbf{q}^\alpha$ in $\mathbb{Z}[\mathbf{q}]$. 
Now, define 
$$
\memph{S_n[\mathbf{q}]}:=\{{\bf q}^\alpha w~|~w\in S_n~\text{and}~{\alpha\in\mathbb{Z}_{\ge 0}^{n-1} }\},
$$ 
and extend the \demph{length function} $\ell$ on $S_n$ to $S_n[\mathbf{q}]$ by  
$\ell(\mathbf{q}^{\alpha}w)=\ell(w)
+2~\text{deg}~\mathbf{q}^{\alpha}$. Ongoing, we refer to $S_n$ and its framework as the \demph{classical case} while we refer to $S_n[\mathbf{q}]$ and its framework as the \demph{quantum case}. The classical framework can be recovered from the quantum one by setting $q_i=0$ for all $i=1,\ldots, n-1$.

\begin{remark}
    Moving forward, given an element $\mathbf{q}^{\alpha}w\in S_n[\mathbf{q}]$, it is assumed that $w\in S_n$.
\end{remark}

Analogously to the classical framework, the \demph{quantum k-Bruhat order}  $\left(S_n[{\bf q}],\preceq_k^{\bf q}\right)$ is defined by its cover relations, which arise from the indexing set in the quantum Monk's rule (Theorem~\ref{thm:quantum-MonkRule}): For $w\in S_n$,
\begin{enumerate}
    \item[(1)] $w\lessdot_k^{\bf q}ws_{ij}$ if $i\le k<j$ and $\ell(w)+1=\ell(ws_{ij})$,
    \item[(2)] $w\lessdot_k^{\bf q}\mathbf{q_{ij}}ws_{ij}$ if $i\le k<j$ and $\ell(w)+1=\ell(\mathbf{q_{ij}}ws_{ij})$, and
    \item[(3)] extend $\bf{q}$-multiplicatively: $\mathbf{q}^\alpha  u\le_k^{\bf q}\mathbf{q}^\beta v$ if and only if $\mathbf{q}^{\gamma}\mathbf{q}^\alpha u\le_k^{\bf q}\mathbf{q}^{\gamma}\mathbf{q}^\beta v$ for $\mathbf{q}^\alpha u,\mathbf{q}^\beta v\in S_n[\mathbf{q}]$ and any $\gamma\in\mathbb{Z}_{\ge 0}^{n-1}$.
\end{enumerate}
We refer to the cover relations of the form (1) as \textit{classical} and those of the form (2) as \textit{quantum}. The following result provides an alternative way to understand the cover relations of the quantum $k$-Bruhat order that does not require computing the length of the permutations. 
\begin{prop}[\cite{qkB1}]\label{prop:covercond}
Let $w\in S_n$ and $u\in S_n[\mathbf{q}]$. Then $w\lessdot_k^{\mathbf{q}} \mathbf{q}^\alpha u$ if and only if 
\begin{enumerate}
    \item[$(a)$] $\mathbf{q}^\alpha u=ws_{ij}$, for some $i\le k<j$, such that $w(i)<w(j)$, and there does not exist $l$ such that $i<l<j$ and $w(i)<w(l)<w(j)$; or
    \item[$(b)$] $\mathbf{q}^\alpha u=\mathbf{q_{ij}}ws_{ij}$, for some $i\le k<j$, such that $w(i)>w(j)$, and for every $l$ such that $i<l<j$, we have $w(j)<w(l)<w(i)$. 
\end{enumerate}
\end{prop}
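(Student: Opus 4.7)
The approach is to unpack the defining length conditions in rules (1) and (2) via a direct inversion count, and then match what they yield against the combinatorial conditions in (a) and (b). Since $w \in S_n$ has trivial $\mathbf{q}$-part, a cover relation $w \lessdot_k^{\mathbf{q}} \mathbf{q}^{\alpha}u$ can only arise from one of these two rules, as rule (3) merely extends the order by multiplication by $\mathbf{q}^{\gamma}$ and so produces no additional covers of $w$. Rule (1) forces $\mathbf{q}^{\alpha}u = ws_{ij}$ with $\ell(ws_{ij}) = \ell(w)+1$, while rule (2) forces $\mathbf{q}^{\alpha}u = \mathbf{q_{ij}}ws_{ij}$ with $\ell(\mathbf{q_{ij}}ws_{ij}) = \ell(ws_{ij}) + 2(j-i) = \ell(w)+1$. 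The proof thus reduces, for each $1 \le i \le k < j \le n$, to determining when $\ell(ws_{ij}) - \ell(w)$ equals $1$ and when it equals $1 - 2(j-i)$.

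For this I would use the standard observation that right multiplication by $s_{ij}$ only alters inversions among pairs involving position $i$ or position $j$. The pair $(i,j)$ itself contributes $+1$ to the change if $w(i) < w(j)$ and $-1$ if $w(i) > w(j)$. For each $l$ with $i < l < j$, a short case analysis on where $w(l)$ lies relative to $\{w(i), w(j)\}$ shows that the two pairs $(i,l)$ and $(l,j)$ jointly contribute $0$ when $w(l)$ sits outside the interval spanned by $w(i)$ and $w(j)$, and jointly contribute $+2$ or $-2$, matching the sign of the contribution from $(i,j)$, when $w(l)$ sits strictly inside. Summing these contributions yields
\[
\ell(ws_{ij}) - \ell(w) \;=\; \begin{cases} 1 + 2N_+ & \text{if } w(i) < w(j), \\ -1 - 2N_- & \text{if } w(i) > w(j), \end{cases}
\]
where $N_+ = \#\{l : i<l<j,\, w(i) < w(l) < w(j)\}$ and $N_- = \#\{l : i<l<j,\, w(j) < w(l) < w(i)\}$.

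From this formula both implications fall out cleanly. The classical length condition $\ell(ws_{ij}) = \ell(w)+1$ holds if and only if $w(i) < w(j)$ and $N_+ = 0$, which is exactly condition (a). The quantum length condition $\ell(ws_{ij}) = \ell(w)+1-2(j-i)$ holds if and only if $w(i) > w(j)$ and $N_- = j-i-1$; since there are only $j-i-1$ indices strictly between $i$ and $j$, this forces every such $l$ to satisfy $w(j) < w(l) < w(i)$, which is exactly condition (b). The only real obstacle is keeping the case analysis on $w(l)$ organized; I would tabulate the inversion status of each affected pair both before and after the swap to avoid sign errors.
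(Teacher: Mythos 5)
Your proof is correct. Note first that the paper itself does not prove this proposition; it is cited from the reference [qkB1], so there is no internal proof here against which to compare. Your argument is the standard direct verification via inversion counting: you rule out rule (3) as a source of covers of $w$ (correct, since $w$ has trivial $\mathbf{q}$-part and (3) merely propagates relations along $\mathbf{q}^\gamma$-shifts), reduce rules (1) and (2) to length conditions $\ell(ws_{ij}) - \ell(w) = 1$ and $\ell(ws_{ij}) - \ell(w) = 1 - 2(j-i)$ respectively, and then establish the key inversion-change formula
\[
\ell(ws_{ij}) - \ell(w) \;=\; \begin{cases} 1 + 2N_+ & \text{if } w(i) < w(j), \\ -1 - 2N_- & \text{if } w(i) > w(j), \end{cases}
\]
where $N_\pm$ count the intermediate positions whose values fall strictly between $w(i)$ and $w(j)$. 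The case analysis is the right one, and the final translations are sound: the classical condition forces $w(i) < w(j)$ and $N_+ = 0$ (i.e., no $l$ with $i < l < j$ and $w(i) < w(l) < w(j)$), which is (a); and the quantum condition forces $w(i) > w(j)$ and $N_- = j-i-1$, which, since $j-i-1$ is the total number of intermediate positions, forces all of them into the interval $(w(j), w(i))$, which is (b). The only polish I would suggest is to spell out, as a one-line remark, that pairs $(p,i)$ and $(p,j)$ with $p < i$ (and symmetrically pairs with $p > j$) contribute zero net change — you implicitly assume this by restricting attention to $i < l < j$, and it is true, but stating it explicitly makes the inversion count airtight.
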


\begin{remark}\label{rem:relbrels}
    Note that, considering the definitions of $<_k$ and $<_k^q$, for $u,v\in S_n$, we have that $u<_kv$ if and only if $\mathbf{q}^\alpha u<_k^q \mathbf{q}^\alpha v$ for any $\alpha \in \mathbb{Z}_{\geq 0}^{n-1}$.
\end{remark}

Ongoing, when discussing elements of the quantum $k$-Bruhat order on $S_n[\mathbf{q}]$, we present permutations of $S_n$ with a bar between the value in position $k$ and the value in position $k+1$. Then the cover relations correspond to exchanging two values $w(i)$ and $w(j)$, one on the left of the bar and the other one on the right, when the numbers in positions $i,\ldots,j$ satisfy one of the two conditions discussed in Proposition~\ref{prop:covercond}. 

\begin{Ex}
    Note that in the quantum 2-Bruhat order on $S_3[\mathbf{q}]$ we have that $21|3\lessdot_2^{\bf q} 23|1$ is a classical cover, while $32|1\lessdot_2^{\bf q}q_231|2$ is a quantum cover. A portion of the Hasse diagram of the quantum 2-Bruhat order on $S_3[\mathbf{q}]$ is illustrated below in Figure~\ref{fig:qkBru}.
\end{Ex}

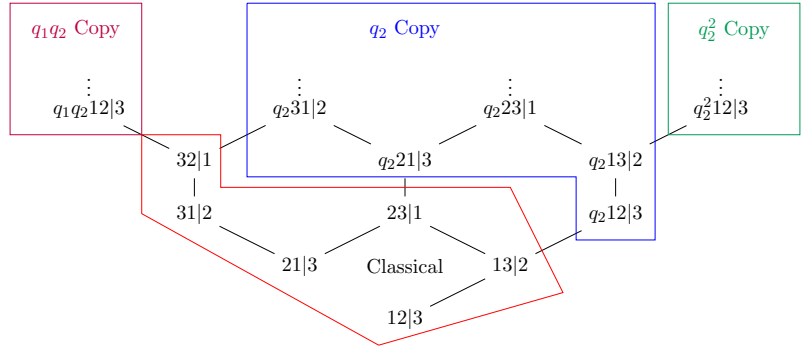
\begin{figure}[H]
    \centering
    $$\scalebox{0.7}{\begin{tikzpicture}
    \node (1) at (0,0) {$12|3$};
    \node (2) at (-2,1) {$21|3$};
    \node (3) at (2,1) {$13|2$};
    \node (4) at (-4,2) {$31|2$};
    \node (5) at (0,2) {$23|1$};
    \node (6) at (4,2) {$q_212|3$};
    \node (7) at (-4,3) {$32|1$};
    \node (8) at (0,3) {$q_221|3$};
    \node (9) at (4,3) {$q_213|2$};
    \node (10) at (-6,4) {$q_1q_212|3$};
    \node (11) at (-2,4) {$q_231|2$};
    \node (12) at (2,4) {$q_223|1$};
    \node (13) at (6,4) {$q^2_212|3$};
    \node at (-6,4.5) {$\vdots$};
    \node at (-2,4.5) {$\vdots$};
    \node at (2,4.5) {$\vdots$};
    \node at (6,4.5) {$\vdots$};
    \draw (1)--(3)--(6)--(9)--(12)--(8)--(11)--(7)--(4)--(2)--(5)--(3);
    \draw (5)--(8);
    \draw (9)--(13);
    \draw (7)--(10);
    \draw[red] (-0.5,-0.5)--(-5,2)--(-5,3.5)--(-3.5,3.5)--(-3.5,2.5)--(2,2.5)--(3,0.5)--(-0.5,-0.5);
    \node at (0,1) {Classical};
    \draw[blue] (-3,6)--(-3,2.7)--(3.25,2.7)--(3.25,1.5)--(4.75,1.5)--(4.75,6)--(-3,6);
    \node at (0,5.5) {\bl{$q_2$ Copy}};
    \draw[purple] (-5,3.5)--(-7.5,3.5)--(-7.5,6)--(-5,6)--(-5,3.5);
    \node at (-6.25,5.5) {\pp{$q_1q_2$ Copy}};
    \draw[ForestGreen] (5,3.5)--(7.5,3.5)--(7.5,6)--(5,6)--(5,3.5);
    \node at (6.25,5.5) {\textcolor{ForestGreen}{$q^2_2$ Copy}};
\end{tikzpicture}}$$
    \caption{Quantum 2-Bruhat order on $S_3[\mathbf{q}]$}
    \label{fig:qkBru}
\end{figure}

\subsection*{The monoid structure on $S_n[\mathbf{q}]$}

We define a monoid structure with an action on $S_n[\mathbf{q}]$ coming from the covering relations of the quantum $k$-Bruhat order. This structure and accompanying action were first introduced in~\cite{qkB1}, and they generalize the classical framework studied by Bergeron and Sottile in~\cite{BS2}.

Let $\memph{\mathcal{F}_n^{\bf q}}$ be the free monoid generated by the symbols $\{\mathbf{v}_{ab}~|~a,b\in[n],\ a\neq b\}$, and let $\mathbf{0}$ denote a zero element, referred to as the \demph{zero operator}. As in the classical setting, we refer to a single generator of $\mathcal{F}_n^{\mathbf{q}}$ as an operator and a product of generators as a composition of operators. The elements in $\mathcal{F}^{\mathbf{q}}_n$ are then compositions of operators of the form $\mathbf{v} = \mathbf{v}_{a_rb_r}\cdots\mathbf{v}_{a_1b_1}$ (where the labeling from right to left reflects the left action defined next). We define the \demph{support} of $\mathbf{v}$ as the set $\supp(\mathbf{v}) = \{a_1,b_1,\ldots, a_r,b_r\}$ and say that $\mathbf{v}$ has \demph{degree} $r$ to indicate that it involves $r$ operators.

Then, $\mathcal{F}^{\mathbf{q}}_n\cup\{\mathbf{0}\}$ acts on $S_n[\mathbf{q}] \cup \{0\}$ in the following way: For $\mathbf{v}_{ab}\in\mathcal{F}_n^{\mathbf{q}}$, $w\in S_n\cup\{0\}$, and $k\in [n-1]$, we set $\mathbf{v}_{ab}\bullet_k0=\mathbf{0}\bullet_kw=0$, 
$$\mathbf{v}_{ab}\bullet_ku=\begin{cases}
    s_{ab}u,& \text{if}~a<b~\text{and}~u\lessdot_k^{\bf q}s_{ab}u \\
    \mathbf{q_{ij}}s_{ab}u, & \text{if}~a>b~\text{and}~u\lessdot_k^{\bf q}\mathbf{q_{ij}}s_{ab}u~(u(i)=a,u(j)=b)\\
    0, & otherwise,
\end{cases}$$
and extend to $S_n[{\bf q}]\cup\{0\}$ by setting $\mathbf{v}\bullet_k(\mathbf{q}^{\alpha}w)=\mathbf{q}^{\alpha}(\mathbf{v}\bullet_kw)$ for $\mathbf{v}\in\mathcal{F}_n^{\mathbf{q}}\cup\{\mathbf{0}\}$. Operators $\mathbf{v}_{ab}$ with $a<b$ are called \demph{classical} (corresponding to classical covers), while those with $a>b$ are called \demph{quantum} (corresponding to quantum covers). 

We visualize a composition of operators in a diagrammatic way as follows: For $\mathbf{v}=\mathbf{v}_{a_rb_r}\cdots\mathbf{v}_{a_1b_1}\in \mathcal{F}^{\mathbf{q}}_n$, draw a rectangular window containing vertical dashed lines for each element of $\supp(\mathbf{v})$, labeled in increasing order from left to right. Now, a single classical (resp., quantum) operator $\mathbf{v}_{ab}$ is represented by a \textcolor{ForestGreen}{green} (resp., dotted \textcolor{BrickRed}{BrickRed}) horizontal line segment from the vertical line labeled $a$ to that labeled $b$. A composition of operators is depicted by stacking line segments from bottom to top as the composition is read from right to left. We illustrate $\mathbf{v}_{14}\mathbf{v}_{62}\mathbf{v}_{16}\in\mathcal{F}^{\mathbf{q}}_6$ in Figure~\ref{fig:op-quantum}.

\begin{figure}[H]
    \centering
    $$\scalebox{0.8}{\begin{tikzpicture}[scale=0.8]
        \draw[rounded corners] (-0.5, 0) rectangle (3.5, 2) {};
        \node (1) at (0,2.5) {$1$};
        \node (2) at (1,2.5) {$2$};
        \node (3) at (2,2.5)  {$4$};
        \node (4) at (3,2.5)  {$6$};
        \node (5) at (0,0.5) [circle, draw = black,fill=black, inner sep = 0.5mm] {};
        \node (6) at (3,0.5) [circle, draw = black,fill=black, inner sep = 0.5mm] {};
        \draw[ForestGreen] (5)--(6);
        \node (7) at (1,1) [circle, draw = black,fill=black, inner sep = 0.5mm] {};
        \node (8) at (3,1) [circle, draw = black,fill=black, inner sep = 0.5mm] {};
        \draw[decorate sep={0.5mm}{1mm},fill, BrickRed] (7)--(8);
        
        \node (9) at (0,1.5) [circle, draw = black,fill=black, inner sep = 0.5mm] {};
        \node (10) at (2,1.5) [circle, draw = black,fill=black, inner sep = 0.5mm] {};
        \draw[ForestGreen] (9)--(10);
        \draw[dashed,gray] (0,0)--(0,2);
        \draw[dashed,gray] (1,0)--(1,2);
        \draw[dashed,gray] (2,0)--(2,2);
        \draw[dashed,gray] (3,0)--(3,2);
    \end{tikzpicture}}$$
    \vspace{-30pt}
    \caption{Diagrammatic presentation of $\mathbf{v}_{14}\mathbf{v}_{62}\mathbf{v}_{16}\in\mathcal{F}^{\mathbf{q}}_6$}
    \label{fig:op-quantum}
\end{figure}
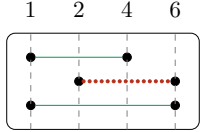

\begin{remark}\label{remark:q_k-power} 
Consider a non-zero composition of operators $\mathbf{v}=\mathbf{v}_{a_{r}b_{r}}\hdots\mathbf{v}_{a_1b_1}$. Then, there exist $n\in \mathbb{Z}_{>0}$, $u\in S_n$, and $k\in[n-1]$  such that $\mathbf{v}\bullet_ku=\mathbf{q}^{\alpha}w\neq 0$. We note the following two observations:
    \begin{enumerate}
        \item[(1)] The number of quantum operators involved in $\mathbf{v}$, i.e., the number of $i$ such that $\mathbf{v}_{a_ib_i}$ with $a_i>b_i$, equals the power of $q_k$ in $\mathbf{q}^{\alpha}$, since whenever we apply a quantum operator, the associated monomial $\mathbf{q_{ij}}$ contains $q_k$. 
        \item[(2)] The length of $\mathbf{q}^{\alpha}w$ can be computed using the information from the operators involved in $\mathbf{v}$ as
    \begin{align*}\label{eq:length with ops}
        \ell(\mathbf{q}^{\alpha}w)&=\ell(u) + r =\ell(u)+\#\{i~|~a_i>b_i,~1\le i\le r\}+\#\{i~|~a_i<b_i,~1\le i\le r\}.
    \end{align*}
    This follows from noticing that the definition of the length function is so that when we apply an operator $\mathbf{v}_{a_ib_i}$, the length increases by one, whether $\mathbf{v}_{a_ib_i}$ is classical or quantum. Therefore, it is enough to count how many operators are involved.
    \end{enumerate}
\end{remark}
We say that $\mathbf{v},\mathbf{v'}\in \mathcal{F}^{\mathbf{q}}_n$ are \demph{equivalent}, written $\mathbf{v}\equiv\mathbf{v'}$, if $\mathbf{v}\bullet_ku=\mathbf{v'}\bullet_ku$ for all $\mathbf{q}^\alpha u\in S_n[\mathbf{q}]$ and $k\in[n-1]$. We refer to  $\mathbf{v}\equiv \mathbf{0}$ as a \demph{zero equivalence} and to $\mathbf{v}\equiv \mathbf{v'} \not\equiv \mathbf{0}$ as a \demph{non-zero equivalence}.

\begin{remark}\label{rem:equivcomp}
    It is important to notice that this equivalence definition is different from the one introduced in~\cite{qkB1}. In~\cite{qkB1}, the authors say that two operators $\mathbf{v},\mathbf{v'}\in\mathcal{F}_n^\mathbf{q}$ are $(u,k)$-equivalent for $u\in S_n$ and $k\in [n-1]$ provided that $\mathbf{v}\bullet_ku=\mathbf{v'}\bullet_ku\neq 0$. Note that the equivalence $\equiv$ considered here is stronger than $(u,k)$-equivalence in the sense that if $\mathbf{v}\equiv\mathbf{v'}\not\equiv 0$, then it follows that $\mathbf{v}$ and $\mathbf{v'}$ are $(u,k)$-equivalent for all $u\in S_n$ and $k\in [n-1]$ for which $\mathbf{v}\bullet_ku\neq 0$ (equivalently $\mathbf{v'}\bullet_ku\neq 0$). As a consequence, the equivalence $\equiv$ has proven to be more challenging to analyze.
\end{remark}

As noted in the introduction, our main concern in this paper is a characterization of the set of relations $\mathbf{u}\equiv\mathbf{v}$ and $\mathbf{u}\equiv\mathbf{0}$ for $\mathbf{u},\mathbf{v}\in\mathcal{F}^{\mathbf{q}}_n$.

\subsection*{Technical results}
In this subsection, we address two initial concerns related to the equivalence relation introduced above:
\begin{itemize}
    \item[(a)] Is it possible to have two non-zero compositions of operators that are equivalent but involve a different number of operators? 
    \item[(b)] Is it possible to have two non-zero compositions of operators that are equivalent but whose support is not the same? 
\end{itemize}
The answer to both questions is \underline{no}, as shown in Lemma~\ref{lem:supp1} and Proposition~\ref{prop:supp2}, respectively. As a consequence of Proposition~\ref{prop:supp2}, we have that $\supp(\mathbf{v})$ is well-defined and independent of the choice of composition $\mathbf{v}\equiv \mathbf{v}_{a_{r}b_{r}}\hdots\mathbf{v}_{a_1b_1}$ used to define it. It is worth noting that, as we will see later via, for example, Theorem~\ref{thm:deg2zero} and Proposition~\ref{prop:min3zero1}, if one considers compositions of operators equivalent to zero, then the answers to both (a) and (b) are \underline{yes}.

We start with question (a) and consider the number of operators involved when we have a non-zero equivalence. Considering Remark~\ref{remark:q_k-power}~(2), we are immediately led to the following.

\begin{lemma}\label{lem:supp1}
    Let $\mathbf{v}=\mathbf{v}_{a_{r_1}b_{r_1}}\hdots\mathbf{v}_{a_1b_1}$ and $\mathbf{v'}= \mathbf{v}_{c_{r_2}d_{r_2}}\hdots\mathbf{v}_{c_1d_1}$ be two non-zero compositions such that $\mathbf{v}\equiv \mathbf{v'} \not\equiv \mathbf{0}$. Then $r_1=r_2$.
\end{lemma}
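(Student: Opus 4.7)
The plan is to derive the equality $r_1 = r_2$ from the length-counting observation in Remark~\ref{remark:q_k-power}~(2), which states that when a composition of $r$ operators acts non-trivially on $u \in S_n$ to give $\mathbf{q}^\alpha w$, the lengths satisfy $\ell(\mathbf{q}^\alpha w) = \ell(u) + r$.

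First, I would use the assumption that $\mathbf{v} \not\equiv \mathbf{0}$ to produce a witness: there exist $n \in \mathbb{Z}_{>0}$, $u \in S_n$, and $k \in [n-1]$ such that $\mathbf{v} \bullet_k u \neq 0$. (Otherwise, by definition, $\mathbf{v}$ would be equivalent to $\mathbf{0}$.) Write $\mathbf{v} \bullet_k u = \mathbf{q}^\alpha w$ for some $\mathbf{q}^\alpha w \in S_n[\mathbf{q}]$.

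Next, I would invoke the hypothesis $\mathbf{v} \equiv \mathbf{v}'$ to obtain $\mathbf{v}' \bullet_k u = \mathbf{q}^\alpha w$ as well; in particular, $\mathbf{v}'$ also has non-zero action on $u$. Now applying Remark~\ref{remark:q_k-power}~(2) separately to each composition gives the two equalities
\[
\ell(\mathbf{q}^\alpha w) \;=\; \ell(u) + r_1 \qquad \text{and} \qquad \ell(\mathbf{q}^\alpha w) \;=\; \ell(u) + r_2,
\]
whence $r_1 = r_2$, as desired.

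There is no substantial obstacle here: the entire argument rests on the fact that each operator application (classical or quantum) increases the length function by exactly $1$, which is exactly what Remark~\ref{remark:q_k-power}~(2) records. The only thing to be careful about is to ensure the chosen $u$ and $k$ realize a non-zero action for \emph{both} compositions simultaneously, which is automatic from the definition of $\equiv$.
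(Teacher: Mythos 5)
Your proposal is correct and follows essentially the same route as the paper: the paper's proof of Lemma~\ref{lem:supp1} is likewise a direct appeal to Remark~\ref{remark:q_k-power}~(2), noting that a non-zero action of a composition of $r$ operators on $u$ always yields an element of length $\ell(u)+r$, so that a shared non-zero output forces $r_1=r_2$. Your step of extracting a common witness $(u,k)$ from $\mathbf{v}\not\equiv\mathbf{0}$ together with $\mathbf{v}\equiv\mathbf{v}'$ is exactly the implicit content of the paper's one-line argument, and your caution about both compositions acting non-trivially on the same $u$ is handled correctly.
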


Next, we move to question (b) and consider the supports of equivalent non-zero compositions of operators. Unlike in the case of question (a), our argument for the answer to question (b) is not as direct.

\begin{prop}\label{prop:supp2}
    Let $\mathbf{v}=\mathbf{v}_{a_{r}b_{r}}\hdots\mathbf{v}_{a_1b_1}$ and $\mathbf{v'}= \mathbf{v}_{c_{r}d_{r}}\hdots\mathbf{v}_{c_1d_1}$ be two non-zero compositions such that $\mathbf{v}\equiv \mathbf{v'}\not\equiv \mathbf{0}$. Then, $\supp(\mathbf{v}) = \supp(\mathbf{v'})$. 
\end{prop}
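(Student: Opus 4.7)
The plan is to begin by exploiting the non-zero assumption. Since $\mathbf{v}\equiv\mathbf{v}'\not\equiv\mathbf{0}$, there exist $u\in S_n$ and $k\in[n-1]$ with
\[
\mathbf{v}\bullet_k u=\mathbf{v}'\bullet_k u=\mathbf{q}^{\alpha}w\neq 0
\]
for some $w\in S_n$ and $\alpha\in\mathbb{Z}_{\geq 0}^{n-1}$. Because the permutation part of every individual operator's action is left-multiplication by the associated transposition, regardless of whether the operator is classical or quantum, we obtain the identity in $S_n$
\[
wu^{-1}=s_{a_rb_r}\cdots s_{a_1b_1}=s_{c_rd_r}\cdots s_{c_1d_1}.
\]
In particular, the support of $wu^{-1}$ as a permutation is contained in $\supp(\mathbf{v})\cap\supp(\mathbf{v}')$, supplying a first easy inclusion.

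To upgrade this to the desired equality of supports, the next step is a contradiction argument. Suppose, without loss of generality, there exists $x\in\supp(\mathbf{v})\setminus\supp(\mathbf{v}')$. Since no operator of $\mathbf{v}'$ involves $x$, the value $x$ is fixed throughout any chain produced by $\mathbf{v}'$; explicitly, for every pair $(u',k')$ with $\mathbf{v}'\bullet_{k'}u'=\mathbf{q}^{\beta}w'\neq 0$, one has $(w')^{-1}(x)=(u')^{-1}(x)$. Since $\mathbf{v}\equiv\mathbf{v}'$, the action of $\mathbf{v}$ must satisfy the same relation: whenever $\mathbf{v}\bullet_{k'}u'\neq 0$, the value $x$ ends in its starting position.

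The crux of the argument, and the main obstacle, is producing a witness $(u',k')$ violating this return-to-start condition. The simple case is when $x$ appears in exactly one operator $\mathbf{v}_{a_jb_j}$ of $\mathbf{v}$: any non-zero action of $\mathbf{v}$ swaps $x$ exactly once, so $x$ ends in a position different from where it started. In general, the plan is to find an $(u',k')$ for which the collection of operators of $\mathbf{v}$ involving $x$ has a net non-trivial effect on $x$'s position --- for example, by arranging the placement of $x$'s first-swap partner so that $x$ crosses the bar at position $k'$ in a way no subsequent operator of $\mathbf{v}$ can undo. Verifying the cover relations along the resulting chain reduces to a case analysis via Proposition~\ref{prop:covercond}, distinguishing classical from quantum operators; identifying a construction that works uniformly across all compositions $\mathbf{v}$ is where I expect the technical bulk of the proof to lie.
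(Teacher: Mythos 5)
There is a genuine gap in your proposal. The witness-construction step at the heart of your argument --- finding $(u',k')$ with $\mathbf{v}\bullet_{k'}u'\neq 0$ but $x$ ending in a different position than it started --- is not always achievable. Consider $\mathbf{v}=\mathbf{v}_{12}\mathbf{v}_{21}$ with $x=1$. Applying Proposition~\ref{prop:covercond}, any nonzero action of $\mathbf{v}$ first swaps $1$ and $2$ into adjacent positions $(k,k+1)$ via the quantum operator $\mathbf{v}_{21}$, and then $\mathbf{v}_{12}$ swaps them right back, so $\mathbf{v}\bullet_k u = q_k\, u$: the permutation part is \emph{always} fixed, and $1$ returns to its starting position for every valid $(u,k)$. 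So the ``simple case'' you rely on (when $x$ appears in exactly one operator) is genuinely special, and in the general case --- precisely the case the proposition needs to handle --- the operators involving $x$ may always net to a trivial effect on $x$'s position. Your concluding sentence correctly flags this as the technical bulk, but it is not merely technical; the approach as stated cannot close.

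The paper's proof sidesteps this by extracting information from the $\mathbf{q}$-monomial $\mathbf{q}^\alpha$ rather than from the final position of $x$ (the paper's $c$). Fixing a single witness $(u,k)$ that exists by the non-zero hypothesis, the paper tracks the entries $\alpha_{u^{-1}(c)-1}$ and $\alpha_{u^{-1}(c)}$ of the exponent vector: because $c\notin\supp(\mathbf{v})$, every quantum operator in $\mathbf{v}$ either contributes both of these $q_i$'s or neither, forcing $\alpha_{u^{-1}(c)-1}=\alpha_{u^{-1}(c)}$; on the other hand, because $c\in\supp(\mathbf{v}')$ and must return to position $u^{-1}(c)$, there has to be a quantum operator in $\mathbf{v}'$ moving $c$ across the bar, which contributes to exactly one of the two entries and forces $\alpha_{u^{-1}(c)-1}\neq\alpha_{u^{-1}(c)}$. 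The contradiction is in the coefficient, not the permutation. In short, the quantum structure carries strictly more information than the final position of $x$, and that extra information is essential here; your plan would need to be replaced by (or fortified with) an analysis of the $\mathbf{q}$-exponents along the chain.

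Your opening observations (the non-zero witness exists, $wu^{-1}$ factors as the product of the associated transpositions, $\mathbf{v}'$ fixes the position of $x$) are correct and match the setup of the paper's proof, but the inclusion $\supp(wu^{-1})\subseteq\supp(\mathbf{v})\cap\supp(\mathbf{v}')$ does not by itself advance the argument, since $\supp(wu^{-1})$ can be much smaller than either $\supp(\mathbf{v})$ or $\supp(\mathbf{v}')$.
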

\begin{proof}
    Since $\mathbf{v}\equiv \mathbf{v'}\not\equiv 0$, there exist $n\ge 3$, $u\in S_n$, $k\in[n-1]$, and  $\alpha\in\mathbb{Z}_{\geq 0}^{n-1}$ such that $\mathbf{v}\bullet_ku=\mathbf{q}^{\alpha}w= \mathbf{v'}\bullet_ku\neq 0$. By contradiction, assume that $\supp(\mathbf{v}) \neq \supp(\mathbf{v'})$. Without loss of generality, assume that there exists $c\in \supp(\mathbf{v'})$ such that $c\notin \supp(\mathbf{v})$.

    As $c\notin \supp(\mathbf{v})$, the position of $c$ in $\mathbf{v}$ is fixed by all the operators $\mathbf{v}_{a_{i}b_{i}}$. Thus, $$\text{if} \quad \mathbf{v}_{a_{i}b_{i}}\hdots\mathbf{v}_{a_1b_1}\bullet_ku=\mathbf{q^{\alpha^i}}u^i\quad  ~\text{for}~ 1\le i\le r~\text{and}~\alpha^i\in\mathbb{Z}_{\ge 0}^{n-1}\text{, \quad then}\quad u^{-1}(c)=(u^i)^{-1}(c).$$
    Moreover, setting $\mathbf{q}^{\alpha^0}=1$ and comparing the terms in $\mathbf{q}^\alpha$ as we apply the operators $\mathbf{v}_{a_ib_i}$ for $1\le i\le r$, only one of the following holds:
    \begin{itemize}
        \item $\mathbf{v}_{a_ib_i}$ is a classical operator, and so $\mathbf{q}^{\alpha^i}=\mathbf{q}^{\alpha^{i-1}}$.
        
        \item $\mathbf{v}_{a_ib_i}$ is a quantum operator with $c$ in a position strictly between the positions of the values $a_i$ and $b_i$. Then, we have that $\mathbf{q}^{\alpha^i}=\mathbf{q}_{jl}\mathbf{q}^{\alpha^{i-1}}$ with $j\le u^{-1}(c)-1<u^{-1}(c)<l$. Thus, the monomial $\mathbf{q}_{jl}$ contains a factor of $q_{\alpha_{u^{-1}(c)-1}}q_{\alpha_{u^{-1}(c)}}$.
  
        \item $\mathbf{v}_{a_ib_i}$ is a quantum operator with $c$ in a position strictly outside the positions between the values $a_i$ and $b_i$. Then, we have that $\mathbf{q}^{\alpha^i}=\mathbf{q}_{jl}\mathbf{q}^{\alpha^{i-1}}$ with $u^{-1}(c)-1<u^{-1}(c)<j$ or $l\le u^{-1}(c)-1<u^{-1}(c)$. Therefore, we conclude that the monomial $\mathbf{q}_{jl}$ does not contain a factor of either $q_{\alpha_{u^{-1}(c)-1}}$ or $q_{\alpha_{u^{-1}(c)}}$. Note that if $l=u^{-1}(c)-1$, then $q_{\alpha_{u^{-1}(c)-1}}$ does not appear in $\mathbf{q}_{jl}$ since the product goes up to $l-1$.
    \end{itemize}
     Consequently, it follows that $\alpha_{u^{-1}(c)}=\alpha_{u^{-1}(c)-1}$; note that if $u^{-1}(c)=1$, then $\alpha_1=0$. 
    
    Now, we consider $\mathbf{v'}$. Since $c\in \supp(\mathbf{v'})$ and $u^{-1}(c)=w^{-1}(c)$, it follows that $c$ must move from position $u^{-1}(c)$ and return to this same position during the operator-wise application of $\mathbf{v'}$. We show that $\alpha_{u^{-1}(c)}\neq \alpha_{u^{-1}(c)-1}$, providing a contradiction. To this end, consider the sequence of elements of $S_n[\mathbf{q}]$ obtained by sequentially applying the $\mathbf{v}_{c_id_i}$'s. For $1\le i\le r$, we write $$\mathbf{v}_{c_{i}d_{i}}\hdots\mathbf{v}_{c_1d_1}\bullet_ku=\mathbf{q}^{\beta^i}w^i,$$ 
   with $\beta^i\in \mathbb{Z}_{\ge 0}^{n-1}$. Then there must exist $1\le j_1<j_2\le r$ with $c\in \{c_{j_1},d_{j_1}\}\cap\{c_{j_2},d_{j_2}\}$ such that
    \begin{itemize}
        \item $(w^i)^{-1}(c)=u^{-1}(c)$ for $1\le i<j_1$ and $j_2\le i\le r$, and
        \item $(w^{j_1})^{-1}(c)\neq u^{-1}(c)\neq (w^{j_2-1})^{-1}(c)$.
    \end{itemize}
    Ongoing, we assume that $u^{-1}(c)\le k$, the other case following via a similar argument. We claim that there exists $j^*$ such that $j_1\le j^*\le j_2$, $c_{j^*}>d_{j^*}$, and $u^{-1}(c)=(w^{j^*-1})^{-1}(c_{j^*})$, i.e., there exists a quantum operator $\mathbf{v}_{c_{j^*}d_{j^*}}$ among $\mathbf{v}_{c_{j_1}d_{j_1}},\hdots,\mathbf{v}_{c_{j_2}d_{j_2}}$ for which $c_{j^*}$ is in position $u^{-1}(c)$ of $w^{j^*-1}$. Otherwise, for $j_1-1\le i\le j_2$, the values $w^i(u^{-1}(c))$ must be weakly increasing; but then $$w^{j_1-1}(u^{-1}(c))=c<w^{j_1}(u^{-1}(c))>c=w^{j_2}(u^{-1}(c)),$$ providing a contradiction. Now, setting $q^{\beta^{0}}=1$, note that for $1\le i\le r$, since $u^{-1}(c)\le k$, it is not possible to have $\mathbf{q}^{\beta^i}=\mathbf{q}_{jl}\mathbf{q}^{\beta^i{-1}}$ with $\mathbf{q}_{jl}$ containing a factor of $q_{u^{-1}(c)-1}$ and no factor of $q_{u^{-1}(c)}$. Thus, since $\mathbf{q}^{\beta^{j^*}}=\mathbf{q}_{j,l}\mathbf{q}^{\beta^{j^*-1}}$ with $j=u^{-1}(c)$, it follows that $\beta^{j^*}_{u^{-1}(c)-1}\neq \beta^{j^*}_{u^{-1}(c)}$ and, consequently, $\alpha_{u^{-1}(c)-1}\neq \alpha_{u^{-1}(c)}$, which is the desired contradiction, and the result follows. 
    \end{proof}

\begin{remark} 
It is important to notice that Proposition~\ref{prop:supp2} does not say that the values $c_1,d_1,\ldots, c_r,d_r$ are a reordering of the values $a_1,b_1,\ldots, a_r,b_r$. For instance, as we will see later, $\mathbf{v}_{13}\mathbf{v}_{34}\mathbf{v}_{23}\equiv \mathbf{v}_{23}\mathbf{v}_{12}\mathbf{v}_{24}$ with $$\supp(\mathbf{v}_{13}\mathbf{v}_{34}\mathbf{v}_{23})=\supp(\mathbf{v}_{23}\mathbf{v}_{12}\mathbf{v}_{24})=\{1,2,3,4\},$$ 
and the indices of $\mathbf{v}_{23}\mathbf{v}_{12}\mathbf{v}_{24}$ are not a reordering of the indices of $\mathbf{v}_{13}\mathbf{v}_{34}\mathbf{v}_{23}$. 
\end{remark}

\section{Equivalence Preserving Transformations}\label{sec:ops}

In this section, we show that certain transformations acting on compositions of operators preserve both zero and non-zero equivalence. Some of these transformations were already considered in~\cite{qkB1}. However, as explained in Remark~\ref{rem:equivcomp}, our equivalence relation is more universal, and the results presented here are in terms of it.

\subsection{Flattening}
 We start by considering the transformation on compositions of operators corresponding to an order-preserving map on the indices which takes the support of the composition to $[m]$ for some $m\in\mathbb{Z}_{>0}$. For that, we need the following auxiliary maps and notation introduced in~\cite{qkB1}. 

Given a subset $S\subseteq [n]$, with $|S|=m\leq n$, we denote by $\phi_S$ the unique order-preserving bijection from $S$ to $[m]$. 
For $s\in [n]$, define the \demph{truncation map} $\tau_s:[n]\to[n-1]\cup\{0\}$ by $$\tau_s(j)=\begin{cases}
    j, & \text{if}~j<s \\
    j-1, & \text{if}~j\ge s.
\end{cases}$$ 
Given $\mathbf{v}=\mathbf{v}_{a_rb_r}\cdots\mathbf{v}_{a_1b_1}$ with $S=\supp(\mathbf{v})$ and $p\notin S$, we define 
$$
\tau_p(\mathbf{v}_{a_rb_r}\cdots\mathbf{v}_{a_1b_1})=\mathbf{v}_{\tau_p(a_r)\tau_p(b_r)}\cdots\mathbf{v}_{\tau_p(a_1)\tau_p(b_1)}.
$$ 
Note that since $p\notin S$, the values $\tau_p(a_i)$ and $\tau_p(b_i)$ are non-zero for $1\le i\le r$, and distinct values of $S$ are mapped to distinct values by $\tau_p$. 
If $\max S=n$ and $[n]\setminus S=\{p_1<p_2<\hdots<p_j\}$, then we define the \demph{flattening} of $\mathbf{v}$ to be 
$$\underline{\mathbf{v}}=\tau_{p_1}(\tau_{p_2}(\cdots\tau_{p_j}(\mathbf{v})\cdots)).
$$

Evidently, we have the following result, which states that the flattening of $\mathbf{v}$ is the composition of operators obtained by reducing the indices to be $1,2,\ldots,|\supp(\mathbf{v})|$ in an order-preserving way.

\begin{lemma}
    Let $\mathbf{v}=\mathbf{v}_{a_rb_r}\cdots \mathbf{v}_{a_1b_1}$ with $S=\supp(\mathbf{v})$ and $|S|=m$. Then, $\supp(\underline{\mathbf{v}})=[m]$ and 
    $$\underline{\mathbf{v}}=\mathbf{v}_{\phi_S(a_r)\phi_S(b_r)}\cdots \mathbf{v}_{\phi_S(a_1)\phi_S(b_1)}.$$
\end{lemma}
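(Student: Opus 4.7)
The plan is to reduce the lemma to a single auxiliary statement about indices: namely, that the composition of truncation maps $\tau_{p_1}\circ\tau_{p_2}\circ\cdots\circ\tau_{p_j}$, when restricted to $S$, coincides with the order-preserving bijection $\phi_S\colon S\to [m]$. Once this is established, both conclusions of the lemma follow immediately from the definition of $\underline{\mathbf{v}}$: each index $a_i,b_i$ of $\mathbf{v}$ is sent to $\phi_S(a_i),\phi_S(b_i)$, giving the claimed factorization, and $\supp(\underline{\mathbf{v}})=\phi_S(S)=[m]$.

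To prove the auxiliary claim, I would invoke uniqueness of the order-preserving bijection $S\to[m]$. First, I would observe that each $\tau_{p_i}$ is order-preserving on its nonzero values, so the composition is order-preserving on $S$. Then, by a short induction on $j$, I would show that for every $s\in S$, the composition sends $s$ to $s-|\{\ell : p_\ell < s\}|$. This value lies in $[m]$ (since $|\{\ell : p_\ell < s\}|$ is at most $s-1$ and the image of $S$ has size $m$), and it is strictly increasing in $s$ on $S$. Hence the composition is an order-preserving bijection $S\to[m]$, and by uniqueness it must equal $\phi_S$.

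The main point to verify carefully in the induction is the following: after applying $\tau_{p_j},\ldots,\tau_{p_{i+1}}$ to $s\in S$, the resulting value $s''$ satisfies $\tau_{p_i}(s'')=s''-1$ exactly when $p_i<s$ (using the \emph{original} value of $s$, not the reduced one). The two cases split cleanly: if $p_i>s$, then $s''\le s<p_i$, so $\tau_{p_i}(s'')=s''$; if $p_i<s$, then the cumulative decrement $s-s''$ counts only gaps $p_\ell\in(p_i,s)$ and is therefore at most $s-p_i-1$, forcing $s''\ge p_i+1$, so $\tau_{p_i}(s'')=s''-1$. This bookkeeping, using $p_1<\cdots<p_j$, is the only non-routine ingredient; the remainder is a direct count.
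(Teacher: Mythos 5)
The paper labels this result ``evident'' and gives no proof, so the relevant comparison is whether your argument correctly supplies the missing details. It does. Reducing the claim to the identity $\tau_{p_1}\circ\cdots\circ\tau_{p_j}\big|_S=\phi_S$ and invoking uniqueness of the order-preserving bijection $S\to[m]$ is exactly the natural route, and it is the one the surrounding material (Lemma~\ref{lem:tauop} and Remark~\ref{rk:tau_p map}) sets up. Your formula, that the composite sends $s\in S$ to $s-|\{\ell:p_\ell<s\}|$, is correct, your induction goes through, and your case split ($p_i>s$ gives $s''\le s<p_i$; $p_i<s$ gives $s''\ge p_i+1$ because the cumulative decrement is bounded by $|(p_i,s)\cap\mathbb{Z}|=s-p_i-1$) is the right bookkeeping. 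Checking the image is $[m]$ is immediate from $\min S\mapsto 1$ and $\max S=n\mapsto n-j=m$, together with injectivity and order-preservation.

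One small point you leave implicit, and which is worth spelling out since the definition of $\tau_p(\mathbf{v})$ only applies when $p\notin\supp(\mathbf{v})$: before applying $\tau_{p_i}$ one must confirm that $p_i$ is not in the support of the operator obtained after the earlier truncations $\tau_{p_j},\ldots,\tau_{p_{i+1}}$. This follows from your own estimate -- for $s\in S$ with $s<p_i$ the partially-truncated value is still $s<p_i$, and for $s>p_i$ it is $\ge p_i+1$ -- but it deserves an explicit sentence so the iterated definition is manifestly well-posed. With that addition the argument is complete.
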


\begin{Ex}
    For $\mathbf{v}=\mathbf{v}_{35}\mathbf{v}_{13}\mathbf{v}_{37}$, 
    $$\underline{\mathbf{v}}=\tau_2(\tau_4(\tau_6(\mathbf{v})))=\tau_2(\tau_4(\tau_6(\mathbf{v}_{35}\mathbf{v}_{13}\mathbf{v}_{37}))) =\tau_2(\tau_4(\mathbf{v}_{34}\mathbf{v}_{13}\mathbf{v}_{36})) =\tau_2(\mathbf{v}_{34}\mathbf{v}_{13}\mathbf{v}_{35})=\mathbf{v}_{23}\mathbf{v}_{12}\mathbf{v}_{24}.$$ 
\end{Ex}

The following result states that the flattening operation preserves zero and non-zero equivalence relations. 
\begin{theorem}~\label{thm:flat}
Let $\mathbf{v}=\mathbf{v}_{a_{r}b_{r}}\hdots\mathbf{v}_{a_1b_1}$ and $\mathbf{v}'= \mathbf{v}_{c_{r}d_{r}}\hdots\mathbf{v}_{c_1d_1}$ be two non-zero compositions such that $\supp(\mathbf{v}) = \supp(\mathbf{v}')=S$. Then,
    \begin{enumerate}
        \item[\textup{(a)}] $\mathbf{v}\equiv 0$ if and only if $\underline{\mathbf{v}'}\equiv 0$.
        \item[\textup{(b)}] $\mathbf{v}\equiv \mathbf{v}'$ if and only if $\underline{\mathbf{v}}\equiv\underline{\mathbf{v}'}$.
    \end{enumerate}
\end{theorem}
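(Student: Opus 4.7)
My plan is to exploit the fact that every operator in $\mathbf{v}$ or $\mathbf{v}'$ involves only indices in $S$, so the cover conditions of Proposition~\ref{prop:covercond} at each step depend only on the relative order of positions and values restricted to $S$. Since flattening applies the order-preserving bijection $\phi_S$, which preserves this relative order, it should preserve both zero and non-zero equivalence. The central device will be a \emph{canonical lift}: given $u' \in S_m$ with $m = |S|$, form $u^{\star} \in S_n$ by setting $u^{\star}(i) = \phi_S^{-1}(u'(i))$ for $i \in [m]$ and placing the values of $[n] \setminus S$ at positions $m{+}1, \ldots, n$ in some fixed order.

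First I will establish a correspondence lemma by induction on the number of operators: for any $k' \in [m-1]$ and any composition $\mathbf{w}$ with $\supp(\mathbf{w}) \subseteq S$,
\[
\mathbf{w} \bullet_{k'} u^{\star} = 0 \iff \underline{\mathbf{w}} \bullet_{k'} u' = 0,
\]
and when non-zero, the outputs correspond under the canonical lift and share the same $\mathbf{q}$-monomial (since every swap occurs at positions in $[m]$). The key observation is that each operator acts on two $S$-values at positions in $[m]$, and the intermediate positions relevant to the cover condition also lie in $[m]$ with only $S$-values, so $\phi_S$ translates the condition verbatim. Next I will establish a restriction lemma for arbitrary $(u, k)$: letting $P = u^{-1}(S) = \{p_1 < \cdots < p_m\}$, $u'(i) = \phi_S(u(p_i))$, and $k' = |P \cap [k]|$, the same inductive argument shows that $\mathbf{v} \bullet_k u \neq 0 \Rightarrow \underline{\mathbf{v}} \bullet_{k'} u' \neq 0$, and the non-zero output $\mathbf{q}^\beta w$ is uniquely reconstructible from $\underline{\mathbf{v}} \bullet_{k'} u' = \mathbf{q}^{\beta'} w'$ and the data of $u$ via $\phi_S(w(p_i)) = w'(i)$ and the rule $\beta_\ell = \beta'_s$ whenever $p_s \le \ell < p_{s+1}$. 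This implication is only one-way because the original cover condition additionally inspects positions outside $P$, making it strictly stronger than the flattened one.

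Part (a) is then immediate: $\mathbf{v} \equiv 0$ kills every canonical lift and hence, via the correspondence lemma, every $u' \in S_m$, whence $\underline{\mathbf{v}} \equiv 0$; the converse uses the contrapositive of the restriction lemma, and the analogous argument handles $\mathbf{v}'$. For the forward direction of (b), evaluating $\mathbf{v}$ and $\mathbf{v}'$ at every canonical lift and applying the correspondence lemma immediately gives $\underline{\mathbf{v}} \equiv \underline{\mathbf{v}'}$. The reverse direction of (b) is where I expect the main difficulty. Given $\underline{\mathbf{v}} \equiv \underline{\mathbf{v}'}$ and arbitrary $(u, k)$ restricting to $(u', k')$, the degenerate cases $k' \in \{0, m\}$ force both sides to vanish because the first operator of either composition has no admissible position (its required index sits on the wrong side of $k$); in all remaining cases, the reconstruction formula shows that whenever either of $\mathbf{v} \bullet_k u$ or $\mathbf{v}' \bullet_k u$ is non-zero, it must equal the same unique lift of the shared flattened output. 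The hard part will be ruling out the mixed situation in which one composition succeeds on $(u, k)$ while the other is obstructed by a non-$S$-value in $u$; I expect to handle this by showing that the locus of non-$S$-value blocking is intrinsic to $(u, k)$ and the common flattened chain data, so any such obstruction must affect $\mathbf{v}$ and $\mathbf{v}'$ simultaneously.
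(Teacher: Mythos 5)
Your overall architecture is sound and closely parallels the paper's: your canonical lift and restriction lemma correspond to the paper's $\epsilon_{r,s}$ construction and Lemma~\ref{lem:nztrunc}, and the correspondence lemma for canonical lifts is correct for the reasons you give. Part~(a) does follow once the restriction lemma is established, exactly as you describe. However, your argument has a genuine gap precisely where the paper puts its real effort, and the gap appears in \emph{both} directions of part~(b), not only the one you flag.

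The unproven claim is: if $\underline{\mathbf{v}}\bullet_{k'}u' = \underline{\mathbf{v}'}\bullet_{k'}u' \neq 0$ and $\mathbf{v}\bullet_ku\neq 0$, then $\mathbf{v}'\bullet_ku\neq 0$. You write that you ``expect to handle this by showing that the locus of non-$S$-value blocking is intrinsic to $(u,k)$ and the common flattened chain data,'' but this is exactly the statement that needs proof, and it is not obvious: $\mathbf{v}$ and $\mathbf{v}'$ trace \emph{different chains} through $S_n[\mathbf{q}]$, passing through different intermediate permutations, and a non-$S$-value sitting between two swapped positions can perfectly well obstruct one chain's quantum cover while leaving the other chain unobstructed. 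The paper closes this gap with the quantity $L_{<p,t}(u)$ (Lemma~\ref{lem:L1}) together with the argument in Proposition~\ref{prop:taunzequiv1} Case~(b): one first shows that $\alpha_{t-1}$ --- the exponent of $q_{t-1}$ at the position $t$ of the omitted value $p$ --- equals $L_{<p,t}(w)-L_{<p,t}(u)$, a quantity depending only on the endpoints and not on the chain; one then shows that if $\mathbf{v}'$ were blocked by $p$, the count of quantum covers crossing position $t$ would strictly exceed $L_{<p,t}(w)-L_{<p,t}(u)$, a contradiction. You would need an analogue of this counting argument (extended to several missing values simultaneously, since you flatten in one step rather than peeling off one $\tau_p$ at a time), and nothing in your sketch substitutes for it.

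You also understate the work needed in the forward direction of~(b): your correspondence lemma gives agreement of $\underline{\mathbf{v}}$ and $\underline{\mathbf{v}'}$ only on $u'\in S_m$ with $k'\in[m-1]$, whereas $\underline{\mathbf{v}}\equiv\underline{\mathbf{v}'}$ requires agreement on all of $S_N$ for all $N$. Passing from the former to the latter is precisely Proposition~\ref{prop:nzeroscheck}, whose proof the paper notes is ``almost identical to that of Proposition~\ref{prop:taunzequiv1}'' --- i.e., it again requires the $L_{<p,t}$ contradiction. So the missing ingredient is not confined to a single step but is genuinely the load-bearing lemma of the whole theorem, and your proposal does not supply it.

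Finally, a stylistic but consequential difference: the paper peels off one missing value at a time via $\tau_p$ and inducts on $N=|\max S|-|S|$; this lets the $L_{<p,t}$ argument focus on a single index $t$. Your one-shot flattening is more direct in appearance but, if you tried to complete the hard step, you would likely find it easier to reduce back to the single-truncation case --- at which point you would essentially be reproducing the paper's proof.
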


To prove Theorem~\ref{thm:flat}, we need to introduce notation that resembles the flattening operation at the level of the permutations and the sequences involved in the monomials of $\mathbf{q}$. For $u\in S_n$ and $r\in [n]$, let $\memph{u/r}\in S_{n-1}$ be the permutation obtained by omitting the $r^{\text{th}}$ value of $u$ and applying $\tau_{u(r)}$ to the remaining values. 
Similarly, for $\alpha=(\alpha_1,\alpha_2,\hdots,\alpha_{n-1})\in \mathbb{Z}_{\ge 0}^{n-1}$, define the sequence 
$$\memph{\alpha/s}=(\alpha_1,\alpha_2,\hdots\alpha_{s-1},\alpha_{s+1}\hdots,\alpha_{n-1}),$$ 
obtained by omitting the $s^{\text{th}}$ component of $\alpha$. Going in the other direction, for $r,s\in [n+1]$, $v\in S_n$, and $\alpha=(\alpha_1,\hdots,\alpha_{n-1})\in \mathbb{Z}_{\ge 0}^{n-1}$ define the permutation $\memph{\epsilon_{r,s}(v)}:=u\in S_{n+1}$ as the unique permutation such that $u(r)=s$ and $u/r=v$. Similarly, define the sequence $$\memph{\epsilon_{r,s}(\alpha)}:=(\alpha_1,\hdots,\alpha_{r-1},s,\alpha_r,\hdots,\alpha_{n-1})$$ obtained by introducing $s$ in the $r^{\text{th}}$ position.

We illustrate these notions in the following example.
\begin{Ex}
    For $u=(3,2,4,1)$, $u/2=(\tau_2(3),\tau_2(4),\tau_2(1))=(2,3,1)$ and $\epsilon_{23}(u)=(4,3,2,5,1)$. For $\alpha=(2,2,3,1)$, $\alpha/2=(2,3,1)$ and $\epsilon_{2,4}(\alpha)=(2,4,2,3,1)$.
\end{Ex}

Next, we establish a series of lemmas describing properties of $\tau_p$ and $\epsilon_{r,s}$. 
\begin{lemma}\label{lem:tauop} 
Let $a,b,p\in [n]$.
    \begin{enumerate}
        \item[\textup{(a)}] If $a<b$ and $p\neq a,b$, then $\tau_p(a)<\tau_p(b)$.
        \item[\textup{(b)}] If $\tau_p(a)<\tau_p(b)$, then $a<b$.
        \item[\textup{(c)}] For $a<b$ and $p\neq ab$, $\tau_p$ maps elements of $(a,b)\backslash\{p\}$ to elements of $(\tau_p(a),\tau_p(b))$ and elements of $[n]\backslash ([a,b]\cup \{p\})$ to elements of $[n-1]\backslash [\tau_p(a),\tau_p(b)]$.
    \end{enumerate}
\end{lemma}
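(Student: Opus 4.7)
The plan is to prove all three parts by direct case analysis on the relative position of $p$ with respect to $a$ and $b$, using the piecewise definition $\tau_p(j)=j$ for $j<p$ and $\tau_p(j)=j-1$ for $j\geq p$. The whole lemma is essentially bookkeeping: what it really says is that $\tau_p$ is an order-preserving bijection from $[n]\setminus\{p\}$ onto $[n-1]$, and each part packages one facet of this fact.

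For (a), I would split into the three cases $p<a$, $a<p<b$, and $p>b$. In the first and third cases, $\tau_p$ shifts $a$ and $b$ by the same amount (either $-1$ or $0$), so $a<b$ immediately gives $\tau_p(a)<\tau_p(b)$. In the middle case $\tau_p(a)=a$ and $\tau_p(b)=b-1$; using that $a,p,b$ are distinct integers with $a<p<b$, one gets $a\leq p-1\leq b-2<b-1$, as required.

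For (b), I would argue by contrapositive: if $a\geq b$, show $\tau_p(a)\geq \tau_p(b)$. The case $a=b$ is trivial. When $a>b$ and $p\neq a,b$, one can invoke part (a) with the roles reversed. Since the hypothesis of (b) does not restrict $p$, I would quickly also handle the two remaining edge cases $p=a$ and $p=b$: each reduces to computing $\tau_p(a)$ and $\tau_p(b)$ from the definition and observing that $b\leq a-1$ suffices to force $\tau_p(b)\leq \tau_p(a)$.

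Part (c) is a direct consequence of (a). For $c\in (a,b)\setminus\{p\}$, we have $p\neq a,b,c$, so applying (a) to the pairs $(a,c)$ and $(c,b)$ sandwiches $\tau_p(c)$ strictly between $\tau_p(a)$ and $\tau_p(b)$. For $c\in[n]\setminus([a,b]\cup\{p\})$, either $c<a$ or $c>b$; in either case (a) applied to the relevant ordered pair places $\tau_p(c)$ outside $[\tau_p(a),\tau_p(b)]$. The main obstacle, if one can call it that, is simply keeping the edge cases $p\in\{a,b\}$ in part (b) straight; everything else is a routine unfolding of the definition of $\tau_p$.
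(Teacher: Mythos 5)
Your proof is correct. Parts (a) and (c) use essentially the same case splits as the paper's proof, so there is nothing to compare there. For part (b) you take a slightly different but arguably cleaner route: rather than the paper's direct forward argument, which splits on whether $\tau_p(a)$ and $\tau_p(b)$ land below, at, or above $p-1$, you prove the contrapositive (if $a\geq b$ then $\tau_p(a)\geq\tau_p(b)$), handle $a=b$ trivially, reuse part (a) when $p\notin\{a,b\}$, and then dispatch the two edge cases $p=a$ and $p=b$ by direct computation. This reuses the work already done in (a) and makes the only genuinely new content of (b) visibly the edge cases where $p$ coincides with one of the arguments — a point the paper also handles, though less transparently, inside its case analysis. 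Both arguments are valid and roughly the same length; yours has the modest advantage of isolating the edge cases rather than interleaving them with the generic case.
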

\begin{proof} 
\begin{enumerate}
        \item[\textup{(a)}] We look at the three possible cases. If $p<a<b$, then $\tau_p(a)=a-1<b-1=\tau_p(b)$; if $a<p<b$, then $\tau_p(a)=a\le p-1<\tau_p(b)=b-1$; and if $a<b<p$, then $\tau_p(a)=a<b=\tau_p(b)$. In all three cases, the order is preserved. 
        \item[\textup{(b)}] We again look at the possible cases. If $\tau_p(a)<\tau_p(b)<p-1$, then $a=\tau_p(a)<\tau_p(b)=b$; while if $p-1<\tau_p(a)<\tau_p(b)$, then $a-1=\tau_p(a)<\tau_p(b)=b-1$. If $\tau_p(a)<\tau_p(b)=p-1$, then $a=\tau_p(a)<\tau_p(b)=b$ whenever $b\neq p$, and $a=\tau_p(a)<\tau_p(b)=b-1=p-1<b=p$ otherwise. Finally, if $\tau_p(a)\le p-1<\tau_p(b)$, then $a=\tau_p(a)<\tau_p(b)=b-1<b$ whenever $a\neq p$, and $a-1=p-1=\tau_p(a)<\tau_p(b)=b-1$ otherwise. Thus, in all cases, $a<b$, as desired.
        \item[\textup{(c)}] This follows immediately from part (a).
\end{enumerate}
\end{proof}
\begin{remark}\label{rk:tau_p map}
Lemma~\ref{lem:tauop} tells us that $\tau_p$ is ``almost'' order-preserving in the sense that it only breaks at $p-1$ and $p$, where these two values can become the same $\tau_p(p-1)=p-1=\tau_p(p)$. 
\end{remark}

Next, for a non-zero composition of operators $\mathbf{v}$ and $p\notin\supp(\mathbf{v})$, we establish a relationship between the actions of $\mathbf{v}$ and $\tau_p(\mathbf{v})$.

\begin{lemma}\label{lem:nztrunc}
Let $\mathbf{v}=\mathbf{v}_{a_{r}b_{r}}\hdots\mathbf{v}_{a_1b_1}$ be a non-zero composition of operators with $\supp(\mathbf{v})\subsetneq [n]$. Consider $u\in S_n$, $k\in [n-1]$, and $\alpha \in\mathbb{Z}_{\ge 0}^{n-1}$ such that $\mathbf{v}\bullet_ku=\mathbf{q}^\alpha w\in S_n[\mathbf{q}]$. Then, for all $p\notin \supp(\mathbf{v})$, we have $\tau_p(\mathbf{v})\bullet_{\tau_t(k)}u/t=\mathbf{q}^{\alpha/t}w/t$ where $t=u^{-1}(p)$. 
\end{lemma}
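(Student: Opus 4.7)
\medskip

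\noindent\emph{Proof plan.} The plan is to proceed by induction on the number of operators $r$ in $\mathbf{v}$. Throughout, write $\mathbf{v} = \mathbf{v}_{a_r b_r}\cdots \mathbf{v}_{a_1 b_1}$, set $u^{(0)}=u$, $\mathbf{q}^{\alpha^{(0)}}=1$, and, for $1\le i\le r$, recursively $\mathbf{v}_{a_i b_i}\bullet_k u^{(i-1)} = \mathbf{q}^{\gamma^{(i)}}u^{(i)}$ (non-zero since $\mathbf{v}\bullet_k u\neq 0$), with $\alpha^{(i)} = \alpha^{(i-1)} + \gamma^{(i)}$. Since $p\notin\supp(\mathbf{v})$, the value $p$ is fixed by every step, so $(u^{(i)})^{-1}(p) = t$ for all $i$; in particular $w^{-1}(p)=t$, making $w/t$ well-defined.

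\medskip
\noindent\textbf{Base case} $r=1$. Write $\mathbf{v}=\mathbf{v}_{ab}$, $\{a,b\}\cap\{p\}=\emptyset$, and set $i_0 = u^{-1}(a)$, $j_0 = u^{-1}(b)$. Let $i_0',j_0'$ be the positions of $\tau_p(a),\tau_p(b)$ in $u/t$, and $k' = \tau_t(k)$. I would split into the classical case ($a<b$) and the quantum case ($a>b$). By Lemma~\ref{lem:tauop}(a), $\tau_p(\mathbf{v}_{ab})=\mathbf{v}_{\tau_p(a)\tau_p(b)}$ is of the same type (classical or quantum). Then I would verify the cover condition of Proposition~\ref{prop:covercond} for $\mathbf{v}_{\tau_p(a)\tau_p(b)}$ acting on $u/t$ at level $k'$ in two steps:
\begin{itemize}
    \item[(i)] A short case analysis on the relative order of $t$ among $i_0,j_0,k$ (noting $t\notin\{i_0,j_0\}$ since $p\neq a,b$) shows $i_0'\le k' < j_0'$.
    \item[(ii)] By Lemma~\ref{lem:tauop}(c), the map $\tau_p$ biject values in the open interval $(a,b)\setminus\{p\}$ with values in $(\tau_p(a),\tau_p(b))$. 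Combined with the fact that omitting the $t$-th position of $u$ corresponds to deleting one slot in the position window $(i_0,j_0)$ (if $i_0<t<j_0$), this shows that the value-ordering hypothesis in Proposition~\ref{prop:covercond}(a) or (b) for $u$ at positions $i_0,j_0$ transfers directly to that for $u/t$ at positions $i_0',j_0'$. The one subtlety is the quantum subcase with $i_0<t<j_0$: there the cover condition for $u$ forces $b<p<a$, so removing $p$ does not disrupt the required strict inequalities.
\end{itemize}
This establishes non-vanishing and that the underlying permutation equals $s_{\tau_p(a)\tau_p(b)}(u/t) = (s_{ab}u)/t$.

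\medskip
\noindent For the $\mathbf{q}$-monomial in the quantum case, $\mathbf{q}^{\alpha} = \mathbf{q}_{i_0 j_0}$ has exponent $1$ exactly in coordinates $i_0,i_0+1,\ldots,j_0-1$. I would check directly, splitting on $t<i_0$, $i_0<t<j_0$, and $t>j_0$, that deleting the $t$-th coordinate of this exponent vector yields the exponent vector of $\mathbf{q}_{i_0' j_0'}$; this is essentially bookkeeping on how $\tau_p$-induced position shifts align with the single deleted variable. This completes the base case.

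\medskip
\noindent\textbf{Inductive step.} Assume the result for $r-1$ and write $\mathbf{v} = \mathbf{v}_{a_r b_r}\cdot \mathbf{v}'$ with $\mathbf{v}' = \mathbf{v}_{a_{r-1}b_{r-1}}\cdots\mathbf{v}_{a_1 b_1}$. Since $\mathbf{v}\bullet_k u\neq 0$, also $\mathbf{v}'\bullet_k u = \mathbf{q}^{\alpha^{(r-1)}} u^{(r-1)}\neq 0$, and $p\notin\supp(\mathbf{v}')$ with $(u^{(r-1)})^{-1}(p)=t$. By the inductive hypothesis,
\[
\tau_p(\mathbf{v}')\bullet_{\tau_t(k)}(u/t) \;=\; \mathbf{q}^{\alpha^{(r-1)}/t}\,(u^{(r-1)}/t).
\]
Applying the base case to $\mathbf{v}_{a_r b_r}$ acting on $u^{(r-1)}$ (with $p$ still at position $t$) gives
\[
\mathbf{v}_{\tau_p(a_r)\tau_p(b_r)}\bullet_{\tau_t(k)}(u^{(r-1)}/t) \;=\; \mathbf{q}^{\gamma^{(r)}/t}\,(u^{(r)}/t).
\]
Composing and using $\mathbf{q}$-multiplicativity of the action yields $\tau_p(\mathbf{v})\bullet_{\tau_t(k)}(u/t) = \mathbf{q}^{\alpha/t}(w/t)$.

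\medskip
\noindent The main obstacle is the base case, and specifically the coordinated case analysis in the quantum subcase: simultaneously verifying that the positional inequality $i_0'\le k'<j_0'$ holds, that the value-ordering condition in Proposition~\ref{prop:covercond}(b) transfers under $\tau_p$ (handling the delicate situation where $p$ itself lies in the open value-window $(b,a)$ and the open position-window $(i_0,j_0)$), and that the monomial $\mathbf{q}_{i_0 j_0}$ reduces to $\mathbf{q}_{i_0' j_0'}$ under deletion of the $t$-th coordinate. All three ingredients follow from Lemma~\ref{lem:tauop} together with careful case checks, but bookkeeping takes some care.
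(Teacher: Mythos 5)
Your proof takes essentially the same route as the paper: establish the single-operator base case (split classical/quantum, verify $i_0'\le\tau_t(k)<j_0'$ by a case analysis on $t$ relative to $i_0,j_0,k$, transfer the value-window hypothesis via Lemma~\ref{lem:tauop}(c), and do the $\mathbf{q}$-bookkeeping), then induct on $r$ by peeling off the outer operator. The paper structures the base case as two explicit cases (a) and (b) for classical and quantum operators and then performs the same inductive composition in case (c); your outline matches this decomposition and correctly isolates the only delicate point, namely that in the quantum subcase with $i_0<t<j_0$ Proposition~\ref{prop:covercond}(b) forces $b<p<a$, which is exactly what makes $\tau_p$ well-behaved there.
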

\begin{proof}
    We start by noticing that $w^{-1}(p)=t=u^{-1}(p)$, since $p\notin \supp(\mathbf{v})$, and that $\tau_p(k)>0$, since $\mathbf{v}\bullet_ku\neq 0$.

    We analyze three cases: (a) when $\mathbf{v}$ is a single classical operator, (b) when $\mathbf{v}$ is a single quantum operator, and (c) the general case. 
\bigskip

 \noindent
    \underline{Case (a):} Suppose $\mathbf{v}=\mathbf{v}_{ab}$ with $a<b$. Then, $i=u^{-1}(a)$ and $j= u^{-1}(b)$ satisfy $i\le k<j$. By Proposition~\ref{prop:covercond}, 
    $\{u(x)~|~i<x<j\}\subseteq [n]\backslash [a,b]$. Letting $i'=(u/t)^{-1}(\tau_p(a))$ and $j'=(u/t)^{-1}(\tau_p(b))$, we claim that $i'\le \tau_t(k)<j'$. To see this, note that
    \begin{itemize}
        \item if $t<i$, then $i'=i-1\le k-1=\tau_t(k)<j-1=j'$;
        \item if $i<t\le k$, then $i'=i\le k-1=\tau_t(k)<j-1=j'$;
        \item if $k<t<j$, then $i'=i\le k=\tau_t(k)<j-1=j'$; and
        \item if $j<t$, then $i'=i\le k=\tau_t(k)<j=j'$.
    \end{itemize}
    Moreover, by Lemma~\ref{lem:tauop}~(c),
    $\{(u/t)(x)~|~i'<x<j'\}=\{\tau_p(u(x))~|~i<x<j,~x\neq t\}\subseteq [n-1]\backslash [\tau_p(a),\tau_p(b)]$. 
    Finally, by Proposition~\ref{prop:covercond}, $\tau_p(\mathbf{v}_{ab})\bullet_{\tau_t(k)}u/t=(u/t)s_{i'j'}$, and it is straightforward to verify that $(u/t)s_{i'j'}=w/t$.
    \bigskip

    \noindent
    \underline{Case (b):} Suppose $\mathbf{v}=\mathbf{v}_{ab}$ with $a>b$. Then, $i=u^{-1}(a)$ and $j= u^{-1}(b)$ satisfy $i\le k<j$, and we denote $\mathbf{q_{ij}}=\mathbf{q}^{\alpha}$.
    By Proposition~\ref{prop:covercond}, $\{u(x)~|~i<x<j\}\subseteq (ab)$. 
    Letting $i'=(u/t)^{-1}(\tau_p(a))$ and $j'=(u/t)^{-1}(\tau_p(b))$, we have that $i'\le \tau_t(k)<j'$ by the same argument as in Case (a). Moreover, in this case, by Lemma~\ref{lem:tauop}~(c), $\{(u/t)(x)~|~i'<x<j'\}=\{\tau_p(u(x))~|~i<x<j,~x\neq t\} \subseteq (\tau_p(a),\tau_p(b))$.
    Finally, by Proposition~\ref{prop:covercond},  $\tau_p(\mathbf{v}_{ab})\bullet_{\tau_t(k)}u/t=\mathbf{q_{i'j'}}(u/t)s_{i',j'}$, and it is straightforward to verify that $(u/t)s_{i',j'}=w/t$ and $\mathbf{q_{i'j'}}=\mathbf{q}^{\alpha/t}$.
    \bigskip

    \noindent
    \underline{Case (c):} For the general case, we proceed by induction on $r$. Cases (a) and (b) cover the base case. Assume that
$$(\mathbf{v}_{a_{r-1}b_{r-1}}\hdots\mathbf{v}_{a_1b_1})\bullet_ku=\mathbf{q}^{\alpha^{r-1}}w_{r-1} \quad \text{ and } \quad \mathbf{v}_{a_{r}b_{r}}\bullet_kw_{r-1}=\mathbf{q}^{\beta^r}w_r.$$ Then, applying the induction hypothesis twice, we find that
\begin{align*}
\tau_p(\mathbf{v}_{a_{r}b_{r}}\hdots\mathbf{v}_{a_1b_1})\bullet_{\tau_t(k)}u&=\tau_p(\mathbf{v}_{a_{r}b_{r}})\bullet_{\tau_t(k)} \left(\tau_p(\mathbf{v}_{a_{r-1}b_{r-1}}\hdots\mathbf{v}_{a_1b_1})\bullet_{\tau_t(k)}u \right) \\
    &=\tau_p(\mathbf{v}_{a_{r}b_{r}})\bullet_{\tau_t(k)}{q}^{\alpha^{\textcolor{purple}{r-1}/t}}w_{r-1}/t\\
    &=\mathbf{q}^{\beta^r/t}\mathbf{q}^{\alpha^{r-1}/t}w_{r}/t\\
    &=\mathbf{q}^{\beta^r/t}\mathbf{q}^{\alpha^{r-1}/t}w/t.
\end{align*}
The result then follows by noticing that $\mathbf{q}^{\beta^r}\mathbf{q}^{\alpha^{r-1}}=\mathbf{q}^{\alpha}$, and so $\mathbf{q}^{\beta^r/t}\mathbf{q}^{\alpha^{r-1}/t}=\mathbf{q}^{\alpha/t}$. \qedhere
\end{proof}

\begin{Ex}
    Consider $\mathbf{v}=\mathbf{v}_{61}\mathbf{v}_{78}\mathbf{v}_{53}$ with $\supp(\mathbf{v})\subsetneq [8]$, $u=(7,6,2,5,3,4,1,8)\in S_8$, and $k=4\in[7]$. Then, 
$\mathbf{v}_{61}\mathbf{v}_{78}\mathbf{v}_{53}\bullet_k u=\mathbf{q}^{\alpha}w$ with $w=(8,1,2,3,5,4,6,7)$ and $\alpha=(0,1,1,2,1,1)$. 

Taking $p=2\notin\supp(\mathbf{v})$ and $t=u^{-1}(p)=3$, we have that $w/t=(7,1,2,4,3,5,6)$ and $\alpha/t=(0,1,2,1,1)$. The reader can check that, as stated in the result,
$$\tau_2(\mathbf{v}_{61}\mathbf{v}_{78}\mathbf{v}_{53})\bullet_{\tau_3(4)} u/3=\mathbf{q}^{\alpha/t}w/t.$$
\end{Ex}

\subsubsection*{Zero equivalences}
In this subsection, we establish part (a) of Theorem~\ref{thm:flat}. To do so, we require the following proposition.

\begin{prop}\label{prop:tauzequiv}
    Let $\mathbf{v}=\mathbf{v}_{a_{r}b_{r}}\hdots\mathbf{v}_{a_1b_1}$ and let $p\notin\supp(\mathbf{v})$. Then, $\mathbf{v}\equiv \mathbf{0}$ if and only if $\tau_p(\mathbf{v})\equiv \mathbf{0}$.
\end{prop}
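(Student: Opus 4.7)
The plan is to prove both directions by contrapositive, with the forward direction ($\mathbf{v}\equiv\mathbf{0} \Rightarrow \tau_p(\mathbf{v})\equiv\mathbf{0}$) following almost immediately from Lemma~\ref{lem:nztrunc}, while the reverse direction requires a lift construction that inverts Lemma~\ref{lem:nztrunc} on witnesses.

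For the contrapositive of the forward direction, suppose $\mathbf{v}\not\equiv\mathbf{0}$ and choose a witness $\mathbf{v}\bullet_k u \neq 0$. Padding $u$ with fixed points if necessary to ensure $p \in [n]$, set $t = u^{-1}(p)$; Lemma~\ref{lem:nztrunc} then yields $\tau_p(\mathbf{v})\bullet_{\tau_t(k)} u/t \neq 0$, so $\tau_p(\mathbf{v})\not\equiv\mathbf{0}$.

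For the contrapositive of the reverse direction, suppose $\tau_p(\mathbf{v})\bullet_{k'}u' \neq 0$ with $u' \in S_{n-1}$ (padded with fixed points so that $p \leq n$). The plan is to lift this to a witness for $\mathbf{v}\not\equiv\mathbf{0}$ by setting $u = \epsilon_{n,p}(u') \in S_n$, which places $p$ at position $n$ with $u/n = u'$, and then to show that $\mathbf{v}\bullet_{k'} u \neq 0$. Positioning $p$ at the rightmost slot isolates $p$ from all operator activity: $p \notin \supp(\mathbf{v})$ guarantees no operator moves $p$, and placing $p$ at position $n$ (to the right of every bar $k'\in[n-1]$) guarantees no operator swaps anything at position $n$. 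The verification then proceeds by induction on the number of operators $r$ in $\mathbf{v}$. For the base case $r = 1$ with $\mathbf{v}_{ab}$ classical ($a<b$), letting $i' = (u')^{-1}(\tau_p(a))$ and $j' = (u')^{-1}(\tau_p(b))$, the positions of $a, b$ in $u$ coincide with $i', j'$ and the bar inequality $i' \leq k' < j'$ transfers unchanged. The "no intermediate values" condition is verified by contradiction via Lemma~\ref{lem:tauop}(c): any $u(x) \in [a,b]$ for $i' < x < j'$ must lie in $(a,b)\setminus\{p\}$, forcing $u'(x) = \tau_p(u(x)) \in (\tau_p(a), \tau_p(b))$, contradicting the hypothesis on $u'$. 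The quantum case $a > b$ is entirely analogous, using that $\tau_p$ bijects $(b,a)\setminus\{p\}$ onto $(\tau_p(b), \tau_p(a))$. For the inductive step, applying the base case to $\mathbf{v}_{a_1b_1}$ yields an intermediate $\mathbf{v}_{a_1b_1}\bullet_{k'}u = \mathbf{q}^\beta w$ with $w(n) = p$, and one checks that $w/n$ coincides with the intermediate permutation produced by $\tau_p(\mathbf{v}_{a_1b_1})\bullet_{k'}u'$; the induction hypothesis then applies to $\mathbf{v}_{a_rb_r}\cdots\mathbf{v}_{a_2b_2}$ acting on $w$.

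The main obstacle is ensuring that the cover-relation conditions transfer faithfully between $u$ and $u'$ through every operator application. The delicate point is not the bar position or the swap positions, which transfer easily, but rather the intermediate-value conditions, where one must worry about whether $p$ could interfere by falling within the relevant value range $(a,b)$ or $(b,a)$. Placing $p$ at the extremal position $n$ circumvents this entirely, since then any intermediate-position value in $u$ is automatically distinct from $p$, so Lemma~\ref{lem:tauop}(c) transports the constraint across $\tau_p$ cleanly via the bijection it induces between $(a,b)\setminus\{p\}$ and $(\tau_p(a),\tau_p(b))$.
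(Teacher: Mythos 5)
Your proof is correct and takes essentially the same approach as the paper: one direction is a direct application of Lemma~\ref{lem:nztrunc} (a non-zero witness for $\mathbf{v}$ truncates to one for $\tau_p(\mathbf{v})$), while the other lifts a witness for $\tau_p(\mathbf{v})$ by inserting $p$ at the rightmost position via $\epsilon_{n,p}$ (the paper's $\epsilon_{n+1,p}$) and checking that the cover conditions transfer through $\tau_p$, a verification the paper compresses to ``one can verify directly'' and which you correctly spell out using Lemma~\ref{lem:tauop}(c) and the fact that $p$ sits to the right of every swap and every bar. One cosmetic remark: your narrative labels are scrambled---Lemma~\ref{lem:nztrunc} immediately yields $\tau_p(\mathbf{v})\equiv\mathbf{0}\Rightarrow\mathbf{v}\equiv\mathbf{0}$ (by contrapositive), not the implication your opening sentence attributes to it, and what you call ``the contrapositive of the forward direction'' is really the contrapositive of the converse---but the two implications actually proved in the body are $\mathbf{v}\not\equiv\mathbf{0}\Rightarrow\tau_p(\mathbf{v})\not\equiv\mathbf{0}$ and $\tau_p(\mathbf{v})\not\equiv\mathbf{0}\Rightarrow\mathbf{v}\not\equiv\mathbf{0}$, which together give the biconditional.
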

\begin{proof}
\Ovalbox{$\Longrightarrow$} By a contrapositive argument, suppose $\tau_p(\mathbf{v})\not\equiv \mathbf{0}$. Then there exist $n\in\mathbb{Z}_{>0}$, $u\in S_n$, and $k\in [n-1]$ such that $\tau_p(\mathbf{v})\bullet_ku\neq 0$. One can verify directly that $\mathbf{v}\bullet_k\epsilon_{n+1,p}(u)\neq 0$, and so $\mathbf{v}\not\equiv \mathbf{0}$.

\Ovalbox{$\Longleftarrow$} We consider the contrapositive again.  If $\mathbf{v}\not\equiv \mathbf{0}$, then there exist $n\in\mathbb{Z}_{>0}$, $u\in S_n$, and $k\in [n-1]$ such that $\mathbf{v}\bullet_ku\neq 0$. Assume that $\mathbf{v}\bullet_ku=\mathbf{q}^{\alpha}w$ for some $\alpha\in\mathbb{Z}_{\ge 0}^{n-1}$ and $w\in S_n$. Applying Lemma~\ref{lem:nztrunc}, it follows that $\tau_p(\mathbf{v})\bullet_{\tau_t(k)}u/t=\mathbf{q}^{\alpha/t}w/t\neq 0$, and so $\tau_p(\mathbf{v})\not\equiv \mathbf{0}$.
\end{proof}

We now prove Theorem~\ref{thm:flat}~(a).

\begin{proof}[Proof of Theorem~\ref{thm:flat}~$(a)$] Let $S=\supp(\mathbf{v})$ with $|S|=m$, $n=\max S$, and $N=n-m$. We prove the result by induction on $N$. For $N=0$, $\mathbf{v}=\underline{\mathbf{v}}$ and the result follows immediately. Proposition~\ref{prop:tauzequiv} covers the case $N=1$. Assume then that the result holds for $N-1\ge 1$. Let $[n]\backslash S=\{p_1<\hdots<p_N\}$ and $\mathbf{\widehat{v}}=\tau_{p_N}(\mathbf{v})$. Note that $\widehat{S}=\supp(\mathbf{\widehat{v}})$ satisfies $|\widehat{S}|=m$, $\max \widehat{S}=n-1$, and $[n-1]\backslash \widehat{S}=\{p_1<\hdots<p_{N-1}\}$. Thus,
\begin{itemize}
    \item $\mathbf{v}\equiv \mathbf{0}$ if and only if $\mathbf{\widehat{v}}\equiv \mathbf{0}$ by Proposition~\ref{prop:tauzequiv} and
    \item $\mathbf{\widehat{v}}\equiv \mathbf{0}$ if and only if $\underline{\mathbf{\widehat{v}}}=\underline{\mathbf{v}}\equiv \mathbf{0}$ by our induction hypothesis.
\end{itemize}
The result follows.
\end{proof}

\subsubsection*{Non-zero equivalences}
In this subsection, we finish the proof of Theorem~\ref{thm:flat} by establishing part (b). To do so, we require a result analogous to Proposition~\ref{prop:tauzequiv}.

\begin{theorem}\label{thm:taunzequiv}
    Let $\mathbf{v}=\mathbf{v}_{a_{r}b_{r}}\hdots\mathbf{v}_{a_1b_1}$ and $\mathbf{v}'=\mathbf{v}_{c_{r}d_{r}}\hdots\mathbf{v}_{c_1d_1}$ with $p\notin \supp(\mathbf{v})=\supp(\mathbf{v}')$. Then $\mathbf{v}\equiv \mathbf{v}'$ if and only if $\tau_p(\mathbf{v})\equiv \tau_p(\mathbf{v}')$.
\end{theorem}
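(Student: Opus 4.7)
The strategy is to use Lemma~\ref{lem:nztrunc}, which relates the action of $\mathbf{v}$ on $u$ to that of $\tau_p(\mathbf{v})$ on $u/t$ whenever the former is non-zero, as the central bridge between the two equivalences. A preliminary observation using Proposition~\ref{prop:tauzequiv} handles the zero case: $\mathbf{v} \equiv \mathbf{0}$ if and only if $\tau_p(\mathbf{v}) \equiv \mathbf{0}$, and likewise for $\mathbf{v}'$, so under either hypothesis I may reduce to the case where neither $\mathbf{v}$ nor $\mathbf{v}'$ is zero-equivalent.

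For the forward direction, given any $u' \in S_{n-1}$ and $k' \in [n-2]$, I would lift to $u = \epsilon_{n,p}(u') \in S_n$ by inserting $p$ at the last position, and set $k = k'$, so that $t := u^{-1}(p) = n$ and $\tau_n(k) = k'$. Because $t = n$ cannot lie strictly between positions $i < j$ of any operator (whose endpoint values in $\supp(\mathbf{v}) = \supp(\mathbf{v}')$ occupy positions in $[n-1]$ throughout the chain), the cover conditions for each step of $\mathbf{v}$ on the intermediates starting from $u$ reduce verbatim to those for the $\tau_p$-image on the corresponding intermediates from $u'$. Thus $\tau_p(\mathbf{v}) \bullet_{k'} u' \neq 0$ if and only if $\mathbf{v} \bullet_k u \neq 0$, and analogously for $\mathbf{v}'$. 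In the non-zero case, Lemma~\ref{lem:nztrunc} applied to both $\mathbf{v}$ and $\mathbf{v}'$, together with the hypothesis $\mathbf{v} \bullet_k u = \mathbf{v}' \bullet_k u$, yields the desired equality; the zero case is handled symmetrically by contrapositive.

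For the converse, take any $u \in S_n$ and $k \in [n-1]$, and set $t = u^{-1}(p)$, $u' = u/t$, and $k' = \tau_t(k)$. In the subcase where both $\mathbf{v} \bullet_k u = \mathbf{q}^\alpha w$ and $\mathbf{v}' \bullet_k u = \mathbf{q}^\beta w''$ are non-zero, Lemma~\ref{lem:nztrunc} combined with $\tau_p(\mathbf{v}) \bullet_{k'} u' = \tau_p(\mathbf{v}') \bullet_{k'} u'$ forces $\mathbf{q}^{\alpha/t} w/t = \mathbf{q}^{\beta/t} w''/t$. Since $p$ remains at position $t$ throughout both actions, $w(t) = w''(t) = p$, and combined with $w/t = w''/t$ this yields $w = w''$. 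To deduce $\alpha = \beta$ from $\alpha/t = \beta/t$, I would exploit the observation that, since $p \notin \supp(\mathbf{v})$, no operator has $t$ as an endpoint at any step; consequently each quantum step increments $\alpha_t$ exactly when it increments $\alpha_{t-1}$, forcing $\alpha_t = \alpha_{t-1}$ (with $\alpha_1 = 0$ when $t = 1$) and symmetrically for $\beta$, so $\alpha = \beta$.

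The main obstacle is the remaining subcase, where exactly one of $\mathbf{v} \bullet_k u, \mathbf{v}' \bullet_k u$ vanishes while the other does not, despite the $\tau_p$-images agreeing on $u'$. This asymmetry can arise a priori when a valid action of $\tau_p(\mathbf{v}')$ on $u'$ fails to lift to an action of $\mathbf{v}'$ on $u$—at some intermediate step, position $t$ lands in the interior of an operator's range while $p$ violates the cover condition there—even though $\mathbf{v}$'s lift is valid at $(u,k)$. To rule out this asymmetric lift-failure, I plan to leverage the shared support together with the equal counts of classical and quantum operators in $\mathbf{v}$ and $\mathbf{v}'$ (forced by Remark~\ref{remark:q_k-power} applied to the matching $\tau_p$-actions), and argue that lift-validity is determined by the endpoints of the action together with the position of $p$, rather than by the specific factorization, so that $\mathbf{v}$'s successful lift forces $\mathbf{v}'$'s. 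Making this structural comparison precise is the delicate heart of the proof.
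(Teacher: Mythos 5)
Your forward direction and the symmetric (both non-zero) subcase of the converse line up with the paper's Propositions~\ref{prop:taunzequiv2} and \ref{prop:taunzequiv1}, Case (a): the same lift $\epsilon_{n,p}$, the same observation that position $n$ never falls strictly interior to any operator's positions, and the same $\alpha_t=\alpha_{t-1}$ argument to recover $\alpha$ from $\alpha/t$. But the asymmetric subcase — where one of $\mathbf{v}\bullet_k u$, $\mathbf{v}'\bullet_k u$ vanishes while the other does not and the $\tau_p$-images agree — is left as a sketch, and the sketch does not yet amount to a proof. Your proposed principle, that ``lift-validity is determined by the endpoints of the action together with the position of $p$, rather than by the specific factorization,'' is really a restatement of the claim you need to establish, not an argument for it, and the supporting ideas you name (shared support, equal number of quantum operators) are not enough to pin it down. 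Two factorizations of the same maximal chain in the quotient can a priori route values through position $t$ differently, so some quantitative invariant is needed to compare them.

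The paper's resolution is precisely such an invariant: it introduces $L_{<p,t}(u) = \#\{i<t : u(i)<p\}$ and proves (Lemma~\ref{lem:L1}) that for a non-zero lift, $\alpha_{t-1}$ equals the net change $L_{<p,t}(w/t)-L_{<p,t}(u/t)$, which in turn is bounded above by the number of quantum steps that straddle position $t$ with $p$ trapped between their values, with equality iff no classical step is similarly obstructed. One then argues that if $\mathbf{v}'\bullet_k u=0$ while $\tau_p(\mathbf{v}')\bullet_{\tau_t(k)}u/t\neq 0$, the first failing step of $\mathbf{v}'$ is an extra crossing of position $t$ that drives the relevant count strictly above $\alpha_{t-1}$, a contradiction. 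Without $L_{<p,t}$ (or an equivalent invariant), there is no mechanism in your proposal that forces the contradiction, so the heart of the converse remains unproved.
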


The proof of this result is more elaborate than that of Proposition~\ref{prop:tauzequiv}. Consequently, we outline the approach below.
\begin{itemize}
    \item[] (Step 1) We introduce the notion of $L_{<p,t}(u)$ for a given permutation $u\in S_n$ and $p,t\in [n]$ which enumerates the entries in $u$ to the right of position $t$ whose values are strictly smaller than $p$. In addition, we establish useful properties of $L_{<p,t}(u)$ (Lemma~\ref{lem:L1}).
    \item[] (Step 2)  We prove one technical result identifying cases where knowing that $\tau_p(\mathbf{v})=0$ for a particular choice of $u$ and $k$ implies that $\mathbf{v}$ is also zero for those $u$ and $k$ (Proposition~\ref{prop:tau0im0}).
    \item[] (Step 3) We prove that $\tau_p(\mathbf{v})\equiv \tau_p(\mathbf{v}') \ \Longrightarrow \ \mathbf{v} \equiv \mathbf{v}'$ (Proposition~\ref{prop:taunzequiv1}).
    \item[] (Step 4) We prove that $\mathbf{v} \equiv \mathbf{v}' \ \Longrightarrow \ \tau_p(\mathbf{v})\equiv \tau_p(\mathbf{v}')$ (Proposition~\ref{prop:taunzequiv2}).
\end{itemize}

Jumping into (Step 1), given a permutation $u\in S_n$ and $p\in [n]$, let $t=u^{-1}(p)$ and define
$$\memph{L_{<p,t}(u)}:=\#\{i~|~i<t,~u(i)<p\}.$$ 
The following result summarizes some of the properties of $L_{<p,t}(u)$.
\begin{lemma}\label{lem:L1}
    For $n\in \mathbb{Z}_{>1}$, let $u\in S_n$, $k\in [n-1]$, and $\mathbf{v}=\mathbf{v}_{a_{r}b_{r}}\hdots\mathbf{v}_{a_1b_1}$ with $\supp(v)\subsetneq [n]$. Suppose that $p\notin \supp(\mathbf{v})$, $u^{-1}(p)=t$, and $\tau_p(\mathbf{v})\bullet_{\tau_t(k)}u/t\neq 0$. Letting $u_0=u$, for $i\in[r]$, define $$\tau_p(\mathbf{v}_{a_{i}b_{i}}\hdots\mathbf{v}_{a_1b_1})\bullet_{\tau_t(k)}u/t = \mathbf{q}^{\beta^i}w_i$$ 
    where $w_i\in S_{n-1}$, and $u_i=\epsilon_{t,p}(w_i)$.
    \begin{enumerate}
        \item[\textup{(a)}] For $i\in[r]$, $L_{<p,t}(u_i/t)=L_{<p,t}(u_{i-1}/t)+1$ if and only if $u_{i-1}^{-1}(a_i)<t<u_{i-1}^{-1}(b_i)$ and $a_i>p>b_i$.
        
        \item[\textup{(b)}] For $i\in[r]$, $L_{<p,t}(u_i/t)=L_{<p,t}(u_{i-1}/t)-1$ if and only if $u_{i-1}^{-1}(a_i)<t<u_{i-1}^{-1}(b_i)$ and $a_i<p<b_i$.
        
        \item[\textup{(c)}] $L_{<p,t}(w/t)-L_{<p,t}(u/t)\le\#\{i\in [r]~|~u_{i-1}^{-1}(a_i)<t<u_{i-1}^{-1}(b_i)~\text{and}~a_i>p>b_i\}$, with equality if and only if there exists no $j\in [r]$ such that $u^{-1}_{j-1}(a_j)<t<u^{-1}_{j-1}(b_j)$ and $a_j<p<b_j$.

        \item[\textup{(d)}] If  $\mathbf{v}\bullet_ku=\mathbf{q}^{\alpha}w\in S_n[\mathbf{q}]$ with $\alpha=(\alpha_1,\hdots,\alpha_{n-1})\in\mathbb{Z}_{\ge 0}^{n-1}$ and $w\in S_n$, then $$\alpha_{t-1}=L_{<p,t}(w)-L_{<p,t}(u)=L_{<p,t}(w/t)-L_{<p,t}(u/t).$$ 
    \end{enumerate}
\end{lemma}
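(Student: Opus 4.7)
My plan is to reduce all four parts to a single observation: since $p\notin\supp(\mathbf v)$, the value $p$ stays at position $t$ throughout the process, and each step $u_{i-1}\to u_i$ swaps the values $a_i$ and $b_i$ between their two positions while leaving every other entry fixed. I will first justify this swap description by unpacking the definitions: the operator $\tau_p(\mathbf v_{a_ib_i})$ acting on $w_{i-1}=u_{i-1}/t$ swaps the positions occupied by $\tau_p(a_i)$ and $\tau_p(b_i)$, and $\epsilon_{t,p}$ simply reinserts $p$ at position $t$. I will also note, via Proposition~\ref{prop:covercond}, that in both the classical case ($a_i<b_i$) and the quantum case ($a_i>b_i$) a valid cover forces $u_{i-1}^{-1}(a_i)<u_{i-1}^{-1}(b_i)$; writing $I_i=u_{i-1}^{-1}(a_i)$ and $J_i=u_{i-1}^{-1}(b_i)$, this gives $I_i<J_i$ at every step.

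Parts (a) and (b) then follow from a short case analysis on the relative locations of $I_i$, $J_i$, and $t$. Only positions $I_i$ and $J_i$ change value from $u_{i-1}$ to $u_i$, so $L_{<p,t}$ is unchanged unless exactly one of them is strictly less than $t$; the inequality $I_i<J_i$ rules out $J_i<t<I_i$, leaving $I_i<t<J_i$ as the only nontrivial case. In that case the single counted position $I_i$ swaps its value from $a_i$ to $b_i$, so $L_{<p,t}$ increases by one precisely when $a_i>p>b_i$ (yielding (a)) and decreases by one precisely when $a_i<p<b_i$ (yielding (b)). Part (c) is then an immediate telescoping: writing $N^{\pm}$ for the number of indices falling under (a) and (b) respectively, the cumulative change is $L_{<p,t}(w/t)-L_{<p,t}(u/t)=N^+-N^-\le N^+$, with equality iff $N^-=0$.

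For part (d), the only way $q_{t-1}$ can appear in $\mathbf q^{\alpha}$ is through a factor $\mathbf{q_{I_iJ_i}}$ contributed by a quantum step, and $q_{t-1}$ divides $\mathbf{q_{I_iJ_i}}$ iff $I_i<t\le J_i$; since $u_{i-1}(t)=p$ and $p\notin\supp(\mathbf v)$, we have $J_i\ne t$, so this tightens to $I_i<t<J_i$. The quantum cover condition (Proposition~\ref{prop:covercond}(b)) forces every intermediate value, and in particular $p$, to satisfy $b_i<p<a_i$, which is exactly the hypothesis of (a); conversely, the classical cover condition (Proposition~\ref{prop:covercond}(a)) places intermediate values outside $[a_i,b_i]$, ruling out $a_i<p<b_i$ when $I_i<t<J_i$, so the situation of (b) never arises and $N^-=0$. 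Combining these, $\alpha_{t-1}=N^+=L_{<p,t}(w/t)-L_{<p,t}(u/t)$, and the identification $L_{<p,t}(u)=L_{<p,t}(u/t)$ (and likewise for $w$) follows because $\tau_p$ preserves the property ``value strictly less than $p$'' for entries distinct from $p$. The only real obstacle I foresee is notational bookkeeping: keeping straight the position shift induced by $\tau_t$ and $\epsilon_{t,p}$ so that the side-of-$t$ relations translate cleanly between $u$ and $u/t$.
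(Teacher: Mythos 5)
Your proof is correct and follows essentially the same approach as the paper's. The paper phrases parts (a)–(b) in terms of the truncated data $\tau_p(a_i)$, $\tau_p(b_i)$, and $(u_{i-1}/t)^{-1}(\cdot)$ and then translates back; you instead work directly at the level of the $u_i\in S_n$ (using the identity $L_{<p,t}(u_i)=L_{<p,t}(u_i/t)$ and the observation that each step is a value-swap fixing position $t$), which is a mild cosmetic simplification of the same argument, and parts (c) and (d) are handled identically via telescoping and the cover conditions from Proposition~\ref{prop:covercond}.
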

\begin{proof}
    \begin{enumerate}
        \item[\textup{(a)}] For $i\in [r]$, note that $L_{<p,t}(u_i/t)=L_{<p,t}(u_{i-1}/t)+1$ if and only if 
        \begin{equation}\label{eq:Lpt1}
            \tau_p(b_i)<p\le \tau_p(a_i)~\text{and}~(u_{i-1}/t)^{-1}(\tau_p(a_i))<t\le (u_{i-1}/t)^{-1}(\tau_p(b_i)).
        \end{equation}
        Since $p\notin\supp(\mathbf{v})$, (\ref{eq:Lpt1}) holds if and only if $b_i<p< a_i~\text{and}~u_{i-1}^{-1}(a_i)<t< u_{i-1}^{-1}(b_i)$, and the result follows.
        
        \item[\textup{(b)}] This follows similarly to part (a) with $\tau_p(b_i)\ge p>\tau_p(a_i)$ in place of $\tau_p(b_i)<p\le \tau_p(a_i)$.
        
        \item[\textup{(c)}] Since $|L_{<p,t}(u_i/t)-L_{<p,t}(u_{i-1}/t)|\le 1$ for $i\in [r]$, this property follows from parts (a) and (b).

        \item[\textup{(d)}] The second equality follows from the fact that $L_{<p,t}(w)=L_{<p,t}(w/t)$ and $L_{<p,t}(u)=L_{<p,t}(u/t)$. For the first equality, letting $u'_0=u$, for $i\in[r]$, define
        $$q^{\alpha^i}u'_i=(\mathbf{v}_{a_{i}b_{i}}\hdots\mathbf{v}_{a_1b_1})\bullet_ku$$ 
        where $u'_i\in S_n$. Note that $u'_i=u_i$ for $i\in [r]$ and $$\alpha_{t-1}=\#\{j\in[r]~|~a_j>b_j~\text{and}~u_{j-1}^{-1}(a_j)<t<u_{j-1}^{-1}(b_j)\}.$$ Since $u_i(t)=p$ for all $0\le i\le r$ and $\mathbf{v}\bullet_ku\neq0$, by Proposition~\ref{prop:covercond},
        $$\#\{j\in[r]~|~a_j>b_j~\text{and}~u_{j-1}^{-1}(a_j)<t<u_{j-1}^{-1}(b_j)\}=\#\{j\in[r]~|~a_j>p>b_j~\text{and}~u_{j-1}^{-1}(a_j)<t<u_{j-1}^{-1}(b_j)\}$$ 
        and there exists no $j\in [r]$ for which $a_j<p<b_j$ and $u_{j-1}^{-1}(a_j)<t<u_{j-1}^{-1}(b_j)$. Thus, by part (c),
        $$L_{<p,t}(w/t)-L_{<p,t}(u/t)=\#\{i\in [r]~|~u_{i-1}^{-1}(a_i)<t<u_{i-1}^{-1}(b_i)~\text{and}~a_i>p>b_i\}=\alpha_{t-1},$$
        as desired. \qedhere
    \end{enumerate} 
\end{proof}

Moving to (Step 2), we establish Proposition~\ref{prop:tau0im0} below, which will prove to be useful later in the paper. We start with the case of a single operator, which we treat separately in Lemma~\ref{lem:0tau1s} below.

\begin{lemma}\label{lem:0tau1s}
    Consider $\mathbf{v}_{ab}$ with $a,b\in [n]$ and $a\neq b$, and let $u\in S_n$. Let $p\in [n]\setminus \{a,b\}$ and $t=u^{-1}(p)$. If $\tau_p(\mathbf{v}_{ab})\bullet_{\tau_t(k)}u/t=0$ for some $k\in [n-1]$, then $\mathbf{v}_{ab}\bullet_ku=0$.
\end{lemma}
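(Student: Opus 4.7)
The plan is to prove Lemma~\ref{lem:0tau1s} by contraposition: assume $\mathbf{v}_{ab}\bullet_ku\neq 0$ and derive $\tau_p(\mathbf{v}_{ab})\bullet_{\tau_t(k)}u/t\neq 0$. With this framing, the statement reduces essentially to the single-operator instance of Lemma~\ref{lem:nztrunc}, which has already been established (via Cases (a) and (b) of its proof, corresponding to $\mathbf{v}_{ab}$ being classical or quantum, respectively).

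To carry this out, I would first verify the hypotheses of Lemma~\ref{lem:nztrunc} for the composition $\mathbf{v}=\mathbf{v}_{ab}$. Since $\supp(\mathbf{v}_{ab})=\{a,b\}$ and $p\in [n]\setminus\{a,b\}$, we have $p\notin\supp(\mathbf{v}_{ab})$, and in particular $\supp(\mathbf{v}_{ab})\subsetneq [n]$. Writing $\mathbf{v}_{ab}\bullet_ku=\mathbf{q}^\alpha w\in S_n[\mathbf{q}]$ for some $\alpha\in\mathbb{Z}_{\geq 0}^{n-1}$ and $w\in S_n$, Lemma~\ref{lem:nztrunc} then yields
\[
\tau_p(\mathbf{v}_{ab})\bullet_{\tau_t(k)}u/t=\mathbf{q}^{\alpha/t}w/t,
\]
which is non-zero since $w/t\in S_{n-1}$. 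This contradicts the hypothesis $\tau_p(\mathbf{v}_{ab})\bullet_{\tau_t(k)}u/t=0$ and completes the contrapositive.

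The only subtle point is the degenerate situation where $\tau_t(k)=0$, since then the action $\bullet_{\tau_t(k)}$ is not literally defined on elements of $S_{n-1}[\mathbf{q}]$ and Lemma~\ref{lem:nztrunc} does not directly apply. Unwinding the definition of $\tau_t$, this happens exactly when $t=k=1$, i.e., $u(1)=p$. In that case, Proposition~\ref{prop:covercond} forces any non-zero $\mathbf{v}_{ab}\bullet_1 u$ to have $u^{-1}(a)\leq 1$, hence $u(1)=a$, contradicting $u(1)=p\neq a$. Thus this edge case cannot occur under the contrapositive hypothesis, and no additional argument is required. I do not expect any significant obstacle here: the lemma is essentially a repackaging of the single-operator version of Lemma~\ref{lem:nztrunc} tailored for direct invocation in the forthcoming proof of Proposition~\ref{prop:tau0im0}.
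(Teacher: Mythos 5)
Your proposal is correct and takes a genuinely different route from the paper. The paper proves the lemma by a direct case analysis on the relative positions of $i' = (u/t)^{-1}(\tau_p(a))$, $j' = (u/t)^{-1}(\tau_p(b))$, and $\tau_t(k)$, using Lemma~\ref{lem:tauop} to translate these back to positional constraints on $i$, $j$, $k$ in $S_n$ and then invoking Proposition~\ref{prop:covercond} to conclude $\mathbf{v}_{ab}\bullet_ku=0$ in each subcase. You instead observe that the claim is precisely the contrapositive of the single-operator ($r=1$) instance of Lemma~\ref{lem:nztrunc}, which is already established: if $\mathbf{v}_{ab}\bullet_ku=\mathbf{q}^\alpha w\neq 0$, that lemma produces a genuine element $\mathbf{q}^{\alpha/t}w/t\in S_{n-1}[\mathbf{q}]$, hence $\tau_p(\mathbf{v}_{ab})\bullet_{\tau_t(k)}u/t\neq 0$. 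Your approach is shorter and makes the logical dependency between the two lemmas explicit, whereas the paper's direct argument essentially re-derives (in contrapositive form) much of the same positional bookkeeping that appears in Cases~(a) and~(b) of the proof of Lemma~\ref{lem:nztrunc}. You also correctly isolate the one point the blunt contrapositive would miss, namely $\tau_t(k)=0$: this forces $t=k=1$ and $u(1)=p$, and since any nonzero $\mathbf{v}_{ab}\bullet_1 u$ would require $u^{-1}(a)=1$, i.e.\ $u(1)=a\neq p$, the conclusion $\mathbf{v}_{ab}\bullet_1 u=0$ holds outright in that degenerate case. The paper's case analysis absorbs this possibility implicitly (it appears in the subcase $\tau_t(k)<i'<j'$, which yields $k<i<j$), but your explicit treatment is cleaner. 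The trade-off is that your proof leans entirely on Lemma~\ref{lem:nztrunc}, while the paper's self-contained argument keeps the lemma usable even if one wanted to modularize or reorganize the surrounding material.
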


\begin{proof}
    Let $i=u^{-1}(a)$, $j=u^{-1}(b)$, $i'=(u/t)^{-1}(\tau_p(a))=\tau_t(i)$, and $j'=(u/t)^{-1}(\tau_p(b))=\tau_t(j)$. We analyze the possible cases, grouping them into two types based on the associated arguments.
    \bigskip

    \noindent
    \underline{Case (a):} $j'<i'$, $\tau_t(k)<i'<j'$, or $i'<j'\le \tau_t(k)$. For $j'<i'$, $\tau_t(k)<i'<j'$, or $i'<j'<\tau_t(k)$, applying Lemma~\ref{lem:tauop}~(b), we have that $j<i$, $k<i<j$, or $i<j<k$, respectively. Thus, $\mathbf{v}_{ab}\bullet_ku=0$. Now, for $i'<j'=\tau_t(k)$, since $u^{-1}(p)=t\neq u^{-1}(a),u^{-1}(b)$, applying Lemma~\ref{lem:tauop}~(b), either $i<j=k$ if $k\neq t$ or $i<j=t-1<k=t$ otherwise. In either case, $\mathbf{v}_{ab}\bullet_ku=0$, as desired.
    \bigskip

    \noindent
    \underline{Case (b):} $i'\le \tau_t(k)<j'$, and $a<b$ or $a>b$. We consider the case $a<b$ as the other one follows similarly.
    Note that since $u^{-1}(p)=t\neq u^{-1}(a),u^{-1}(b)$, applying Lemma~\ref{lem:tauop}~(b), either 
    \begin{itemize}
        \item $i'<\tau_t(k)<j'$ so that $i<k<j$,
        \item $k\neq t$ and $i'=\tau_t(k)<j'$ so that $i=k<j$, or
        \item $k=t$ and $i'=\tau_t(k)<j'$ so that $i=t-1<t=k<j$.
    \end{itemize}
    Thus, $i\le k<j$. Now, applying Lemma~\ref{lem:tauop}~(a), we have that $\tau_p(a)<\tau_p(b)$. Consequently, applying Proposition~\ref{prop:covercond}, there exists $l'\in(i',j')$ such that $(u/t)(l')\in (\tau_p(a),\tau_p(b))$. Considering Lemma~\ref{lem:tauop}~(b), this implies that there exists $i<l<j$ such that $\tau_t(l)=l'$ and $u(l)\in (a,b)$. Hence, applying Proposition~\ref{prop:covercond} once again, it follows that $\mathbf{v}_{ab}\bullet_ku=0$, as desired. \qedhere
\end{proof}
Now, we state and prove the result corresponding to (Step 2).
\begin{prop}\label{prop:tau0im0}
    Let $\mathbf{v}=\mathbf{v}_{a_{r}b_{r}}\hdots\mathbf{v}_{a_1b_1}$ and let $p\notin \supp(\mathbf{v})$. Suppose that $u\in S_n$ and $k\in [n-1]$ are such that $\tau_p(\mathbf{v})\bullet_{\tau_t(k)}u/t=0$ where $t=u^{-1}(p)$. Then $\mathbf{v}\bullet_ku=0$.
\end{prop}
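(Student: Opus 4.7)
The plan is to observe that Proposition~\ref{prop:tau0im0} is precisely the contrapositive of Lemma~\ref{lem:nztrunc}, and to argue directly from that. Assume, toward a contradiction, that $\mathbf{v}\bullet_k u \neq 0$, so that $\mathbf{v}\bullet_k u = \mathbf{q}^\alpha w$ for some $\alpha \in \mathbb{Z}_{\geq 0}^{n-1}$ and $w \in S_n$. Since $p \notin \supp(\mathbf{v})$ and $t = u^{-1}(p)$, Lemma~\ref{lem:nztrunc} applies and gives
\[
\tau_p(\mathbf{v})\bullet_{\tau_t(k)}\, u/t \;=\; \mathbf{q}^{\alpha/t}\, w/t,
\]
which is a nonzero element of $S_{n-1}[\mathbf{q}]$. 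This contradicts the hypothesis, so we must have $\mathbf{v}\bullet_k u = 0$.

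A more self-contained alternative would be to induct on the degree $r$ of $\mathbf{v}$, taking the base case $r = 1$ directly from Lemma~\ref{lem:0tau1s}. For the inductive step, set $\mathbf{v}' = \mathbf{v}_{a_{r-1}b_{r-1}}\cdots\mathbf{v}_{a_1b_1}$. If $\mathbf{v}'\bullet_k u = 0$, then $\mathbf{v}\bullet_k u = 0$ automatically. Otherwise, write $\mathbf{v}'\bullet_k u = \mathbf{q}^{\alpha'} w'$, and note that Lemma~\ref{lem:nztrunc} (applied to $\mathbf{v}'$, which also avoids $p$ in its support) yields $\tau_p(\mathbf{v}')\bullet_{\tau_t(k)} u/t = \mathbf{q}^{\alpha'/t} w'/t$. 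The hypothesis $\tau_p(\mathbf{v})\bullet_{\tau_t(k)} u/t = 0$ therefore forces $\tau_p(\mathbf{v}_{a_r b_r})\bullet_{\tau_t(k)} w'/t = 0$. Since $(w')^{-1}(p) = t$ (because $p \notin \supp(\mathbf{v}')$), Lemma~\ref{lem:0tau1s} applied to the single operator $\mathbf{v}_{a_r b_r}$ and the permutation $w'$ yields $\mathbf{v}_{a_r b_r}\bullet_k w' = 0$, so $\mathbf{v}\bullet_k u = 0$, closing the induction.

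Either route is essentially immediate from prior results, so I do not foresee any genuine obstacle. The only minor boundary issue is to confirm that $\tau_p(\mathbf{v})\bullet_{\tau_t(k)} u/t$ is well-defined in the degenerate corner where $k = t = 1$ and $\tau_t(k) = 0$. In that corner $u(1) = p$, and any operator acting non-trivially at level $k = 1$ on a permutation with $p$ in position $1$ must have $p$ among its indices -- contradicting $p \notin \supp(\mathbf{v})$ -- so $\mathbf{v}\bullet_1 u = 0$ trivially and the statement holds vacuously in this case.
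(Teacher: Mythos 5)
Your first route is correct and is in fact a cleaner argument than the paper's: Proposition~\ref{prop:tau0im0} really is nothing more than the contrapositive of Lemma~\ref{lem:nztrunc}. Granting $\mathbf{v}\bullet_k u = \mathbf{q}^\alpha w \neq 0$ (and noting $\supp(\mathbf{v})\subseteq[n]\setminus\{p\}\subsetneq[n]$), that lemma hands you $\tau_p(\mathbf{v})\bullet_{\tau_t(k)}u/t = \mathbf{q}^{\alpha/t}w/t$, a genuine element of $S_{n-1}[\mathbf{q}]$, and you are done. The paper instead locates the minimal index $i^*$ for which $\tau_p(\mathbf{v}_{a_{i^*}b_{i^*}}\cdots\mathbf{v}_{a_1b_1})\bullet_{\tau_t(k)}u/t$ vanishes, case-splits on whether the untruncated prefix $\mathbf{v}_{a_{i^*-1}b_{i^*-1}}\cdots\mathbf{v}_{a_1b_1}$ still acts nontrivially, and finishes with Lemma~\ref{lem:nztrunc} plus Lemma~\ref{lem:0tau1s}; this buys nothing that the direct contrapositive does not already give, since Lemma~\ref{lem:0tau1s} is itself just the $r=1$ case of the same contrapositive. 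Your second, ``inductive'' route is essentially the paper's own argument in slightly different clothing --- you peel off the last operator rather than picking the minimal failing index --- but observe that you never actually invoke the inductive hypothesis, so the label ``induction'' is vestigial; it is really just a two-case direct argument. Your closing remark on the degenerate corner $\tau_t(k)=0$ is appropriate (the paper folds the corresponding observation into the opening line of the proof of Lemma~\ref{lem:nztrunc}), though for completeness you should also treat the symmetric corner $\tau_t(k)=n-1$ (that is, $t=n$ and $k=n-1$), where the index exceeds $n-2$ and is out of range for $u/t\in S_{n-1}$; the same reasoning dispatches it, since any cover at level $k=n-1$ must involve position $n$, which holds $p$.
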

\begin{proof}
We define $i^*\in[r]$ to be the least integer for which $\tau_p(\mathbf{v}_{a_{i^*}b_{i^*}}\cdots\mathbf{v}_{a_1b_1})\bullet_{\tau_t(k)}u/t=0$. If $i^*=1$, then the result follows upon applying Lemma~\ref{lem:0tau1s}. So, assume that $i^*>1$. In this case, $\tau_p(\mathbf{v}_{a_{i^*-1}b_{i^*-1}}\cdots\mathbf{v}_{a_1b_1})\bullet_{\tau_t(k)}u/t\neq 0$ and $\tau_p(\mathbf{v}_{a_{i^*}b_{i^*}}\cdots\mathbf{v}_{a_1b_1})\bullet_{\tau_t(k)}u/t=0$. Note that if $i^*>1$ and $\mathbf{v}_{a_{i^*-1}b_{i^*-1}}\cdots\mathbf{v}_{a_1b_1}\bullet_ku=0$, then the result follows directly. Consequently, we also assume that $\mathbf{v}_{a_{i^*-1}b_{i^*-1}}\cdots\mathbf{v}_{a_1b_1}\bullet_ku=\mathbf{q}^{\alpha}w\neq 0$ for some $\alpha\in\mathbb{Z}_{\ge 0}^{n-1}$ and $w\in S_n$. Applying Lemma~\ref{lem:nztrunc}, we have that  $\tau_p(\mathbf{v}_{a_{i^*-1}b_{i^*-1}}\cdots\mathbf{v}_{a_1b_1})\bullet_{\tau_t(k)}u/t= \mathbf{q}^{\alpha/t}w/t$ and
    \begin{align*}
0&=\tau_p(\mathbf{v}_{a_{i^*}b_{i^*}}\cdots\mathbf{v}_{a_1b_1})\bullet_{\tau_t(k)}u/t
=\tau_p(\mathbf{v}_{a_{i^*}b_{i^*}})\bullet_{\tau_t(k)}\left(\tau_p(\mathbf{v}_{a_{i^*-1}b_{i^*-1}}\cdots\mathbf{v}_{a_1b_1})\bullet_{\tau_t(k)}u/t\right) \\   &=\tau_p(\mathbf{v}_{a_{i^*}b_{i^*}})\bullet_{\tau_t(k)}\mathbf{q}^{\alpha/t}w/t=\mathbf{q}^{\alpha/t}\tau_p(\mathbf{v}_{a_{i^*}b_{i^*}})\bullet_{\tau_t(k)}w/t;
    \end{align*}
    that is, $\tau_p(\mathbf{v}_{a_{i^*}b_{i^*}})\bullet_{\tau_t(k)}w/t=0$. Thus, applying Lemma~\ref{lem:0tau1s} once again, it follows that $$\mathbf{v}\bullet_ku=\mathbf{v}_{a_{i^*}b_{i^*}}\cdots\mathbf{v}_{a_1b_1}\bullet_ku=\mathbf{v}_{a_{i^*}b_{i^*}}\bullet_kw=0,$$ as desired.
\end{proof}

Next, for (Step 3), we show that $\tau_p(\mathbf{v})\equiv \tau_p(\mathbf{v'})$ implies $\mathbf{v}\equiv \mathbf{v'}$.

\begin{prop}\label{prop:taunzequiv1}
    Let $\mathbf{v}=\mathbf{v}_{a_{r}b_{r}}\hdots\mathbf{v}_{a_1b_1}$ and $\mathbf{v}'=\mathbf{v}_{c_{r}d_{r}}\hdots\mathbf{v}_{c_1d_1}$ with $S=\supp(\mathbf{v})=\supp(\mathbf{v}')$ and take $p\notin S$. If $\tau_p(\mathbf{v})\equiv \tau_p(\mathbf{v}')$, then $\mathbf{v}\equiv \mathbf{v}'$.
\end{prop}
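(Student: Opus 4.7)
The plan is to show directly from the definitions that $\mathbf{v}\bullet_k u=\mathbf{v}'\bullet_k u$ for every $u\in S_n$ and every $k\in[n-1]$; by $\mathbf{q}$-multiplicativity this then extends to all of $S_n[\mathbf{q}]$. Fix such $u$ and $k$, and set $t=u^{-1}(p)$, $k'=\tau_t(k)$, and $u'=u/t$. The hypothesis supplies $\tau_p(\mathbf{v})\bullet_{k'}u'=\tau_p(\mathbf{v}')\bullet_{k'}u'$; call this common value $X$. If $X=0$, then Proposition~\ref{prop:tau0im0} applied to each of $\mathbf{v}$ and $\mathbf{v}'$ yields $\mathbf{v}\bullet_k u=0=\mathbf{v}'\bullet_k u$. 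Otherwise $X=\mathbf{q}^\gamma v\neq 0$, and the argument splits further according to whether the unlifted actions vanish.

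First, assume both $\mathbf{v}\bullet_k u$ and $\mathbf{v}'\bullet_k u$ are nonzero, say $\mathbf{q}^\alpha w$ and $\mathbf{q}^\beta w'$ respectively. Lemma~\ref{lem:nztrunc} then forces $\mathbf{q}^{\alpha/t}(w/t)=\mathbf{q}^{\beta/t}(w'/t)=\mathbf{q}^\gamma v$, so $w/t=w'/t=v$ and $\alpha/t=\beta/t=\gamma$. Since $p\notin\supp(\mathbf{v})=\supp(\mathbf{v}')$, no operator of either composition can move the entry at position $t$, so $w(t)=w'(t)=p$; combined with $w/t=w'/t$, this forces $w=w'=\epsilon_{t,p}(v)$. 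For the monomial exponents, $\alpha/t=\beta/t$ already pins down $\alpha_i=\beta_i$ for $i\neq t$. The remaining coordinate $\alpha_t$ I would handle via Lemma~\ref{lem:L1}(d), which writes $\alpha_{t-1}=L_{<p,t}(w)-L_{<p,t}(u)$, a formula depending only on $u$ and $w$; hence $\alpha_{t-1}=\beta_{t-1}$. Finally, since $p$'s position $t$ must sit strictly interior to any quantum operator contributing to $q_{t-1}$, such an operator necessarily also contributes to $q_t$ (and conversely), giving $\alpha_t=\alpha_{t-1}=\beta_{t-1}=\beta_t$ for $1<t<n$; the boundary cases $t\in\{1,n\}$ make the relevant coordinate identically zero. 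Hence $\alpha=\beta$ and the two actions agree.

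The main obstacle is the remaining sub-case, where $X\neq 0$ yet exactly one of $\mathbf{v}\bullet_k u$, $\mathbf{v}'\bullet_k u$ vanishes. One cannot transfer vanishing across truncation directly, because the converse of Proposition~\ref{prop:tau0im0} genuinely fails: deleting $p$ can unblock a quantum (or classical) cover that $p$ was obstructing. To rule this case out, I would argue by contradiction, assuming $\mathbf{v}\bullet_k u=0$ and $\mathbf{v}'\bullet_k u\neq 0$, and examine the first step $i^*$ at which $\mathbf{v}$'s chain breaks: the blockage arises because $p$ sits at position $t$ in the intermediate state $w_{i^*-1}$ produced by the preceding operators, in a configuration that obstructs $\mathbf{v}_{a_{i^*}b_{i^*}}$. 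The plan is then to feed carefully chosen inputs into $\tau_p(\mathbf{v})\equiv\tau_p(\mathbf{v}')$ — inputs tailored to detect the configuration at $w_{i^*-1}/t$ — in order to force either $\mathbf{v}'$ to also break at a comparable step or else to produce incompatible monomial data via Lemma~\ref{lem:L1}. I expect this to be the most delicate part of the argument; it may require an induction on the degree $r$ of the compositions, with Proposition~\ref{prop:tauzequiv} and Theorem~\ref{thm:flat}(a) handling the degenerate zero-equivalence sub-cases.
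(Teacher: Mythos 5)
Your handling of the $X=0$ case via Proposition~\ref{prop:tau0im0} and of the case where both $\mathbf{v}\bullet_ku$ and $\mathbf{v}'\bullet_ku$ are nonzero is correct and follows the same lines as the paper's Case~(a): the permutation parts agree because $p$ is fixed at position $t$, and the missing coordinate of the exponent is recovered via the observation that any quantum operator straddling position $t$ contributes to both $q_{t-1}$ and $q_t$ (your invocation of Lemma~\ref{lem:L1}(d) in that sub-case is redundant, since $\alpha_{t-1}=\beta_{t-1}$ already follows from $\alpha/t=\beta/t$, but it does no harm).

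The genuine gap is the third sub-case, where $X\neq 0$ but exactly one of $\mathbf{v}\bullet_ku$, $\mathbf{v}'\bullet_ku$ vanishes. You correctly diagnose why this is delicate (the converse of Proposition~\ref{prop:tau0im0} fails) and you identify a contradiction approach, the first breaking step $i^*$, and the involvement of Lemma~\ref{lem:L1}, but you do not close the argument, and the closing move you sketch --- ``feed carefully chosen inputs into $\tau_p(\mathbf{v})\equiv\tau_p(\mathbf{v}')$'' --- is not the right move. One stays with the \emph{same} $u$ and $k$ throughout. The key point you are missing is a quantitative mismatch in the exponent of $q_{t-1}$. Assuming WLOG $\mathbf{v}\bullet_ku=\mathbf{q}^\alpha w\neq 0$ and $\mathbf{v}'\bullet_ku=0$, Lemma~\ref{lem:L1}(d) (applied along the $\mathbf{v}$-chain, which never meets $p$) pins down $\alpha_{t-1}=L_{<p,t}(w/t)-L_{<p,t}(u/t)$ exactly. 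On the other hand, the breaking step $i^*$ of $\mathbf{v}'$ must be blocked precisely by $p$ sitting strictly between positions $u_{i^*-1}^{-1}(c_{i^*})$ and $u_{i^*-1}^{-1}(d_{i^*})$; after truncation that step survives in $\tau_p(\mathbf{v}')$ and, whether $c_{i^*}>d_{i^*}$ or $c_{i^*}<d_{i^*}$, it forces the count in Lemma~\ref{lem:L1}(c) to be \emph{strictly} larger than $L_{<p,t}(w/t)-L_{<p,t}(u/t)$. Since $\tau_p(\mathbf{v})$ and $\tau_p(\mathbf{v}')$ produce the same monomial, this yields $\alpha_{t-1}>\alpha_{t-1}$, the contradiction. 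Without this counting argument via parts (c) and (d) of Lemma~\ref{lem:L1}, the proof is incomplete.
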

\begin{proof}
    For $n\ge \max(S)$, consider $u\in S_n$ and $k\in [n-1]$, and let $t=u^{-1}(p)$. Then, it suffices to show that $\mathbf{v}\bullet_ku=\mathbf{v}'\bullet_ku$. 
    
    If $\tau_p(\mathbf{v})\bullet_{\tau_t(k)}u/t=\tau_p(\mathbf{v}')\bullet_{\tau_t(k)}u/t=0$, then, by Proposition~\ref{prop:tau0im0}, we have that $\mathbf{v}\bullet_ku=\mathbf{v}'\bullet_ku=0$.
    Thus, two cases remain to consider.
    \bigskip

    \noindent
 \underline{Case (a):}  $\tau_p(\mathbf{v})\bullet_{\tau_t(k)}u/t=\tau_p(\mathbf{v}')\bullet_{\tau_t(k)}u/t\in S_{n-1}[\mathbf{q}]$ and $\mathbf{v}\bullet_ku,\mathbf{v}'\bullet_ku\neq 0$. Let $\mathbf{v}\bullet_ku=\mathbf{q}^{\alpha}w_1$ and $\mathbf{v}'\bullet_ku=\mathbf{q}^{\beta}w_2$ with $\alpha=(\alpha_1,\hdots,\alpha_{n-1}),\beta=(\beta_1,\hdots,\beta_{n-1})\in \mathbb{Z}_{\ge 0}^{n-1}$. Applying Lemma~\ref{lem:nztrunc}, it follows that $\tau_p(\mathbf{v})\bullet_{\tau_t(k)}u/t=\mathbf{q^{\alpha/t}}w_1/t=\mathbf{q}^{\beta/t}w_2/t=\tau_p(\mathbf{v}')\bullet_{\tau_t(k)}u/t$ which implies that $w_1=\epsilon_{t,p}(w_1/t)=\epsilon_{t,p}(w_2/t)=w_2$ and $\alpha^i=\beta^i$ for $i\neq t$. We claim that $\alpha_t=\alpha_{t-1}$ and $\beta_t=\beta_{t-1}$, from which it follows that $\alpha=\beta$. As the arguments are the same, we establish the claim for $\alpha_t$. Letting $u_0=u$ and $\alpha_0=(0,\hdots,0)\in\mathbb{Z}_{\ge 0}^{n-1}$, for $i\in [r]$, we define $$\mathbf{q}^{\alpha^i}u_i=(\mathbf{v}_{a_{i}b_{i}}\hdots\mathbf{v}_{a_1b_1})\bullet_ku$$ with $u_i\in S_n$. Note that if $a_i>b_i$ for some $i\in [r]$ with $u_{i-1}^{-1}(a_i)=i^*$ and $u_{i-1}^{-1}(b_i)=j^*$, then, since $p\notin S$, either
    \begin{itemize}
        \item $t<i^*<j^*$ or $i^*<j^*<t$, in which cases neither $q_{t-1}$ nor $q_t$ are factors of $\dfrac{\mathbf{q}^{\alpha^i}}{\mathbf{q}^{\alpha_{i-1}}}$, or
        \item $i^*<t<j^*$, in which case both $q_{t-1}$ and $q_t$ are factors of $\dfrac{\mathbf{q}^{\alpha^i}}{\mathbf{q}^{\alpha_{i-1}}}$.
    \end{itemize}
    Thus, the claim follows. Consequently, $\mathbf{v}\bullet_ku=\mathbf{v}'\bullet_ku$, as desired.
        \bigskip

    \noindent
\underline{Case (b):}  $\tau_p(\mathbf{v})\bullet_{\tau_t(k)}u/t=\tau_p(\mathbf{v}')\bullet_{\tau_t(k)}u/t=q^{\alpha/t}w/t\in S_{n-1}[\mathbf{q}]$ for $\alpha=(\alpha_1,\hdots,\alpha_{n-1})\in \mathbb{Z}_{\ge 0}^{n-1}$ and $w\in S_n$, and either  $\mathbf{v}\bullet_ku=0$ or $\mathbf{v}'\bullet_ku=0$. We claim that $\mathbf{v}\bullet_ku=\mathbf{v}'\bullet_ku=0$. Without loss of generality, assume that $\mathbf{v}'\bullet_ku=0$. For a contradiction, assume that $\mathbf{v}\bullet_ku=\mathbf{q}^{\alpha}w\in S_n[\mathbf{q}]$. Applying Lemma~\ref{lem:L1}~(d), it follows that 
    \begin{equation}\label{eq:1}
        \alpha_{t-1}=L_{<p,t}(w)-L_{<p,t}(u)=L_{<p,t}(w/t)-L_{<p,t}(u/t).
    \end{equation}
    Now, define $i^*\in [r]$ to be the least integer for which $\mathbf{v}_{c_{i^*}d_{i^*}}\cdots\mathbf{v}_{c_1d_1}\bullet_ku=0$. In particular, $i^*=1$ if $\mathbf{v}_{c_1d_1}\bullet_ku=0$; otherwise, $i^*>1$ is such that $\mathbf{v}_{c_{i^*-1}d_{i^*-1}}\hdots\mathbf{v}_{c_1d_1}\bullet_ku=\mathbf{q}^{\beta}u_{i^*-1}\in S_n[\mathbf{q}]$ and $\mathbf{v}_{c_{i^*}d_{i^*}}\hdots\mathbf{v}_{c_1d_1}\bullet_ku=0$. There are two cases, and in both, letting $u_0=u$, for $j\in[r]$, we define
$$\tau_p(\mathbf{v}_{a_{j}b_{j}}\hdots\mathbf{v}_{a_1b_1})\bullet_{\tau_t(k)}u/t = \mathbf{q}^{\beta^j}w_j$$ 
     where $w_j\in S_{n-1}$, and $u_j=\epsilon_{t,p}(w_j)$.

    \begin{itemize}
\item[]    \underline{Case (b.1):} $c_{i^*}>d_{i^*}$. Since $\tau_p(\mathbf{v}_{c_{i^*}d_{i^*}}\hdots\mathbf{v}_{c_1d_1})\bullet_{\tau_t(k)}u/t\neq 0$, it follows that $$\mathbf{v}_{c_{i^*}d_{i^*}}\bullet_k(\mathbf{v}_{c_{i^*-1}d_{i^*-1}}\hdots\mathbf{v}_{c_1d_1}\bullet_ku)=0$$ as a result of the addition of $p$ at position $t$ in $u$. Consequently, considering Proposition~\ref{prop:covercond}, we must have that $(u_{i^*-1})^{-1}(c_{i^*})<t<(u_{i^*-1})^{-1}(d_{i^*})$ and either  $p>c_{i^*}$ or $p<d_{i^*}$. Note that, in either case, applying Lemma~\ref{lem:L1}~(c), we have that 
    \begin{align*}
        \alpha_{t-1}&=\#\{j\in [r]~|~\tau_p(c_j)>\tau_p(d_j),~(u_{j-1}/t)^{-1}(\tau_p(c_j))<t\le (u_{j-1}/t)^{-1}(\tau_p(d_j))\}\\
        &=\#\{j\in [r]~|~c_j>d_j,~u^{-1}_{j-1}(c_j)<t<u^{-1}_{j-1}(d_j)\}\\
        &\ge \#\{j\in [r]~|~c_j>p>d_j,~u^{-1}_{j-1}(c_j)<t<u^{-1}_{j-1}(d_j)\}+1\\
        &>L_{<p,t}(w/t)-L_{<p,t}(u/t),
    \end{align*}
    where we are thinking of $\alpha$ as that in $\tau_p(\mathbf{v}')\bullet_{\tau_t(k)}u/t=\mathbf{q}^{\alpha/t}w/t$. Considering (\ref{eq:1}), this is a contradiction.

\item[]    \underline{Case (b.2):} $c_i^*<d_i^*$. As a consequence of the addition of $p$ at position $t$ in $u$, we have that 
$$\mathbf{v}_{c_{i^*}d_{i^*}}\bullet_k(\mathbf{v}_{c_{i^*-1}d_{i^*-1}}\hdots\mathbf{v}_{c_1d_1}\bullet_ku)=0.$$
Therefore, by Proposition~\ref{prop:covercond}, we must have that $(u_{i^*-1})^{-1}(c_{i^*})<t<(u_{i^*-1})^{-1}(d_{i^*})$ and $c_{i^*}<p<d_{i^*}$. Thus, applying Lemma~\ref{lem:L1}~(c), it follows that 
    \begin{align*}
        \alpha_{t-1}&=\#\{j\in [r]~|~\tau_p(c_j)>\tau_p(d_j),~(u_{j-1}/t)^{-1}(\tau_p(c_j))<t\le (u_{j-1}/t)^{-1}(\tau_p(d_j))\}\\
        &=\#\{j\in [r]~|~c_j>d_j,~u^{-1}_{j-1}(c_j)<t<u^{-1}_{j-1}(d_j)\}\\
        &\ge \#\{j\in [r]~|~c_j>p>d_j,~u^{-1}_{j-1}(c_j)<t<u^{-1}_{j-1}(d_j)\}\\
        &>L_{<p,t}(w/t)-L_{<p,t}(u/t).
    \end{align*}
As before, considering (\ref{eq:1}), this is a contradiction. \qedhere
    \end{itemize}
    
\end{proof}

Finally, for (Step 4), we show that $\mathbf{v}\equiv \mathbf{v}'$ implies $\tau_p(\mathbf{v})\equiv \tau_p(\mathbf{v}')$.

\begin{prop}\label{prop:taunzequiv2}
    Let $\mathbf{v}=\mathbf{v}_{a_{r}b_{r}}\hdots\mathbf{v}_{a_1b_1}$ and $\mathbf{v}'=\mathbf{v}_{c_{r}d_{r}}\hdots\mathbf{v}_{c_1d_1}$ with $S=\supp(\mathbf{v})=\supp(\mathbf{v}')$, and consider $p\notin S$. If $\mathbf{v}\equiv \mathbf{v}'$, then $\tau_p(\mathbf{v})\equiv \tau_p(\mathbf{v}')$.
\end{prop}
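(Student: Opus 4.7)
The plan is to reduce the claim to the hypothesized equivalence $\mathbf v \equiv \mathbf v'$ by lifting every test point on the target side back to a test point on the source side, via a carefully chosen insertion of $p$. Fix $m \geq \max\tau_p(\supp(\mathbf v))$, $u \in S_m$, and $k \in [m-1]$; the goal is $\tau_p(\mathbf v)\bullet_k u = \tau_p(\mathbf v')\bullet_k u$. I set $\tilde u := \epsilon_{m+1,p}(u) \in S_{m+1}$ and $\tilde k := k$, so that $t := \tilde u^{-1}(p) = m+1$, $\tilde u/t = u$, and $\tau_t(\tilde k) = k$. The whole argument hinges on the sub-claim that if $\tau_p(\mathbf v)\bullet_k u \neq 0$, then $\mathbf v \bullet_{\tilde k} \tilde u \neq 0$; this is the converse of the nonzero half of Lemma \ref{lem:nztrunc}, and it does not follow formally from Proposition \ref{prop:tau0im0}, so it must be argued directly.

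I would prove the sub-claim by induction on the number $r$ of operators in $\mathbf v$, in parallel with the inductive structure used for Lemma \ref{lem:nztrunc}. The governing observation is that since $p \notin \supp(\mathbf v)$ no operator in $\mathbf v$ moves the value $p$, and because $p$ is placed at position $m+1$, it stays strictly to the right of the positional footprint of the action throughout; consequently, if $u_{i-1}$ and $\tilde u_{i-1}$ denote the intermediate states after $i{-}1$ operators, then $\tilde u_{i-1} = \epsilon_{m+1,p}(u_{i-1})$ at every step. For a single step I would verify the cover condition of Proposition \ref{prop:covercond}: by Lemma \ref{lem:tauop}, the positions of $a_i,b_i$ in $\tilde u_{i-1}$ coincide with the positions of $\tau_p(a_i),\tau_p(b_i)$ in $u_{i-1}$, both of which lie in $[1,m]$, so $\tilde k = k$ sits in the correct relative slot. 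Moreover, $\tau_p$ restricts to a bijection from $(a_i,b_i)\setminus\{p\}$ onto $(\tau_p(a_i),\tau_p(b_i))$ (and symmetrically from $(b_i,a_i)\setminus\{p\}$ onto $(\tau_p(b_i),\tau_p(a_i))$), so the open-interval intermediate-value constraint transfers verbatim provided $p$ is not itself intermediate, which it is not since it sits at position $m+1 > j^*$. The associated $\mathbf q$-monomial $\mathbf q_{i^* j^*}$ also agrees in both settings, since it involves only $q_1,\ldots,q_{m-1}$.

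With the sub-claim in hand, the rest is short. If both $\tau_p(\mathbf v)\bullet_k u$ and $\tau_p(\mathbf v')\bullet_k u$ are zero, there is nothing to prove. Otherwise, without loss of generality $\tau_p(\mathbf v)\bullet_k u \neq 0$; the sub-claim gives $\mathbf v\bullet_{\tilde k}\tilde u \neq 0$, so by $\mathbf v \equiv \mathbf v'$ we have $\mathbf v'\bullet_{\tilde k}\tilde u = \mathbf v\bullet_{\tilde k}\tilde u \neq 0$. Applying Lemma \ref{lem:nztrunc} to each side then yields $\tau_p(\mathbf v)\bullet_k u = (\mathbf v\bullet_{\tilde k}\tilde u)/t = (\mathbf v'\bullet_{\tilde k}\tilde u)/t = \tau_p(\mathbf v')\bullet_k u$, which simultaneously rules out the asymmetric case. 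The main obstacle in the formal write-up is the sub-claim, and within it the case $a_i < p < b_i$ (with its quantum analog) where the interval $(a_i, b_i)$ genuinely contains $p$; the argument succeeds there only because the inserted $p$ is pinned at position $m+1$, strictly to the right of every position touched by the action, so it never appears among the intermediate positions of any cover.
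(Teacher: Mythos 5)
Your proof is correct and uses essentially the same mechanism as the paper: insert $p$ at the final position so that it sits outside the positional footprint of the action, prove that nonvanishing of the truncated action lifts to nonvanishing of the full action, and then finish via Lemma~\ref{lem:nztrunc} and the hypothesis $\mathbf v\equiv\mathbf v'$. The paper phrases your sub-claim in contrapositive form (if $\mathbf v\bullet_k\epsilon_{n,p}(u)=0$ then $\tau_p(\mathbf v)\bullet_ku=0$, argued by locating the first operator $\mathbf v_{a_{i^*}b_{i^*}}$ at which the action dies and transferring that obstruction down), whereas you argue the positive lift directly by induction on $r$; these are the same argument up to negation and wording. One small omission: you implicitly need $p\le m+1$ for the insertion $\epsilon_{m+1,p}$ to make sense, which your constraint $m\ge\max\tau_p(\supp(\mathbf v))$ does not by itself guarantee when $p$ exceeds everything in $\supp(\mathbf v)$. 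The paper disposes of this by first noting the claim is trivial when $p>\max S$ (since then $\tau_p$ fixes $\supp(\mathbf v)$ so $\tau_p(\mathbf v)=\mathbf v$ and $\tau_p(\mathbf v')=\mathbf v'$), and only then assuming $p<\max S$, which forces $p\le\max\tau_p(\supp(\mathbf v))\le m$; you should insert the same preliminary reduction.
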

\begin{proof}
    The result is immediate for $p>\max S$, and so we assume that $p<\max S$. For $n\ge \max S$, consider $u\in S_{n-1}$ and $k\in [n-2]$, and define $u^*=\epsilon_{n,p}(u)$. If $\mathbf{v}\bullet_ku^*=\mathbf{v}'\bullet_ku^*=\mathbf{q}^{\alpha}w\neq 0$, then, applying Lemma~\ref{lem:nztrunc}, we have that $$\tau_p(\mathbf{v})\bullet_ku=\tau_p(\mathbf{v})\bullet_{\tau_n(k)}u^*/n=\mathbf{q}^{\alpha/n}w/n=\tau_p(\mathbf{v}')\bullet_{\tau_n(k)}u^*/n=\tau_p(\mathbf{v}')\bullet_ku.$$ On the other hand, if $\mathbf{v}\bullet_ku^*=\mathbf{v}'\bullet_ku^*=0$, then we claim that $\tau_p(\mathbf{v})\bullet_ku=\tau_p(\mathbf{v}')\bullet_ku=0$. We show that $\tau_p(\mathbf{v})\bullet_ku=0$ as $\tau_p(\mathbf{v}')\bullet_ku=0$ follows by the same argument. 
    
    Define $i^*\in [r]$ to be the least integer for which $\mathbf{v}_{a_{i^*}b_{i^*}}\cdots\mathbf{v}_{a_1b_1}\bullet_ku^*=0$. In particular, $i^*=1$ if $\mathbf{v}_{a_1b_1}\bullet_ku^*=0$; otherwise, $i^*>1$ is such that $\mathbf{v}_{a_{i^*-1}b_{i^*-1}}\hdots\mathbf{v}_{a_1b_1}\bullet_ku^*=\mathbf{q}^{\beta}w\in S_n[\mathbf{q}]$ and $\mathbf{v}_{a_{i^*}b_{i^*}}\hdots\mathbf{v}_{a_1b_1}\bullet_ku^*=0$. Applying Lemma~\ref{lem:nztrunc}, we have that $\tau_p(\mathbf{v}_{a_{i^*-1}b_{i^*-1}}\hdots\mathbf{v}_{a_1b_1})\bullet_{k}u=\mathbf{q}^{\beta/n}w/n\in S_{n-1}[\mathbf{q}]$. We analyze the possible cases, grouping them into two types based on the associated arguments.
    \bigskip

    \noindent
    \underline{Case (a):} $w^{-1}(a_{i^*})>w^{-1}(b_{i^*})$, $w^{-1}(a_{i^*})<w^{-1}(b_{i^*})\le k$, or $k<w^{-1}(a_{i^*})<w^{-1}(b_{i^*})$. Since, $w^{-1}(a_{i^*})=(w/n)^{-1}(\tau_p(a_{i^*}))$ and $w^{-1}(b_{i^*})=(w/n)^{-1}(\tau_p(b_{i^*}))$, we must have that $\tau_p(\mathbf{v}_{a_{i^*}b_{i^*}})\bullet_kw/n=0$, i.e., $\tau_p(\mathbf{v})\bullet_ku=0$.
    \bigskip

    \noindent
    \underline{Case (b):} $w^{-1}(a_{i^*})\le k<w^{-1}(b_{i^*})$, and $a_{i^*}<b_{i^*}$ or $a_{i^*}>b_{i^*}$. We consider the case $a_{i^*}<b_{i^*}$ as the other case follows similarly. 
    Considering Proposition~\ref{prop:covercond}, there exists $l\in (w^{-1}(a_{i^*}),w^{-1}(b_{i^*}))$ such that $c=w(l)\in (a_{i^*},b_{i^*})$. Since $p\neq a_{i^*},b_{i^*}, c$ by construction of $u^*$, applying Lemma~\ref{lem:tauop}~(a), we have that $\tau_p(a_{i^*})<\tau_p(c)<\tau_p(b_{i^*})$. Thus, $$w^{-1}(a_{i^*})=\tau_n(w^{-1}(a_{i^*}))=(w/n)^{-1}(\tau_p(a_{i^*}))<l=\tau_n(l)<w^{-1}(b_{i^*})=\tau_n(w^{-1}(b_{i^*}))=(w/n)^{-1}(\tau_p(b_{i^*}))$$ and $\tau_p(c)=\tau_p(w(l))\in (\tau_p(a_{i^*}),\tau_p(b_{i^*}))$; that is, $(w/n)^{-1}(\tau_p(a_{i^*}))<l<(w/n)^{-1}(\tau_p(b_{i^*}))$ and $(w/n)(l)\in (\tau_p(a_{i^*}),\tau_p(b_{i^*}))$. Consequently, by Proposition~\ref{prop:covercond}, $\tau_p(\mathbf{v}_{a_{i^*}b_{i^*}})\bullet_kw/n=0$, i.e., $\tau_p(\mathbf{v})\bullet_ku=0$.\qedhere

\end{proof}

All steps complete, Theorem~\ref{thm:taunzequiv} follows. We can now finish the proof of Theorem~\ref{thm:flat}.

\begin{proof}[Proof of Theorem~\ref{thm:flat}~$(b)$]
Let $S=\supp(\mathbf{v})=\supp(\mathbf{v}')$ with $|S|=m$, $n=\max S$, and $N=n-m$. We proceed by induction on $N$. For $N=0$, $\mathbf{v}=\underline{\mathbf{v}}$ and $\mathbf{v}'=\underline{\mathbf{v}'}$, so that the result follows immediately. Theorem~\ref{thm:taunzequiv} covers the case $N=1$.
Assume then that the result holds for $N-1\ge 1$. Let $[n]\backslash S=\{p_1<\hdots<p_N\}$, $\mathbf{\widehat{v}_1}=\tau_{p_N}(\mathbf{v})$, and $\mathbf{\widehat{v}_2}=\tau_{p_N}(\mathbf{v}')$. Note that $\widehat{S}=\supp(\tau_{p_N}(\mathbf{v}))=\supp(\tau_{p_N}(\mathbf{v}'))$ satisfies $|\widehat{S}|=m$, $\max \widehat{S}=n-1$, and $[n-1]\backslash \widehat{S}=\{p_1<\hdots<p_{N-1}\}$. Thus, it follows that
\begin{enumerate}
    \item[(1)] $\mathbf{v}\equiv \mathbf{v}'$ if and only if $\mathbf{\widehat{v}_1}\equiv \mathbf{\widehat{v}_2}$ by Theorem~\ref{thm:taunzequiv} and
    \item[(2)] $\mathbf{\widehat{v}_1}\equiv \mathbf{\widehat{v}_2}$ if and only if $\underline{\mathbf{v}}=\underline{\mathbf{\widehat{v}_1}}\equiv \underline{\mathbf{\widehat{v}_2}}=\underline{\mathbf{v}'}$ by our induction hypothesis.
\end{enumerate}
The result follows.
\end{proof}

Before moving to the next equivalence-preserving transformation of interest, we introduce two further results concerning flattening. Both results will be crucial in our study of the remaining equivalence-preserving transformations.

\begin{prop}\label{prop:zeroscheck}
    Let $S=\supp(\mathbf{v})$ with $|S|=m$. Then $\mathbf{v}\equiv 0$ if and only if $\underline{\mathbf{v}}\bullet_ku=0$ for all $u\in S_m$ and $k\in [m-1]$.
\end{prop}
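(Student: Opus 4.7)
The plan is to leverage Theorem~\ref{thm:flat}$(a)$, which tells us that $\mathbf{v}\equiv \mathbf{0}$ if and only if $\underline{\mathbf{v}}\equiv \mathbf{0}$, so it suffices to prove the statement with $\mathbf{v}$ replaced by $\underline{\mathbf{v}}$. The forward direction is then immediate: if $\underline{\mathbf{v}}\equiv \mathbf{0}$, then $\underline{\mathbf{v}}\bullet_k u=0$ for every $u$ and $k$, and in particular for $u\in S_m$ and $k\in[m-1]$.

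For the reverse direction, assume $\underline{\mathbf{v}}\bullet_k u=0$ for every $u\in S_m$ and $k\in[m-1]$; the goal becomes $\underline{\mathbf{v}}\equiv \mathbf{0}$. Since the action of $\mathcal{F}_n^{\mathbf{q}}\cup\{\mathbf{0}\}$ extends $\mathbf{q}$-multiplicatively, it is enough to show $\underline{\mathbf{v}}\bullet_k u=0$ for every $n\ge m$, every $u\in S_n$, and every $k\in[n-1]$. I will do this by induction on $n$, with the base case $n=m$ being exactly the hypothesis.

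For the inductive step, suppose $n>m$ and, for contradiction, that $\underline{\mathbf{v}}\bullet_k u\neq 0$ for some $u\in S_n$ and $k\in[n-1]$. I will choose $p\in\{m+1,\ldots,n\}$ and apply Lemma~\ref{lem:nztrunc} with this $p$: setting $t=u^{-1}(p)$, the lemma yields $\tau_p(\underline{\mathbf{v}})\bullet_{\tau_t(k)} u/t\neq 0$. Because $p>m=\max\supp(\underline{\mathbf{v}})$, the truncation $\tau_p$ restricts to the identity on $[m]$, so $\tau_p(\underline{\mathbf{v}})=\underline{\mathbf{v}}$; furthermore $u/t\in S_{n-1}$. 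Provided $\tau_t(k)\in[n-2]$, invoking the inductive hypothesis gives $\underline{\mathbf{v}}\bullet_{\tau_t(k)} u/t=0$, the desired contradiction.

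The main obstacle is verifying that a suitable $p$ can always be chosen so that $\tau_t(k)\in[n-2]$. The constraint $\tau_t(k)\ge 1$ fails only when $t=k=1$, and $\tau_t(k)\le n-2$ fails only when $t=n$ and $k=n-1$. Since $|\{m+1,\ldots,n\}|=n-m\ge 1$, and at most one value of $p$ sits at position $1$ and at most one at position $n$, a good $p$ exists unless $n=m+1$ and either $k=1$ with $u(1)=m+1$ or $k=n-1$ with $u(n)=m+1$. Both corner cases can be dispatched by inspection of the rightmost operator $\mathbf{v}_{a_1 b_1}$ in $\underline{\mathbf{v}}$: the cover relation at $k=1$ forces position $1$ to be among the two positions swapped, but $u(1)>m$ while $a_1,b_1\in[m]$, so $\mathbf{v}_{a_1b_1}\bullet_1 u=0$ and hence $\underline{\mathbf{v}}\bullet_1 u=0$, contradicting the assumption; the $k=n-1$ case is entirely symmetric (now forcing position $n$ to be involved). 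This disposes of the edge cases and completes the induction.
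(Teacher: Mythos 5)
Your proposal is correct and follows essentially the same route as the paper: reduce to $\underline{\mathbf{v}}$ via Theorem~\ref{thm:flat}~(a), then use Lemma~\ref{lem:nztrunc} to truncate a value outside $\supp(\underline{\mathbf{v}})$ and drop to a smaller symmetric group. The paper phrases this as a minimal-counterexample argument (take $N$ least and apply the lemma with $p=N$), whereas you phrase it as induction on $n$ with $p\in\{m+1,\ldots,n\}$; these are the same idea. One remark: the corner-case discussion about $\tau_t(k)\in[n-2]$, while correct, is unnecessary. The hypothesis $\mathbf{v}\bullet_k u\neq 0$ in Lemma~\ref{lem:nztrunc} already rules out $t=k=1$ and $(t,k)=(n,n-1)$ by precisely the reasoning you give in your edge cases (the first operator would have to move the value $p\notin\supp(\mathbf{v})$ sitting at position $1$ or $n$), and the paper's proof of the lemma notes $\tau_t(k)>0$ for this reason. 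So the lemma applies directly for any $p\notin\supp(\underline{\mathbf{v}})$ without splitting into cases, which is what makes the paper's one-line application licit.
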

\begin{proof}
    By Theorem~\ref{thm:flat}~(a), it suffices to show that $\underline{\mathbf{v}}\equiv 0$ if and only if $\underline{\mathbf{v}}\bullet_ku=0$ for all $u\in S_m$ and $k\in [m-1]$. The forward direction is immediate. For the backward direction, assume that $\underline{\mathbf{v}}\not\equiv 0$. Then there exist $N$, $u\in S_N$, and $k\in [N-1]$ such that $\underline{\mathbf{v}}\bullet_ku\neq 0$. We take $N$ to be the smallest possible. Note that $N>m$ by assumption, so that $N\notin \supp(\underline{\mathbf{v}})=[m]$. Thus, letting $t=u^{-1}(N)$ and applying Lemma~\ref{lem:nztrunc}, it follows that $$\underline{\mathbf{v}}\bullet_{\tau_t(k)}u/t=\tau_N(\underline{\mathbf{v}})\bullet_{\tau_t(k)}u/t\neq 0,$$ where $u/t\in S_{N-1}$, contradicting the minimality of $N$. The result follows.
\end{proof}

The proof of the following non-zero analogue of Proposition~\ref{prop:zeroscheck} is almost identical to that of Proposition~\ref{prop:taunzequiv1}, and so we omit the details.

\begin{prop}\label{prop:nzeroscheck}
    Let $\mathbf{v}=\mathbf{v}_{a_rb_r}\hdots\mathbf{v}_{a_1b_1}$, $\mathbf{v}'=\mathbf{v}_{c_rd_r}\hdots\mathbf{v}_{c_1d_1}$, and $S=\supp(\mathbf{v})=\supp(\mathbf{v}')$ with $|S|=m$. Then $\mathbf{v}\equiv \mathbf{v}'$ if and only if $\underline{\mathbf{v}}\bullet_ku=\underline{\mathbf{v}'}\bullet_ku$ for all $u\in S_m$ and $k\in [m-1]$.
\end{prop}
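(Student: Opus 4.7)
The plan is to reduce via Theorem~\ref{thm:flat}(b) and then induct on the size of the ambient symmetric group. Since $\supp(\mathbf{v})=\supp(\mathbf{v}')=S$, Theorem~\ref{thm:flat}(b) gives $\mathbf{v}\equiv\mathbf{v}'$ if and only if $\underline{\mathbf{v}}\equiv\underline{\mathbf{v}'}$, so it suffices to prove that $\underline{\mathbf{v}}\equiv\underline{\mathbf{v}'}$ is equivalent to the finite check $\underline{\mathbf{v}}\bullet_k u=\underline{\mathbf{v}'}\bullet_k u$ for all $u\in S_m$ and $k\in[m-1]$. The forward direction is immediate from the definition of $\equiv$ and $\mathbf{q}$-equivariance. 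For the converse, I would argue by induction on $N\geq m$ that $\underline{\mathbf{v}}\bullet_k u = \underline{\mathbf{v}'}\bullet_k u$ for every $u\in S_N$ and $k\in[N-1]$, with base case $N=m$ being precisely the hypothesis.

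For the inductive step, fix $N>m$, $u\in S_N$, and $k\in[N-1]$, and set $p=N$ and $t=u^{-1}(N)$. Because $\supp(\underline{\mathbf{v}})=\supp(\underline{\mathbf{v}'})=[m]$ and $\tau_N$ acts as the identity on $[m]$, we have $\tau_N(\underline{\mathbf{v}})=\underline{\mathbf{v}}$ and $\tau_N(\underline{\mathbf{v}'})=\underline{\mathbf{v}'}$. When $\tau_t(k)\geq 1$, the induction hypothesis applied to $u/t\in S_{N-1}$ yields
\begin{equation*}
\underline{\mathbf{v}}\bullet_{\tau_t(k)}u/t \;=\; \underline{\mathbf{v}'}\bullet_{\tau_t(k)}u/t,
\end{equation*}
which plays exactly the role of the hypothesized equality $\tau_p(\mathbf{v})\bullet_{\tau_t(k)}u/t=\tau_p(\mathbf{v}')\bullet_{\tau_t(k)}u/t$ in the proof of Proposition~\ref{prop:taunzequiv1}. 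The edge case $\tau_t(k)=0$ forces $k=t=1$, hence $u(1)=N\notin[m]$, and then no operator in $\underline{\mathbf{v}}$ or $\underline{\mathbf{v}'}$ can be applied to $u$ under $\bullet_1$ (as the active position $1$ would have to carry a value in $[m]$), so both sides are trivially zero.

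From here the argument replicates the case analysis of Proposition~\ref{prop:taunzequiv1} essentially verbatim. If the common truncated value above is zero, then Proposition~\ref{prop:tau0im0} gives $\underline{\mathbf{v}}\bullet_ku=\underline{\mathbf{v}'}\bullet_ku=0$. If both $\underline{\mathbf{v}}\bullet_k u$ and $\underline{\mathbf{v}'}\bullet_k u$ are non-zero, Lemma~\ref{lem:nztrunc} together with the observation that any quantum operator either straddles position $t$ (contributing the factor $q_{t-1}q_t$ to the resulting monomial) or avoids it entirely (contributing neither $q_{t-1}$ nor $q_t$) forces the underlying permutations to coincide and the exponent vectors to agree in every slot, including the $(t-1)$-th one; this is exactly Case~(a) of Proposition~\ref{prop:taunzequiv1}. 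The hardest case, and the main obstacle, is when exactly one of $\underline{\mathbf{v}}\bullet_ku$, $\underline{\mathbf{v}'}\bullet_ku$ is zero: here one imports the counting argument of Case~(b) of Proposition~\ref{prop:taunzequiv1}, which uses Lemma~\ref{lem:L1}(d) to pin down $\alpha_{t-1}=L_{<p,t}(w/t)-L_{<p,t}(u/t)$ from the non-zero side and then shows that the first prefix of the other composition reaching $0$ forces a strict inequality $\alpha_{t-1}>L_{<p,t}(w/t)-L_{<p,t}(u/t)$, a contradiction. Since that argument only uses the agreement of the two truncated actions (not the global hypothesis $\tau_p(\mathbf{v})\equiv\tau_p(\mathbf{v}')$), it transfers without essential modification, completing the induction and hence the proposition.
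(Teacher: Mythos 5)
Your proposal is correct and takes essentially the same route as the paper's (omitted) proof. The paper explicitly declines to give the argument, stating only that it is ``almost identical to that of Proposition~\ref{prop:taunzequiv1}''; your reconstruction — flattening via Theorem~\ref{thm:flat}~(b), then induction on the ambient $N$ with $p=N$ and $t=u^{-1}(N)$, feeding the inductive hypothesis into the three cases of Proposition~\ref{prop:taunzequiv1}'s proof (Proposition~\ref{prop:tau0im0} for the zero case, the $q_{t-1}q_t$-parity argument for the both-nonzero case, the $L_{<p,t}$-counting argument for the mixed case) — is exactly that. Your added observation that Case~(b) of Proposition~\ref{prop:taunzequiv1} uses only pointwise agreement of the truncated actions at the given $(u,k)$, not the full equivalence $\tau_p(\mathbf{v})\equiv\tau_p(\mathbf{v}')$, is the correct justification for why the argument transfers; and your explicit treatment of the degenerate case $\tau_t(k)=0$ (which forces $k=t=1$, $u(1)=N>m$, so no operator with indices in $[m]$ can act under $\bullet_1$) closes a gap that the minimal-counterexample phrasing in Proposition~\ref{prop:zeroscheck} sidesteps automatically but the inductive phrasing must address.
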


\subsection{Cyclic Shift}
In this subsection, we consider the transformation 
corresponding to a cyclic shift of the indices.

For $n>0$ and $S=\{t_1<\cdots<t_m\}\subset [n]$, let $\memph{\mathfrak{o}_S}:S\to S$ be the map defined by 
$$\mathfrak{o}(t_i)=\begin{cases}
    t_{i+1}, & 1\le i<m\\
    t_1, & i=m.
\end{cases}$$
Given an operator $\mathbf{v}=\mathbf{v}_{a_rb_r}\cdots\mathbf{v}_{a_1b_1}$ with $S=\supp(\mathbf{v})$, we define $\memph{\mathfrak{o}(\mathbf{v})}:=\mathbf{v}_{\mathfrak{o}_S(a_r)\mathfrak{o}_S(b_r)}\cdots\mathbf{v}_{\mathfrak{o}_S(a_1)\mathfrak{o}_S(b_1)}$. We refer to this transformation as the \demph{cyclic shift}.

In the particular case of $\supp(\mathbf{v}) =[n]$, we simply denote $\memph{\mathfrak{o}_n}:=\mathfrak{o}_{[n]}$. Note that one can think of $\mathfrak{o}_n$ as the $n$-cycle $(2,3,4\ldots,n,1)$ acting on the indices of the operators in $\mathbf{v}$. Moreover, given $u\in S_n$ and $1\leq i<j\leq n$, we define $\mathfrak{o}_n(u)\in S_n$ by $$\mathfrak{o}_n(u)(i)=\begin{cases}
    u(i)+1 & u(i)\neq n \\
    1 & u(i)=1
\end{cases}$$
and
$$\memph{\mathbf{q}^{\mathfrak{o}(u,n,i,j)}}=
\begin{cases}
    \mathbf{q}_{ij}^{-1}, & \text{if}~u(i)=n, \\
    \mathbf{q}_{ij}, & \text{if}~u(j)=n, \\
    1, & \text{otherwise}.
\end{cases}
$$

\begin{Ex}
    For the operator $\mathbf{v}=\mathbf{v}_{28}\mathbf{v}_{83}\mathbf{v}_{25}$, we have $\mathfrak{o}(\mathbf{v})=\mathbf{v}_{32}\mathbf{v}_{25}\mathbf{v}_{38}$. The diagrams of $\mathbf{v}$ \textup(left\textup) and $\mathfrak{o}(\mathbf{v})$ \textup(right\textup) are illustrated in Figure~\ref{fig:cycshift1}. For $u=(2,4,3,1)\in S_4$, we have $\mathfrak{o}_4(u)=(3,1,4,2)$, $\mathbf{q}^{\mathfrak{o}(u,4,1,3)}=1$, $\mathbf{q}^{\mathfrak{o}(u,4,1,2)}=\mathbf{q}_{12}$, and $\mathbf{q}^{\mathfrak{o}(u,4,2,4)}=\mathbf{q}^{-1}_{24}$.
    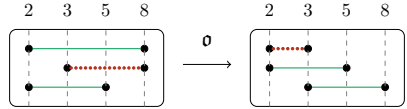
\begin{figure}[H]
    \centering
    \scalebox{0.8}{\begin{tikzpicture}[scale=0.8]
    \node at (0,0) {\scalebox{0.8}{\begin{tikzpicture}[scale=0.8]
        \draw[rounded corners] (-0.5, 0) rectangle (3.5, 2) {};
        
        \node (1) at (0,2.5) {$2$};
        \node (2) at (1,2.5) {$3$};
        \node (3) at (2,2.5)  {$5$};
        \node (4) at (3,2.5)  {$8$};

        \node (5) at (0,1.5) [circle, draw = black,fill=black, inner sep = 0.5mm] {};
        \node (6) at (3,1.5) [circle, draw = black,fill=black, inner sep = 0.5mm] {};
        \draw[ForestGreen] (5)--(6);

        \node (7) at (1,1) [circle, draw = black,fill=black, inner sep = 0.5mm] {};
        \node (8) at (3,1) [circle, draw = black,fill=black, inner sep = 0.5mm] {};
        \draw[decorate sep={0.5mm}{1mm},fill, BrickRed] (7)--(8);
        
        \node (9) at (0,0.5) [circle, draw = black,fill=black, inner sep = 0.5mm] {};
        \node (10) at (2,0.5) [circle, draw = black,fill=black, inner sep = 0.5mm] {};
        \draw[ForestGreen] (9)--(10);
        
        \draw[dashed,gray] (0,0)--(0,2);
        \draw[dashed,gray] (1,0)--(1,2);
        \draw[dashed,gray] (2,0)--(2,2);
        \draw[dashed,gray] (3,0)--(3,2);
    \end{tikzpicture}}};
    \draw[->] (2,-0.25)--(3,-0.25);
    \node at (2.5, 0.25) {$\mathfrak{o}$};
    \node at (5,0) {\scalebox{0.8}{\begin{tikzpicture}[scale=0.8]
        \draw[rounded corners] (-0.5, 0) rectangle (3.5, 2) {};
        
        \node (1) at (0,2.5) {$2$};
        \node (2) at (1,2.5) {$3$};
        \node (3) at (2,2.5)  {$5$};
        \node (4) at (3,2.5)  {$8$};

        \node (5) at (0,1.5) [circle, draw = black,fill=black, inner sep = 0.5mm] {};
        \node (6) at (1,1.5) [circle, draw = black,fill=black, inner sep = 0.5mm] {};
        \draw[decorate sep={0.5mm}{1mm},fill, BrickRed] (5)--(6);

        \node (7) at (0,1) [circle, draw = black,fill=black, inner sep = 0.5mm] {};
        \node (8) at (2,1) [circle, draw = black,fill=black, inner sep = 0.5mm] {};
        \draw[ForestGreen] (7)--(8);
        
        \node (9) at (1,0.5) [circle, draw = black,fill=black, inner sep = 0.5mm] {};
        \node (10) at (3,0.5) [circle, draw = black,fill=black, inner sep = 0.5mm] {};
        \draw[ForestGreen] (9)--(10);
        
        \draw[dashed,gray] (0,0)--(0,2);
        \draw[dashed,gray] (1,0)--(1,2);
        \draw[dashed,gray] (2,0)--(2,2);
        \draw[dashed,gray] (3,0)--(3,2);
        
    \end{tikzpicture}}};
    \end{tikzpicture}}
    \caption{Operator}
    \label{fig:cycshift1}
\end{figure}
\end{Ex}

Below, we establish that zero and non-zero equivalences are preserved by the cyclic shift, constituting the main result of this subsection.
\begin{theorem}\label{thm:cycshift}~
    \begin{enumerate}
        \item[\textup{(a)}] $\mathbf{v}\equiv \mathbf{0}$ if and only if $\mathfrak{o}(\mathbf{v})\equiv \mathbf{0}$.
        \item[\textup{(b)}] $\mathbf{v}\equiv\mathbf{v}'$ if and only if $\mathfrak{o}(\mathbf{v})\equiv \mathfrak{o}(\mathbf{v}')$.
    \end{enumerate}
\end{theorem}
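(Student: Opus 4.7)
The strategy is to reduce to the case $\supp(\mathbf{v})=\supp(\mathbf{v}')=[n]$ and then exhibit a precise intertwining between the $\bullet_k$-action of $\mathbf{v}$ and that of $\mathfrak{o}_n(\mathbf{v})$, mediated by the bijection $u\mapsto\mathfrak{o}_n(u)$ on $S_n$. Because flattening commutes with the cyclic shift (i.e.\ $\underline{\mathfrak{o}(\mathbf{v})}=\mathfrak{o}(\underline{\mathbf{v}})$, which is immediate from the definitions), Theorem~\ref{thm:flat} together with Propositions~\ref{prop:zeroscheck} and~\ref{prop:nzeroscheck} let us assume $\supp(\mathbf{v})=\supp(\mathbf{v}')=[n]$ and verify the equivalences by checking actions on permutations of $S_n$ only.

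The technical heart is a single-operator intertwining lemma: for every $\mathbf{v}_{ab}$, $u\in S_n$, and $k\in[n-1]$, the action $\mathbf{v}_{ab}\bullet_k u$ vanishes if and only if $\mathfrak{o}_n(\mathbf{v}_{ab})\bullet_k\mathfrak{o}_n(u)$ does, and when both are nonzero,
$$\mathfrak{o}_n(\mathbf{v}_{ab})\bullet_k\mathfrak{o}_n(u)=\mathbf{q}^{\mathfrak{o}(u,n,i,j)}\mathfrak{o}_n(\mathbf{v}_{ab}\bullet_k u),$$
where $i<j$ are the positions in $u$ of the affected values. I would prove this by case analysis on whether $\mathbf{v}_{ab}$ is classical or quantum and whether $\{a,b\}$ contains $n$. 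The two delicate subcases are $\mathbf{v}_{an}$ (classical, becoming the quantum $\mathbf{v}_{a+1,1}$ under $\mathfrak{o}_n$) and $\mathbf{v}_{nb}$ (quantum, becoming classical); in each, Proposition~\ref{prop:covercond} reduces the check to showing that the intermediate-value conditions transport across the cyclic value-shift, and one verifies by direct computation that the correction $\mathbf{q}_{ij}^{\pm1}$ matches the definition of $\mathbf{q}^{\mathfrak{o}(u,n,i,j)}$. The remaining three cases yield no $\mathbf{q}$-correction.

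Next I would extend the lemma to compositions by induction on the degree $r$ of $\mathbf{v}$. Writing the intermediate states along the chain as $w_0=u,w_1,\ldots,w_r$ (so $\mathbf{v}\bullet_k u=\mathbf{q}^{\alpha}w_r$), iteration gives $\mathfrak{o}_n(\mathbf{v})\bullet_k\mathfrak{o}_n(u)=\mathbf{q}^{\delta}\mathfrak{o}_n(\mathbf{v}\bullet_k u)$ with $\delta=\sum_{i=1}^r\mathfrak{o}(w_{i-1},n,i_i^{\ast},j_i^{\ast})$. The crucial—and most subtle—point is that $\delta$ is path-independent: writing $p_i:=w_i^{-1}(n)$, a coordinatewise calculation shows that the $l$-th entry of the $i$-th contribution equals $[l<p_{i-1}]-[l<p_i]$ (Iverson bracket), which telescopes to $[l<p_0]-[l<p_r]$. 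Hence $\delta$ depends only on $u^{-1}(n)$ and $w_r^{-1}(n)$, not on the particular chain realizing $\mathbf{v}\bullet_k u$.

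With this in place, both parts of the theorem follow quickly. For (a), if $\mathbf{v}\equiv\mathbf{0}$ then the intertwining yields $\mathfrak{o}_n(\mathbf{v})\bullet_k\mathfrak{o}_n(u)=0$ for all $u\in S_n$ and $k\in[n-1]$; as $\mathfrak{o}_n$ permutes $S_n$, Proposition~\ref{prop:zeroscheck} gives $\mathfrak{o}_n(\mathbf{v})\equiv\mathbf{0}$. For (b), if $\mathbf{v}\equiv\mathbf{v}'$, then $\mathbf{v}\bullet_k u=\mathbf{v}'\bullet_k u$ for all $u,k$, and path-independence of $\delta$ forces $\mathfrak{o}_n(\mathbf{v})\bullet_k\mathfrak{o}_n(u)=\mathfrak{o}_n(\mathbf{v}')\bullet_k\mathfrak{o}_n(u)$; Proposition~\ref{prop:nzeroscheck} completes the step. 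The reverse implications in both (a) and (b) follow by iterating the forward implication $n-1$ times, since $\mathfrak{o}_n^{\,n}$ is the identity on operators with support $[n]$. The principal obstacle throughout is the telescoping identity establishing path-independence of $\delta$; without it, the accumulated $\mathbf{q}$-factor under $\mathfrak{o}_n$ would a priori depend on the chain, and the forward implication of (b) could fail.
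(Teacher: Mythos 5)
Your proof is correct and follows the same overall architecture as the paper's: reduce to $\supp(\mathbf{v})=[n]$ via flattening (using $\underline{\mathfrak{o}(\mathbf{v})}=\mathfrak{o}(\underline{\mathbf{v}})$ together with Theorem~\ref{thm:flat} and Propositions~\ref{prop:zeroscheck}--\ref{prop:nzeroscheck}), prove a single-operator intertwining identity (the paper's Lemma~\ref{lem:osingle}), extend by induction to compositions with an accumulated $\mathbf{q}$-correction (Proposition~\ref{prop:cyclic_shift_non-zero}), and then show that correction is path-independent (Lemma~\ref{lem:osameq}). Where you genuinely improve on the paper is the last step: your coordinatewise telescoping identity, showing the $q_l$-exponent of the $i$-th factor is $[l<p_{i-1}]-[l<p_i]$ with $p_i=w_i^{-1}(n)$, is a cleaner and more transparent proof of Lemma~\ref{lem:osameq} than the paper's induction on $r$ with a three-way case split ($n\notin\{a_r,b_r\}$, $n=a_r$, $n=b_r$). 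I checked the Iverson-bracket identity in all three subcases and it does recover the paper's closed form $\mathbf{q}_{ji}$ / $\mathbf{q}_{ij}^{-1}$ / $1$ depending on the relative order of $u^{-1}(n)$ and $w^{-1}(n)$. The closing argument for the ``only if'' direction (iterate the forward direction $n-1$ times and use $\mathfrak{o}_n^{\,n}=\mathrm{id}$ on operators with support $[n]$) is a nice uniform way to get both reverse implications and matches the spirit of what the paper leaves implicit. One very small slip: you speak of ``the remaining three cases'' for the single-operator lemma when there are four cases total in your $(\text{classical}/\text{quantum})\times(n\in\{a,b\}?)$ dichotomy, so the remainder after the two delicate subcases is two, not three; this has no bearing on the correctness of the argument.
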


We start by noticing that $\underline{\mathfrak{o}(\mathbf{v})}=\mathfrak{o}(\underline{\mathbf{v}})$, and so by Theorem~\ref{thm:flat}, it suffices to consider the case when $\supp(\mathbf{v})= \supp(\mathbf{v}')=[n]$ for some $n\in\mathbb{Z}_{>0}$. Moreover, by Propositions~\ref{prop:zeroscheck} and~\ref{prop:nzeroscheck}, we need only check the equivalence of the actions for all $u\in S_n$ and $k\in[n-1]$.

With the above in mind, Theorem~\ref{thm:cycshift}~(a) follows by induction, as shown in~\cite{qkB1}. The base case of a single operator is included below for later use.

\begin{lemma}~\cite[Corollary 5.11 (2)]{qkB1}\label{lem:osingle}
    Let $a,b\in[n]$ with $a\neq b$, $u\in S_n$, and $k\in[n-1]$. Then $$\mathbf{v}_{ab}\bullet_ku\neq 0 \qquad \text{ if and only if } \qquad \mathbf{v}_{\mathfrak{o}_n(a)\mathfrak{o}_n(b)}\bullet_k\mathfrak{o}_n(u)\neq 0.$$ 
    Moreover, 
    $$\mathbf{v}_{ab}\bullet_ku=\mathbf{q}^\alpha w\qquad \text{ if and only if } \qquad \mathbf{v}_{\mathfrak{o}_n(a)\mathfrak{o}_n(b)}\bullet_k\mathfrak{o}_n(u)=\mathbf{q}^{\mathfrak{o}(u,n,u^{-1}(a),u^{-1}(b))}\mathbf{q}^\alpha\mathfrak{o}_n(w).$$
\end{lemma}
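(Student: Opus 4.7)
The plan is to prove both statements simultaneously by a direct case analysis based on Proposition~\ref{prop:covercond}, with the cases determined by the type of $\mathbf{v}_{ab}$ (classical or quantum) and whether $n\in\{a,b\}$. Set $i=u^{-1}(a)$ and $j=u^{-1}(b)$. The first observation is that, since $\mathfrak{o}_n$ simply relabels values, we have $\mathfrak{o}_n(u)^{-1}(\mathfrak{o}_n(a))=i$ and $\mathfrak{o}_n(u)^{-1}(\mathfrak{o}_n(b))=j$. Consequently, the positional requirement $i\le k<j$ appearing in Proposition~\ref{prop:covercond} transfers identically from $u$ to $\mathfrak{o}_n(u)$, and so the issue reduces to verifying the ``betweenness'' condition on values in positions $i<l<j$, and then matching the resulting permutation and $\mathbf{q}$-monomial.

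I would then treat four cases. \textbf{Case 1:} $a<b$ with $b\neq n$ (so $a,b\neq n$). Here $\mathfrak{o}_n(a)=a+1<b+1=\mathfrak{o}_n(b)$, so the new operator is still classical. Since $\mathfrak{o}_n$ restricted to $[n]\setminus\{n\}$ is order-preserving and since $u(l)\neq a,b$ for $l\in(i,j)$, one checks that $u(l)\in(a,b)$ iff $\mathfrak{o}_n(u)(l)\in(a+1,b+1)$; here one must separately address the possibility $u(l)=n$, in which case $\mathfrak{o}_n(u(l))=1\notin(a+1,b+1)$, matching $n\notin(a,b)$. \textbf{Case 2:} $a>b$ with $a\neq n$, handled analogously for the quantum condition $\{u(l)\mid i<l<j\}\subseteq(b,a)$. \textbf{Case 3:} $a<b=n$. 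Then $\mathfrak{o}_n(b)=1<a+1=\mathfrak{o}_n(a)$, so the operator flips from classical to quantum. The original classical condition is $u(l)<a$ for $l\in(i,j)$ (values above $b=n$ are impossible). Under $\mathfrak{o}_n$ these become $\mathfrak{o}_n(u)(l)=u(l)+1\le a<a+1=\mathfrak{o}_n(a)$ with $\mathfrak{o}_n(u)(l)\ge 2>1=\mathfrak{o}_n(b)$, which is exactly the quantum ``betweenness'' condition for $\mathbf{v}_{\mathfrak{o}_n(a)\mathfrak{o}_n(b)}$. \textbf{Case 4:} $a=n>b$, symmetric to Case 3, where the quantum operator becomes classical.

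Once nonvanishing has been established in each case, I would compute the outputs and compare. On the $\mathbf{v}_{ab}\bullet_k u$ side, the output is $s_{ab}u$ (classical) or $\mathbf{q}_{ij}s_{ab}u$ (quantum). On the other side it is $s_{\mathfrak{o}_n(a)\mathfrak{o}_n(b)}\mathfrak{o}_n(u)$ or $\mathbf{q}_{ij}s_{\mathfrak{o}_n(a)\mathfrak{o}_n(b)}\mathfrak{o}_n(u)$. A direct position-by-position check shows $\mathfrak{o}_n(s_{ab}u)=s_{\mathfrak{o}_n(a)\mathfrak{o}_n(b)}\mathfrak{o}_n(u)$ in all four cases (the only nontrivial verification is at positions $i$ and $j$, where one uses $\mathfrak{o}_n(a)=a+1$ or $1$ according as $a\neq n$ or $a=n$). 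For the $\mathbf{q}$-monomials, in Cases 1 and 2 the type is preserved and $u(i),u(j)\neq n$, so $\mathbf{q}^{\mathfrak{o}(u,n,i,j)}=1$ and both sides carry identical $\mathbf{q}_{ij}$-factors. In Case 3 the left side has no $\mathbf{q}$-factor while the right side has $\mathbf{q}_{ij}$, compensated by $\mathbf{q}^{\mathfrak{o}(u,n,i,j)}=\mathbf{q}_{ij}$ since $u(j)=n$. In Case 4 the left side carries $\mathbf{q}_{ij}$ and the right is classical, compensated by $\mathbf{q}^{\mathfrak{o}(u,n,i,j)}=\mathbf{q}_{ij}^{-1}$ since $u(i)=n$.

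The main obstacle is purely bookkeeping: every case requires a small but careful verification that values equal to $n$ among $\{u(l)\mid i<l<j\}$ do not disrupt the betweenness translation (in particular, $n$ cannot appear between the positions in Cases 1 and 2 without violating the original condition, while in Cases 3 and 4 the value $n$ is already at position $i$ or $j$ and so cannot recur). Once this observation is isolated, the four cases become uniform applications of Proposition~\ref{prop:covercond}, and the forward and backward implications for the nonvanishing statement are established simultaneously with the explicit formula for the image.
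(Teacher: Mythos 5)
The paper does not supply a proof of this lemma; it is imported from~\cite[Corollary~5.11(2)]{qkB1}, so there is no in-paper argument to compare against. Your proof sketch is the expected direct verification via Proposition~\ref{prop:covercond}, and it is essentially correct: the positions $i=u^{-1}(a)$ and $j=u^{-1}(b)$ are unchanged under relabeling by $\mathfrak{o}_n$, so the positional requirement $i\le k<j$ carries over verbatim; the ``betweenness'' condition on the values $u(l)$ for $l\in(i,j)$ translates case by case, with $b=n$ (classical turns quantum) and $a=n$ (quantum turns classical) producing exactly the correction monomial $\mathbf{q}^{\mathfrak{o}(u,n,i,j)}=\mathbf{q}_{ij}$ or $\mathbf{q}_{ij}^{-1}$; and the identity $s_{\mathfrak{o}_n(a)\mathfrak{o}_n(b)}\circ\mathfrak{o}_n=\mathfrak{o}_n\circ s_{ab}$ gives $s_{\mathfrak{o}_n(a)\mathfrak{o}_n(b)}\mathfrak{o}_n(u)=\mathfrak{o}_n(s_{ab}u)$ for the matching of outputs in every case.

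One small misstatement to fix: your closing paragraph asserts that in both Cases 1 and 2, a value of $n$ at a position strictly between $i$ and $j$ would violate the original nonvanishing condition. That is correct for Case 2 (quantum, $b<a<n$), since the condition forces every intermediate value into $(b,a)$ and $n>a$ is excluded. But in Case 1 (classical, $a<b<n$), the condition only forbids values in $(a,b)$, and $n\notin(a,b)$, so an intermediate value of $n$ is perfectly allowed; the claim as stated is false for Case 1. Your earlier Case 1 discussion already handles this correctly — $u(l)=n$ maps to $\mathfrak{o}_n(u)(l)=1\notin(\mathfrak{o}_n(a),\mathfrak{o}_n(b))$, mirroring $n\notin(a,b)$ — so the argument is sound and only the summary sentence is off.
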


Now, to finish the proof of Theorem~\ref{thm:cycshift}, it remains to establish Theorem~\ref{thm:cycshift}~(b). We break the proof of this result into two steps:
\begin{enumerate}
    \item[] (Step 1) We show that for an operator $\mathbf{v}$ with $\supp(\mathbf{v})=[n]$, $u\in S_n$, and $k\in [n-1]$,
    $$
    \mathbf{v}\bullet_ku=\mathbf{q}^\alpha w \qquad \text{ if and only if } \qquad \mathfrak{o}_n(\mathbf{v})\bullet_k\mathfrak{o}_n(u)={\bf q}^\beta\mathbf{q}^\alpha\mathfrak{o}_n(w),
    $$ for some $\beta\in\mathbb{Z}^{n-1}$ (Proposition~\ref{prop:cyclic_shift_non-zero}).
    \item[] (Step 2) We show that the aforementioned $\beta$ depends only on $u$ and $w$ (Lemma~\ref{lem:osameq}).
\end{enumerate}
Once both steps are completed, it follows that operators $\mathbf{v},\mathbf{v}'$ with support $[n]$ act equally on all elements of $S_n$ for all choices of $k\in [n-1]$ if and only if $\mathfrak{o}_n(\mathbf{v})$ and $\mathfrak{o}_n(\mathbf{v}')$ do. As noted above, this is enough to establish Theorem~\ref{thm:cycshift}~(b).

\begin{prop}\label{prop:cyclic_shift_non-zero}
   Let $\mathbf{v}=\mathbf{v}_{a_rb_r}\cdots\mathbf{v}_{a_1b_1}$ with $\supp(\mathbf{v})\subseteq[n]$. Consider $u\in S_n$ and $k\in [n-1]$ such that $\mathbf{v}\bullet_ku\neq 0$. Take $u_0=u$ and, for $1\le i<r$, define $u_i\in S_n$ as $\mathbf{v}_{a_ib_i}\cdots\mathbf{v}_{a_1b_1}\bullet_ku_0=\mathbf{q}^{\alpha^i}u_i$ with $\alpha^i\in\mathbb{Z}^{n-1}_{\ge 0}$.
   Then,
   $$\mathbf{v}\bullet_ku=\mathbf{q}^\alpha w \qquad \text{ if and only if } \qquad \mathfrak{o}_n(\mathbf{v})\bullet_k\mathfrak{o}_n(u)=\left(\prod_{i=1}^r\mathbf{q}^{\mathfrak{o}(u_{i-1},n,u_{i-1}^{-1}(a_i),u_{i-1}^{-1}(b_i))}\right)\mathbf{q}^\alpha\mathfrak{o}_n(w). $$
\end{prop}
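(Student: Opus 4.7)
My plan is to proceed by induction on $r$, the number of operators in $\mathbf{v}$. The base case $r=1$ is precisely Lemma~\ref{lem:osingle}, which handles both the ``if and only if'' (nonzero on one side is equivalent to nonzero on the other) and the exact form of the $\mathbf{q}$-factor $\mathbf{q}^{\mathfrak{o}(u,n,u^{-1}(a),u^{-1}(b))}$ picked up under the cyclic shift.

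For the inductive step, I would peel off the leftmost operator by writing $\mathbf{v} = \mathbf{v}_{a_rb_r}\mathbf{v}^\ast$ where $\mathbf{v}^\ast = \mathbf{v}_{a_{r-1}b_{r-1}}\cdots\mathbf{v}_{a_1b_1}$, and observe that $\mathfrak{o}_n(\mathbf{v}) = \mathbf{v}_{\mathfrak{o}_n(a_r)\mathfrak{o}_n(b_r)}\mathfrak{o}_n(\mathbf{v}^\ast)$. Since $\mathbf{v}\bullet_k u \neq 0$, the intermediate value $\mathbf{v}^\ast\bullet_k u = \mathbf{q}^{\alpha^{r-1}}u_{r-1}$ is nonzero, and the inductive hypothesis gives
$$\mathfrak{o}_n(\mathbf{v}^\ast)\bullet_k\mathfrak{o}_n(u) = \left(\prod_{i=1}^{r-1}\mathbf{q}^{\mathfrak{o}(u_{i-1},n,u_{i-1}^{-1}(a_i),u_{i-1}^{-1}(b_i))}\right)\mathbf{q}^{\alpha^{r-1}}\mathfrak{o}_n(u_{r-1}).$$
Then I would apply Lemma~\ref{lem:osingle} to the single operator $\mathbf{v}_{a_rb_r}$ acting on $u_{r-1}$: writing $\mathbf{v}_{a_rb_r}\bullet_k u_{r-1} = \mathbf{q}^{\beta^r}u_r$, the lemma yields
$$\mathbf{v}_{\mathfrak{o}_n(a_r)\mathfrak{o}_n(b_r)}\bullet_k\mathfrak{o}_n(u_{r-1}) = \mathbf{q}^{\mathfrak{o}(u_{r-1},n,u_{r-1}^{-1}(a_r),u_{r-1}^{-1}(b_r))}\mathbf{q}^{\beta^r}\mathfrak{o}_n(u_r).$$
Finally, using that the $\bullet_k$ action is $\mathbf{q}$-multiplicative (so that any leading $\mathbf{q}^\gamma$ commutes past the operator), I would pass $\mathbf{q}^{\alpha^{r-1}}$ past $\mathbf{v}_{a_rb_r}$ on the original side, and pass the accumulated product on the shifted side, combining everything to reach the claimed formula with $w = u_r$ and $\alpha = \alpha^{r-1} + \beta^r = \alpha^r$.

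The biconditional in the statement propagates through the induction because the bidirectional ``nonzero iff nonzero'' clause of Lemma~\ref{lem:osingle} is preserved at each step: if we start instead from the assumption that the right-hand side is nonzero, the same induction runs in reverse via the backward implication of the lemma, since $\mathfrak{o}_n$ is invertible on $[n]$ and applying the inverse cyclic shift to both $\mathbf{v}$ and $u$ (or equivalently, reading Lemma~\ref{lem:osingle} right-to-left) returns us to the original action.

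The main obstacle I anticipate is the bookkeeping of the $\mathbf{q}$-monomials. Although the combinatorics of combining them is routine, one must be careful that the $\mathbf{q}^{\mathfrak{o}(u_{i-1},n,\cdot,\cdot)}$ factors can have \emph{negative} exponents (precisely when the cyclic shift sends the value $n$ at one of the two transposed positions back to $1$), so the product lives in $\mathbb{Z}^{n-1}$ rather than $\mathbb{Z}^{n-1}_{\ge 0}$. This is consistent with the intended Step~$1$ described after the statement, and it raises no real difficulty provided that the exponents are tracked as signed integers throughout; the $\mathbf{q}$-multiplicativity of $\bullet_k$ extends to monomials in $\mathbb{Z}[\mathbf{q},\mathbf{q}^{-1}]$ without change.
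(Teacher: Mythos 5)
Your proof is correct and follows essentially the same route as the paper: induction on $r$, with Lemma~\ref{lem:osingle} providing the base case and the single-operator step, peeling off the leftmost operator and invoking $\mathbf{q}$-multiplicativity to combine the factors. Your explicit caution about the possibly negative exponents in $\mathbf{q}^{\mathfrak{o}(\cdot)}$ (so that the intermediate products live in $\mathbb{Z}^{n-1}$ rather than $\mathbb{Z}^{n-1}_{\ge 0}$) is a point the paper's own proof leaves implicit, and is a worthwhile clarification; it is ultimately resolved, as you note, because $\bullet_k$ extends $\mathbf{q}$-multiplicatively to Laurent monomials, and Lemma~\ref{lem:osameq} then shows the full product collapses to a legal coefficient on the right-hand side.
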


\begin{proof}
    We proceed by induction on $r$. The base case is covered by Lemma~\ref{lem:osingle}. Assume the result holds for $r-1\ge 0$. Moreover, assume that $\mathbf{v}_{a_rb_r}\bullet_ku_{r-1}=\mathbf{q}^\beta w$; note that, by definition, we have $\beta+\alpha^{r-1}=\alpha$. Then, applying our induction hypothesis along with Lemma~\ref{lem:osingle}, we have that $\mathbf{v}\bullet_ku=w$ if and only if
    \begin{align*}
    \mathfrak{o}(\mathbf{v})\bullet_k\mathfrak{o}(u_0)&=\left(\prod_{i=1}^{r-1}\mathbf{q}^{\mathfrak{o}(u_{i-1},n,u_{i-1}^{-1}(a_i),u_{i-1}^{-1}(b_i))}\right)\mathbf{q}^{\alpha^{r-1}}\mathfrak{o}_n(\mathbf{v}_{a_rb_r})\bullet_k\mathfrak{o}_n(u_{r-1})\\
    &=\left(\prod_{i=1}^{r-1}\mathbf{q}^{\mathfrak{o}(u_{i-1},n,u_{i-1}^{-1}(a_i),u_{i-1}^{-1}(b_i))}\right)\mathbf{q}^{\alpha^{r-1}}\mathbf{q}^{\mathfrak{o}(u_{r-1},n,u_{r-1}^{-1}(a_r),u_{i-1}^{-1}(b_r))}\mathbf{q}^\beta\mathfrak{o}_n(u)\\
    &=\left(\prod_{i=1}^{r}\mathbf{q}^{\mathfrak{o}(u_{i-1},n,u_{i-1}^{-1}(a_i),u_{i-1}^{-1}(b_i))}\right)\mathbf{q}^\alpha\mathfrak{o}_n(u),
\end{align*}
as desired. Thus, the result follows by induction.
\end{proof}

Finally, we look at the $\mathbf{q}$-monomial appearing in Proposition~\ref{prop:cyclic_shift_non-zero}.  
\begin{lemma}\label{lem:osameq}
With the same assumptions and notation as in Proposition~\ref{prop:cyclic_shift_non-zero}, together with $u^{-1}(n)=i$ and $w^{-1}(n)=j$, we have that
$$\prod_{i=1}^{r}\mathbf{q}^{\mathfrak{o}(u_{i-1},n,(u_{i-1})^{-1}(a_i),(u_{i-1})^{-1}(b_i))}=\begin{cases}
        \mathbf{q}_{ji}, & \text{ if } j<i \\
        \mathbf{q}_{ij}^{-1}, & \text{ if } i<j \\
        1, & \text{ if } i=j.
    \end{cases}$$ 
\end{lemma}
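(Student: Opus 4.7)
The plan is to track the position of $n$ as the operators are applied and show that the product of $\mathbf{q}$-monomials telescopes. Introduce the auxiliary notation $t_s := u_s^{-1}(n)$ for $0 \le s \le r$, so that $t_0 = i$ and $t_r = j$. Also introduce the monomial $f(p) := q_1 q_2 \cdots q_{p-1}$ for $1 \le p \le n$, with the convention $f(1) = 1$, and note that for $p < p'$ one has $\mathbf{q}_{pp'} = f(p')/f(p)$. The heart of the argument is showing that the $s$-th factor in the product can be rewritten as $f(t_{s-1})/f(t_s)$; once this is done, the product collapses to $f(t_0)/f(t_r) = f(i)/f(j)$, and the three cases of the claim follow by elementary manipulation of $f(i)/f(j)$.

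To prove the factor-by-factor identity, set $p_s := u_{s-1}^{-1}(a_s)$ and $q_s := u_{s-1}^{-1}(b_s)$. Since $\mathbf{v} \bullet_k u \ne 0$, each application of $\mathbf{v}_{a_sb_s}$ realizes a cover relation in $\preceq_k^{\mathbf{q}}$, which (by inspection of the classical and quantum cases in the definition of $\bullet_k$) forces $p_s < q_s$. One then splits into three cases based on whether $n \in \{a_s, b_s\}$. If $n \notin \{a_s, b_s\}$, then $n$ is not moved by the operator, so $t_s = t_{s-1}$, and by definition the factor is $1 = f(t_{s-1})/f(t_s)$. If $a_s = n$ (necessarily quantum, as $a_s > b_s$), then $u_{s-1}(p_s) = n$ so the factor is $\mathbf{q}_{p_sq_s}^{-1}$; moreover $t_{s-1} = p_s$ and $t_s = q_s$, so $\mathbf{q}_{p_sq_s}^{-1} = f(p_s)/f(q_s) = f(t_{s-1})/f(t_s)$. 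Symmetrically, if $b_s = n$ (necessarily classical), then $u_{s-1}(q_s) = n$, the factor is $\mathbf{q}_{p_sq_s}$, and $t_{s-1} = q_s$, $t_s = p_s$, so again the factor equals $f(t_{s-1})/f(t_s)$.

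Telescoping the product yields
\[
\prod_{s=1}^r \mathbf{q}^{\mathfrak{o}(u_{s-1},n,p_s,q_s)} \;=\; \prod_{s=1}^r \frac{f(t_{s-1})}{f(t_s)} \;=\; \frac{f(i)}{f(j)}.
\]
If $i = j$ this equals $1$; if $i < j$ it equals $(q_i q_{i+1} \cdots q_{j-1})^{-1} = \mathbf{q}_{ij}^{-1}$; if $i > j$ it equals $q_j q_{j+1} \cdots q_{i-1} = \mathbf{q}_{ji}$, matching the claim. The only real subtlety is making sure that the three cases (no movement, $n$ moves right, $n$ moves left) are each rewritten with the same sign convention so that they telescope uniformly; the introduction of $f$ cleanly absorbs this. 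No other part of the argument is delicate, since the behavior of $\mathfrak{o}$ on a single step is already encoded in Lemma~\ref{lem:osingle} and Proposition~\ref{prop:cyclic_shift_non-zero}.
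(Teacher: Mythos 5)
Your proof is correct, and it takes a genuinely different (and cleaner) route than the paper's. The paper argues by induction on $r$: in each of the three cases $n\notin\{a_r,b_r\}$, $n=a_r$, $n=b_r$, it applies the inductive hypothesis to the first $r-1$ factors and then does a further three-way case split on whether $u_{r-1}^{-1}(n)$ is less than, greater than, or equal to $u^{-1}(n)$, producing a fairly dense nested case analysis. You instead introduce the prefix monomial $f(p)=q_1\cdots q_{p-1}$, observe that every factor $\mathbf{q}^{\mathfrak{o}(u_{s-1},n,p_s,q_s)}$ equals $f(t_{s-1})/f(t_s)$ for $t_s=u_s^{-1}(n)$ (checking this only requires the single-step casework on whether $n\in\{a_s,b_s\}$), and then let the product telescope to $f(i)/f(j)$. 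The sign conventions that the paper's proof has to juggle explicitly across subcases are absorbed uniformly by $f$, so the argument is shorter and the mechanism — tracking the position of $n$ as it migrates — is more transparent. The one prerequisite you rely on, that $p_s=u_{s-1}^{-1}(a_s)<u_{s-1}^{-1}(b_s)=q_s$ whenever the action is nonzero, is indeed guaranteed by Proposition~\ref{prop:covercond} in both the classical and quantum cases, so the telescoping is well founded. A minor stylistic caution: reusing the symbol $q_s$ for the position $u_{s-1}^{-1}(b_s)$ invites confusion with the indeterminates $q_1,\ldots,q_{n-1}$; renaming it would make the write-up safer.
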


\begin{proof}
    We proceed by induction on $r$. The base case is immediate. Assume the result holds for $r-1\ge 0$. There are three cases.
    \bigskip

    \noindent
    \underline{Case (a):} $n\notin \{a_r,b_r\}$. In this case $\mathbf{q}^{\mathfrak{o}(u_{r-1},n,(u_{r-1})^{-1}(a_r),(u_{r-1})^{-1}(b_r))}=1$ so that $$\prod_{i=1}^{r}\mathbf{q}^{\mathfrak{o}(u_{i-1},n,(u_{i-1})^{-1}(a_i),(u_{i-1})^{-1}(b_i))}=\prod_{i=1}^{r-1}\mathbf{q}^{\mathfrak{o}(u_{i-1},n,(u_{i-1})^{-1}(a_i),(u_{i-1})^{-1}(b_i))}.$$ Applying the induction hypothesis, we have that $$\prod_{i=1}^{r}\mathbf{q}^{\mathfrak{o}(u_{i-1},n,(u_{i-1})^{-1}(a_i),(u_{i-1})^{-1}(b_i))}=\begin{cases}
        \mathbf{q}_{u_{r-1}^{-1}(n)u^{-1}(n)}, & u_{r-1}^{-1}(n)<u^{-1}(n) \\
        \mathbf{q}_{u^{-1}(n)u_{r-1}^{-1}(n)}^{-1}, & u^{-1}(n)<u_{r-1}^{-1}(n) \\
        1, & u^{-1}(n)=u_{r-1}^{-1}(n)
    \end{cases}=\begin{cases}
        \mathbf{q}_{u_{r}^{-1}(n)u^{-1}(n)}, & u_{r}^{-1}(n)<u^{-1}(n) \\
        \mathbf{q}_{u^{-1}(n)u_{r}^{-1}(n)}^{-1}, & u^{-1}(n)<u_{r}^{-1}(n) \\
        1, & u^{-1}(n)=u_{r}^{-1}(n),
    \end{cases}$$ as desired, where the final equality holds since $u_{r}^{-1}(n)=u_{r-1}^{-1}(n)$.
    \bigskip

    \noindent
    \underline{Case (b):} $n=a_r$. In this case $\mathbf{q}^{\mathfrak{o}(u_{r-1},n,(u_{r-1})^{-1}(a_r),(u_{r-1})^{-1}(b_r))}=\mathbf{q}^{-1}_{u_{r-1}^{-1}(n)u_{r}^{-1}(n)}$. Applying our induction hypothesis, we have that 
    \begin{align*}
        \prod_{i=1}^{r}\mathbf{q}^{\mathfrak{o}(u_{i-1},n,(u_{i-1})^{-1}(a_i),(u_{i-1})^{-1}(b_i))}&=\mathbf{q}^{-1}_{u_{r-1}^{-1}(n)u_{r}^{-1}(n)}\prod_{i=1}^{r-1}\mathbf{q}^{\mathfrak{o}(u_{i-1},n,(u_{i-1})^{-1}(a_i),(u_{i-1})^{-1}(b_i))} \\
        &~\\
        &=\mathbf{q}^{-1}_{u_{r-1}^{-1}(n)u_{r}^{-1}(n)}\begin{cases}
        \mathbf{q}_{u_{r-1}^{-1}(n)u^{-1}(n)}, & u_{r-1}^{-1}(n)<u^{-1}(n) \\
        \mathbf{q}_{u^{-1}(n)u_{r-1}^{-1}(n)}^{-1}, & u^{-1}(n)<u_{r-1}^{-1}(n) \\
        1, & u^{-1}(n)=u_{r-1}^{-1}(n)
    \end{cases} \\
    &~\\
    &=\begin{cases}
        \mathbf{q}_{u_{r}^{-1}(n)u^{-1}(n)}, & u_{r-1}^{-1}(n)<u_{r}^{-1}(n)<u^{-1}(n) \\
        \mathbf{q}^{-1}_{u^{-1}(n)u_{r}^{-1}(n)}, & u_{r-1}^{-1}(n),u^{-1}(n)<u_{r}^{-1}(n)~\text{and}~u_{r-1}^{-1}(n)\neq u^{-1}(n) \\
        1, & u_{r-1}^{-1}(n)<u_{r}^{-1}(n)=u^{-1}(n) \\
        \mathbf{q}^{-1}_{u_{r-1}^{-1}(n)u_{r}^{-1}(n)}, & u^{-1}(n)=u_{r-1}^{-1}(n)<u^{-1}_r(n)
    \end{cases} \\
    &~\\
    &=\begin{cases}
        \mathbf{q}_{u_{r}^{-1}(n)u^{-1}(n)}, & u_{r}^{-1}(n)<u^{-1}(n) \\
        \mathbf{q}^{-1}_{u^{-1}(n)u_{r}^{-1}(n)}, & u^{-1}(n)<u_{r}^{-1}(n) \\
        1, & u^{-1}(n)=u_{r}^{-1}(n),
    \end{cases}
    \end{align*}
    as desired.
    \bigskip

    \noindent
    \underline{Case (c):} $n=b_r$. In this case $\mathbf{q}^{\mathfrak{o}(u_{r-1},n,(u_{r-1})^{-1}(a_r),(u_{r-1})^{-1}(b_r))}=\mathbf{q}_{u_{r}^{-1}(n)u_{r-1}^{-1}(n)}$. Applying our induction hypothesis, we have that  
    \begin{align*}
        \prod_{i=1}^{r}\mathbf{q}^{\mathfrak{o}(u_{i-1},n,(u_{i-1})^{-1}(a_i),(u_{i-1})^{-1}(b_i))}&=\mathbf{q}_{u_{r}^{-1}(n)u_{r-1}^{-1}(n)}\prod_{i=1}^{r-1}\mathbf{q}^{\mathfrak{o}(u_{i-1},(u_{i-1})^{-1}(a_i),(u_{i-1})^{-1}(b_i))} \\
        &~\\
        &=\mathbf{q}_{u_{r}^{-1}(n)u_{r-1}^{-1}(n)}\begin{cases}
        \mathbf{q}_{u_{r-1}^{-1}(n)u^{-1}(n)}, & u_{r-1}^{-1}(n)<u^{-1}(n) \\
        \mathbf{q}_{u^{-1}(n)u_{r-1}^{-1}(n)}^{-1}, & u^{-1}(n)<u_{r-1}^{-1}(n) \\
        1, & u^{-1}(n)=u_{r-1}^{-1}(n)
    \end{cases} \\
    &~\\
    &=\begin{cases}
        \mathbf{q}_{u_{r}^{-1}(n)u^{-1}(n)}, & u_r^{-1}(n)<u_{r-1}^{-1}(n),u^{-1}(n)~\text{and}~u_{r-1}^{-1}(n)\neq u^{-1}(n) \\
        1, & u_{r}^{-1}(n)=u^{-1}(n)<u_{r-1}^{-1}(n) \\
        \mathbf{q}^{-1}_{u^{-1}(n)u_{r}^{-1}(n)}, & u^{-1}(n)<u_{r}^{-1}(n)<u_{r-1}^{-1}(n) \\
        \mathbf{q}_{u_{r}^{-1}(n)u_{r-1}^{-1}(n)}, & u_r^{-1}(n)<u^{-1}(n)=u_{r-1}^{-1}(n)
    \end{cases} \\
    &~\\
    &=\begin{cases}
        \mathbf{q}_{u_{r}^{-1}(n)u^{-1}(n)}, & u_{r}^{-1}(n)<u^{-1}(n) \\
        \mathbf{q}^{-1}_{u^{-1}(n)u_{r}^{-1}(n)}, & u^{-1}(n)<u_{r}^{-1}(n) \\
        1, & u^{-1}(n)=u_{r}^{-1}(n),
    \end{cases}
    \end{align*}
    as desired.
\end{proof}

\subsection{Horizontal Flip}

In this subsection, we consider the transformation 
corresponding to ``flipping'' the associated diagram horizontally.

For $S=\{t_1<\hdots<t_m\}\subseteq [n]$, let $\memph{\mathfrak{h}_S}:S\to S$ be the map defined by $$\mathfrak{h}_S(t_i)=t_{m-i+1}.$$ 
We extend this definition to permutations by defining $\memph{\mathfrak{h}_n}:S_n\to S_n$ as $\mathfrak{h}(u)=w_0uw_0$ for $u\in S_n$, where $w_0$ is the longest element of $S_n$, i.e., $\mathfrak{h}_n(u)(i)=n-u(n-i+1)+1$ for $i\in [n]$. Given an operator $\mathbf{v}=\mathbf{v}_{a_rb_r}\cdots\mathbf{v}_{a_1b_1}$ with $\supp(\mathbf{v})=S$, we define $\memph{\mathfrak{h}(\mathbf{v})}=\mathbf{v}_{\mathfrak{h}_S(b_r)\mathfrak{h}_S(a_r)}\cdots\mathbf{v}_{\mathfrak{h}_S(b_1)\mathfrak{h}_S(a_1)}$ and refer to this transformation as the \demph{horizontal flip}. Finally, define $\mathfrak{h}(\alpha)$ as the composition obtained from $\alpha=(\alpha_1,\hdots,\alpha_{n-1})\in\mathbb{Z}_{\ge 0}^{n-1}$ by reversing the entries, $\mathfrak{h}(\alpha)=(\alpha_{n-1},\hdots,\alpha_1)$.

\begin{Ex}
    For the operator $\mathbf{v}=\mathbf{v}_{28}\mathbf{v}_{83}\mathbf{v}_{25}$, we have $\mathfrak{h}(\mathbf{v})=\mathbf{v}_{28}\mathbf{v}_{52}\mathbf{v}_{38}$. The diagrams of $\mathbf{v}$ \textup(left\textup) and $\mathfrak{h}(\mathbf{v})$ \textup(right\textup) are illustrated in Figure~\ref{fig:horflip}. For $u=(2,4,3,1)\in S_4$ and $\alpha=(2,0,2,5)\in\mathbb{Z}^4_{\ge 0}$, we have $\mathfrak{h}_4(u)=(4,2,1,3)$ and $\mathfrak{h}(\alpha)=(5,2,0,2)$.
    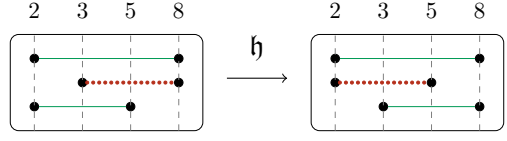
\begin{figure}[H]
        \centering
        $$\begin{tikzpicture}[scale=0.8]
    \node at (0,0) {\scalebox{0.8}{\begin{tikzpicture}[scale=0.8]
        \draw[rounded corners] (-0.5, 0) rectangle (3.5, 2) {};
        
        \node (1) at (0,2.5) {$2$};
        \node (2) at (1,2.5) {$3$};
        \node (3) at (2,2.5)  {$5$};
        \node (4) at (3,2.5)  {$8$};

        \node (5) at (0,1.5) [circle, draw = black,fill=black, inner sep = 0.5mm] {};
        \node (6) at (3,1.5) [circle, draw = black,fill=black, inner sep = 0.5mm] {};
        \draw[ForestGreen] (5)--(6);

        \node (7) at (1,1) [circle, draw = black,fill=black, inner sep = 0.5mm] {};
        \node (8) at (3,1) [circle, draw = black,fill=black, inner sep = 0.5mm] {};
        \draw[decorate sep={0.5mm}{1mm},fill, BrickRed] (7)--(8);
        
        \node (9) at (0,0.5) [circle, draw = black,fill=black, inner sep = 0.5mm] {};
        \node (10) at (2,0.5) [circle, draw = black,fill=black, inner sep = 0.5mm] {};
        \draw[ForestGreen] (9)--(10);
        
        \draw[dashed,gray] (0,0)--(0,2);
        \draw[dashed,gray] (1,0)--(1,2);
        \draw[dashed,gray] (2,0)--(2,2);
        \draw[dashed,gray] (3,0)--(3,2);
        
    \end{tikzpicture}}};
    \draw[->] (2,-0.25)--(3,-0.25);
    \node at (2.5, 0.25) {$\mathfrak{h}$};
    \node at (5,0) {\scalebox{0.8}{\begin{tikzpicture}[scale=0.8]
        \draw[rounded corners] (-0.5, 0) rectangle (3.5, 2) {};
        
        \node (1) at (0,2.5) {$2$};
        \node (2) at (1,2.5) {$3$};
        \node (3) at (2,2.5)  {$5$};
        \node (4) at (3,2.5)  {$8$};

        \node (5) at (0,1.5) [circle, draw = black,fill=black, inner sep = 0.5mm] {};
        \node (6) at (3,1.5) [circle, draw = black,fill=black, inner sep = 0.5mm] {};
        \draw[ForestGreen] (5)--(6);

        \node (7) at (0,1) [circle, draw = black,fill=black, inner sep = 0.5mm] {};
        \node (8) at (2,1) [circle, draw = black,fill=black, inner sep = 0.5mm] {};
        \draw[decorate sep={0.5mm}{1mm},fill, BrickRed] (7)--(8);
        
        \node (9) at (1,0.5) [circle, draw = black,fill=black, inner sep = 0.5mm] {};
        \node (10) at (3,0.5) [circle, draw = black,fill=black, inner sep = 0.5mm] {};
        \draw[ForestGreen] (9)--(10);
        
        \draw[dashed,gray] (0,0)--(0,2);
        \draw[dashed,gray] (1,0)--(1,2);
        \draw[dashed,gray] (2,0)--(2,2);
        \draw[dashed,gray] (3,0)--(3,2);
        
    \end{tikzpicture}}};
    \end{tikzpicture}$$
        \caption{Horizontal Flip}
        \label{fig:horflip}
    \end{figure}
\end{Ex}

Below, we establish that zero and non-zero equivalences are preserved by the horizontal flip.
\begin{theorem}\label{thm:hshift}~
\begin{enumerate}
    \item[\textup{(a)}] $\mathbf{v}\equiv \mathbf{0}$ if and only if $\mathfrak{h}(\mathbf{v})\equiv \mathbf{0}$.
    \item[\textup{(b)}] $\mathbf{v}\equiv \mathbf{v}$ if and only if $\mathfrak{h}(\mathbf{v})\equiv \mathfrak{h}(\mathbf{v}')$.
\end{enumerate}
\end{theorem}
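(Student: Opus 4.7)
The plan is to reduce everything to the full-support case via flattening and then prove a single intertwining lemma from which both (a) and (b) fall out immediately. First, observe that horizontal flip commutes with flattening: if $S=\supp(\mathbf{v})=\{t_1<\cdots<t_m\}$, then the composite map $t_i \mapsto t_{m-i+1} \mapsto m-i+1$ (flip then flatten) equals $t_i \mapsto i \mapsto m-i+1$ (flatten then flip), so $\underline{\mathfrak{h}(\mathbf{v})}=\mathfrak{h}(\underline{\mathbf{v}})$ where on the right $\mathfrak{h}=\mathfrak{h}_{[m]}$. Combined with Theorem~\ref{thm:flat}, this reduces both (a) and (b) to the case $\supp(\mathbf{v})=\supp(\mathbf{v}')=[n]$, and then by Propositions~\ref{prop:zeroscheck}~and~\ref{prop:nzeroscheck} we need only compare actions for $u\in S_n$, $k\in[n-1]$.

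The heart of the argument is the following compatibility statement, which I will prove by induction on the length $r$ of $\mathbf{v}$: for $\mathbf{v}\in\mathcal{F}_n^{\mathbf{q}}$ with $\supp(\mathbf{v})\subseteq[n]$, $u\in S_n$, $k\in[n-1]$, $\alpha\in\mathbb{Z}_{\geq 0}^{n-1}$, and $w\in S_n$,
$$\mathbf{v}\bullet_k u=\mathbf{q}^{\alpha}w \iff \mathfrak{h}(\mathbf{v})\bullet_{n-k}\mathfrak{h}_n(u)=\mathbf{q}^{\mathfrak{h}(\alpha)}\mathfrak{h}_n(w),$$
with $\mathbf{v}\bullet_k u=0$ iff $\mathfrak{h}(\mathbf{v})\bullet_{n-k}\mathfrak{h}_n(u)=0$. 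Given this lemma, Theorem~\ref{thm:hshift} is immediate: $u\mapsto \mathfrak{h}_n(u)$ is an involution of $S_n$ and $k\mapsto n-k$ is a bijection of $[n-1]$, so $\mathbf{v}$ and $\mathbf{v}'$ agree on all $(u,k)$ iff their flips agree on all $(\mathfrak{h}_n(u),n-k)$. The zero case follows by the same argument.

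For the base case of the lemma ($r=1$), I will translate the cover conditions of Proposition~\ref{prop:covercond} through $\mathfrak{h}_n$. The key identities are $\mathfrak{h}_n(u)^{-1}(\mathfrak{h}_n(c))=n-u^{-1}(c)+1$ for any $c\in[n]$ and $\mathfrak{h}_n(u)(n-l+1)=n-u(l)+1$. Setting $i=u^{-1}(a)$, $j=u^{-1}(b)$, $i'=n-j+1$, $j'=n-i+1$, the condition $i\leq k<j$ is equivalent to $i'\leq n-k<j'$; the value comparison $a<b$ (resp. $a>b$) becomes $\mathfrak{h}(b)<\mathfrak{h}(a)$ (resp. $\mathfrak{h}(b)>\mathfrak{h}(a)$); and the intermediate-position bijection $l\leftrightarrow n-l+1$ matches intermediate-value ranges since $u(l)\in(a,b)$ iff $n-u(l)+1\in(\mathfrak{h}(b),\mathfrak{h}(a))$. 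Thus classical covers map to classical covers and quantum covers map to quantum covers. For the quantum $\mathbf{q}$-monomial, $\mathbf{q}_{ij}$ has exponent vector supported on $\{i,i+1,\ldots,j-1\}$, and reversing the vector sends this to $\{n-j+1,\ldots,n-i\}$, which is precisely the support of $\mathbf{q}_{i'j'}$; hence $\mathfrak{h}(\mathbf{q}_{ij})=\mathbf{q}_{i'j'}$, matching the monomial produced by $\mathbf{v}_{\mathfrak{h}(b)\mathfrak{h}(a)}$ acting on $\mathfrak{h}_n(u)$.

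For the inductive step, write $\mathbf{v}=\mathbf{v}_{a_rb_r}\mathbf{v}'$ with $\mathbf{v}'$ of length $r-1$. If $\mathbf{v}'\bullet_k u=\mathbf{q}^{\gamma}v$ and $\mathbf{v}_{a_rb_r}\bullet_k v=\mathbf{q}^{\delta}w$, then $\mathbf{v}\bullet_k u=\mathbf{q}^{\gamma+\delta}w$; applying the inductive hypothesis to $\mathbf{v}'$ and the base case to $\mathbf{v}_{a_rb_r}$, and using that $\mathfrak{h}$ is additive on exponent vectors and $\mathfrak{h}(\mathbf{v})=\mathbf{v}_{\mathfrak{h}(b_r)\mathfrak{h}(a_r)}\mathfrak{h}(\mathbf{v}')$, one obtains the flipped identity. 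The zero cases are handled identically: the lemma's biconditional at each single-step guarantees that non-zero-ness propagates or fails in tandem. The main obstacle is the case bookkeeping in the base step --- there is no deep difficulty, but one must carefully run through the classical and quantum sub-cases in both directions to see that each piece of data (positions, values, intermediate-value conditions, $\mathbf{q}$-monomials) maps correctly under the reversal $l\mapsto n-l+1$.
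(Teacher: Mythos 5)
Your proof is correct and follows essentially the same route as the paper: reduce to the full-support case via flattening and Propositions~\ref{prop:zeroscheck}/\ref{prop:nzeroscheck}, then establish the intertwining identity $\mathbf{v}\bullet_ku=\mathbf{q}^{\alpha}w \iff \mathfrak{h}(\mathbf{v})\bullet_{n-k}\mathfrak{h}_n(u)=\mathbf{q}^{\mathfrak{h}(\alpha)}\mathfrak{h}_n(w)$ by induction on the number of operators (this is exactly Proposition~\ref{prop:hfr}), and read both (a) and (b) off from that. The one place you do more work than the paper is the single-operator base case: the paper simply cites this as Lemma~\ref{lem:hf1} (Corollary 5.11(3) of~\cite{qkB1}), whereas you derive it directly by pushing the cover conditions of Proposition~\ref{prop:covercond} through the position/value reversal $l\mapsto n-l+1$, checking that classical covers map to classical covers, quantum to quantum, the intermediate-value windows match, and that reversing the exponent vector of $\mathbf{q}_{ij}$ gives exactly $\mathbf{q}_{n-j+1,\,n-i+1}$. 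All of these checks are correct. You also explicitly verify that flattening and horizontal flip commute ($\underline{\mathfrak{h}(\mathbf{v})}=\mathfrak{h}(\underline{\mathbf{v}})$), which the paper leaves implicit, and you obtain part (a) as a corollary of the same lemma rather than invoking a separate induction from~\cite{qkB1} as the paper does. These are small self-containments, not a different strategy.
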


As in the case of the cyclic shift, it suffices to establish Theorem~\ref{thm:hshift} in the case where the operators have support $[n]$, and we only need to check the equivalences for all $u\in S_n$ and $k\in [n-1]$. Moreover, as in Theorem~\ref{thm:cycshift}~(a), Theorem~\ref{thm:hshift}~(a) follows by induction, as shown in~\cite{qkB1}. The base case of a single operator is included below for later use.

\begin{lemma}\label{lem:hf1}\cite[Corollary 5.11 (3)]{qkB1}
    Let $a,b\in[n]$ with $a\neq b$, $u\in S_n$, and $k\in[n-1]$. Then, 
    $$\mathbf{v}_{ab}\bullet_ku\neq 0 \qquad \text{ if and only if }\qquad \mathfrak{h}_{[n]}(\mathbf{v}_{ab})\bullet_{n-k}\mathfrak{h}_n(u)\neq 0.$$ 
    Moreover,
    $$\mathbf{v}_{ab}\bullet_ku=\mathbf{q}^{\alpha}w \qquad \text{ if and only if } \qquad \mathfrak{h}_{[n]}(\mathbf{v}_{ab})\bullet_{n-k}\mathfrak{h}_n(u)=\mathbf{q}^{\mathfrak{h}(\alpha)}\mathfrak{h}_n(w).$$
\end{lemma}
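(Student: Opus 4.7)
The plan is to prove Lemma~\ref{lem:hf1} by direct verification from Proposition~\ref{prop:covercond}, splitting on whether $\mathbf{v}_{ab}$ is classical ($a<b$) or quantum ($a>b$). The crucial preliminary identity to establish is
\[
(\mathfrak{h}_n(u))^{-1}(n-c+1)=n-u^{-1}(c)+1\quad\text{for all }c\in[n],
\]
which follows at once from $\mathfrak{h}_n(u)(i)=n-u(n-i+1)+1$. From this one sees that $\mathfrak{h}_{[n]}(\mathbf{v}_{ab})=\mathbf{v}_{n-b+1,n-a+1}$, and that $a<b$ iff $n-b+1<n-a+1$, so $\mathfrak{h}$ preserves the classical/quantum dichotomy of an operator. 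Setting $i=u^{-1}(a)$ and $j=u^{-1}(b)$, the positions of $n-b+1$ and $n-a+1$ in $\mathfrak{h}_n(u)$ are $n-j+1$ and $n-i+1$ respectively.

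Next, I would translate the two conditions of Proposition~\ref{prop:covercond} through the flip. The position condition $i\le k<j$ for $\mathbf{v}_{ab}\bullet_k u\neq 0$ is equivalent, after subtracting from $n+1$, to $n-j+1\le n-k<n-i+1$, which is precisely the position condition for $\mathfrak{h}_{[n]}(\mathbf{v}_{ab})\bullet_{n-k}\mathfrak{h}_n(u)$. For the ``betweenness'' condition, the substitution $l\mapsto l'=n-l+1$ sends $(i,j)$ bijectively to $(n-j+1,n-i+1)$, while the value flip $v\mapsto n-v+1$ sends the interval $(a,b)$ to $(n-b+1,n-a+1)$ (classical case) and $(b,a)$ to $(n-a+1,n-b+1)$ (quantum case). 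Thus the two conditions in Proposition~\ref{prop:covercond} are interchanged exactly, yielding the ``if and only if'' for non-vanishing.

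To pin down the image, I would verify directly from the formula $\mathfrak{h}_n(u)=w_0uw_0$ that $s_{n-b+1,n-a+1}\mathfrak{h}_n(u)=\mathfrak{h}_n(s_{ab}u)$ by evaluating both sides on each position. In the quantum case there is the additional task of checking the $\mathbf{q}$-monomial. The exponent vector of $\mathbf{q}_{ij}=q_iq_{i+1}\cdots q_{j-1}$ is the indicator of $\{i,i+1,\ldots,j-1\}\subseteq[n-1]$; applying $\mathfrak{h}$ to this vector yields the indicator of $\{n-j+1,\ldots,n-i\}$, which is exactly the exponent vector of $\mathbf{q}_{n-j+1,n-i+1}$ attached to $\mathfrak{h}_{[n]}(\mathbf{v}_{ab})\bullet_{n-k}\mathfrak{h}_n(u)$. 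This matches $\mathbf{q}^{\mathfrak{h}(\alpha)}$ exactly.

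The argument is essentially bookkeeping, and no step is a real obstacle; the only care needed is to track that $c\mapsto n-c+1$ is an order-reversing involution on $[n]$, which is what guarantees that classical and quantum operators map to operators of the same type under $\mathfrak{h}$ and that the two clauses of Proposition~\ref{prop:covercond} get swapped in a compatible way. Because the classical case carries $\alpha=0$ trivially, the quantum case is the only place where the $\mathfrak{h}(\alpha)$ computation is non-vacuous, and there the claim reduces to the elementary fact that reversing the indicator of a consecutive interval in $[n-1]$ is again the indicator of a consecutive interval, as computed above.
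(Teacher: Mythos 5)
Your proposal is correct. Worth noting up front: the paper does not actually prove this lemma---it cites it as \cite[Corollary 5.11 (3)]{qkB1}---so there is no in-paper proof to compare against. Your direct verification from Proposition~\ref{prop:covercond} is exactly the kind of argument one would expect to find in that reference, and all of your computations check out: the identity $(\mathfrak{h}_n(u))^{-1}(n-c+1)=n-u^{-1}(c)+1$ is right; $\mathfrak{h}_{[n]}(\mathbf{v}_{ab})=\mathbf{v}_{n-b+1,n-a+1}$ follows from the definition; the position condition $i\le k<j$ (with $i=u^{-1}(a)$, $j=u^{-1}(b)$) translates correctly to $n-j+1\le n-k<n-i+1$; the betweenness conditions translate under $l\mapsto n-l+1$ and the order-reversing value map $v\mapsto n-v+1$; the identity $s_{n-b+1,n-a+1}\mathfrak{h}_n(u)=\mathfrak{h}_n(s_{ab}u)$ is easy to check pointwise; and the indicator-vector computation showing $\mathfrak{h}(\alpha)$ is the exponent of $\mathbf{q}_{n-j+1,n-i+1}$ matches the monomial $\mathbf{q}_{i'j'}$ produced by the flipped quantum operator.

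One small wording issue: you say the two clauses of Proposition~\ref{prop:covercond} are ``interchanged exactly.'' They are not interchanged---as you yourself established, $\mathfrak{h}$ preserves the classical/quantum dichotomy, so clause (a) maps to clause (a) and clause (b) to clause (b). What you mean (and what your computations in fact show) is that each clause is carried over \emph{faithfully} under the flip. This is a terminology slip, not a mathematical gap; the substance of the argument is sound.
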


Now, to finish the proof of Theorem~\ref{thm:hshift}, it remains to establish Theorem~\ref{thm:hshift}~(b). To do so, we show that for an operator $\mathbf{v}$ with $\supp(\mathbf{v})=[n]$, $u\in S_n$, and $k\in [n-1]$,  $\mathbf{v}\bullet_ku=\mathbf{q}^{\alpha}w$ if and only if $\mathfrak{h}_n(\mathbf{v})\bullet_{n-k}\mathfrak{h}_n(u)=\mathbf{q}^{\mathfrak{h}(\alpha)}\mathfrak{h}_n(w)$. From this it follows that operators $\mathbf{v},\mathbf{v}'$ with support $[n]$ have equal action on all elements of $S_n$ for all choices of $k\in [n-1]$ if and only if $\mathfrak{h}_n(\mathbf{v})$ and $\mathfrak{h}_n(\mathbf{v}')$ do.

\begin{prop}\label{prop:hfr}
Let $\mathbf{v}=\mathbf{v}_{a_rb_r}\cdots\mathbf{v}_{a_1b_1}$ with $\supp(\mathbf{v})\subseteq [n]$, $u\in S_n$, and $k\in [n-1]$ be such that $\mathbf{v}\bullet_ku\neq 0$.  Then, 
$$\mathbf{v}\bullet_ku=\mathbf{q}^{\alpha}w \qquad \text{ if and only if } \qquad \mathfrak{h}_{[n]}(\mathbf{v})\bullet_{n-k}\mathfrak{h}_n(u)=\mathbf{q}^{\mathfrak{h}(\alpha)} \mathfrak{h}_n(w).$$ 
\end{prop}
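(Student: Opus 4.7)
The plan is to proceed by induction on $r$, the degree of $\mathbf{v}$, mirroring the structure of the proof of Proposition~\ref{prop:cyclic_shift_non-zero}. The base case $r=1$ is precisely Lemma~\ref{lem:hf1}. For the inductive step, I will use two key structural properties of $\mathfrak{h}$: first, $\mathfrak{h}$ is compatible with composition, in the sense that writing $\mathbf{v}=\mathbf{v}_{a_rb_r}\cdot\tilde{\mathbf{v}}$ with $\tilde{\mathbf{v}}=\mathbf{v}_{a_{r-1}b_{r-1}}\cdots\mathbf{v}_{a_1b_1}$ gives $\mathfrak{h}_{[n]}(\mathbf{v})=\mathfrak{h}_{[n]}(\mathbf{v}_{a_rb_r})\cdot\mathfrak{h}_{[n]}(\tilde{\mathbf{v}})$ directly from the definition; and second, $\mathfrak{h}$ is $\mathbb{Z}$-linear on exponent vectors, so $\mathfrak{h}(\alpha'+\beta)=\mathfrak{h}(\alpha')+\mathfrak{h}(\beta)$.

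Granting this, for the forward implication, suppose $\mathbf{v}\bullet_k u=\mathbf{q}^\alpha w\neq 0$. Since non-zero compositions remain non-zero through each intermediate step, I may set $\tilde{\mathbf{v}}\bullet_k u=\mathbf{q}^{\alpha'}u'$ and $\mathbf{v}_{a_rb_r}\bullet_k u'=\mathbf{q}^\beta w$ with $\alpha=\alpha'+\beta$. Applying the inductive hypothesis yields $\mathfrak{h}_{[n]}(\tilde{\mathbf{v}})\bullet_{n-k}\mathfrak{h}_n(u)=\mathbf{q}^{\mathfrak{h}(\alpha')}\mathfrak{h}_n(u')$, and Lemma~\ref{lem:hf1} applied to the single operator $\mathbf{v}_{a_rb_r}$ gives $\mathfrak{h}_{[n]}(\mathbf{v}_{a_rb_r})\bullet_{n-k}\mathfrak{h}_n(u')=\mathbf{q}^{\mathfrak{h}(\beta)}\mathfrak{h}_n(w)$. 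Using the $\mathbf{q}$-multiplicative extension of the action together with the factorization of $\mathfrak{h}_{[n]}(\mathbf{v})$, I compute
$$\mathfrak{h}_{[n]}(\mathbf{v})\bullet_{n-k}\mathfrak{h}_n(u)=\mathbf{q}^{\mathfrak{h}(\alpha')}\bigl(\mathfrak{h}_{[n]}(\mathbf{v}_{a_rb_r})\bullet_{n-k}\mathfrak{h}_n(u')\bigr)=\mathbf{q}^{\mathfrak{h}(\alpha')}\mathbf{q}^{\mathfrak{h}(\beta)}\mathfrak{h}_n(w)=\mathbf{q}^{\mathfrak{h}(\alpha)}\mathfrak{h}_n(w),$$
which is the desired identity.

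For the reverse implication, observe that $\mathfrak{h}_{[n]}$ is an involution on compositions of operators with support contained in $[n]$, that $\mathfrak{h}_n$ is an involution on $S_n$, that $n-(n-k)=k$, and that $\mathfrak{h}\circ\mathfrak{h}$ is the identity on $\mathbb{Z}_{\ge 0}^{n-1}$. Hence the reverse direction follows by applying the forward direction to the composition $\mathfrak{h}_{[n]}(\mathbf{v})$, the permutation $\mathfrak{h}_n(u)$, the index $n-k$, and the exponent $\mathfrak{h}(\alpha)$.

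I do not expect any real obstacle here; the argument is a straightforward induction of the same flavor as Proposition~\ref{prop:cyclic_shift_non-zero}, with the only points requiring care being the compatibility $\mathfrak{h}_{[n]}(\mathbf{v}_{a_rb_r}\cdot\tilde{\mathbf{v}})=\mathfrak{h}_{[n]}(\mathbf{v}_{a_rb_r})\cdot\mathfrak{h}_{[n]}(\tilde{\mathbf{v}})$ and the linearity $\mathfrak{h}(\alpha'+\beta)=\mathfrak{h}(\alpha')+\mathfrak{h}(\beta)$, both of which are immediate from the definitions. Note that, unlike in the cyclic-shift case, no extra $\mathbf{q}$-monomial factor arises, since horizontal flipping simply reverses the role of the indices $k<n$ and $n-k<n$ without introducing any wrap-around through the boundary between position $n$ and position $1$.
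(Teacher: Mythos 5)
Your proof is correct and follows essentially the same route as the paper's: induction on $r$, base case Lemma~\ref{lem:hf1}, peeling off the last operator $\mathbf{v}_{a_rb_r}$, applying the inductive hypothesis to $\tilde{\mathbf{v}}$ and the lemma to the single operator, and recombining via $\mathbf{q}$-multiplicativity and the linearity of $\mathfrak{h}$ on exponent vectors. The only cosmetic difference is that you prove the forward implication and then deduce the reverse direction from the involutivity of $\mathfrak{h}_{[n]}$, $\mathfrak{h}_n$, $k\mapsto n-k$, and $\alpha\mapsto\mathfrak{h}(\alpha)$, whereas the paper carries the "if and only if" through the chain directly; both handle it equally well.
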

    \begin{proof}
        We proceed by induction on $r$. The base case is covered by Lemma~\ref{lem:hf1}. Assume the result holds for $r-1\ge 0$. Since $\mathbf{v}\bullet_ku=\mathbf{v}_{a_rb_r}\cdots\mathbf{v}_{a_1b_1}\bullet_ku\neq 0$, it follows that $\mathbf{v}_{a_{r-1}b_{r-1}}\cdots\mathbf{v}_{a_1b_1}\bullet_ku\neq 0$. Suppose that $\mathbf{v}_{a_{r-1}b_{r-1}}\cdots\mathbf{v}_{a_1b_1}\bullet_ku=\mathbf{q}^{\beta}u^*$ for $\beta\in\mathbb{Z}_{\ge 0}^{n-1}$ and $u^*\in S_n$. Then, applying our induction hypothesis along with Lemma~\ref{lem:hf1}, we have that $$\mathbf{v}\bullet_ku=\mathbf{v}_{a_rb_r}\bullet_k\mathbf{q}^{\beta}u^*=\mathbf{q}^{\beta}\mathbf{v}_{a_rb_r}\bullet_ku^*=\mathbf{q}^{\beta}\mathbf{q}^{\gamma}w=\mathbf{q}^{\beta+\gamma}w=\mathbf{q}^{\alpha}w$$ if and only if 
    \begin{align*}
        \mathfrak{h}_{[n]}(\mathbf{v})\bullet_{n-k}\mathfrak{h}_n(u)&=\mathfrak{h}_{[n]}(\mathbf{v}_{a_rb_r})\bullet_{n-k}\mathbf{q}^{\mathfrak{h}(\beta)}\mathfrak{h}_n(u^*)=\mathbf{q}^{\mathfrak{h}(\beta)}\mathfrak{h}_{[n]}(\mathbf{v}_{a_rb_r})\bullet_{n-k}\mathfrak{h}_n(u^*)=\mathbf{q}^{\mathfrak{h}(\beta)}\mathbf{q}^{\mathfrak{h}(\gamma)}\mathfrak{h}_n(w)\\
        &=\mathbf{q}^{\mathfrak{h}(\beta) + \mathfrak{h}(\gamma)}\mathfrak{h}_n(w).
    \end{align*}
    Since $\alpha=\beta+\gamma$ if and only if $\mathfrak{h}(\alpha)=\mathfrak{h}(\beta) + \mathfrak{h}(\gamma)$ as vectors in $\mathbb{Z}^{n-1}_{\ge 0}$, we have $\mathbf{q}^{\mathfrak{h}(\beta) +\mathfrak{h}(\gamma)}\mathfrak{h}_n(w)=\mathbf{q}^{\mathfrak{h}(\alpha)}\mathfrak{h}_n(w)$ and the result follows.
\end{proof}

\subsection{Vertical Flip}

In this subsection, we consider the transformation 
corresponding to reversing the order of the operators, which essentially ``flips'' the associated diagram vertically. 

In this case, the transformation is defined directly for the operators. For an operator $\mathbf{v}=\mathbf{v}_{a_rb_r}\cdots\mathbf{v}_{a_1b_1}$, we define the transformation $\memph{\rho}$ by $\rho(\mathbf{v})=\mathbf{v}_{a_1b_1}\cdots\mathbf{v}_{a_rb_r}$ and refer to this transformation as the \demph{vertical flip}. In addition, we need the corresponding definition for permutations and compositions.
Given a permutation $u\in S_n$, define $\memph{\rho_n(u)}:S_n\to S_n$ by $\rho_n(u):=uw_0$ where $w_0$ is the longest element of $S_n$, i.e., $\rho_n(u)$ is the permutation for which $\rho_n(u)(i)=u(n-i+1)$ for $i\in [n]$. 

\begin{Ex}
    For the operator $\mathbf{v}=\mathbf{v}_{28}\mathbf{v}_{83}\mathbf{v}_{25}$, we have $\rho(\mathbf{v})=\mathbf{v}_{25}\mathbf{v}_{83}\mathbf{v}_{28}$. The diagrams of $\mathbf{v}$ \textup(left\textup) and $\rho(\mathbf{v})$ \textup(right\textup) are illustrated in Figure~\ref{fig:verflip}. For $u=(2,4,3,1)\in S_4$, we have $\rho_n(u)=(1,3,4,2)$.
    \begin{figure}[H]
        \centering
        \begin{tikzpicture}[scale=0.8]
    \node at (0,0) {\scalebox{0.8}{\begin{tikzpicture}[scale=0.8]
        \draw[rounded corners] (-0.5, 0) rectangle (3.5, 2) {};
        
        \node (1) at (0,2.5) {$1$};
        \node (2) at (1,2.5) {$2$};
        \node (3) at (2,2.5)  {$3$};
        \node (4) at (3,2.5)  {$4$};

        \node (5) at (0,1.5) [circle, draw = black,fill=black, inner sep = 0.5mm] {};
        \node (6) at (3,1.5) [circle, draw = black,fill=black, inner sep = 0.5mm] {};
        \draw[ForestGreen] (5)--(6);

        \node (7) at (1,1) [circle, draw = black,fill=black, inner sep = 0.5mm] {};
        \node (8) at (3,1) [circle, draw = black,fill=black, inner sep = 0.5mm] {};
        \draw[decorate sep={0.5mm}{1mm},fill, BrickRed] (7)--(8);
        
        \node (9) at (0,0.5) [circle, draw = black,fill=black, inner sep = 0.5mm] {};
        \node (10) at (2,0.5) [circle, draw = black,fill=black, inner sep = 0.5mm] {};
        \draw[ForestGreen] (9)--(10);
        
        \draw[dashed,gray] (0,0)--(0,2);
        \draw[dashed,gray] (1,0)--(1,2);
        \draw[dashed,gray] (2,0)--(2,2);
        \draw[dashed,gray] (3,0)--(3,2);
    \end{tikzpicture}}};
    \draw[->] (2,-0.25)--(3,-0.25);
    \node at (2.5, 0.25) {$\rho$};
    \node at (5,0) {\scalebox{0.8}{\begin{tikzpicture}[scale=0.8]
        \draw[rounded corners] (-0.5, 0) rectangle (3.5, 2) {};
        
        \node (1) at (0,2.5) {$1$};
        \node (2) at (1,2.5) {$2$};
        \node (3) at (2,2.5)  {$3$};
        \node (4) at (3,2.5)  {$4$};

        \node (5) at (0,0.5) [circle, draw = black,fill=black, inner sep = 0.5mm] {};
        \node (6) at (3,0.5) [circle, draw = black,fill=black, inner sep = 0.5mm] {};
        \draw[ForestGreen] (5)--(6);

        \node (7) at (1,1) [circle, draw = black,fill=black, inner sep = 0.5mm] {};
        \node (8) at (3,1) [circle, draw = black,fill=black, inner sep = 0.5mm] {};
        \draw[decorate sep={0.5mm}{1mm},fill, BrickRed] (7)--(8);
        
        \node (9) at (0,1.5) [circle, draw = black,fill=black, inner sep = 0.5mm] {};
        \node (10) at (2,1.5) [circle, draw = black,fill=black, inner sep = 0.5mm] {};
        \draw[ForestGreen] (9)--(10);
        
        \draw[dashed,gray] (0,0)--(0,2);
        \draw[dashed,gray] (1,0)--(1,2);
        \draw[dashed,gray] (2,0)--(2,2);
        \draw[dashed,gray] (3,0)--(3,2);
    \end{tikzpicture}}};
    \end{tikzpicture}
        \caption{Vertical Flip}
        \label{fig:verflip}
    \end{figure}
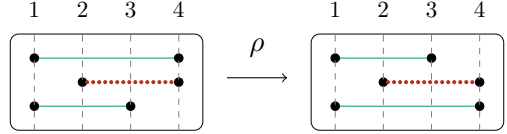
\end{Ex}

Below, we establish that the zero and non-zero equivalences are preserved by the vertical flip.

\begin{theorem}\label{thm:vshift}~
\begin{enumerate}
    \item[\textup{(a)}] $\mathbf{v}\equiv \mathbf{0}$ if and only if $\rho(\mathbf{v})\equiv \mathbf{0}$.
    \item[\textup{(b)}] $\mathbf{v}\equiv \mathbf{v}'$ if and only if $\rho(\mathbf{v})\equiv \rho(\mathbf{v}')$.
\end{enumerate}
\end{theorem}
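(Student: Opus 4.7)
Following the pattern of the preceding flip theorems, I would first note that $\underline{\rho(\mathbf{v})} = \rho(\underline{\mathbf{v}})$, so by Theorem~\ref{thm:flat} it suffices to treat the case $\supp(\mathbf{v}) = \supp(\mathbf{v}') = [n]$, and by Propositions~\ref{prop:zeroscheck} and~\ref{prop:nzeroscheck} it suffices to verify the actions on $u \in S_n$ and $k \in [n-1]$. Since $\rho$ is manifestly an involution on $\mathcal{F}_n^{\mathbf{q}}$, only one direction needs to be proved in each of (a) and (b).

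The goal for part~(b), in analogy with Propositions~\ref{prop:cyclic_shift_non-zero} and~\ref{prop:hfr}, is to establish an explicit correspondence between the non-zero actions of $\mathbf{v}$ and those of $\rho(\mathbf{v})$. Concretely, I would seek an involution $\Phi$ on quadruples $(u, k, \mathbf{q}^{\alpha} w)$ such that $\mathbf{v} \bullet_k u = \mathbf{q}^{\alpha} w$ if and only if $\rho(\mathbf{v}) \bullet_{k'} u' = \mathbf{q}^{\alpha'} w'$, where $(u', k', \mathbf{q}^{\alpha'} w') = \Phi(u, k, \mathbf{q}^{\alpha} w)$. A natural candidate for $\Phi$ uses the inversion map on permutations together with a canonical adjustment of $k$ and a compensating transformation of $\alpha$ that tracks how each quantum cover contributes to the $\mathbf{q}$-monomial under reversal. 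Once $\Phi$ is in place, part~(a) (since $\Phi$ would send zero actions to zero actions in a compatible way) and part~(b) (via the bijection on non-zero actions) would follow directly; the correspondence itself should be proved by induction on the number of operators $r$, with the trivial base $r = 1$ since $\rho(\mathbf{v}_{ab}) = \mathbf{v}_{ab}$.

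The chief obstacle is the construction and verification of $\Phi$, as, unlike $\mathfrak{o}_n$ and $\mathfrak{h}_n$, vertical reversal does not arise from a symmetry of the quantum $k$-Bruhat order itself. It is not even obvious that reversing a maximal chain in $(S_n[\mathbf{q}], \le_k^{\mathbf{q}})$ produces a maximal chain in $(S_n[\mathbf{q}], \le_{k'}^{\mathbf{q}})$ for any $k'$: a quantum cover strictly increases the $\mathbf{q}$-grading, so there is no single upward cover that undoes it. Working out how the $\mathbf{q}$-monomial accumulates along the reversed chain will require a careful case analysis on whether each operator $\mathbf{v}_{a_i b_i}$ is classical or quantum and on the relative positions of the values $a_i, b_i$ in the intermediate permutations, paralleling the treatment in Lemma~\ref{lem:osameq} for the cyclic shift. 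I expect this bookkeeping on the quantum side to be the genuine technical heart of the proof, with the reduction to $\supp(\mathbf{v})=[n]$ and the inductive framework playing the same organizing role as in the proofs of Theorems~\ref{thm:cycshift} and~\ref{thm:hshift}.
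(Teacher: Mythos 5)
The organizing framework you lay out — reduce to $\supp(\mathbf{v})=[n]$ via Theorem~\ref{thm:flat} and Propositions~\ref{prop:zeroscheck}, \ref{prop:nzeroscheck}, then construct an explicit correspondence between non-zero actions of $\mathbf{v}$ and $\rho(\mathbf{v})$ by induction on the degree $r$ — does match the paper's strategy. But the two decisions you leave open are precisely where the proposal goes off track.

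First, the correspondence $\Phi$ you guess at is wrong. You suggest building it from ``the inversion map on permutations'' acting on $u$. The correspondence the paper actually proves (Proposition~\ref{prop:vflip}) instead \emph{swaps the roles of the source and target}: $\mathbf{v}\bullet_ku=\mathbf{q}^{\alpha}w$ if and only if $\rho(\mathbf{v})\bullet_{n-k}\rho_n(w)=\mathbf{q}^{\mathfrak{h}(\alpha)}\rho_n(u)$, where $\rho_n(u)=uw_0$. The reversed operator string is applied with $\rho_n(w)$ (a plain permutation, no $\mathbf{q}$-factor) as the new starting point and $\mathbf{q}^{\mathfrak{h}(\alpha)}\rho_n(u)$ as the new endpoint. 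This resolves exactly the obstacle you raise: there is no attempt to ``undo'' a quantum cover by a downward step, because the $\mathbf{q}$-power still accumulates upward along the new chain — it merely lands on the other end. Your worry that reversing a chain in $\le_k^{\mathbf{q}}$ may not give a chain in $\le_{k'}^{\mathbf{q}}$ is perceptive, but it reflects the wrong picture (reversal inside one chain) rather than the right one (a fresh chain with swapped endpoints).

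Second, you call the base case $r=1$ ``trivial since $\rho(\mathbf{v}_{ab})=\mathbf{v}_{ab}$.'' That identity only shows that the operator is fixed by $\rho$; it does not establish the nontrivial cover-relation symmetry $\mathbf{v}_{ab}\bullet_ku=\mathbf{q}^{\alpha}w \Leftrightarrow \mathbf{v}_{ab}\bullet_{n-k}\rho_n(w)=\mathbf{q}^{\mathfrak{h}(\alpha)}\rho_n(u)$, which is exactly Lemma~\ref{lem:vf1} (imported from \cite[Cor.~5.11(4)]{qkB1}) and is the technical heart of the argument. Once that base case is in hand, the case analysis on classical vs.\ quantum operators you anticipate evaporates: the induction in Proposition~\ref{prop:vflip} is a one-step bookkeeping exercise using that $\mathfrak{h}$ is additive on exponent vectors, and part~(a) is handled separately by the induction already done in \cite{qkB1}. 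So the gap is not in the scaffolding but in identifying the swap-endpoints correspondence and recognizing where the real work (the single-operator lemma) lives.
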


As in the previous cases, it suffices to establish this result in the case where the operators have support $[n]$, and we only need to check the equivalences for all $u\in S_n$ and $k\in [n-1]$. Moreover, as in the cases of Theorems~\ref{thm:cycshift}~(a) and~\ref{thm:hshift}~(a), Theorem~\ref{thm:vshift}~(a) follows by induction, as shown in~\cite{qkB1}. The base case of a single operator is included below for later use.

\begin{lemma}~\cite[Corollary 5.11 (4)]{qkB1}\label{lem:vf1}
    Let $a, b\in [n]$ with $a\neq b$, $u,w\in S_n$, and $k\in [n-1]$. Then, $\rho(\mathbf{v}_{ab}) = \mathbf{v}_{ab}$, and
    $$\mathbf{v}_{ab}\bullet_ku=\mathbf{q}^{\alpha}w \qquad \text{ if and only if } \qquad \mathbf{v}_{ab}\bullet_{n-k}\rho_n(w)=\mathbf{q}^{\mathfrak{h}(\alpha)}\rho_n(u).$$
\end{lemma}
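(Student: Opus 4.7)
The plan is to begin by observing that $\rho(\mathbf{v}_{ab})=\mathbf{v}_{ab}$ is immediate from the definition of the vertical flip on a length-one composition, so the content lies entirely in the equivalence of actions. I will then exploit the fact that $\rho_n$, $\mathfrak{h}$, and $k\mapsto n-k$ are all involutions: applying the forward implication to $\rho_n(w),\ n-k,\ \mathfrak{h}(\alpha),\ \rho_n(u)$ in place of $u,\ k,\ \alpha,\ w$ recovers the reverse implication. So it suffices to establish one direction.

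For the forward direction, I would set $i=u^{-1}(a)$ and $j=u^{-1}(b)$ and let $i'=n-j+1,\ j'=n-i+1$. Because $\rho_n(u)(l')=u(n-l'+1)$, the map $\rho_n$ is a position-reversing relabeling: value $c$ sits at position $p$ in $u$ if and only if it sits at position $n-p+1$ in $\rho_n(u)$, and similarly for $w$. Consequently the cover condition of Proposition~\ref{prop:covercond} transports cleanly: the inequality $i\le k<j$ becomes $i'\le n-k<j'$, and the constraint on the intermediate values $u(l)$ for $l\in(i,j)$ becomes the same constraint on $\rho_n(w)(l')$ for $l'\in(i',j')$, since reversing positions does not alter the multiset of values in any index interval.

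The argument then splits into the classical case ($a<b$, $\alpha=0$) and the quantum case ($a>b$). In the classical case, a direct check gives $s_{ab}\rho_n(w)=\rho_n(u)$, and $\mathfrak{h}(0)=0$, so the equation holds upon invoking Proposition~\ref{prop:covercond}(a). In the quantum case, $\mathbf{q}^\alpha=\mathbf{q}_{ij}=q_iq_{i+1}\cdots q_{j-1}$, so $\mathfrak{h}(\alpha)$ has its ones precisely in positions $n-j+1,\ldots,n-i$; hence $\mathbf{q}^{\mathfrak{h}(\alpha)}=q_{n-j+1}\cdots q_{n-i}=\mathbf{q}_{i'j'}$, which is exactly the monomial prescribed by Proposition~\ref{prop:covercond}(b) for the quantum cover $\rho_n(w)\lessdot_{n-k}^{\mathbf{q}}\mathbf{q}_{i'j'}s_{ab}\rho_n(w)=\mathbf{q}^{\mathfrak{h}(\alpha)}\rho_n(u)$. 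The whole proof is essentially bookkeeping on positions and values; the only mild care needed is aligning the $\mathbf{q}$-subscripts with the components of $\alpha$ after reversing, so I do not anticipate any serious obstacle.
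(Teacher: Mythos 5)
Your proof is correct, and the paper itself offers no proof of this lemma (it is cited from [qkB1, Corollary 5.11(4)]), so there is no in-paper argument to compare against. Your approach is the natural one: the observation that $\rho_n$, $\mathfrak{h}$, and $k\mapsto n-k$ are involutions reduces the biconditional to a single implication, and the forward direction then follows by transporting the cover condition of Proposition~\ref{prop:covercond} under position reversal. The key bookkeeping is sound: with $i=u^{-1}(a)$, $j=u^{-1}(b)$, one has $\rho_n(w)^{-1}(a)=n-j+1=i'$ and $\rho_n(w)^{-1}(b)=n-i+1=j'$ (since $w=s_{ab}u$ places $a$ at position $j$ and $b$ at position $i$), the inequality $i\le k<j$ is equivalent to $i'\le n-k<j'$, the intermediate values $\{\rho_n(w)(l'):l'\in(i',j')\}$ coincide with $\{u(l):l\in(i,j)\}$ because $w$ agrees with $u$ off of $\{i,j\}$, and $s_{ab}\rho_n(w)=s_{ab}s_{ab}uw_0=\rho_n(u)$. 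Your computation of $\mathbf{q}^{\mathfrak{h}(\alpha)}=\mathbf{q}_{i'j'}$ in the quantum case is also correct. The only minor imprecision is the sentence ``reversing positions does not alter the multiset of values in any index interval,'' which conflates $\rho_n(w)$ and $w$; the precise statement you rely on (and which holds) is that $\rho_n(w)$ restricted to $(i',j')$ reproduces $u$ restricted to $(i,j)$ via $l'\mapsto n-l'+1$. This does not affect the validity of the argument.
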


Now, to finish the proof of Theorem~\ref{thm:vshift}, it remains to establish Theorem~\ref{thm:vshift}~(b). To do so, we show that for an operator $\mathbf{v}$ with $\supp(\mathbf{v})=[n]$, $u\in S_n$, and $k\in [n-1]$,  $\mathbf{v}\bullet_ku=\mathbf{q}^{\alpha}w$ if and only if $\rho(\mathbf{v})\bullet_{n-k}\rho_n(w)=\mathbf{q}^{\mathfrak{h}(\alpha)}\rho_n(u)$. From this it follows that operators $\mathbf{v},\mathbf{v}'$ with support $[n]$ have equal action on all elements of $S_n$ for all choices of $k\in [n-1]$ if and only if $\rho(\mathbf{v})$ and $\rho(\mathbf{v}')$ do. 

\begin{prop}\label{prop:vflip}
 Let $\mathbf{v}=\mathbf{v}_{a_rb_r}\cdots\mathbf{v}_{a_1b_1}$, $u\in S_n$, and $k\in [n-1]$. Then $\mathbf{v}\bullet_ku=\mathbf{q}^{\alpha}w$ if and only if $\rho(\mathbf{v})\bullet_{n-k}\rho_n(w)=\mathbf{q}^{\mathfrak{h}(\alpha)}\rho_n(u)$.
\end{prop}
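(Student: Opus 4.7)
The proof will follow the same template as those of Propositions~\ref{prop:hfr} and~\ref{prop:cyclic_shift_non-zero}, namely induction on the number of operators $r$, with Lemma~\ref{lem:vf1} furnishing the base case $r=1$. Before starting the induction, it is worth observing that $\rho$, $\rho_n$, and $\mathfrak{h}$ are all involutions: $\rho^2(\mathbf{v})=\mathbf{v}$ because reversing a sequence twice is the identity, $\rho_n^2(u)=uw_0w_0=u$, and $\mathfrak{h}^2(\alpha)=\alpha$ because reversing a tuple of length $n-1$ twice returns the original. Moreover $\mathfrak{h}$ is additive on $\mathbb{Z}^{n-1}_{\geq 0}$. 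Consequently, it suffices to establish the forward implication; the converse then follows by applying the forward direction to the tuple $(\rho(\mathbf{v}),\rho_n(w),n-k,\mathfrak{h}(\alpha))$ in place of $(\mathbf{v},u,k,\alpha)$ and using these involutive identities.

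For the inductive step, I will split $\mathbf{v}=\mathbf{v}_{a_rb_r}\mathbf{v}'$ with $\mathbf{v}'=\mathbf{v}_{a_{r-1}b_{r-1}}\cdots\mathbf{v}_{a_1b_1}$, so that $\rho(\mathbf{v})=\rho(\mathbf{v}')\,\mathbf{v}_{a_rb_r}$. Assuming $\mathbf{v}\bullet_ku=\mathbf{q}^\alpha w\neq 0$, the intermediate steps must also be nonzero, so I can write $\mathbf{v}'\bullet_ku=\mathbf{q}^\beta u^\ast$ and $\mathbf{v}_{a_rb_r}\bullet_ku^\ast=\mathbf{q}^\gamma w$, with $\alpha=\beta+\gamma$. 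The inductive hypothesis applied to $\mathbf{v}'$ gives $\rho(\mathbf{v}')\bullet_{n-k}\rho_n(u^\ast)=\mathbf{q}^{\mathfrak{h}(\beta)}\rho_n(u)$, and Lemma~\ref{lem:vf1} applied to the single-operator equality gives $\mathbf{v}_{a_rb_r}\bullet_{n-k}\rho_n(w)=\mathbf{q}^{\mathfrak{h}(\gamma)}\rho_n(u^\ast)$. Composing these, and using that $\bullet$-actions of individual operators commute with scalar $\mathbf{q}$-monomials,
$$
\rho(\mathbf{v})\bullet_{n-k}\rho_n(w)=\rho(\mathbf{v}')\bullet_{n-k}\bigl(\mathbf{q}^{\mathfrak{h}(\gamma)}\rho_n(u^\ast)\bigr)=\mathbf{q}^{\mathfrak{h}(\gamma)}\mathbf{q}^{\mathfrak{h}(\beta)}\rho_n(u)=\mathbf{q}^{\mathfrak{h}(\alpha)}\rho_n(u),
$$
using additivity of $\mathfrak{h}$ in the final step. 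This completes the induction.

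The one subtlety to address is the zero case: if $\mathbf{v}\bullet_ku=0$, then the decomposition above breaks down at the first $i$ for which $\mathbf{v}_{a_ib_i}\cdots\mathbf{v}_{a_1b_1}\bullet_ku=0$. Here I will apply the ``if and only if'' form of Lemma~\ref{lem:vf1}, which guarantees that a single-operator application vanishes exactly when its flipped counterpart does, together with the inductive hypothesis on the initial segment that does act nontrivially, to conclude $\rho(\mathbf{v})\bullet_{n-k}\rho_n(w)=0$ as well. I expect this bookkeeping, rather than any conceptually new idea, to be the main obstacle: carefully matching which step in the left-to-right sequence for $\mathbf{v}\bullet_ku$ corresponds to which step in the (reversed) sequence for $\rho(\mathbf{v})\bullet_{n-k}\rho_n(w)$, and verifying that a zero collapse in one forces a zero collapse in the other. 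Once this is handled, the proof is essentially identical in flavor to that of Proposition~\ref{prop:hfr}, with the role of $\mathfrak{h}_{[n]}(\mathbf{v}_{ab})=\mathbf{v}_{\mathfrak{h}(b)\mathfrak{h}(a)}$ replaced by the identity $\rho(\mathbf{v}_{ab})=\mathbf{v}_{ab}$ of Lemma~\ref{lem:vf1}.
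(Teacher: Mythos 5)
Your proof is correct and follows essentially the same approach as the paper's: induction on the number of operators $r$, with Lemma~\ref{lem:vf1} as the base case, splitting off $\mathbf{v}_{a_rb_r}$ and composing the inductive hypothesis with the single-operator lemma. One small remark: the final paragraph about the ``zero case'' is superfluous and slightly ill-posed --- if $\mathbf{v}\bullet_ku=0$ there is no $w$ or $\alpha$ to name, so both sides of the biconditional are vacuous, and your involutive-symmetry reduction to the forward implication already disposes of everything with no separate bookkeeping required.
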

\begin{proof}
    We proceed by induction on $r$. The base case is covered by Lemma~\ref{lem:vf1}. Assume the result holds for $r-1\ge 1$. Since $\mathbf{v}\bullet_ku=\mathbf{v}_{a_rb_r}\cdots\mathbf{v}_{a_1b_1}\bullet_ku\neq 0$, it follows that $\mathbf{v}_{a_{r-1}b_{r-1}}\cdots\mathbf{v}_{a_1b_1}\bullet_ku\neq 0$. Suppose that $\mathbf{v}_{a_{r-1}b_{r-1}}\cdots\mathbf{v}_{a_1b_1}\bullet_ku=\mathbf{q}^{\beta}u^*$ with $\beta\in\mathbb{Z}^{n-1}_{\ge 0}$ and $u^*\in S_n$. Then, applying our inductive hypothesis along with Lemma~\ref{lem:vf1}, we have that
    \begin{align*}
\mathbf{v}\bullet_ku&=\mathbf{v}_{a_rb_r}\cdots\mathbf{v}_{a_1b_1}\bullet_ku=\mathbf{v}_{a_r,b_r}\bullet_k(\mathbf{v}_{a_{r-1}b_{r-1}}\cdots\mathbf{v}_{a_1b_1})\bullet_ku=\mathbf{v}_{a_r,b_r}\bullet_k\mathbf{q}^{\beta}u^*=\mathbf{q}^{\beta}\mathbf{v}_{a_r,b_r}\bullet_ku^*\\
&=\mathbf{q}^{\beta}\mathbf{q}^{\gamma}w=\mathbf{q}^{\beta+\gamma}w=\mathbf{q}^{\alpha}w
    \end{align*}
    if and only if 
    \begin{align*}
        \rho(\mathbf{v})\bullet_{n-k}\rho_n(w)&=\mathbf{v}_{a_1,b_1}\cdots\mathbf{v}_{a_{r-1}b_{r-1}}\mathbf{v}_{a_rb_r}\bullet_{n-k}\rho_n(w)=(\mathbf{v}_{a_1,b_1}\cdots\mathbf{v}_{a_{r-1}b_{r-1}})\bullet_{n-k}(\mathbf{v}_{a_rb_r}\bullet_{n-k}\rho_n(w))\\
        &=\mathbf{v}_{a_1,b_1}\cdots\mathbf{v}_{a_{r-1}b_{r-1}}\bullet_{n-k}\mathbf{q}^{\mathfrak{h}(\gamma)}\rho_n(u^*)=\mathbf{q}^{\mathfrak{h}(\gamma)}
        \mathbf{v}_{a_1,b_1}\cdots\mathbf{v}_{a_{r-1}b_{r-1}}\bullet_{n-k}\rho_n(u^*)\\
        &=\mathbf{q}^{\mathfrak{h}(\gamma)}\mathbf{q}^{\mathfrak{h}(\beta)}\rho_n(u)
        =\mathbf{q}^{\mathfrak{h}(\gamma)+\mathfrak{h}(\beta)}\rho_n(u)
        =\mathbf{q}^{\mathfrak{h}(\alpha)}\rho_n(u),
    \end{align*}
    as desired.
    
\end{proof}

\section{Low-Degree Equivalences}\label{sec:ldequiv}

The main goal of this section is to determine all the zero equivalences of degrees two, three, and four, and all the non-zero equivalences of degrees two and three. 
Since the equivalence of two compositions of operators is preserved under further applications of other operators, we introduce the following notion.
\begin{definition}
    We say that an equivalence is \demph{minimal} if it is not a consequence of an equivalence of a lower degree \textup(i.e., involving fewer operators\textup). 
\end{definition}
The rest of this section is organized by the degree of the composition of operators.

\subsection{Degree 2 Equivalences}
We start by analyzing the zero equivalences. By 
Theorem~\ref{thm:cycshift}~(a), it suffices to consider only those compositions for which the first operator is quantum. In Figure~\ref{fig:deg2rels} we illustrate the set of all such diagrams.

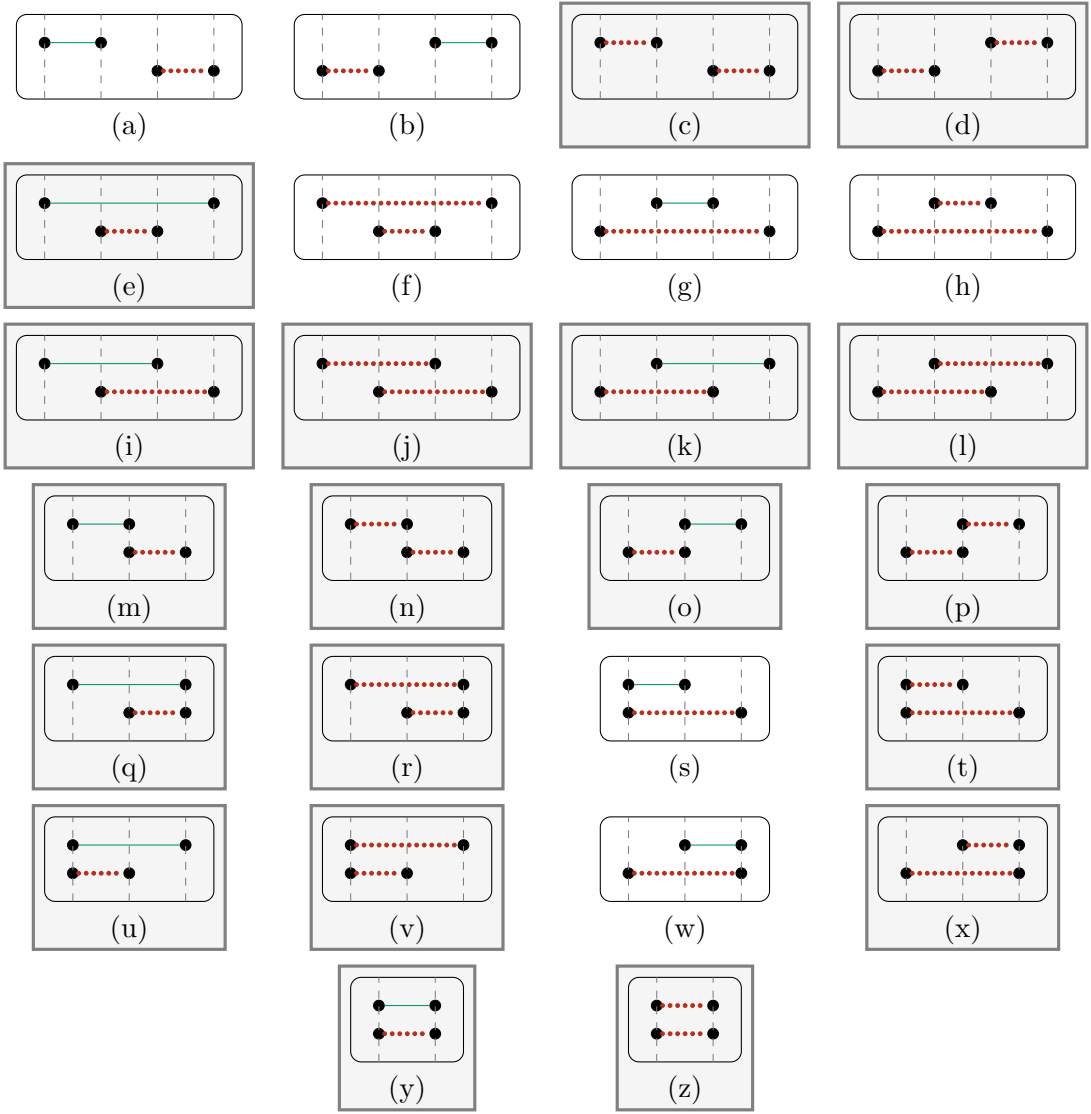
\begin{figure}[H]
    \centering
$$\begin{array}{cccc}
\begin{tikzpicture}[scale=0.75]
        \draw[rounded corners] (-0.5, 0) rectangle (3.5, 1.5) {};
        
        \node (1) at (0,1) [circle, draw = black,fill=black, inner sep = 0.5mm] {};
        \node (2) at (1,1) [circle, draw = black,fill=black, inner sep = 0.5mm] {};
        \draw[ForestGreen](1)--(2);
        
        \node (3) at (2,0.5) [circle, draw = black,fill=black, inner sep = 0.5mm] {};
        \node (4) at (3,0.5) [circle, draw = black,fill=black, inner sep = 0.5mm] {};
        \draw[decorate sep={0.5mm}{1mm},fill, BrickRed] (3)--(4);
        
        \draw[dashed,gray] (0,0)--(0,1.5);
        \draw[dashed,gray] (1,0)--(1,1.5);
        \draw[dashed,gray] (2,0)--(2,1.5);
        \draw[dashed,gray] (3,0)--(3,1.5);
        \node at (1.5,-0.5) {(a)};
    \end{tikzpicture} &
    \begin{tikzpicture}[scale=0.75]
     
        \draw[rounded corners] (-0.5, 0) rectangle (3.5, 1.5) {};
        \node (1) at (2,1) [circle, draw = black,fill=black, inner sep = 0.5mm] {};
        \node (2) at (3,1) [circle, draw = black,fill=black, inner sep = 0.5mm] {};
        \draw[ForestGreen] (1)--(2);
        
        \node (3) at (0,0.5) [circle, draw = black,fill=black, inner sep = 0.5mm] {};
        \node (4) at (1,0.5) [circle, draw = black,fill=black, inner sep = 0.5mm] {};
        \draw[decorate sep={0.5mm}{1mm},fill, BrickRed] (3)--(4);
        
        \draw[dashed,gray] (0,0)--(0,1.5);
        \draw[dashed,gray] (1,0)--(1,1.5);
        \draw[dashed,gray] (2,0)--(2,1.5);
        \draw[dashed,gray] (3,0)--(3,1.5);
        \node at (1.5,-0.5) {(b)};
    \end{tikzpicture}&
\begin{tikzpicture}[scale=0.75]
 \draw[gray, very thick, fill=gray!8] (-0.7,-0.85) rectangle (3.7,1.7);
    \draw[rounded corners] (-0.5, 0) rectangle (3.5, 1.5) {};    
    \node (1) at (0,1) [circle, draw = black,fill=black, inner sep = 0.5mm] {};
        \node (2) at (1,1) [circle, draw = black,fill=black, inner sep = 0.5mm] {};
        \draw[decorate sep={0.5mm}{1mm},fill, BrickRed] (1)--(2);
        
        \node (3) at (2,0.5) [circle, draw = black,fill=black, inner sep = 0.5mm] {};
        \node (4) at (3,0.5) [circle, draw = black,fill=black, inner sep = 0.5mm] {};
        \draw[decorate sep={0.5mm}{1mm},fill, BrickRed] (3)--(4);
        
        \draw[dashed,gray] (0,0)--(0,1.5);
        \draw[dashed,gray] (1,0)--(1,1.5);
        \draw[dashed,gray] (2,0)--(2,1.5);
        \draw[dashed,gray] (3,0)--(3,1.5);
        \node at (1.5,-0.5) {(c)};
    
    \end{tikzpicture} 
    &
    \begin{tikzpicture}[scale=0.75]
    \draw[gray, very thick, fill=gray!8] (-0.7,-0.85) rectangle (3.7,1.7);

        \draw[rounded corners] (-0.5, 0) rectangle (3.5, 1.5) {};
        
        \node (1) at (2,1) [circle, draw = black,fill=black, inner sep = 0.5mm] {};
        \node (2) at (3,1) [circle, draw = black,fill=black, inner sep = 0.5mm] {};
        \draw[decorate sep={0.5mm}{1mm},fill, BrickRed] (1)--(2);
        
        \node (3) at (0,0.5) [circle, draw = black,fill=black, inner sep = 0.5mm] {};
        \node (4) at (1,0.5) [circle, draw = black,fill=black, inner sep = 0.5mm] {};
        \draw[decorate sep={0.5mm}{1mm},fill, BrickRed] (3)--(4);

        \draw[dashed,gray] (0,0)--(0,1.5);
        \draw[dashed,gray] (1,0)--(1,1.5);
        \draw[dashed,gray] (2,0)--(2,1.5);
        \draw[dashed,gray] (3,0)--(3,1.5);
        \node at (1.5,-0.5) {(d)};
    \end{tikzpicture}\\
        \begin{tikzpicture}[scale=0.75]
    \draw[gray, very thick, fill=gray!8] (-0.7,-0.85) rectangle (3.7,1.7);
        \draw[rounded corners] (-0.5, 0) rectangle (3.5, 1.5) {};
        
        \node (1) at (0,1) [circle, draw = black,fill=black, inner sep = 0.5mm] {};
        \node (2) at (3,1) [circle, draw = black,fill=black, inner sep = 0.5mm] {};
        \draw[ForestGreen] (1)--(2);
        
        \node (3) at (1,0.5) [circle, draw = black,fill=black, inner sep = 0.5mm] {};
        \node (4) at (2,0.5) [circle, draw = black,fill=black, inner sep = 0.5mm] {};
        \draw[decorate sep={0.5mm}{1mm},fill, BrickRed] (3)--(4);
        
        \draw[dashed,gray] (0,0)--(0,1.5);
        \draw[dashed,gray] (1,0)--(1,1.5);
        \draw[dashed,gray] (2,0)--(2,1.5);
        \draw[dashed,gray] (3,0)--(3,1.5);
        \node at (1.5,-0.5) {(e)};
    \end{tikzpicture} &
    \begin{tikzpicture}[scale=0.75]
        \draw[rounded corners] (-0.5, 0) rectangle (3.5, 1.5) {};
        
        \node (1) at (0,1) [circle, draw = black,fill=black, inner sep = 0.5mm] {};
        \node (2) at (3,1) [circle, draw = black,fill=black, inner sep = 0.5mm] {};
        \draw[decorate sep={0.5mm}{1mm},fill, BrickRed] (1)--(2);
        \node (3) at (1,0.5) [circle, draw = black,fill=black, inner sep = 0.5mm] {};
        \node (4) at (2,0.5) [circle, draw = black,fill=black, inner sep = 0.5mm] {};
        \draw[decorate sep={0.5mm}{1mm},fill, BrickRed] (3)--(4);
            
        \draw[dashed,gray] (0,0)--(0,1.5);
        \draw[dashed,gray] (1,0)--(1,1.5);
        \draw[dashed,gray] (2,0)--(2,1.5);
        \draw[dashed,gray] (3,0)--(3,1.5);
        \node at (1.5,-0.5) {(f)};
    \end{tikzpicture}  &
        \begin{tikzpicture}[scale=0.75]
        \draw[rounded corners] (-0.5, 0) rectangle (3.5, 1.5) {};
        
        \node (1) at (0,0.5) [circle, draw = black,fill=black, inner sep = 0.5mm] {};
        \node (2) at (3,0.5) [circle, draw = black,fill=black, inner sep = 0.5mm] {};
        \draw[decorate sep={0.5mm}{1mm},fill, BrickRed] (1)--(2);
        
        \node (3) at (1,1) [circle, draw = black,fill=black, inner sep = 0.5mm] {};
        \node (4) at (2,1) [circle, draw = black,fill=black, inner sep = 0.5mm] {};
        \draw[ForestGreen] (3)--(4);

        \draw[dashed,gray] (0,0)--(0,1.5);
        \draw[dashed,gray] (1,0)--(1,1.5);
        \draw[dashed,gray] (2,0)--(2,1.5);
        \draw[dashed,gray] (3,0)--(3,1.5);
        \node at (1.5,-0.5) {(g)};
    \end{tikzpicture}  
    &
    \begin{tikzpicture}[scale=0.75]
        \draw[rounded corners] (-0.5, 0) rectangle (3.5, 1.5) {};
        
        \node (1) at (0,0.5) [circle, draw = black,fill=black, inner sep = 0.5mm] {};
        \node (2) at (3,0.5) [circle, draw = black,fill=black, inner sep = 0.5mm] {};
        \draw[decorate sep={0.5mm}{1mm},fill, BrickRed] (1)--(2);
        
        \node (3) at (1,1) [circle, draw = black,fill=black, inner sep = 0.5mm] {};
        \node (4) at (2,1) [circle, draw = black,fill=black, inner sep = 0.5mm] {};
        \draw[decorate sep={0.5mm}{1mm},fill, BrickRed] (3)--(4);

        \draw[dashed,gray] (0,0)--(0,1.5);
        \draw[dashed,gray] (1,0)--(1,1.5);
        \draw[dashed,gray] (2,0)--(2,1.5);
        \draw[dashed,gray] (3,0)--(3,1.5);
        \node at (1.5,-0.5) {(h)};
    \end{tikzpicture} \\
    \begin{tikzpicture}[scale=0.75]
     \draw[gray, very thick, fill=gray!8] (-0.7,-0.85) rectangle (3.7,1.7);

        \draw[rounded corners] (-0.5, 0) rectangle (3.5, 1.5) {};
        
        \node (1) at (0,1) [circle, draw = black,fill=black, inner sep = 0.5mm] {};
        \node (2) at (2,1) [circle, draw = black,fill=black, inner sep = 0.5mm] {};
        \draw[ForestGreen] (1)--(2);
        
        \node (3) at (1,0.5) [circle, draw = black,fill=black, inner sep = 0.5mm] {};
        \node (4) at (3,0.5) [circle, draw = black,fill=black, inner sep = 0.5mm] {};
        \draw[decorate sep={0.5mm}{1mm},fill, BrickRed] (3)--(4);

        \draw[dashed,gray] (0,0)--(0,1.5);
        \draw[dashed,gray] (1,0)--(1,1.5);
        \draw[dashed,gray] (2,0)--(2,1.5);
        \draw[dashed,gray] (3,0)--(3,1.5);
        \node at (1.5,-0.5) {(i)};
    \end{tikzpicture} 
    &
    \begin{tikzpicture}[scale=0.75]
     \draw[gray, very thick, fill=gray!8] (-0.7,-0.85) rectangle (3.7,1.7);

        \draw[rounded corners] (-0.5, 0) rectangle (3.5, 1.5) {};
        \node (1) at (0,1) [circle, draw = black,fill=black, inner sep = 0.5mm] {};
        \node (2) at (2,1) [circle, draw = black,fill=black, inner sep = 0.5mm] {};
        \draw[decorate sep={0.5mm}{1mm},fill, BrickRed] (1)--(2);
        \node (3) at (1,0.5) [circle, draw = black,fill=black, inner sep = 0.5mm] {};
        \node (4) at (3,0.5) [circle, draw = black,fill=black, inner sep = 0.5mm] {};
        \draw[decorate sep={0.5mm}{1mm},fill, BrickRed] (3)--(4);
        \draw[dashed,gray] (0,0)--(0,1.5);
        \draw[dashed,gray] (1,0)--(1,1.5);
        \draw[dashed,gray] (2,0)--(2,1.5);
        \draw[dashed,gray] (3,0)--(3,1.5);
        \node at (1.5,-0.5) {(j)};
    \end{tikzpicture} &
    \begin{tikzpicture}[scale=0.75]
     \draw[gray, very thick, fill=gray!8] (-0.7,-0.85) rectangle (3.7,1.7);

        \draw[rounded corners] (-0.5, 0) rectangle (3.5, 1.5) {};
        
        \node (1) at (1,1) [circle, draw = black,fill=black, inner sep = 0.5mm] {};
        \node (2) at (3,1) [circle, draw = black,fill=black, inner sep = 0.5mm] {};
        \draw[ForestGreen](1)--(2);
        
        \node (3) at (0,0.5) [circle, draw = black,fill=black, inner sep = 0.5mm] {};
        \node (4) at (2,0.5) [circle, draw = black,fill=black, inner sep = 0.5mm] {};
        \draw[decorate sep={0.5mm}{1mm},fill, BrickRed] (3)--(4);

        \draw[dashed,gray] (0,0)--(0,1.5);
        \draw[dashed,gray] (1,0)--(1,1.5);
        \draw[dashed,gray] (2,0)--(2,1.5);
        \draw[dashed,gray] (3,0)--(3,1.5);
        \node at (1.5,-0.5) {(k)};
    \end{tikzpicture}
    &
    \begin{tikzpicture}[scale=0.75]
     \draw[gray, very thick, fill=gray!8] (-0.7,-0.85) rectangle (3.7,1.7);

        \draw[rounded corners] (-0.5, 0) rectangle (3.5, 1.5) {};
        
        \node (1) at (1,1) [circle, draw = black,fill=black, inner sep = 0.5mm] {};
        \node (2) at (3,1) [circle, draw = black,fill=black, inner sep = 0.5mm] {};
        \draw[decorate sep={0.5mm}{1mm},fill, BrickRed] (1)--(2);
        
        \node (3) at (0,0.5) [circle, draw = black,fill=black, inner sep = 0.5mm] {};
        \node (4) at (2,0.5) [circle, draw = black,fill=black, inner sep = 0.5mm] {};
        \draw[decorate sep={0.5mm}{1mm},fill, BrickRed] (3)--(4);

        \draw[dashed,gray] (0,0)--(0,1.5);
        \draw[dashed,gray] (1,0)--(1,1.5);
        \draw[dashed,gray] (2,0)--(2,1.5);
        \draw[dashed,gray] (3,0)--(3,1.5);
        \node at (1.5,-0.5) {(l)};
    \end{tikzpicture}    \\ 
    \begin{tikzpicture}[scale=0.75]
     \draw[gray, very thick, fill=gray!8] (-0.7,-0.85) rectangle (2.7,1.7);
        \draw[rounded corners] (-0.5, 0) rectangle (2.5, 1.5) {};
        
        \node (1) at (0,1) [circle, draw = black,fill=black, inner sep = 0.5mm] {};
        \node (2) at (1,1) [circle, draw = black,fill=black, inner sep = 0.5mm] {};
        \draw[ForestGreen] (1)--(2);
        
        \node (3) at (1,0.5) [circle, draw = black,fill=black, inner sep = 0.5mm] {};
        \node (4) at (2,0.5) [circle, draw = black,fill=black, inner sep = 0.5mm] {};
        \draw[decorate sep={0.5mm}{1mm},fill, BrickRed] (3)--(4);

        \draw[dashed,gray] (0,0)--(0,1.5);
        \draw[dashed,gray] (1,0)--(1,1.5);
        \draw[dashed,gray] (2,0)--(2,1.5);
        \node at (1,-0.5) {(m)};
    \end{tikzpicture} 
    &
    \begin{tikzpicture}[scale=0.75]
    \draw[gray, very thick, fill=gray!8] (-0.7,-0.85) rectangle (2.7,1.7);
        \draw[rounded corners] (-0.5, 0) rectangle (2.5, 1.5) {};
        
        \node (1) at (0,1) [circle, draw = black,fill=black, inner sep = 0.5mm] {};
        \node (2) at (1,1) [circle, draw = black,fill=black, inner sep = 0.5mm] {};
        \draw[decorate sep={0.5mm}{1mm},fill, BrickRed] (1)--(2);
        
        \node (3) at (1,0.5) [circle, draw = black,fill=black, inner sep = 0.5mm] {};
        \node (4) at (2,0.5) [circle, draw = black,fill=black, inner sep = 0.5mm] {};
        \draw[decorate sep={0.5mm}{1mm},fill, BrickRed] (3)--(4);

        \draw[dashed,gray] (0,0)--(0,1.5);
        \draw[dashed,gray] (1,0)--(1,1.5);
        \draw[dashed,gray] (2,0)--(2,1.5);
        \node at (1,-0.5) {(n)};
    \end{tikzpicture} &
    \begin{tikzpicture}[scale=0.75]
    \draw[gray, very thick, fill=gray!8] (-0.7,-0.85) rectangle (2.7,1.7);
        \draw[rounded corners] (-0.5, 0) rectangle (2.5, 1.5) {};
        
        \node (1) at (1,1) [circle, draw = black,fill=black, inner sep = 0.5mm] {};
        \node (2) at (2,1) [circle, draw = black,fill=black, inner sep = 0.5mm] {};
        \draw[ForestGreen](1)--(2);
        
        \node (3) at (0,0.5) [circle, draw = black,fill=black, inner sep = 0.5mm] {};
        \node (4) at (1,0.5) [circle, draw = black,fill=black, inner sep = 0.5mm] {};
        \draw[decorate sep={0.5mm}{1mm},fill, BrickRed] (3)--(4);

        \draw[dashed,gray] (0,0)--(0,1.5);
        \draw[dashed,gray] (1,0)--(1,1.5);
        \draw[dashed,gray] (2,0)--(2,1.5);
        \node at (1,-0.5) {(o)};
    \end{tikzpicture}
    &
    \begin{tikzpicture}[scale=0.75]
    \draw[gray, very thick, fill=gray!8] (-0.7,-0.85) rectangle (2.7,1.7);
        \draw[rounded corners] (-0.5, 0) rectangle (2.5, 1.5) {};
        
        \node (1) at (1,1) [circle, draw = black,fill=black, inner sep = 0.5mm] {};
        \node (2) at (2,1) [circle, draw = black,fill=black, inner sep = 0.5mm] {};
        \draw[decorate sep={0.5mm}{1mm},fill, BrickRed] (1)--(2);
        
        \node (3) at (0,0.5) [circle, draw = black,fill=black, inner sep = 0.5mm] {};
        \node (4) at (1,0.5) [circle, draw = black,fill=black, inner sep = 0.5mm] {};
        \draw[decorate sep={0.5mm}{1mm},fill, BrickRed] (3)--(4);

        \draw[dashed,gray] (0,0)--(0,1.5);
        \draw[dashed,gray] (1,0)--(1,1.5);
        \draw[dashed,gray] (2,0)--(2,1.5);
        \node at (1,-0.5) {(p)};
    \end{tikzpicture} \\ 
    \begin{tikzpicture}[scale=0.75]
    \draw[gray, very thick, fill=gray!8] (-0.7,-0.85) rectangle (2.7,1.7);
        \draw[rounded corners] (-0.5, 0) rectangle (2.5, 1.5) {};
        
        \node (1) at (0,1) [circle, draw = black,fill=black, inner sep = 0.5mm] {};
        \node (2) at (2,1) [circle, draw = black,fill=black, inner sep = 0.5mm] {};
        \draw[ForestGreen] (1)--(2);
        
        \node (3) at (1,0.5) [circle, draw = black,fill=black, inner sep = 0.5mm] {};
        \node (4) at (2,0.5) [circle, draw = black,fill=black, inner sep = 0.5mm] {};
        \draw[decorate sep={0.5mm}{1mm},fill, BrickRed] (3)--(4);

        \draw[dashed,gray] (0,0)--(0,1.5);
        \draw[dashed,gray] (1,0)--(1,1.5);
        \draw[dashed,gray] (2,0)--(2,1.5);
        \node at (1,-0.5) {(q)};
    \end{tikzpicture}
    & 
    \begin{tikzpicture}[scale=0.75]
    \draw[gray, very thick, fill=gray!8] (-0.7,-0.85) rectangle (2.7,1.7);
        \draw[rounded corners] (-0.5, 0) rectangle (2.5, 1.5) {};
        
        \node (1) at (0,1) [circle, draw = black,fill=black, inner sep = 0.5mm] {};
        \node (2) at (2,1) [circle, draw = black,fill=black, inner sep = 0.5mm] {};
        \draw[decorate sep={0.5mm}{1mm},fill, BrickRed] (1)--(2);
        
        \node (3) at (1,0.5) [circle, draw = black,fill=black, inner sep = 0.5mm] {};
        \node (4) at (2,0.5) [circle, draw = black,fill=black, inner sep = 0.5mm] {};
        \draw[decorate sep={0.5mm}{1mm},fill, BrickRed] (3)--(4);

        \draw[dashed,gray] (0,0)--(0,1.5);
        \draw[dashed,gray] (1,0)--(1,1.5);
        \draw[dashed,gray] (2,0)--(2,1.5);
        \node at (1,-0.5) {(r)};
    \end{tikzpicture} &
    \begin{tikzpicture}[scale=0.75]
        \draw[rounded corners] (-0.5, 0) rectangle (2.5, 1.5) {};
        
        \node (1) at (0,1) [circle, draw = black,fill=black, inner sep = 0.5mm] {};
        \node (2) at (1,1) [circle, draw = black,fill=black, inner sep = 0.5mm] {};
        \draw[ForestGreen] (1)--(2);
        
        \node (3) at (0,0.5) [circle, draw = black,fill=black, inner sep = 0.5mm] {};
        \node (4) at (2,0.5) [circle, draw = black,fill=black, inner sep = 0.5mm] {};
        \draw[decorate sep={0.5mm}{1mm},fill, BrickRed] (3)--(4);
        
        \draw[dashed,gray] (0,0)--(0,1.5);
        \draw[dashed,gray] (1,0)--(1,1.5);
        \draw[dashed,gray] (2,0)--(2,1.5);
        \node at (1,-0.5) {(s)};
    \end{tikzpicture}
    &
    \begin{tikzpicture}[scale=0.75]
    \draw[gray, very thick, fill=gray!8] (-0.7,-0.85) rectangle (2.7,1.7);
        \draw[rounded corners] (-0.5, 0) rectangle (2.5, 1.5) {};
        
        \node (1) at (0,1) [circle, draw = black,fill=black, inner sep = 0.5mm] {};
        \node (2) at (1,1) [circle, draw = black,fill=black, inner sep = 0.5mm] {};
        \draw[decorate sep={0.5mm}{1mm},fill, BrickRed] (1)--(2);
        
        \node (3) at (0,0.5) [circle, draw = black,fill=black, inner sep = 0.5mm] {};
        \node (4) at (2,0.5) [circle, draw = black,fill=black, inner sep = 0.5mm] {};
        \draw[decorate sep={0.5mm}{1mm},fill, BrickRed] (3)--(4);
        
        \draw[dashed,gray] (0,0)--(0,1.5);
        \draw[dashed,gray] (1,0)--(1,1.5);
        \draw[dashed,gray] (2,0)--(2,1.5);
        \node at (1,-0.5) {(t)};
    \end{tikzpicture} \\ 
    \begin{tikzpicture}[scale=0.75]
    \draw[gray, very thick, fill=gray!8] (-0.7,-0.85) rectangle (2.7,1.7);
        \draw[rounded corners] (-0.5, 0) rectangle (2.5, 1.5) {};
        
        \node (1) at (0,1) [circle, draw = black,fill=black, inner sep = 0.5mm] {};
        \node (2) at (2,1) [circle, draw = black,fill=black, inner sep = 0.5mm] {};
        \draw[ForestGreen](1)--(2);
        
        \node (3) at (0,0.5) [circle, draw = black,fill=black, inner sep = 0.5mm] {};
        \node (4) at (1,0.5) [circle, draw = black,fill=black, inner sep = 0.5mm] {};
        \draw[decorate sep={0.5mm}{1mm},fill, BrickRed] (3)--(4);
        
        \draw[dashed,gray] (0,0)--(0,1.5);
        \draw[dashed,gray] (1,0)--(1,1.5);
        \draw[dashed,gray] (2,0)--(2,1.5);
        \node at (1,-0.5) {(u)};
    \end{tikzpicture}
    &
    \begin{tikzpicture}[scale=0.75]
    \draw[gray, very thick, fill=gray!8] (-0.7,-0.85) rectangle (2.7,1.7);
        \draw[rounded corners] (-0.5, 0) rectangle (2.5, 1.5) {};
        \node (1) at (0,1) [circle, draw = black,fill=black, inner sep = 0.5mm] {};
        \node (2) at (2,1) [circle, draw = black,fill=black, inner sep = 0.5mm] {};
        \draw[decorate sep={0.5mm}{1mm},fill, BrickRed] (1)--(2);
        
        \node (3) at (0,0.5) [circle, draw = black,fill=black, inner sep = 0.5mm] {};
        \node (4) at (1,0.5) [circle, draw = black,fill=black, inner sep = 0.5mm] {};
        \draw[decorate sep={0.5mm}{1mm},fill, BrickRed] (3)--(4);
        
        \draw[dashed,gray] (0,0)--(0,1.5);
        \draw[dashed,gray] (1,0)--(1,1.5);
        \draw[dashed,gray] (2,0)--(2,1.5);
        \node at (1,-0.5) {(v)};
    \end{tikzpicture} & 
    \begin{tikzpicture}[scale=0.75]
        \draw[rounded corners] (-0.5, 0) rectangle (2.5, 1.5) {};
        
        \node (1) at (1,1) [circle, draw = black,fill=black, inner sep = 0.5mm] {};
        \node (2) at (2,1) [circle, draw = black,fill=black, inner sep = 0.5mm] {};
        \draw[ForestGreen] (1)--(2);
        
        \node (3) at (0,0.5) [circle, draw = black,fill=black, inner sep = 0.5mm] {};
        \node (4) at (2,0.5) [circle, draw = black,fill=black, inner sep = 0.5mm] {};
        \draw[decorate sep={0.5mm}{1mm},fill, BrickRed] (3)--(4);

        \draw[dashed,gray] (0,0)--(0,1.5);
        \draw[dashed,gray] (1,0)--(1,1.5);
        \draw[dashed,gray] (2,0)--(2,1.5);
        \node at (1,-0.5) {(w)};
    \end{tikzpicture}  
    &
\begin{tikzpicture}[scale=0.75]
\draw[gray, very thick, fill=gray!8] (-0.7,-0.85) rectangle (2.7,1.7);
        \draw[rounded corners] (-0.5, 0) rectangle (2.5, 1.5) {};
        
        \node (1) at (1,1) [circle, draw = black,fill=black, inner sep = 0.5mm] {};
        \node (2) at (2,1) [circle, draw = black,fill=black, inner sep = 0.5mm] {};
        \draw[decorate sep={0.5mm}{1mm},fill, BrickRed] (1)--(2);
        
        \node (3) at (0,0.5) [circle, draw = black,fill=black, inner sep = 0.5mm] {};
        \node (4) at (2,0.5) [circle, draw = black,fill=black, inner sep = 0.5mm] {};
        \draw[decorate sep={0.5mm}{1mm},fill, BrickRed] (3)--(4);

        \draw[dashed,gray] (0,0)--(0,1.5);
        \draw[dashed,gray] (1,0)--(1,1.5);
        \draw[dashed,gray] (2,0)--(2,1.5);
        \node at (1,-0.5) {(x)};
    \end{tikzpicture}  \\ 
        & \begin{tikzpicture}[scale=0.75]
        \draw[gray, very thick, fill=gray!8] (-0.7,-0.85) rectangle (1.7,1.7);
        \draw[rounded corners] (-0.5, 0) rectangle (1.5, 1.5) {};
        
        \node (1) at (0,1) [circle, draw = black,fill=black, inner sep = 0.5mm] {};
        \node (2) at (1,1) [circle, draw = black,fill=black, inner sep = 0.5mm] {};
        \draw[ForestGreen] (1)--(2);
        
        \node (3) at (0,0.5) [circle, draw = black,fill=black, inner sep = 0.5mm] {};
        \node (4) at (1,0.5) [circle, draw = black,fill=black, inner sep = 0.5mm] {};
        \draw[decorate sep={0.5mm}{1mm},fill, BrickRed] (3)--(4);
        
        \draw[dashed,gray] (0,0)--(0,1.5);
        \draw[dashed,gray] (1,0)--(1,1.5);
        \node at (0.5,-0.5) {(y)};
    \end{tikzpicture}
    &
    \begin{tikzpicture}[scale=0.75]
    \draw[gray, very thick, fill=gray!8] (-0.7,-0.85) rectangle (1.7,1.7);
        \draw[rounded corners] (-0.5, 0) rectangle (1.5, 1.5) {};
        
        \node (1) at (0,1) [circle, draw = black,fill=black, inner sep = 0.5mm] {};
        \node (2) at (1,1) [circle, draw = black,fill=black, inner sep = 0.5mm] {};
        \draw[decorate sep={0.5mm}{1mm},fill, BrickRed] (1)--(2);
        
        \node (3) at (0,0.5) [circle, draw = black,fill=black, inner sep = 0.5mm] {};
        \node (4) at (1,0.5) [circle, draw = black,fill=black, inner sep = 0.5mm] {};
        \draw[decorate sep={0.5mm}{1mm},fill, BrickRed] (3)--(4);
        
        \draw[dashed,gray] (0,0)--(0,1.5);
        \draw[dashed,gray] (1,0)--(1,1.5);
        \node at (0.5,-0.5) {(z)};
    \end{tikzpicture} & 
\end{array}$$
\caption{Compositions of two operators with the first operator being a quantum operator}
\label{fig:deg2rels}
\end{figure}

The following result establishes which of the compositions appearing in Figure~\ref{fig:deg2rels} are equivalent to zero. 
\begin{prop}\label{prop:deg2zeroqf}
        Let $a<b$ and $\mathbf{v}=\mathbf{v}_{cd}\mathbf{v}_{ba}$. Then $\mathbf{v}\equiv \mathbf{0}$ if and only if one of the following holds:
        \begin{enumerate}
            \item[\textup{(i)}] $a= d<b= c$; 
            \item[\textup{(ii)}] $d<c< a<b$ or $a<b<d<c$; 
            \item[\textup{(iii)}] $c<d=a<b$, $d<c= a<b$, $a<b=c<d$, or $a<b=d<c$; 
            \item[\textup{(iv)}] $c<a<d<b$, $d<a<c<b$, $a<c<b<d$, or $a<d<b<c$; 
            \item[\textup{(v)}] $c<a<b=d$, $d<a<b=c$, $a=d<c<b$, $a=c<b<d$, $a=d<b<c$, or $a<d<c=b$;
            \item[\textup{(vi)}] $c<a<b<d$. 
        \end{enumerate}
The set of operators satisfying one of conditions listed in (i)-(vi) corresponds to the diagrams highlighted in Figure~\ref{fig:deg2rels}, which correspond to the labels (c), (d), (e), (i), (j), (k), (l), (m), (n),  (o), (p), (q), (r), (t), (u), (v), (x), and (z).
\end{prop}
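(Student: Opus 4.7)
The plan is to combine flattening with a finite case check on permutations of $S_m$. By Theorem~\ref{thm:flat}~(a), the zero equivalence is invariant under flattening, so I may assume $\supp(\mathbf{v})=[m]$ with $m=|\{a,b,c,d\}|\in\{2,3,4\}$. Proposition~\ref{prop:zeroscheck} then reduces the question to checking whether $\underline{\mathbf{v}}\bullet_k u=0$ for every $u\in S_m$ and $k\in[m-1]$, a finite verification in each configuration. I would organize the subsequent case analysis by the size $m$ of the support and, within each $m$, by the relative order of the indices $a,b,c,d$; note that since $a<b$ is fixed and the first operator is the quantum one, flattening preserves which of the listed cases (i)--(vi) we sit in.

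For the forward direction, I would argue case by case through the orderings in (i)--(vi). By Proposition~\ref{prop:covercond}, the quantum operator $\mathbf{v}_{ba}$ requires indices $i\le k<j$ with $u(i)=b$, $u(j)=a$, and every value in a position strictly between lying in $(a,b)$; after it acts, the output $\mathbf{q}_{ij}s_{ba}u$ has $a$ in position $i$ and $b$ in position $j$, with the intervening values unchanged and still inside $(a,b)$. For each ordering in (i)--(vi), I would show that $\mathbf{v}_{cd}$ cannot then apply: either the required value is on the wrong side of the bar, or the swap would cross an interval whose interior contains a value forbidden by the cover condition. Cases (i)--(ii) follow because $\mathbf{v}_{cd}$ is forced to act on the same values $a,b$ in a configuration that blocks the cover; cases (iii) and (v) arise when $\{c,d\}$ and $\{a,b\}$ share an element, so after the quantum move the shared value sits in the wrong region; and cases (iv) and (vi) are the nested or fully containing orderings where the intermediate values between the positions touched by $\mathbf{v}_{cd}$ violate Proposition~\ref{prop:covercond}. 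For the converse, for every other ordering, I would exhibit an explicit $(u,k)$ by placing $b$ just before the bar, $a$ just after, and arranging the remaining values so that both successive covers are legal.

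The main obstacle is organizational rather than conceptual: there are roughly two dozen orderings of $\{a,b,c,d\}$ compatible with $a<b$, and each must be handled explicitly. I expect to streamline the bookkeeping by invoking the equivalence-preserving transformations $\mathfrak{o}$, $\mathfrak{h}$, and $\rho$ of Section~\ref{sec:ops} to group orderings related by a symmetry, and by consulting Figure~\ref{fig:deg2rels}, whose highlighted diagrams already isolate precisely the orderings expected to yield zero.
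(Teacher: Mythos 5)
Your proposal is correct but takes a genuinely different route than the paper. The paper's proof of the zero direction is essentially one observation: every highlighted (zero) composition in Figure~\ref{fig:deg2rels} is a cyclic shift of a composition of two \emph{classical} operators already shown to be $\mathbf{0}$ in~\cite[Proposition 5.2]{qkB1}, and Theorem~\ref{thm:cycshift}~(a) says cyclic shift preserves zero equivalence. This collapses all of cases (i)--(vi) at once; the paper only walks through a single sample (it includes one illustrative direct argument and otherwise defers). You instead propose a direct analysis of each ordering via Proposition~\ref{prop:covercond} after flattening (Theorem~\ref{thm:flat}~(a)) and appealing to Proposition~\ref{prop:zeroscheck} to make the verification finite. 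Your route is logically sound and more self-contained (it does not lean on the external classical-case result), but it is substantially more bookkeeping: even with $\mathfrak{o}$, $\mathfrak{h}$, $\rho$ used to group orderings, you must verify that these transformations carry compositions of the required form $\mathbf{v}_{cd}\mathbf{v}_{ba}$ with $a<b$ to compositions of the same shape (note $\rho$ swaps the positions of the two factors, so the ``quantum first'' normalization can be lost and must be restored by a further cyclic shift). The paper's recognition of the zero cases as $\mathfrak{o}$-images of classical ones is exactly the uniform symmetry you are reaching for, stated once rather than case by case. For the non-zero direction, both your plan and the paper do the same thing: exhibit a single concrete $(u,k)$ that produces a non-zero output for each of the remaining orderings, so there is no substantive difference there.
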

    
    \begin{proof}
        First, we show that the compositions (a), (c), (f), (g), (h), (s), (w), and (y) of Figure~\ref{fig:deg2rels} are non-zero. Note that to do so, given such an operator $\mathbf{v}$, it suffices to identify $n\in\mathbb{Z}_{>0}$, $k\in [n-1]$, and $u\in S_n$ such that $\mathbf{v}\bullet_ku\neq 0$. To this end, consider the permutation $w=71635|428$, for $n=8$ and $k=5$. Then, we have that
\begin{equation*}
    \begin{array}{clccl}
{(a)} & \mathbf{v}_{12}\mathbf{v}_{54}\bullet_kw = q_5\cdot 72634|518 
& \qquad &
{(j)} & \mathbf{v}_{62}\mathbf{v}_{54} \bullet_kw= q_3q_4q^2_5q_6 \cdot 71234|568\\
{(k)} & \mathbf{v}_{24}\mathbf{v}_{52}\bullet_kw= q_5q_6 \cdot  71634|258
& \qquad & 
{(m)} & \mathbf{v}_{45}\mathbf{v}_{54}\bullet_k = q_5 \cdot 71635|428 = w \\
{(q)} & \mathbf{v}_{34}\mathbf{v}_{62}\bullet_kw = q_3q_4q_5q_6 \cdot 71245|368
& \qquad & 
{(r)} & \mathbf{v}_{54}\mathbf{v}_{62}\bullet_kw= q_3q_4q^2_5q_6 \cdot 71234|568 \\
{(s)} & \mathbf{v}_{56}\mathbf{v}_{62}\bullet_kw = q_3q_4q_5q_6 \cdot 71236|458
& \qquad & 
{(y)} & \mathbf{v}_{78}\mathbf{v}_{54}\bullet_kw = q_5 \cdot 81634|527
    \end{array}
\end{equation*}

To finish the proof, we must show that the remaining operators of Figure~\ref{fig:deg2rels} are equivalent to \textbf{0}. As it turns out, the remaining operators are cyclic shifts of compositions of two classical operators. Consequently, the fact that such operators are equivalent to \textbf{0} follows from~\cite[Proposition 5.2]{qkB1}. For the sake of illustrating how one shows that a composition of operators is equivalent to \textbf{0}, we include the proof for relation (b).

Suppose $a<b<c<d$ and $\mathbf{v}=\mathbf{v}_{ba}\mathbf{v}_{dc}\not\equiv \mathbf{0}$. Then there must exist a permutation $w=(w_1,\hdots,w_n) \in S_n$ and $k\in [n-1]$ for which $\mathbf{v}\bullet_k w\neq 0$. Since $\mathbf{v}=\mathbf{v}_{ba}\mathbf{v}_{dc}$, it must be the case that $\mathbf{v}_{dc}\bullet_k w=q^{\alpha}w'\neq 0$ for some $q^{\alpha}w'=q^{\alpha}(w_1',\hdots,w_n') \in S_n[\mathbf{q}]$; that is, if $w_{i_1}=c$ and $w_{j_1}=d$, then $j_1\le k<i_1$ and $c<w_l<d$ for all $j_1<l<i_1$. Now, similarly, as  
$$
\mathbf{v}\bullet_k w=\mathbf{v}_{ba}\bullet_k(\mathbf{v}_{dc}\bullet_k w)=\mathbf{v}_{ba}\bullet_k w'\neq 0,
$$
it must be the case that if $w'_{i_2}=a$ and $w'_{j_2}=b$, then $j_2\le k<i_2$ and $a<w_l<b$ for all $j_2<l<i_2$. Since $a<b<c<d$, it follows that $w_{i_2}=w'_{i_2}=a$ and $w_{j_2}=w'_{j_2}=b$. Consequently, there are two cases: either
\begin{itemize}
    \item $j_1<j_2\le k<i_1$, which contradicts our assumption that $c<w_l<d$ for all $j_1<l<i_1$, or
    \item $j_2<j_1\le k<i_2$, which contradicts our assumption that $a<w_l<b$ for all $j_2<l<i_2$;
\end{itemize}
that is, in either case, we are led to a contradiction. The result follows.
    \end{proof}
    
 Applying the equivalence preserving operators of Section~\ref{sec:ops} to the compositions of operators in Proposition~\ref{prop:deg2zeroqf}, we obtain the following theorem, which characterizes the degree two compositions that are equivalent to zero.
 
    \begin{theorem}\label{thm:deg2zero}
        Let $\mathbf{v}=\mathbf{v}_{cd}\mathbf{v}_{ab}$. Then $\mathbf{v}\equiv \mathbf{0}$ if and only if one of the following holds:
        \begin{enumerate}
            \item[\textup{(i)}] $a=c<b=d$ or $b=d<a=c$;
            \item[\textup{(ii)}] $a<d<c<b$ or $c<b<a<d$;
            \item[\textup{(iii)}] $a<c<b=d$, $a<d<b=c$, $a=c<d<b$, $a=d<c<b$, $c<a<b=d$, $c<b<a=d$, $a=c<b<d$, $b=c<a<d$, $b=d<c<a$, $d<b<a=c$, $d=b<a<c$, or $b<d<c=a$;
            \item[\textup{(iv)}] $a<c<b<d$, $b<c<a<d$, $a<d<b<c$, $b<d<a<c$, $c<a<d<b$, $c<b<d<a$, $d<a<c<b$, or $d<b<c<a$;
            \item[\textup{(v)}] $a<b=d<c$, $b<a=c<d$, $b<a=d<c$, $d<c=a<b$, $c<d=b<a$, or $d<c=b<a$;
            \item[\textup{(vi)}] $d<c<b<a$ or $b<a<d<c$.
        \end{enumerate}
    \end{theorem}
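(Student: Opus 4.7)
The plan is to deduce Theorem~\ref{thm:deg2zero} from Proposition~\ref{prop:deg2zeroqf} by exploiting the equivalence-preserving transformations developed in Section~\ref{sec:ops}. The proposition already classifies zero equivalences among compositions whose bottom (first-applied) operator is quantum; the theorem extends this to arbitrary $\mathbf{v}_{cd}\mathbf{v}_{ab}$. Since cyclic shift, horizontal flip, and vertical flip all preserve zero equivalence (Theorems~\ref{thm:cycshift}(a), \ref{thm:hshift}(a), and~\ref{thm:vshift}(a)), it suffices to reduce any composition to the form covered by the proposition.

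First, by Theorem~\ref{thm:flat}(a), I may assume $\supp(\mathbf{v})=[m]$ with $m\in\{2,3,4\}$ after flattening. The reduction then proceeds in two stages. If $\mathbf{v}_{ab}$ is already quantum ($a>b$), Proposition~\ref{prop:deg2zeroqf} applies directly with the roles of $a$ and $b$ swapped, and the six cases of the proposition translate into corresponding subconditions of the theorem. If instead $\mathbf{v}_{ab}$ is classical but $\mathbf{v}_{cd}$ is quantum, I apply the vertical flip $\rho$ (Theorem~\ref{thm:vshift}(a)), which swaps the two operators, producing a composition of the required form. Finally, if both operators are classical, I apply the cyclic shift $\mathfrak{o}$: since $m=\max\supp(\mathbf{v})$ appears among the indices, at least one operator will become quantum after at most one shift, and the previous two cases then apply. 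Tracking the inverse transformations returns us to the original $\mathbf{v}$.

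The main work is then a bookkeeping step: for each condition (i)--(vi) of Proposition~\ref{prop:deg2zeroqf}, compute the full orbit of $\mathbf{v}_{cd}\mathbf{v}_{ba}$ under powers of $\mathfrak{o}$ and under $\rho$, and check that the union of these orbits coincides with the conditions (i)--(vi) listed in the theorem. For instance, case (i) of the proposition ($a=d<b=c$, so $\mathbf{v}=\mathbf{v}_{ba}\mathbf{v}_{ba}$, the square of a quantum operator) has cyclic-shift orbit consisting of squares of classical operators as well, yielding the two subcases ``$a=c<b=d$'' and ``$b=d<a=c$'' of theorem case (i). Analogously, case (ii) of the proposition, consisting of disjoint operators of ``opposite orientation,'' expands under the orbit to the two subcases of theorem case (ii); and cases (iii)--(vi) expand to give the remaining cases in the theorem. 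Since $\rho$ is an involution and $\mathfrak{o}$ has finite order, this enumeration is finite and straightforward.

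The main obstacle will be the last combinatorial verification: making sure that every configuration listed in (i)--(vi) of the theorem does in fact lie in the orbit of some configuration in (i)--(vi) of the proposition (completeness), and conversely that no new zero equivalences are produced that fall outside the stated list (soundness). Organizing this check by the multiset of \emph{types} (classical vs.\ quantum, with vs.\ without shared indices, nested vs.\ crossing support intervals) keeps it manageable, as the orbit structure is governed entirely by the cyclic order of the four indices on the support circle together with which segment is designated ``top.''
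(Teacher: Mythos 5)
Your proposal follows essentially the same route as the paper: the paper states in one sentence that Theorem~\ref{thm:deg2zero} is obtained by applying the equivalence-preserving transformations of Section~\ref{sec:ops} to the compositions in Proposition~\ref{prop:deg2zeroqf}, and your reduction scheme (flatten, then use $\rho$ and at most one cyclic shift to make the bottom operator quantum) makes that derivation explicit and is logically sound. One small bookkeeping inaccuracy: the orbit of Proposition~\ref{prop:deg2zeroqf}(ii) under $\mathfrak{o}$ does not land only in Theorem~\ref{thm:deg2zero}(ii); working it through (flattening $d<c<a<b$ to $\mathbf{v}_{21}\mathbf{v}_{43}$ and iterating $\mathfrak{o}$) one finds that the single $\mathfrak{o}$-orbit of size four contains both conditions of Theorem~\ref{thm:deg2zero}(ii) \emph{and} both of Theorem~\ref{thm:deg2zero}(vi), and that this orbit also contains Proposition~\ref{prop:deg2zeroqf}(vi), so the proposition's cases do not map bijectively onto the theorem's cases. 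This does not break your argument—you explicitly flag the final enumeration as the step requiring verification—but it means the ``one proposition case yields one theorem case'' narrative in your example paragraph needs correction when you actually carry out the orbit computation.
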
    

Next, we look at equivalences between non-zero compositions of two operators. The following result provides a set of non-zero equivalences that appear in~\cite[Proposition 5.2]{qkB1}. We illustrate them in  Figure~\ref{fig:deg2equivrels}, where we keep the same labeling as in Figure~\ref{fig:deg2rels} for the diagrams included in that list.

\begin{prop}\label{prop:deg2rel}
Let $a<b<c<d$. We have the following list of equivalences of degree two:
\begin{multicols}{2}
        \begin{enumerate}
            \item[\textup{(i)}] $\mathbf{v}_{dc}\mathbf{v}_{ab}\equiv \mathbf{v}_{ab}\mathbf{v}_{dc}$,
            \item[\textup{(ii)}] $\mathbf{v}_{da}\mathbf{v}_{cb}\equiv\mathbf{v}_{cb}\mathbf{v}_{da}$,
            \item[\textup{(iii)}] $\mathbf{v}_{da}\mathbf{v}_{bc}\equiv\mathbf{v}_{bc}\mathbf{v}_{da}$,
            \item[\textup{(iv)}] $\mathbf{v}_{cd}\mathbf{v}_{ba}\equiv\mathbf{v}_{ba}\mathbf{v}_{cd}$,
            \item[\textup{(v)}] $\mathbf{v}_{ab}\mathbf{v}_{cd}\equiv\mathbf{v}_{cd}\mathbf{v}_{ab}$, and
            \item[\textup{(vi)}] $\mathbf{v}_{ad}\mathbf{v}_{bc}\equiv\mathbf{v}_{bc}\mathbf{v}_{ad}$.
        \end{enumerate}
        \end{multicols}
\end{prop}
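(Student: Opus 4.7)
The plan is to apply the reduction machinery of Section~\ref{sec:ops} to collapse the six equivalences into a small number of representative cases, and then verify these representatives by direct computation. First, by Theorem~\ref{thm:flat}(b), each equivalence is equivalent to its flattened version, so I may assume without loss of generality that $\{a,b,c,d\}=[4]$, i.e.\ $(a,b,c,d)=(1,2,3,4)$. By Proposition~\ref{prop:nzeroscheck}, it then suffices to verify $\mathbf{v}\bullet_k w=\mathbf{v}'\bullet_k w$ for every $w\in S_4$ and $k\in[3]$.

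Next, I would use Theorem~\ref{thm:cycshift}(b) to observe that the cyclic shift $\mathfrak{o}_{[4]}$ permutes the six equivalences among themselves. A direct computation with $\mathfrak{o}(1)=2$, $\mathfrak{o}(2)=3$, $\mathfrak{o}(3)=4$, $\mathfrak{o}(4)=1$ shows that $\mathfrak{o}$ carries the two sides of (v) to the two sides of (iii) (with sides possibly swapped), so (v) and (iii) are equivalent as statements. Iterating $\mathfrak{o}$ on (vi) analogously produces (iv), (ii), and (i) in succession. Thus it suffices to establish (v) and (vi).

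For (v), both $\mathbf{v}_{12}$ and $\mathbf{v}_{34}$ are classical and, by Proposition~\ref{prop:covercond}(a), their applicability conditions depend on disjoint pairs of values. Since no value lies strictly between $1$ and $2$ (nor between $3$ and $4$), the intervening-value condition is vacuous in each case, and applying either operator does not disturb the positions of the values relevant to the other. Hence the combined applicability condition is symmetric in the two orderings, and the composite swap of values is the same. For (vi), both operators are classical with nested supports. The crucial observation is that $\mathbf{v}_{14}$ is applicable to $u$ precisely when $u^{-1}(1)\le k<u^{-1}(4)$ and the positions of values $2$ and $3$ lie outside the interval $(u^{-1}(1),u^{-1}(4))$; since swapping values $2$ and $3$ preserves the \emph{set} $\{u^{-1}(2),u^{-1}(3)\}$, this condition is invariant under $\mathbf{v}_{23}$, and conversely $\mathbf{v}_{23}$'s position condition $u^{-1}(2)\le k<u^{-1}(3)$ is unaffected by swapping values $1$ and $4$. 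The composite swap of values agrees in both orderings, and no quantum factor arises.

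The main obstacle is not the equality of outputs where both sides are nonzero -- this is essentially the content of the $(u,k)$-equivalence established in~\cite{qkB1} -- but verifying the coincidence of the domains of nonzero action, which is the new requirement imposed by $\equiv$ (see Remark~\ref{rem:equivcomp}). The symmetric formulation of the applicability conditions in the two cases above is precisely what handles this subtlety and upgrades the weaker equivalence to the stronger one.
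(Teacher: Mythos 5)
Your proposal is correct, and it follows a genuinely different route from the paper. The paper states the six equivalences, attributes them to~\cite[Proposition 5.2]{qkB1}, and then verifies only (i) directly as an illustration, via a case-by-case application of Proposition~\ref{prop:covercond} to the mixed quantum--classical pair $\mathbf{v}_{dc}\mathbf{v}_{ab}$. You instead invoke the Section~\ref{sec:ops} machinery up front: Theorem~\ref{thm:flat}(b) and Proposition~\ref{prop:nzeroscheck} to restrict to $\{a,b,c,d\}=[4]$ and $u\in S_4$, and Theorem~\ref{thm:cycshift}(b) to observe that the six equivalences split into exactly two $\mathfrak{o}$-orbits, $\{(v),(iii)\}$ and $\{(vi),(iv),(ii),(i)\}$, so only (v) and (vi) need to be checked by hand. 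I verified the orbit calculation and it is right. Choosing (v) and (vi) as representatives is clever: both are compositions of two classical operators, so there are no $\mathbf{q}$-factors to track, and the applicability analysis via Proposition~\ref{prop:covercond}(a) becomes especially transparent — for (v) because there is no integer strictly between $1,2$ nor between $3,4$, and for (vi) because the intervening-value condition for $\mathbf{v}_{14}$ depends only on the unordered set $\{u^{-1}(2),u^{-1}(3)\}$, which $s_{23}$ preserves. Your closing observation — that the substantive content beyond~\cite{qkB1} is the coincidence of the zero sets, and that the symmetric formulation of the applicability conditions delivers it — is exactly the right thing to say in light of Remark~\ref{rem:equivcomp}. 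What the paper's direct approach buys is self-containment per item; what yours buys is uniformity, a single clean mechanism covering all six items, and avoidance of the quantum-monomial bookkeeping.
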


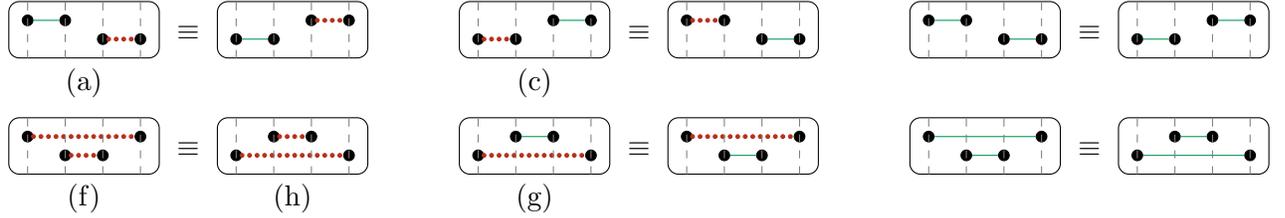
\begin{figure}[H]
    \centering
    $$
    \begin{array}{ccccccccccc}
    \begin{tikzpicture}[scale=0.5]
        \draw[rounded corners] (-0.5, 0) rectangle (3.5, 1.5) {};
        
        \node (1) at (0,1) [circle, draw = black,fill=black, inner sep = 0.5mm] {};
        \node (2) at (1,1) [circle, draw = black,fill=black, inner sep = 0.5mm] {};
        \draw[ForestGreen](1)--(2);
        
        \node (3) at (2,0.5) [circle, draw = black,fill=black, inner sep = 0.5mm] {};
        \node (4) at (3,0.5) [circle, draw = black,fill=black, inner sep = 0.5mm] {};
        \draw[decorate sep={0.5mm}{1mm},fill, BrickRed] (3)--(4);

        \draw[dashed,gray] (0,0)--(0,1.5);
        \draw[dashed,gray] (1,0)--(1,1.5);
        \draw[dashed,gray] (2,0)--(2,1.5);
        \draw[dashed,gray] (3,0)--(3,1.5);
        \node at (1.5,-0.65) {(a)};
    \end{tikzpicture} & 
    \hspace{-0.25cm} \begin{tikzpicture}
        \node at (0,0.85) {$\equiv$};
        \node at (0,0) {};
    \end{tikzpicture} &
    \hspace{-0.25cm} \begin{tikzpicture}[scale=0.5]
        \draw[rounded corners] (-0.5, 0) rectangle (3.5, 1.5) {};
        
        \node (1) at (0,0.5) [circle, draw = black,fill=black, inner sep = 0.5mm] {};
        \node (2) at (1,0.5) [circle, draw = black,fill=black, inner sep = 0.5mm] {};
        \draw[ForestGreen](1)--(2);
        
        \node (3) at (2,1) [circle, draw = black,fill=black, inner sep = 0.5mm] {};
        \node (4) at (3,1) [circle, draw = black,fill=black, inner sep = 0.5mm] {};
        \draw[decorate sep={0.5mm}{1mm},fill, BrickRed] (3)--(4);

        \draw[dashed,gray] (0,0)--(0,1.5);
        \draw[dashed,gray] (1,0)--(1,1.5);
        \draw[dashed,gray] (2,0)--(2,1.5);
        \draw[dashed,gray] (3,0)--(3,1.5);
        \node at (1.5,-0.65) {$\textcolor{white}{(a)}$};
    \end{tikzpicture} & \hspace{0.5cm} & 
    \begin{tikzpicture}[scale=0.5]
        \draw[rounded corners] (-0.5, 0) rectangle (3.5, 1.5) {};
        
        \node (1) at (2,1) [circle, draw = black,fill=black, inner sep = 0.5mm] {};
        \node (2) at (3,1) [circle, draw = black,fill=black, inner sep = 0.5mm] {};
        \draw[ForestGreen](1)--(2);
        
        \node (3) at (0,0.5) [circle, draw = black,fill=black, inner sep = 0.5mm] {};
        \node (4) at (1,0.5) [circle, draw = black,fill=black, inner sep = 0.5mm] {};
        \draw[decorate sep={0.5mm}{1mm},fill, BrickRed] (3)--(4);

        \draw[dashed,gray] (0,0)--(0,1.5);
        \draw[dashed,gray] (1,0)--(1,1.5);
        \draw[dashed,gray] (2,0)--(2,1.5);
        \draw[dashed,gray] (3,0)--(3,1.5);
        \node at (1.5,-0.65) {(c)};
    \end{tikzpicture} & 
    \hspace{-0.25cm} \begin{tikzpicture}
        \node at (0,0.85) {$\equiv$};
        \node at (0,0) {};
    \end{tikzpicture} & 
    \hspace{-0.25cm} \begin{tikzpicture}[scale=0.5]
        \draw[rounded corners] (-0.5, 0) rectangle (3.5, 1.5) {};
        
        \node (1) at (0,1) [circle, draw = black,fill=black, inner sep = 0.5mm] {};
        \node (2) at (1,1) [circle, draw = black,fill=black, inner sep = 0.5mm] {};
        \draw[decorate sep={0.5mm}{1mm},fill, BrickRed] (1)--(2);
        
        \node (3) at (2,0.5) [circle, draw = black,fill=black, inner sep = 0.5mm] {};
        \node (4) at (3,0.5) [circle, draw = black,fill=black, inner sep = 0.5mm] {};
        \draw[ForestGreen](3)--(4);

        \draw[dashed,gray] (0,0)--(0,1.5);
        \draw[dashed,gray] (1,0)--(1,1.5);
        \draw[dashed,gray] (2,0)--(2,1.5);
        \draw[dashed,gray] (3,0)--(3,1.5);
        \node at (1.5,-0.65) {$\textcolor{white}{(c)}$};
    \end{tikzpicture} 
    & \hspace{0.5cm} & 
        \begin{tikzpicture}[scale=0.5]
        \draw[rounded corners] (-0.5, 0) rectangle (3.5, 1.5) {};
        
        \node (1) at (0,1) [circle, draw = black,fill=black, inner sep = 0.5mm] {};
        \node (2) at (1,1) [circle, draw = black,fill=black, inner sep = 0.5mm] {};
        \draw[ForestGreen](1)--(2);
        
        \node (3) at (2,0.5) [circle, draw = black,fill=black, inner sep = 0.5mm] {};
        \node (4) at (3,0.5) [circle, draw = black,fill=black, inner sep = 0.5mm] {};
        \draw[ForestGreen](3)--(4);

        \draw[dashed,gray] (0,0)--(0,1.5);
        \draw[dashed,gray] (1,0)--(1,1.5);
        \draw[dashed,gray] (2,0)--(2,1.5);
        \draw[dashed,gray] (3,0)--(3,1.5);
        \node at (1.5,-0.65) {$\textcolor{white}{(*)}$};
    \end{tikzpicture} & 
    \hspace{-0.25cm} \begin{tikzpicture}
        \node at (0,0.85) {$\equiv$};
        \node at (0,0) {};
    \end{tikzpicture} & 
    \hspace{-0.25cm} \begin{tikzpicture}[scale=0.5]
        \draw[rounded corners] (-0.5, 0) rectangle (3.5, 1.5) {};
        
        \node (1) at (0,0.5) [circle, draw = black,fill=black, inner sep = 0.5mm] {};
        \node (2) at (1,0.5) [circle, draw = black,fill=black, inner sep = 0.5mm] {};
        \draw[ForestGreen](1)--(2);
        
        \node (3) at (2,1) [circle, draw = black,fill=black, inner sep = 0.5mm] {};
        \node (4) at (3,1) [circle, draw = black,fill=black, inner sep = 0.5mm] {};
        \draw[ForestGreen](3)--(4);

        \draw[dashed,gray] (0,0)--(0,1.5);
        \draw[dashed,gray] (1,0)--(1,1.5);
        \draw[dashed,gray] (2,0)--(2,1.5);
        \draw[dashed,gray] (3,0)--(3,1.5);
        \node at (1.5,-0.65) {$\textcolor{white}{(*)}$};
    \end{tikzpicture}
    \\ 
    \begin{tikzpicture}[scale=0.5]
        \draw[rounded corners] (-0.5, 0) rectangle (3.5, 1.5) {};
        \node (1) at (0,1) [circle, draw = black,fill=black, inner sep = 0.5mm] {};
        \node (2) at (3,1) [circle, draw = black,fill=black, inner sep = 0.5mm] {};
        \draw[decorate sep={0.5mm}{1mm},fill, BrickRed] (1)--(2);

        \node (3) at (1,0.5) [circle, draw = black,fill=black, inner sep = 0.5mm] {};
        \node (4) at (2,0.5) [circle, draw = black,fill=black, inner sep = 0.5mm] {};
        \draw[decorate sep={0.5mm}{1mm},fill, BrickRed] (3)--(4);
        \draw[dashed,gray] (0,0)--(0,1.5);
        \draw[dashed,gray] (1,0)--(1,1.5);
        \draw[dashed,gray] (2,0)--(2,1.5);
        \draw[dashed,gray] (3,0)--(3,1.5);
        \node at (1.5,-0.65) {(f)};
    \end{tikzpicture} &
    \hspace{-0.25cm} \begin{tikzpicture}
        \node at (0,0.85) {$\equiv$};
        \node at (0,0) {};
    \end{tikzpicture} & 
    \hspace{-0.25cm} \begin{tikzpicture}[scale=0.5]
        \draw[rounded corners] (-0.5, 0) rectangle (3.5, 1.5) {};
        
        \node (1) at (1,1) [circle, draw = black,fill=black, inner sep = 0.5mm] {};
        \node (2) at (2,1) [circle, draw = black,fill=black, inner sep = 0.5mm] {};
        \draw[decorate sep={0.5mm}{1mm},fill, BrickRed] (1)--(2);
        
        \node (3) at (0,0.5) [circle, draw = black,fill=black, inner sep = 0.5mm] {};
        \node (4) at (3,0.5) [circle, draw = black,fill=black, inner sep = 0.5mm] {};
        \draw[decorate sep={0.5mm}{1mm},fill, BrickRed] (3)--(4);

        \draw[dashed,gray] (0,0)--(0,1.5);
        \draw[dashed,gray] (1,0)--(1,1.5);
        \draw[dashed,gray] (2,0)--(2,1.5);
        \draw[dashed,gray] (3,0)--(3,1.5);
        \node at (1.5,-0.65) {(h)};
    \end{tikzpicture} & \hspace{0.5cm} &
    \begin{tikzpicture}[scale=0.5]
        \draw[rounded corners] (-0.5, 0) rectangle (3.5, 1.5) {};
        
        \node (1) at (1,1) [circle, draw = black,fill=black, inner sep = 0.5mm] {};
        \node (2) at (2,1) [circle, draw = black,fill=black, inner sep = 0.5mm] {};
        \draw[ForestGreen](1)--(2);
        
        \node (3) at (0,0.5) [circle, draw = black,fill=black, inner sep = 0.5mm] {};
        \node (4) at (3,0.5) [circle, draw = black,fill=black, inner sep = 0.5mm] {};
        \draw[decorate sep={0.5mm}{1mm},fill, BrickRed] (3)--(4);

        \draw[dashed,gray] (0,0)--(0,1.5);
        \draw[dashed,gray] (1,0)--(1,1.5);
        \draw[dashed,gray] (2,0)--(2,1.5);
        \draw[dashed,gray] (3,0)--(3,1.5);
        \node at (1.5,-0.65) {(g)};
    \end{tikzpicture} & 
    \hspace{-0.25cm} \begin{tikzpicture}
        \node at (0,0.85) {$\equiv$};
        \node at (0,0) {};
    \end{tikzpicture} & 
    \hspace{-0.25cm} \begin{tikzpicture}[scale=0.5]
        \draw[rounded corners] (-0.5, 0) rectangle (3.5, 1.5) {};
        
        \node (1) at (1,0.5) [circle, draw = black,fill=black, inner sep = 0.5mm] {};
        \node (2) at (2,0.5) [circle, draw = black,fill=black, inner sep = 0.5mm] {};
        \draw[ForestGreen](1)--(2);
        
        \node (3) at (0,1) [circle, draw = black,fill=black, inner sep = 0.5mm] {};
        \node (4) at (3,1) [circle, draw = black,fill=black, inner sep = 0.5mm] {};
        \draw[decorate sep={0.5mm}{1mm},fill, BrickRed] (3)--(4);

        \draw[dashed,gray] (0,0)--(0,1.5);
        \draw[dashed,gray] (1,0)--(1,1.5);
        \draw[dashed,gray] (2,0)--(2,1.5);
        \draw[dashed,gray] (3,0)--(3,1.5);
        \node at (1.5,-0.65) {$\textcolor{white}{(g)}$};
    \end{tikzpicture}
    &\hspace{0.5cm} &
\begin{tikzpicture}[scale=0.5]
        \draw[rounded corners] (-0.5, 0) rectangle (3.5, 1.5) {};
        
        \node (1) at (1,0.5) [circle, draw = black,fill=black, inner sep = 0.5mm] {};
        \node (2) at (2,0.5) [circle, draw = black,fill=black, inner sep = 0.5mm] {};
        \draw[ForestGreen](1)--(2);
        
        \node (3) at (0,1) [circle, draw = black,fill=black, inner sep = 0.5mm] {};
        \node (4) at (3,1) [circle, draw = black,fill=black, inner sep = 0.5mm] {};
        \draw[ForestGreen](3)--(4);

        \draw[dashed,gray] (0,0)--(0,1.5);
        \draw[dashed,gray] (1,0)--(1,1.5);
        \draw[dashed,gray] (2,0)--(2,1.5);
        \draw[dashed,gray] (3,0)--(3,1.5);
        \node at (1.5,-0.65) {$\textcolor{white}{(*)}$};
    \end{tikzpicture}
    & \hspace{-0.25cm}\begin{tikzpicture}
        \node at (0,0.85) {$\equiv$};
        \node at (0,0) {};
    \end{tikzpicture} & 
    \hspace{-0.25cm} \begin{tikzpicture}[scale=0.5]
        \draw[rounded corners] (-0.5, 0) rectangle (3.5, 1.5) {};
        
        \node (1) at (1,1) [circle, draw = black,fill=black, inner sep = 0.5mm] {};
        \node (2) at (2,1) [circle, draw = black,fill=black, inner sep = 0.5mm] {};
        \draw[ForestGreen](1)--(2);
        
        \node (3) at (0,0.5) [circle, draw = black,fill=black, inner sep = 0.5mm] {};
        \node (4) at (3,0.5) [circle, draw = black,fill=black, inner sep = 0.5mm] {};
        \draw[ForestGreen](3)--(4);

        \draw[dashed,gray] (0,0)--(0,1.5);
        \draw[dashed,gray] (1,0)--(1,1.5);
        \draw[dashed,gray] (2,0)--(2,1.5);
        \draw[dashed,gray] (3,0)--(3,1.5);
        \node at (1.5,-0.65) {$\textcolor{white}{(*)}$};
    \end{tikzpicture}
    \end{array}
    $$
    \caption{Equivalent non-zero compositions of degree two}
    \label{fig:deg2equivrels}
\end{figure} 

To provide an example of how one can approach establishing an equivalence between non-zero compositions of operators, we include the following lemma with proof, which corresponds to Proposition~\ref{prop:deg2rel}~(i).

\begin{lemma}
        If $a<b<c<d$, then $\mathbf{v}_{dc}\mathbf{v}_{ab}\equiv \mathbf{v}_{ab}\mathbf{v}_{dc}$.
    \end{lemma}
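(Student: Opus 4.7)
The plan is to unpack both compositions directly via Proposition~\ref{prop:covercond}, exploiting the fact that the values $\{a,b\}$ and $\{c,d\}$ are disjoint and strictly separated by $b<c$. Since the action on $S_n[\mathbf{q}]$ is $\mathbf{q}$-multiplicative, I would fix an arbitrary $w\in S_n$ and $k\in[n-1]$ and compare $\mathbf{v}_{dc}\mathbf{v}_{ab}\bullet_k w$ with $\mathbf{v}_{ab}\mathbf{v}_{dc}\bullet_k w$. I would set $i=w^{-1}(a)$, $j=w^{-1}(b)$, $i'=w^{-1}(d)$, and $j'=w^{-1}(c)$, which are four distinct positions.

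First, I would trace through $\mathbf{v}_{dc}\mathbf{v}_{ab}\bullet_k w$. By Proposition~\ref{prop:covercond}(a), applying $\mathbf{v}_{ab}$ requires $i\leq k<j$ and $w(l)\notin(a,b)$ for $l\in(i,j)$, producing $w_1=s_{ij}w$. The positions of $c,d$ in $w_1$ are still $j',i'$, so applying $\mathbf{v}_{dc}$ requires, by Proposition~\ref{prop:covercond}(b), that $i'\leq k<j'$ and $w_1(l)\in(c,d)$ for every $l\in(i',j')$. Since $w_1$ differs from $w$ only at positions $i,j$ where it takes values $b,a$, both smaller than $c$, this second condition forces $i,j\notin(i',j')$ and then reduces to $w(l)\in(c,d)$ for $l\in(i',j')$.

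Symmetrically, for $\mathbf{v}_{ab}\mathbf{v}_{dc}\bullet_k w$ the first step requires $i'\leq k<j'$ and $w(l)\in(c,d)$ for $l\in(i',j')$, which already forces $i,j\notin(i',j')$ because $a,b\notin(c,d)$. The resulting element is $\mathbf{q_{i'j'}}w_2$ with $w_2=s_{i'j'}w$, and applying $\mathbf{v}_{ab}$ requires $i\leq k<j$ together with $w_2(l)\notin(a,b)$ for $l\in(i,j)$. The only new values introduced by $w_2$ are $c$ and $d$ at positions $i',j'$, and both exceed $b$, so the condition is insensitive to whether $i',j'$ lie in $(i,j)$ and simplifies to $w(l)\notin(a,b)$ for $l\in(i,j)$. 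Comparing the two analyses, both compositions are non-zero under precisely the same list of constraints on $(w,k)$.

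When those constraints hold, disjointness of $\{i,j\}$ and $\{i',j'\}$ makes the transpositions $s_{ij}$ and $s_{i'j'}$ commute, and the monomial $\mathbf{q_{i'j'}}$ picked up from $\mathbf{v}_{dc}$ depends only on $i',j'$; hence both compositions return $\mathbf{q_{i'j'}}\,s_{ij}s_{i'j'}w$, and they vanish on the same inputs. The only delicate point — which I expect to be the sole obstacle — is bookkeeping: verifying that each orthogonality constraint like $\{i,j\}\cap(i',j')=\emptyset$ is automatically enforced by the value-interval condition from Proposition~\ref{prop:covercond}, so that no extra hypothesis appears on one side of the equivalence that fails on the other. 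Once this is tracked carefully, the proof is immediate.
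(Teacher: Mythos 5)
Your argument is correct and is essentially the same as the paper's: both proofs unpack each composition through Proposition~\ref{prop:covercond}, show the two sets of non-vanishing constraints coincide, and observe that the resulting permutation and $\mathbf{q}$-monomial agree. One small notational slip: writing $w_1 = s_{ij}w$ conflates position- and value-indexed transpositions --- the action of $\mathbf{v}_{ab}$ produces $s_{ab}w = w\,s_{ij}$ --- but your subsequent description of $w_1$ as differing from $w$ only at positions $i,j$ is the correct intent, so the content is unaffected.
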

    \begin{proof}
        Let $\mathbf{v}=\mathbf{v}_{dc}\mathbf{v}_{ab}$. Assume that $\mathbf{v}\bullet_kw\neq 0$, $w^{-1}(a)=i_1$, $w^{-1}(b)=j_1$, $w^{-1}(c)=i_2$, and $w^{-1}(d)=j_2$. Then, since $\mathbf{v}\bullet_kw\neq 0$, it follows that $\mathbf{v}_{ab}\bullet_kw\neq 0$. Applying Proposition~\ref{prop:covercond}, $\mathbf{v}_{ab}\bullet_kw\neq 0$ if and only if  $i_1\le k<j_1$ and, for $l\in (a,b)$, $w^{-1}(l)<i_1$ or $w^{-1}(l)>j_1$.
        Since $c,d\notin (a,b)$, we get no additional restrictions on $i_1,j_1,i_2,$ and $j_2$. Note that in $\mathbf{v}_{ab}\bullet_kw$ we have $a$ in position $j_1>k$, $b$ in position $i_1\le k$, $c$ in position $i_2$, and $d$ in position $j_2$. Now, since $$\mathbf{v}\bullet_kw=\mathbf{v}_{dc}\bullet_k(\mathbf{v}_{ab}\bullet_kw)\neq 0,$$ applying Proposition~\ref{prop:covercond} once again, we find that $j_2\le k<i_2$ and, for $l\in\mathbb{Z}_{>0}\backslash [c,d]$, $w^{-1}(l)<j_2$ or $w^{-1}(l)>i_2$. 
        As $a,b\in \mathbb{Z}_{>0}\backslash [c,d]$, it follows that $i_1<j_2\le k<i_2<j_1$. Thus, all together, we have that $\mathbf{v}\bullet_kw\neq 0$ if and only if 
        \begin{itemize}
            \item $i_1<j_2\le k<i_2<j_1$,
            \item $w^{-1}(l)<i_1$ or $w^{-1}(l)>j_1$ for $l\in (a,b)$, and
            \item $w^{-1}(l)<j_2$ or $w^{-1}(l)>i_2$ for $l\in\mathbb{Z}_{>0}\backslash [c,d]$.
        \end{itemize}
        Moreover, $\mathbf{v}\bullet_kw$ has a coefficient of $q_{j_2i_2}$, $a$ in position $j_1$, $b$ in position $i_1$, $c$ in position $j_2$, $d$ in position $i_2$, and all other entries of $w$ fixed.

        A similar analysis for $\mathbf{v}^*=\mathbf{v}_{ab}\mathbf{v}_{dc}$ shows that
        $\mathbf{v}^*\bullet_kw\neq 0$ if and only if 
        \begin{itemize}
            \item $i_1<j_2\le k<i_2<j_1$,
            \item $w^{-1}(l)<j_2$ or $w^{-1}(l)>i_2$ for $l\in\mathbb{Z}_{>0}\backslash [c,d]$, and
            \item $w^{-1}(l)<i_1$ or $w^{-1}(l)>j_1$ for $l\in (a,b)$;
        \end{itemize}
        that is, the exact same conditions required for $\mathbf{v}\bullet_kw\neq 0$. 
        Moreover, as in $\mathbf{v}\bullet_kw$, one finds that $\mathbf{v}^*\bullet_kw$ has a coefficient of $q_{j_2i_2}$, $a$ in position $j_1$, $b$ in position $i_1$, $c$ in position $j_2$, $d$ in position $i_2$, and all other entries of $w$ fixed; that is, when nontrivial, $\mathbf{v}$ and $\mathbf{v}^*$ have the same effect. The result follows.
    \end{proof}

It turns out that these are all the equivalences involving non-zero compositions of degree two.
\begin{theorem}
    The list of equivalences of non-zero compositions of degree two presented in Proposition~\ref{prop:deg2rel} is complete; that is, there are no other equivalences of non-zero compositions of only two operators.
\end{theorem}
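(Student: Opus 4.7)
The proof proceeds by reducing to a finite, enumerable case. Suppose $\mathbf{v},\mathbf{v}'$ are distinct non-zero compositions of degree two with $\mathbf{v}\equiv\mathbf{v}'$. By Lemma~\ref{lem:supp1}, $\mathbf{v}'$ also has degree two, and by Proposition~\ref{prop:supp2}, $\supp(\mathbf{v})=\supp(\mathbf{v}')=S$ with $|S|\le 4$. Theorem~\ref{thm:flat}(b) lets us flatten and assume $S=[m]$ for some $m\in\{2,3,4\}$; then Proposition~\ref{prop:nzeroscheck} reduces the verification to comparing actions on the finite set $S_m\times[m-1]$. Observe that the six equivalences in Proposition~\ref{prop:deg2rel} all correspond to the case $m=4$ with $(a,b,c,d)=(1,2,3,4)$ after flattening, so the goal becomes showing that no other equivalences arise at $m\in\{2,3,4\}$.

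I would first handle the small cases. For $m=2$, Theorem~\ref{thm:deg2zero} leaves only $\mathbf{v}_{12}\mathbf{v}_{21}$ and $\mathbf{v}_{21}\mathbf{v}_{12}$ as non-zero; evaluating both on $(2,1)\in S_2$ with $k=1$ gives $q_1\cdot(1,2)$ and $0$ respectively, so these two compositions are inequivalent. For $m=3$, any degree-two composition with support $[3]$ has its two operators sharing exactly one index; there are at most $24$ such ordered compositions. After discarding those ruled out by Theorem~\ref{thm:deg2zero}, one checks that the remaining non-zero compositions have pairwise distinct actions on $S_3\times[2]$. The enumeration is considerably shortened by the equivalence-preserving transformations of Section~\ref{sec:ops}, which permute the candidate compositions in a structured way and reduce the case-work to a handful of representatives.

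The bulk of the work is $m=4$, where the two operators of $\mathbf{v}$ must have disjoint index pairs partitioning $[4]$. The three such partitions --- the separated $\{\{1,2\},\{3,4\}\}$, the crossing $\{\{1,3\},\{2,4\}\}$, and the nested $\{\{1,4\},\{2,3\}\}$ --- each yield $8$ ordered compositions, for $24$ total. Applying Theorem~\ref{thm:deg2zero}, I would establish that all $8$ compositions in the crossing partition are $\equiv\mathbf{0}$, while exactly $6$ of the $8$ in each of the separated and nested partitions are non-zero. The resulting $12$ non-zero compositions are then precisely paired by the six equivalences of Proposition~\ref{prop:deg2rel}: items (i), (iv), and (v) account for the pairings in the separated partition, and items (ii), (iii), and (vi) for those in the nested partition. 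To conclude, it remains to certify that the six resulting equivalence classes are mutually distinct as operators on $S_4[\mathbf{q}]$; by Proposition~\ref{prop:nzeroscheck}, this amounts to exhibiting, for each pair of classes, a distinguishing input $(u,k)\in S_4\times[3]$.

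The principal obstacle is the case analysis at $m=4$: one must both confirm the zero versus non-zero split of the $24$ compositions and check that no further equivalences merge the six proposed classes. The cyclic shift, horizontal flip, and vertical flip of Section~\ref{sec:ops} mitigate this obstacle substantially, since they preserve equivalences and act on the twelve non-zero compositions (and on the six equivalence classes) in a highly structured way; this reduces the distinctness verification to a small number of representative comparisons, making the case-work tractable.
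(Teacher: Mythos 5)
Your proposal is sound and reaches the correct conclusion, but it organizes the verification somewhat differently from the paper. The paper keeps all indices generic (writing $a_1<b_1$, $a_2<b_2<c_2$, $a_3<b_3<c_3<d_3$), collects the non-zero representatives into sets $S_1,\dots,S_6$ according to support size and quantum count, and distinguishes compositions within each $S_i$ via positional arguments of the form ``if $\mathbf{v}_1\bullet_k w\neq 0$ then $w^{-1}(b_2)\le k$, while $\mathbf{v}_2$ forces the opposite inequality.'' You instead invoke the flattening machinery (Theorem~\ref{thm:flat}(b) together with Proposition~\ref{prop:nzeroscheck}) to reduce everything to a concrete finite check on $S_m\times[m-1]$ for $m\in\{2,3,4\}$, then enumerate the $24$ supported compositions at $m=4$ by partition type and apply Theorem~\ref{thm:deg2zero} to cull the zeros. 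Both routes are valid; the paper's generic positional argument subsumes the computation, whereas your explicit reduction makes the finiteness of the check transparent and leverages the transformations of Section~\ref{sec:ops} to cut down the case-work (something the paper's proof of this particular theorem does not do). Your zero/non-zero tallies check out: the crossing partition $\{\{1,3\},\{2,4\}\}$ is entirely zero via condition (iv) of Theorem~\ref{thm:deg2zero}, and the separated and nested partitions each retain exactly six non-zero compositions paired into three classes by Proposition~\ref{prop:deg2rel}; the residual distinctness check (fifteen pairs of classes, or fewer up to symmetry) is exactly what the paper's Cases~2 and~5 carry out, noting also that the quantum count already separates most of them by Remark~\ref{remark:q_k-power}(1). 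One small slip: for $m=2$ you compute $\mathbf{v}_{12}\mathbf{v}_{21}\bullet_1(2,1)=q_1\cdot(1,2)$, but the correct output is $q_1\cdot(2,1)$ (the quantum cover $\mathbf{v}_{21}$ sends $(2,1)$ to $q_1(1,2)$, and then the classical cover $\mathbf{v}_{12}$ sends $(1,2)$ back to $(2,1)$); the conclusion that it is non-zero while $\mathbf{v}_{21}\mathbf{v}_{12}\bullet_1(2,1)=0$, and hence that the two are inequivalent, is unaffected.
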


\begin{proof}
We prove this result by analyzing the remaining non-zero compositions of two operators and showing that their action cannot be the same for all permutations. 

Let $a_1<b_1$, $a_2<b_2<c_2$, and $a_3<b_3<c_3<d_3$. We need to show that no two operators of the following set are equivalent:
\begin{align*}
S=\big\{ &\mathbf{v}_{a_2b_2}\mathbf{v}_{b_2c_2},
\mathbf{v}_{b_2c_2}\mathbf{v}_{a_2b_2},
\mathbf{v}_{a_3b_3}\mathbf{v}_{c_3d_3},
\mathbf{v}_{a_3d_3}\mathbf{v}_{b_3c_3}, \mathbf{v}_{a_1b_1}\mathbf{v}_{b_1a_1}, \mathbf{v}_{a_1b_1}\mathbf{v}_{b_1a_1},
\mathbf{v}_{a_2b_2}\mathbf{v}_{c_2a_2},
\mathbf{v}_{b_2c_2}\mathbf{v}_{c_2a_2}, \\
& \mathbf{v}_{c_2a_2}\mathbf{v}_{b_2c_2}, \mathbf{v}_{c_2a_2}\mathbf{v}_{a_2b_2},
\mathbf{v}_{a_3b_3}\mathbf{v}_{d_3c_3},
\mathbf{v}_{b_3c_3}\mathbf{v}_{d_3a_3},
\mathbf{v}_{b_3a_3}\mathbf{v}_{c_3d_3},
\mathbf{v}_{d_3a_3}\mathbf{v}_{c_3b_3}\big\}.
\end{align*} 
Considering Lemma~\ref{lem:supp1} and Proposition~\ref{prop:supp2}, we split $S$ into several subsets according to their support and the number of quantum operators involved.
 \begin{equation*}
     \begin{array}{lcl}
S_1=\{\mathbf{v}_{a_2b_2}\mathbf{v}_{b_2c_2},\mathbf{v}_{b_2c_2}\mathbf{v}_{a_2b_2}\} & \qquad &
S_2=\{\mathbf{v}_{a_3b_3}\mathbf{v}_{c_3d_3},\mathbf{v}_{a_3d_3}\mathbf{v}_{b_3c_3}\} \\
S_3=\{\mathbf{v}_{a_1b_1}\mathbf{v}_{b_1a_1}, \mathbf{v}_{a_1b_1}\mathbf{v}_{b_1a_1}\} & \qquad &
S_4=\{\mathbf{v}_{a_2b_2}\mathbf{v}_{c_2a_2},\mathbf{v}_{b_2c_2}\mathbf{v}_{c_2a_2}, \mathbf{v}_{c_2a_2}\mathbf{v}_{b_2c_2},\mathbf{v}_{c_2a_2}\mathbf{v}_{a_2b_2}\} \\
S_5=\{\mathbf{v}_{a_3b_3}\mathbf{v}_{d_3c_3},\mathbf{v}_{b_3c_3}\mathbf{v}_{d_3a_3},\mathbf{v}_{b_3a_3}\mathbf{v}_{c_3d_3}\} & \qquad & 
S_6=\{\mathbf{v}_{d_3a_3}\mathbf{v}_{c_3b_3}\}.
    \end{array}
 \end{equation*}

We analyze each set $S_i$, for $i\in [5]$.
\bigskip

\noindent
\underline{Case 1:} 
Let $\mathbf{v}_1=\mathbf{v}_{a_2b_2}\mathbf{v}_{b_2c_2}$ and $\mathbf{v}_2=\mathbf{v}_{b_2c_2}\mathbf{v}_{a_2b_2}$ in $S_1$. If $\mathbf{v}_1\bullet_kw\neq 0$, then $w(i)=b_2$ implies $i\le k$. On the other hand, if $\mathbf{v}_2\bullet_kw\neq 0$, then $w(i)=b_2$ implies $i> k$. Thus, $\mathbf{v}_1\not\equiv\mathbf{v}_2$ and no two compositions of $S_1$ are equivalent.
\bigskip

\noindent
\underline{Case 2:} 
Let $\mathbf{v}_1=\mathbf{v}_{a_3b_3}\mathbf{v}_{c_3d_3}$ and $\mathbf{v}_2=\mathbf{v}_{a_3d_3}\mathbf{v}_{b_3c_3}$ in $S_2$. If $\mathbf{v}_1\bullet_kw\neq 0$, then $w(i)=c_3$ implies $i\le k$. On the other hand, if $\mathbf{v}_2\bullet_kw\neq 0$, then $w(i)=c_3$ implies $i> k$. Thus, $\mathbf{v}_1\not\equiv\mathbf{v}_2$ and no two compositions of $S_2$ are equivalent.
\bigskip

\noindent
\underline{Case 3:} 
Let $\mathbf{v}_1=\mathbf{v}_{a_1b_1}\mathbf{v}_{b_1a_1}$ and $\mathbf{v}_2=\mathbf{v}_{b_1a_1}\mathbf{v}_{a_1b_1}$ in $S_3$. Note that if $\mathbf{v}_1\bullet_kw\neq 0$, then $w(i)=a_1$ implies $i>k$. On the other hand, if $\mathbf{v}_2\bullet_kw\neq 0$, then $w(i)=a_1$ implies $i\le k$. Thus, $\mathbf{v}_1\not\equiv\mathbf{v}_2$ and no two compositions of $S_3$ are equivalent.
\bigskip

\noindent
\underline{Case 4:} 
Let $\mathbf{v}_1=\mathbf{v}_{a_2b_2}\mathbf{v}_{c_2a_2}$, $\mathbf{v}_2=\mathbf{v}_{b_2c_2}\mathbf{v}_{c_2a_2}$, $\mathbf{v}_3=\mathbf{v}_{c_2a_2}\mathbf{v}_{b_2c_2}$, and $\mathbf{v}_4=\mathbf{v}_{c_2a_2}\mathbf{v}_{a_2b_2}$ in $S_4$. If $\mathbf{v}_4\bullet_kw\neq 0$, then $w(i)=a_2$ implies $i\le k$. On the other hand, if $\mathbf{v}_1\bullet_kw\neq 0$, $\mathbf{v}_2\bullet_kw\neq 0$, or $\mathbf{v}_3\bullet_kw\neq 0$, then $w(i)=a_2$ implies $i>k$. Consequently, $\mathbf{v}_4\not\equiv \mathbf{v}_1,\mathbf{v}_2,\mathbf{v}_3$. Now, if $\mathbf{v}_3\bullet_kw\neq 0$, then $w(i)=c_2$ implies $i>k$. On the other hand, if $\mathbf{v}_1\bullet_kw\neq 0$ or $\mathbf{v}_2\bullet_kw\neq 0$, then $w(i)=c_2$ implies $i\le k$. Consequently, $\mathbf{v}_3\not\equiv \mathbf{v}_1,\mathbf{v}_2$. Finally, if $\mathbf{v}_1\bullet_kw\neq 0$, then $w(i)=b_2$ implies $i>k$. On the other hand, if $\mathbf{v}_2\bullet_kw\neq 0$, then $w(i)=b_2$ implies $i\le k$. Consequently, $\mathbf{v}_1\not\equiv\mathbf{v}_2$. Thus, no two compositions of $S_4$ are equivalent.
\bigskip

\noindent
\underline{Case 5:} 
Let $\mathbf{v}_1=\mathbf{v}_{a_3b_3}\mathbf{v}_{d_3c_3}$, $\mathbf{v}_2=\mathbf{v}_{b_3c_3}\mathbf{v}_{d_3a_3}$, and $\mathbf{v}_3=\mathbf{v}_{b_3a_3}\mathbf{v}_{c_3d_3}$ in $S_5$. If $\mathbf{v}_1\bullet_kw\neq 0$, then $w(i)=b_3$ implies $i>k$. On the other hand, if $\mathbf{v}_2\bullet_kw\neq 0$ or $\mathbf{v}_3\bullet_kw\neq 0$, then $w(i)=b_3$ implies $i\le k$. Consequently, $\mathbf{v}_1\not\equiv\mathbf{v}_2,\mathbf{v}_3$. Now, if $\mathbf{v}_2\bullet_kw\neq 0$, then $w(i)=d_3$ implies $d_3\le k$. On the other hand, if $\mathbf{v}_3\bullet_kw\neq 0$, then $w(i)=d_3$ implies $d_3>k$. Consequently, $\mathbf{v}_2\not\equiv\mathbf{v}_3$. Thus, no two compositions of $S_5$ are equivalent.
\end{proof}

Our next result shows that if adding an operator to a non-zero composition of operators with no shared support values results in a zero composition, then it is the result of a zero equivalence of degree two.

\begin{prop}\label{prop:dis0}
    Let $\mathbf{v}=\mathbf{v}_{a_nb_n}\cdots\mathbf{v}_{a_1b_1}\not\equiv \mathbf{0}$. Suppose that $a_{n+1},b_{n+1}\in\mathbb{Z}_{>0}$ satisfy $a_{n+1}\neq b_{n+1}$ and $a_{n+1},b_{n+1}\notin \{a_1,b_1,\hdots,a_n,b_n\}$. Then $\mathbf{v}_{a_{n+1}b_{n+1}}\mathbf{v}\equiv \mathbf{0}$ if and only if $\mathbf{v}_{a_{n+1}b_{n+1}}\mathbf{v}_{a_ib_i}\equiv\mathbf{0}$ for some $1\le i\le n$. 
\end{prop}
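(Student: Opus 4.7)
The proof establishes both directions through contrapositive arguments, crucially exploiting that since $a_{n+1},b_{n+1}\notin\supp(\mathbf{v})$, the positions of $a_{n+1}$ and $b_{n+1}$ are preserved by every operator in $\mathbf{v}$, and that every non-zero disjoint-support degree-two composition commutes with its reverse (combining Proposition~\ref{prop:deg2rel} with the completeness statement that follows it, together with Theorem~\ref{thm:deg2zero}).

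For the backward direction ($\Leftarrow$), suppose some index $i\in[n]$ satisfies $\mathbf{v}_{a_{n+1}b_{n+1}}\mathbf{v}_{a_ib_i}\equiv\mathbf{0}$, and choose $i$ to be maximal with this property. For every $j>i$ we have $\mathbf{v}_{a_{n+1}b_{n+1}}\mathbf{v}_{a_jb_j}\not\equiv\mathbf{0}$, so by the classification cited above the pair must commute, i.e. $\mathbf{v}_{a_{n+1}b_{n+1}}\mathbf{v}_{a_jb_j}\equiv\mathbf{v}_{a_jb_j}\mathbf{v}_{a_{n+1}b_{n+1}}$. Iterating these commutations pushes $\mathbf{v}_{a_{n+1}b_{n+1}}$ down past $\mathbf{v}_{a_nb_n},\ldots,\mathbf{v}_{a_{i+1}b_{i+1}}$, giving
\begin{equation*}
\mathbf{v}_{a_{n+1}b_{n+1}}\mathbf{v}\equiv\mathbf{v}_{a_nb_n}\cdots\mathbf{v}_{a_{i+1}b_{i+1}}\bigl(\mathbf{v}_{a_{n+1}b_{n+1}}\mathbf{v}_{a_ib_i}\bigr)\mathbf{v}_{a_{i-1}b_{i-1}}\cdots\mathbf{v}_{a_1b_1}.
\end{equation*}
Since the middle factor is equivalent to $\mathbf{0}$ and $\mathbf{0}$ annihilates under the action on either side, the right-hand side is equivalent to $\mathbf{0}$, yielding $\mathbf{v}_{a_{n+1}b_{n+1}}\mathbf{v}\equiv\mathbf{0}$.

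For the forward direction ($\Rightarrow$), argue by contrapositive. Suppose $\mathbf{v}_{a_{n+1}b_{n+1}}\mathbf{v}_{a_ib_i}\not\equiv\mathbf{0}$ for every $i\in[n]$. By the same classification, each pair commutes, and iterating gives $\mathbf{v}_{a_{n+1}b_{n+1}}\mathbf{v}\equiv\mathbf{v}\mathbf{v}_{a_{n+1}b_{n+1}}$. It then suffices to exhibit a witness $(u,k)$ with $\mathbf{v}\mathbf{v}_{a_{n+1}b_{n+1}}\bullet_k u\neq 0$. After flattening via Theorem~\ref{thm:flat} and applying a suitable cyclic shift via Theorem~\ref{thm:cycshift}, one may normalize so that $a_{n+1},b_{n+1}$ are the two extremal values of $\supp(\mathbf{v})\cup\{a_{n+1},b_{n+1}\}$, ensuring that neither lies in any open interval $(a_i,b_i)$ or $(b_i,a_i)$ and hence that neither can block any operator of $\mathbf{v}$. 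Starting from a witness $(u^*,k^*)$ to $\mathbf{v}\not\equiv\mathbf{0}$ (which exists by Proposition~\ref{prop:zeroscheck} and the hypothesis $\mathbf{v}\not\equiv\mathbf{0}$), insert $a_{n+1}$ and $b_{n+1}$ into $u^*$ at two adjacent positions straddling the bar to obtain $u$, with $k$ shifted accordingly. By adjacency, $\mathbf{v}_{a_{n+1}b_{n+1}}$ satisfies its no-blocker condition vacuously and its positional condition by construction; by extremality of the inserted values, the action of $\mathbf{v}$ on $u$ faithfully mirrors its action on $u^*$ (a direct application of Lemma~\ref{lem:nztrunc} applied once to $a_{n+1}$ and once to $b_{n+1}$), producing the required non-zero witness.

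The main obstacle is the witness construction in the forward direction, since without normalization one of $a_{n+1},b_{n+1}$ could sit inside an interval $(a_i,b_i)$ and act as a blocker for $\mathbf{v}_{a_ib_i}$. This is circumvented by first reducing, via the equivalence-preserving transformations developed in Section~\ref{sec:ops}, to the normalized setting in which such blocker obstructions cannot arise.
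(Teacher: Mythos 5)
\textbf{The backward direction is essentially correct} (and essentially coincides with the paper's argument), though the ``choose $i$ maximal'' device is unnecessary: since $a_{n+1},b_{n+1}$ are disjoint from every $\{a_j,b_j\}$, the combination of Theorem~\ref{thm:deg2zero}~(ii),~(iv),~(vi) and Proposition~\ref{prop:deg2rel} gives the commutation $\mathbf{v}_{a_{n+1}b_{n+1}}\mathbf{v}_{a_jb_j}\equiv\mathbf{v}_{a_jb_j}\mathbf{v}_{a_{n+1}b_{n+1}}$ for \emph{every} $j$ regardless of whether the product is zero, so you can simply slide $\mathbf{v}_{a_{n+1}b_{n+1}}$ next to any $\mathbf{v}_{a_ib_i}$.

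\textbf{The forward direction has a genuine gap}, for three compounding reasons. First, the normalization step is not achievable: after flattening, the equivalence-preserving transformations of Section~\ref{sec:ops} act on the index set $[m]$ only through the dihedral group generated by cyclic shift and reversal, and the cyclic distance between $a_{n+1}$ and $b_{n+1}$ is an invariant of that action. The extremal pair $\{1,m\}$ has cyclic distance $1$, so for instance the pair $\{2,4\}\subset[5]$ can never be normalized to $\{1,5\}$. Second, even granting extremality, the claim ``neither can block any operator of $\mathbf{v}$'' fails for quantum operators: by Proposition~\ref{prop:covercond}~(b), the cover $\mathbf{v}_{a_ib_i}$ with $a_i>b_i$ requires every intermediate value to lie \emph{inside} $(b_i,a_i)$, so a value \emph{outside} $[b_i,a_i]$ is precisely what blocks --- and a global max or min is always outside every such interval. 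Third, Lemma~\ref{lem:nztrunc} is cited in the wrong direction: it asserts that a nonzero action of $\mathbf{v}$ on $u$ descends to a nonzero action of $\tau_p(\mathbf{v})$ on $u/t$, whereas your construction needs the converse (that a nonzero reduced action lifts to a nonzero full action after reinserting $p$), which is false in general exactly because the reinserted value can block. This is the real obstacle the paper confronts: the condition $\mathbf{v}_{a_{n+1}b_{n+1}}\mathbf{v}_{a_ib_i}\not\equiv\mathbf{0}$ only forces each $\{a_i,b_i\}$ to be entirely inside $(b_{n+1},a_{n+1})$ or entirely outside $[b_{n+1},a_{n+1}]$ (the paper's $(\star\star)$); some pairs can lie inside, and the paper's witness therefore inserts $a_{n+1},b_{n+1}$ at carefully chosen non-adjacent positions $i^*,j^*$ so that exactly the ``inside'' values separate them, then runs a case analysis (the paper's Cases 1 and 2) to show that any putative blocking at a later step of $\mathbf{v}$ would already have blocked $\mathbf{v}$ on the original witness $w$, a contradiction. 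The adjacent-insertion shortcut cannot substitute for this: placing $a_{n+1},b_{n+1}$ on opposite sides of the bar forces them to sit between the two swapping positions of every operator of $\mathbf{v}$, so non-blocking is a real constraint, not a vacuous one.
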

\begin{proof}
    Note that since $\{a_{n+1},b_{n+1}\}\cap\{a_i,b_i\}=\emptyset$ for all $i\in [n]$, combining Theorem~\ref{thm:deg2zero}~(ii),~(iv),~(vi) and Proposition~\ref{prop:deg2rel} we have that 
    $$(\star) \qquad \text{for each } i\in[n], \quad \mathbf{v}_{a_{n+1}b_{n+1}}\mathbf{v}_{a_ib_i}\equiv \mathbf{v}_{a_ib_i}\mathbf{v}_{a_{n+1}b_{n+1}}.$$

    The backward direction is an immediate consequence of $(\star)$. For the forward direction, assume that $\mathbf{v}_{a_{n+1}b_{n+1}}\mathbf{v}_{a_ib_i}\not\equiv\mathbf{0}$ for all $1\le i\le n$. We construct a permutation $w^*$ for which there exists $k^*\in\mathbb{Z}_{>0}$ such that $\mathbf{v}_{a_{n+1}b_{n+1}}\mathbf{v}\bullet_{k^*}w^*\neq 0$. Considering Theorem~\ref{thm:cycshift}~(a), without loss of generality, we may assume that $a_{n+1}>b_{n+1}$. Then, since $\mathbf{v}_{a_{n+1}b_{n+1}}\mathbf{v}_{a_ib_i}\not\equiv\mathbf{0}$ for all $1\le i\le n$, by Theorem~\ref{thm:deg2zero}~(ii),~(iv),~(vi), it must be the case that for $i\in [n]$,
    \begin{enumerate}
        \item[(1)] if $a_i>b_i$, then either $b_i<b_{n+1}<a_{n+1}<a_i$ or $b_{n+1}<b_i<a_i<a_{n+1}$; and
        \item[(2)] if $a_i<b_i$, then either $b_{n+1}<a_i<b_i<a_{n+1}$, $a_i<b_i<b_{n+1}<a_{n+1}$, or $b_{n+1}<a_{n+1}<a_i<b_i$;
    \end{enumerate}
    that is, 
    $$(\star\star) \qquad \text{for each } i\in [n] \text{, either } \quad a_i,b_i\in (b_{n+1},a_{n+1}) \quad \text{ or } \quad a_i,b_i\notin [b_{n+1},a_{n+1}].$$

    Since $\mathbf{v}\not\equiv 0$, if $|\mathrm{supp}(\mathbf{v})|=t$, then there exists $u\in S_t$ and $k\in\mathbb{Z}_{>0}$ such that $\mathbf{\underline{v}}\bullet_ku\neq 0$. Assume $u=(u'_1,\hdots ,u'_t)$. If $\phi:\mathrm{supp}(\mathbf{v})\to [t]$ denotes the unique order-preserving bijection, then let $(u_1,\hdots,u_t)=(\phi^{-1}(u_1'),\hdots,\phi^{-1}(u_t'))$. Letting $N=\max\{a_1,b_1,\hdots,a_{n+1},b_{n+1}\}$, define $w,w^*\in S_N$ as follows. Set $w=(w_1,\hdots,w_N)$ where $w_i=u_i$ for $i\in [t]$ and the values $w_i$ for $i>t$ are the values $[N]\backslash\{a_1,b_1,\hdots,a_n,b_n\}$ listed in increasing order. Note that since $\mathbf{\underline{v}}\bullet_ku\neq 0$, evidently, $\mathbf{v}\bullet_kw\neq 0$. Now, for $w^*$, let $i^*=1+\max\{i~|~i\le k,~u_i\notin [b_{n+1},a_{n+1}]\}$ if such $i\le k$ exists, or $i^*=1$, otherwise, and $j^*=1+\min\{j~|~j> k,~u_j\notin [b_{n+1},a_{n+1}]\}$ if such a value $j>k$ exists, or $j^*=t+2$ otherwise. Then, we define $w^*=(w'_1,\hdots, w'_N)$ where $w'_i=w_i$ for $1\le i<i^*$, $w'_{i^*}=a_{n+1}$, $w'_i=w_{i-1}$ for $i^*<i<j^*$, $w'_{j^*}=b_{n+1}$, and $w'_i=w_i$ for $t+2<i\le N$. Note that, by construction, $w'_i=u_{i-1}\in [b_{n+1},a_{n+1}]$ for $i^*+1\le i\le j^*-1$. Ongoing, we assume that $i^*>1$ and $j^*<t+2$; the arguments in the other cases are similar, but simpler. In this case, we have $$w^*=(u_1,\hdots,u_{i^*-1},a_{n+1},u_{i^*},\hdots,u_{j^*-2},b_{n+1},u_{j^*-1},\hdots,u_t,w_{t+3},\hdots,w_N).$$ We claim that taking $k^*=k+1$ we have $\mathbf{v}_{a_{n+1}b_{n+1}}\mathbf{v}\bullet_{k^*}w^*\neq 0$.
    Assume otherwise. By construction, we have that 
    \begin{align*}
        \mathbf{v}_{a_{n+1}b_{n+1}}\bullet_{k^*}w^*&=q_{i^*,j^*}(u_1,\hdots,u_{i^*-1},b_{n+1},u_{i^*},\hdots,u_{j^*-1},a_{n+1},u_{j^*},\hdots,u_t,w_{t+3},\hdots,w_N)\\
        &=q_{i^*,j^*}w'\neq 0.
    \end{align*}
    Thus, since $\mathbf{v}_{a_{n+1}b_{n+1}}\mathbf{v}\equiv \mathbf{v}\mathbf{v}_{a_{n+1}b_{n+1}}$ by $(\star)$, it follows that $\mathbf{v}\bullet_{k^*}w'=0$. Let $m\in [n]$ be least such that $\mathbf{v}_{a_mb_m}\cdots\mathbf{v}_{a_1b_1}\bullet_{k^*}w'= 0$. By construction, $\mathbf{v}_{a_mb_m}\cdots\mathbf{v}_{a_1b_1}\bullet_{k^*}w'= 0$ must be a consequence of the positions of $b_{n+1}$ and $a_{n+1}$ in $w'$. In particular, letting $\mathbf{q^{\alpha}}\widehat{w}=\mathbf{v}_{a_{m-1}b_{m-1}}\cdots\mathbf{v}_{a_1b_1}\bullet_{k^*}w^*\neq 0$, since the positions of $a_{n+1}$ and $b_{n+1}$ are fixed by $\mathbf{v}_{a_ib_i}$ for all $i\in [n]$, applying Proposition~\ref{prop:covercond} there are two cases.
    \bigskip

    \noindent
    \underline{Case 1:} $a_m<b_m$ and either
    \begin{itemize}
        \item $a_m<a_{n+1}<b_m$ with $\widehat{w}^{-1}(a_m)<j^*<\widehat{w}^{-1}(b_m)$ or
        \item $a_m<b_{n+1}<b_m$ with $\widehat{w}^{-1}(a_m)<i^*<\widehat{w}^{-1}(b_m)$.
    \end{itemize}
     By (2) above, $a_{n+1},b_{n+1}\notin [a_m,b_m]$, and so neither option is possible.
    \bigskip

    \noindent
    \underline{Case 2:} $a_m>b_m$ and either 
    \begin{itemize}
        \item $a_{n+1}\notin [b_m,a_m]$ with $\widehat{w}^{-1}(a_m)<j^*<\widehat{w}^{-1}(b_m)$ or
        \item $b_{n+1}\notin [b_m,a_m]$ with $\widehat{w}^{-1}(a_m)<i^*<\widehat{w}^{-1}(b_m)$.
    \end{itemize}
    By (2) above, in either case, we must have $[b_m,a_m]\subset (b_{n+1},a_{n+1})$. Assume that $a_{n+1}\notin [b_m,a_m]$ with $\widehat{w}^{-1}(a_m)<j^*<\widehat{w}^{-1}(b_m)$, as the other case follows via a similar argument. Since $w^*(j^*+1)=u_{j^*-1}\notin [b_{n+1},a_{n+1}]$, it follows that $w^*(j^*+1)\notin [b_m,a_m]$ and, considering $(\star\star)$, we have $\widehat{w}(j^*+1)\notin [b_m,a_m]$. Consequently, $\widehat{w}^{-1}(a_m)<j^*+1<\widehat{w}^{-1}(b_m)$. Letting $\mathbf{q^{\beta}}w''=\mathbf{v}_{a_{m-1}b_{m-1}}\cdots\mathbf{v}_{a_1b_1}\bullet_kw$, this implies that $(w'')^{-1}(b_m)<j^*-1<(w'')^{-1}(a_m)$ where $w''(j^*-1)\notin [b_m,a_m]$; but then $\mathbf{v}_{a_mb_m}\bullet_kw''=0$, i.e., $\mathbf{v}\bullet_kw=0$, which is a contradiction. Thus, $\mathbf{v}_{a_{n+1}b_{n+1}}\mathbf{v}\bullet_{k^*}w^*\neq 0$, as desired, and the result follows.
\end{proof}

We finish this subsection with a result concerning compositions equivalent to ${\bf 0}$, their support, and implications related to minimality. The main motivation for these results is computational. As mentioned in the introduction, our goal is to fully characterize the minimal zero and non-zero equivalences of all degrees, and computer testing is an essential tool for that. However, due to the sizes of the samples and the equalities that need to be checked, having results that allow us to restrict the sizes of supports of compositions that need to be considered is very helpful. 

\begin{lemma}\label{lem:help1}
    Let $n>1$ and $\mathbf{v}=\mathbf{v}_{a_nb_n}\cdots\mathbf{v}_{a_1b_1}$ with either
    \begin{enumerate}
        \item[$(a)$] $|\supp(\mathbf{v})|=2n$ or
        \item[$(b)$] $|\supp(\mathbf{v})|=2n-1$.
    \end{enumerate}
    If $\mathbf{v}\equiv \mathbf{0}$, then $\mathbf{v}\equiv\mathbf{v}_{a'_nb'_n}\cdots\mathbf{v}_{a'_1b'_1}$ where $\mathbf{v}_{a'_{i+1}b'_{i+1}}\mathbf{v}_{a'_ib'_i}\equiv \mathbf{0}$ for some $1\le i<n$, i.e., $\mathbf{v}\equiv \mathbf{0}$ as a consequence of a degree-two zero equivalence.
\end{lemma}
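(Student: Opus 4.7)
The plan is to locate the minimal zero prefix of $\mathbf{v}$ and use Proposition~\ref{prop:dis0} together with the commutativity of disjoint-support operators to exhibit an adjacent zero pair, possibly after rearranging via non-zero equivalences. Let $m\ge 2$ be the smallest index such that $P_m:=\mathbf{v}_{a_mb_m}\cdots\mathbf{v}_{a_1b_1}\equiv\mathbf{0}$; the sub-prefix $P_{m-1}$ is then non-zero. A first observation, used throughout, is that whenever an operator $\mathbf{v}_{a_jb_j}$ in $P_{m-1}$ has disjoint support from all earlier operators, every pair $\mathbf{v}_{a_jb_j}\mathbf{v}_{a_ib_i}$ with $i<j$ must be non-zero: otherwise Proposition~\ref{prop:dis0} would force the length-$j$ prefix to be zero, contradicting the minimality of $m$. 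Combined with the dichotomy ``disjoint-support pairs are either zero or commute'' that one may read off from Theorem~\ref{thm:deg2zero} and Proposition~\ref{prop:deg2rel}, this yields commutativity for all such pairs.

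In case (a) all operator supports are pairwise disjoint, so the observation makes every pair of operators in $P_{m-1}$ commute. Proposition~\ref{prop:dis0} at step $m$ provides $i<m$ with $\mathbf{v}_{a_mb_m}\mathbf{v}_{a_ib_i}\equiv\mathbf{0}$. Choosing $i$ maximal, each intermediate $\mathbf{v}_{a_lb_l}$ (with $i<l<m$) satisfies $\mathbf{v}_{a_mb_m}\mathbf{v}_{a_lb_l}\not\equiv\mathbf{0}$ and hence commutes with $\mathbf{v}_{a_mb_m}$. Commuting $\mathbf{v}_{a_mb_m}$ down past positions $m-1,\ldots,i+1$ places the zero pair at positions $(i+1,i)$, leaving the operators beyond position $m$ untouched.

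In case (b), let $\mathbf{v}_{a_pb_p},\mathbf{v}_{a_qb_q}$ (with $p<q$) denote the unique shared-index pair. When $m\ne q$, the operator $\mathbf{v}_{a_mb_m}$ still has disjoint support from every operator in $P_{m-1}$ (either $m$ is a regular index, or $m=p$ with $q>m$), and the case-(a) argument carries through verbatim: any occurrence of $\mathbf{v}_{a_pb_p}$ or $\mathbf{v}_{a_qb_q}$ between positions $i$ and $m$ is disjoint-support with $\mathbf{v}_{a_mb_m}$ and hence commutes with it by maximality of $i$. The genuinely delicate sub-case is $m=q$, in which Proposition~\ref{prop:dis0} fails to apply because $\mathbf{v}_{a_qb_q}$ shares an index with $\mathbf{v}_{a_pb_p}\in P_{m-1}$. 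Here the operators in $P_{m-1}$ are pairwise disjoint-support (the partner of $\mathbf{v}_{a_pb_p}$ sits at position $q=m$ outside $P_{m-1}$), hence pairwise commute, so we rearrange $P_{m-1}$ to place $\mathbf{v}_{a_pb_p}$ at its top, obtaining an equivalent composition $\mathbf{v}_{a_qb_q}\mathbf{v}_{a_pb_p}\mathbf{v}_R\equiv\mathbf{0}$ with $\mathbf{v}_R$ collecting the remaining regular operators. Either (i) some regular $\mathbf{v}_{a_jb_j}$ in $\mathbf{v}_R$ satisfies $\mathbf{v}_{a_qb_q}\mathbf{v}_{a_jb_j}\equiv\mathbf{0}$, in which case we commute $\mathbf{v}_{a_jb_j}$ up to the top of the sub-prefix (using the commutativity of sub-prefix operators just established) to make the zero pair adjacent; or (ii) we argue $\mathbf{v}_{a_qb_q}\mathbf{v}_{a_pb_p}\equiv\mathbf{0}$ by contradiction. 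If $\mathbf{v}_{a_qb_q}\mathbf{v}_{a_pb_p}\not\equiv\mathbf{0}$, Theorem~\ref{thm:vshift} gives $\mathbf{v}_{a_pb_p}\mathbf{v}_{a_qb_q}\not\equiv\mathbf{0}$, and iteratively applying Proposition~\ref{prop:dis0} to adjoin the operators of $\rho(\mathbf{v}_R)$ on the left of this base yields $\rho(P_m)\not\equiv\mathbf{0}$: at each step the incoming regular operator is disjoint-support with everything in the current base, the pair with $\mathbf{v}_{a_pb_p}$ and every previously-adjoined regular is non-zero by minimality of $m$, and the pair with $\mathbf{v}_{a_qb_q}$ is non-zero by the case-(ii) hypothesis. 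Applying $\rho$ once more contradicts $P_m\equiv\mathbf{0}$.

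The main obstacle is the $m=q$ sub-case of (b), where Proposition~\ref{prop:dis0} does not apply directly because the incoming operator shares an index with one already present in the base. The key maneuver is to use the vertical flip from Theorem~\ref{thm:vshift} to reinterpret the question in terms of the non-zero base $\mathbf{v}_{a_pb_p}\mathbf{v}_{a_qb_q}$, and then to iterate Proposition~\ref{prop:dis0} on the flipped composition, with the minimality of $m$ and the case hypothesis together controlling every potentially problematic pair-zero that could arise during the iteration.
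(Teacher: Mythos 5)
Your argument is correct, and it reaches the lemma's conclusion by a genuinely different route from the paper's own proof. The paper proceeds by induction on $n$: after establishing that operators with disjoint supports commute, in case (b) it reindexes ``without loss of generality'' so that the unique shared-index pair $(i^*,j^*)$ sits at positions $1$ and $2$ (this is the content of $(\star\star)$ applied to move all regular operators), after which the top operator is always regular, Proposition~\ref{prop:dis0} applies cleanly, and the inductive step never encounters the sub-case you call $m=q$. You instead locate the minimal zero prefix $P_m$ and argue non-inductively. For case (a), and for case (b) with $m\ne q$, your maximal-$i$ plus commute-down maneuver is essentially the same content as the paper's inductive step. The departure is your $m=q$ sub-case, which the paper's WLOG eliminates: you dichotomize on whether some regular $\mathbf{v}_{a_jb_j}$ in $\mathbf{v}_R$ pairs to zero with $\mathbf{v}_{a_qb_q}$, and in the residual case you apply the vertical flip $\rho$ from Theorem~\ref{thm:vshift} and iterate Proposition~\ref{prop:dis0} on the reversed word (where $\mathbf{v}_{a_pb_p}$ becomes the top of a two-operator base and each subsequent adjoined operator has disjoint support from the accumulated base) to force $\rho(P_m)\not\equiv\mathbf{0}$, a contradiction. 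The bookkeeping there is sound: your opening observation from minimality of $m$, together with the case-(ii) hypothesis and a vertical flip to reverse pair orders where needed, does indeed control every degree-two pair that Proposition~\ref{prop:dis0} requires to be non-zero. The net trade-off is that your direct approach avoids setting up the induction and WLOG relabeling, at the cost of the extra $\rho$-iteration argument; the paper's relabeling trick is the more economical way to kill the $m=q$ case, but both are valid.
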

\begin{proof}
    We treat each case separately by induction on $n$.
    \begin{enumerate}
        \item[$(a)$] The base case is immediate. Assume that the result holds for $n-1\ge 1$. Note that $|\supp(\mathbf{v})|=2n$ implies $\{a_i,b_i\}\cap\{a_j,b_j\}=\emptyset$ for all $i,j\in [n]$ such that $i\neq j$. Consequently, combining Theorem~\ref{thm:deg2zero}~(ii),~(iv),~(vi) and Proposition~\ref{prop:deg2rel} we have that 
    $$\textup{$(\star)\qquad$ $\mathbf{v}_{a_ib_i}\mathbf{v}_{a_jb_j}\equiv \mathbf{v}_{a_jb_j}\mathbf{v}_{a_ib_i}$ for all $i,j\in [n]$ such that $i\neq j$.}$$ 
    Now, if $\mathbf{v}=\mathbf{v}_{a_nb_n}\cdots\mathbf{v}_{a_1b_1}\equiv 0$, then either
    \begin{itemize}
        \item $\mathbf{v}_{a_{n-1}b_{n-1}}\cdots\mathbf{v}_{a_1b_1}\equiv \mathbf{0}$, and the result follows from the induction hypothesis; or
        \item $\mathbf{v}_{a_{n-1}b_{n-1}}\cdots\mathbf{v}_{a_1b_1}\not\equiv \mathbf{0}$, and the result follows by Proposition~\ref{prop:dis0} along with $(\star)$.
    \end{itemize}
    
    \item[$(b)$] As in (a), the base case is immediate. Assume the result holds for $n-1\ge 1$. Note that $|\supp(\mathbf{v})|=2n-1$ implies that
    \begin{itemize}
        \item there exists unique pair $i^*, j^*\in [n]$ such that $i^*< j^*$ and $|\{a_{i^*},b_{i^*}\}\cap\{a_{j^*},b_{j^*}\}|=1$, and
        \item for all other pairs $i,j\in [n]$ such that $i\neq j$, we have $\{a_i,b_i\}\cap\{a_j,b_j\}=\emptyset$.
    \end{itemize}
    Consequently, if $i\notin \{i^*,j^*\}$, then $\{a_i,b_i\}\cap\{a_j,b_j\}=\emptyset$ for all $j\in [n]$ such that $j\neq i$. Thus, combining Theorem~\ref{thm:deg2zero}~(ii),~(iv),~(vi) and Proposition~\ref{prop:deg2rel} we have that 
    $$\text{$(\star\star)$ if $i\notin \{i^*,j^*\}$, then $\mathbf{v}_{a_ib_i}\mathbf{v}_{a_jb_j}\equiv \mathbf{v}_{a_jb_j}\mathbf{v}_{a_ib_i}$ for all $j\in [n]$ such that $i\neq j$.}$$ Consequently, without loss of generality, we may assume that $i^*=1$ and $j^*=2$. Now, if $\mathbf{v}=\mathbf{v}_{a_nb_n}\cdots\mathbf{v}_{a_1b_1}\equiv 0$, then either
    \begin{itemize}
        \item $\mathbf{v}_{a_{n-1}b_{n-1}}\cdots\mathbf{v}_{a_1b_1}\equiv \mathbf{0}$, and the result follows from the inductive hypothesis; or
        \item $\mathbf{v}_{a_{n-1}b_{n-1}}\cdots\mathbf{v}_{a_1b_1}\not\equiv \mathbf{0}$, and the result follows by Proposition~\ref{prop:dis0} along with $(\star\star)$. \qedhere
    \end{itemize}
    \end{enumerate}
\end{proof}

\subsection{Degree 3 Equivalences}\label{sec:order3}

In this section, we determine all minimal equivalences of degree three. We follow the same approach as for degree two equivalences. Consequently, we start with zero equivalences of degree three and, considering Theorem~\ref{thm:cycshift}~(a), focus on those for which the first operator is quantum.

Utilizing~\cite[Proposition 5.2]{qkB1} along with properties of the cyclic shift transformation, we are led to the following.

\begin{prop}\label{prop:min3zero1}
    Let $a<b<c<d$. Then the following are minimal zero equivalences of degree three:
    \begin{multicols}{2}
        \begin{enumerate}
            \item[\textup{(i)}] $\mathbf{v}_{ca}\mathbf{v}_{bc}\mathbf{v}_{ca}\equiv \mathbf{0}$; 
            \item[\textup{(ii)}] $\mathbf{v}_{ca}\mathbf{v}_{ab}\mathbf{v}_{ca}\equiv \mathbf{0}$; 
            \item[\textup{(iii)}] $\mathbf{v}_{bd}\mathbf{v}_{ab}\mathbf{v}_{ca}\equiv \mathbf{0}$; 
            \item[\textup{(iv)}] $\mathbf{v}_{ca}\mathbf{v}_{bc}\mathbf{v}_{db}\equiv \mathbf{0}$; 
            \item[\textup{(v)}]  $\mathbf{v}_{db}\mathbf{v}_{bc}\mathbf{v}_{ca}\equiv \mathbf{0}$; 
            \item[\textup{(vi)}] $\mathbf{v}_{ac}\mathbf{v}_{cd}\mathbf{v}_{db}\equiv \mathbf{0}$. 
        \end{enumerate}  
    \end{multicols}
\end{prop}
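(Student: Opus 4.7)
The plan is to establish each of (i)--(vi) as a zero equivalence and then verify minimality. The six relations split naturally into two classes: (i)--(ii) reduce to classical braid relations via the cyclic shift, while (iii)--(vi) form a single orbit under the combined action of the cyclic shift and the horizontal flip, so that a single direct verification propagates to all four.

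For (i) and (ii), I will apply the cyclic shift $\mathfrak{o}_{\{a,b,c\}}$, which sends $a\mapsto b$, $b\mapsto c$, $c\mapsto a$. A direct computation gives
\[
\mathfrak{o}^2(\mathbf{v}_{ca}\mathbf{v}_{bc}\mathbf{v}_{ca})=\mathbf{v}_{bc}\mathbf{v}_{ab}\mathbf{v}_{bc}\quad\text{and}\quad\mathfrak{o}(\mathbf{v}_{ca}\mathbf{v}_{ab}\mathbf{v}_{ca})=\mathbf{v}_{ab}\mathbf{v}_{bc}\mathbf{v}_{ab},
\]
each of which is a classical braid-type zero relation appearing in Definition~\ref{def:monoid}(iv). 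Theorem~\ref{thm:cycshift}(a) then transports these zero equivalences back to (i) and (ii) respectively.

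For (iii)--(vi), a short calculation shows that the cyclic shift $\mathfrak{o}_{\{a,b,c,d\}}$ sends (iii) to (iv), while the horizontal flip $\mathfrak{h}$ sends (iv) to (v) and (iii) to (vi). By Theorems~\ref{thm:cycshift}(a) and~\ref{thm:hshift}(a), zero equivalence is preserved by both transformations, so it suffices to prove one of the four; I will focus on (v): $\mathbf{v}_{db}\mathbf{v}_{bc}\mathbf{v}_{ca}\equiv\mathbf{0}$. Suppose for contradiction that $\mathbf{v}_{db}\mathbf{v}_{bc}\mathbf{v}_{ca}\bullet_k w\neq 0$ for some $w$ and $k$, and let $p_x$ denote the position of $x\in\{a,b,c,d\}$ in $w$. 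Proposition~\ref{prop:covercond} applied to $\mathbf{v}_{ca}$ forces $p_c\le k<p_a$, with the only values allowed strictly between those positions belonging to $(a,c)=\{b\}$; consequently either $p_a=p_c+1$ or $p_a=p_c+2$ with $p_b=p_c+1$. Tracing through, the subsequent $\mathbf{v}_{bc}$ forces $p_b\le k$, and after its action the configuration has $a$ at $p_c$, $c$ at $p_b$, $b$ at $p_a$, and $d$ still at $p_d$. The quantum cover condition of Proposition~\ref{prop:covercond}(b) for $\mathbf{v}_{db}$ then demands $p_d\le k<p_a$ and every position strictly between $p_d$ and $p_a$ to carry a value in $(b,d)=\{c\}$. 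Examining the possible orderings of $p_d$ relative to $\{p_b,p_c\}$ (namely $p_d<p_b<p_c$, or $p_b<p_d<p_c$, or $p_c<p_d<p_a$ only in the subcase $p_a=p_c+2$), one sees that $p_c$ always lies strictly between $p_d$ and $p_a$ yet carries the forbidden value $a\notin(b,d)$, producing the desired contradiction.

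For minimality, I will verify via Theorem~\ref{thm:deg2zero} that no length-two adjacent subword of any of (i)--(vi) is itself a zero equivalence. The nine distinct subwords that arise, namely $\mathbf{v}_{ca}\mathbf{v}_{bc}$, $\mathbf{v}_{bc}\mathbf{v}_{ca}$, $\mathbf{v}_{ca}\mathbf{v}_{ab}$, $\mathbf{v}_{ab}\mathbf{v}_{ca}$, $\mathbf{v}_{bd}\mathbf{v}_{ab}$, $\mathbf{v}_{bc}\mathbf{v}_{db}$, $\mathbf{v}_{db}\mathbf{v}_{bc}$, $\mathbf{v}_{ac}\mathbf{v}_{cd}$, and $\mathbf{v}_{cd}\mathbf{v}_{db}$, can each be checked against the six cases of Theorem~\ref{thm:deg2zero} and shown not to match any listed pattern, so no degree-three relation in the list follows from a degree-two one. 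The main technical hurdle will be the position-tracking argument for (v): keeping the constraints from three successive cover conditions organized while ruling out every allowable configuration of $(p_a,p_b,p_c,p_d)$ relative to $k$ requires care, but once that single case is settled the cyclic shift and horizontal flip symmetries automatically propagate the result to (iii), (iv), and (vi).
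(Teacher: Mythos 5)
The paper does not supply a self-contained proof of this proposition; it attributes the zero equivalences to~\cite[Proposition 5.2]{qkB1} and then invokes the cyclic-shift symmetry. Your direct verification is therefore a genuinely different route, and its strategy is sound: the orbit computations are correct ($\mathfrak{o}$ carries (iii) to (iv), $\mathfrak{h}$ carries (iv) to (v) and (iii) to (vi), and $\mathfrak{o}^2$, $\mathfrak{o}$ carry (i), (ii) to the classical braid zeros of Definition~\ref{def:monoid}(iv), which remain zero under $\bullet_k$ because the classical and quantum $k$-Bruhat orders agree on $S_n$ by Remark~\ref{rem:relbrels}).

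Two gaps remain. In the direct argument for (v), the claim that values strictly between positions $p_c$ and $p_a$ must equal $b$ presumes $(a,c)=\{b\}$, i.e.\ that $a,b,c$ are consecutive, which is not part of the hypothesis. Either invoke Proposition~\ref{prop:zeroscheck} to reduce to $(a,b,c,d)=(1,2,3,4)$ and $w\in S_4$, or — cleaner — drop the derived constraint ``$p_a=p_c+1$ or $p_a=p_c+2$'' entirely: the contradiction only needs $p_d<p_c<p_a$ together with the fact that the value $a$ sitting at position $p_c$ after $\mathbf{v}_{bc}\mathbf{v}_{ca}$ acts is not in $(b,d)$. And $p_d<p_c$ is already forced: the $\mathbf{v}_{ca}$ cover condition requires $p_d\notin(p_c,p_a)$ since $d\notin(a,c)$, while $\mathbf{v}_{db}$ requires $p_d\le k<p_a$. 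Your listed case $p_c<p_d<p_a$ is impossible for exactly this reason, and if it were possible your claim that $p_c$ lies strictly between $p_d$ and $p_a$ would fail there, so it should be excluded rather than enumerated. The second gap is in minimality. As the paper's own treatment shows (see the proof of Proposition~\ref{prop:arblength-minimal}), one must rule out not only a degree-two zero subcomposition but also the possibility of reaching one after rewriting via lower-degree non-zero equivalences. Your check addresses only the former. Here the latter is automatic — every adjacent pair of operators in (i)--(vi) shares a support element, whereas by Proposition~\ref{prop:deg2rel} and its completeness theorem every degree-two non-zero equivalence commutes operators of disjoint support — but this observation must be stated for the minimality argument to be complete.
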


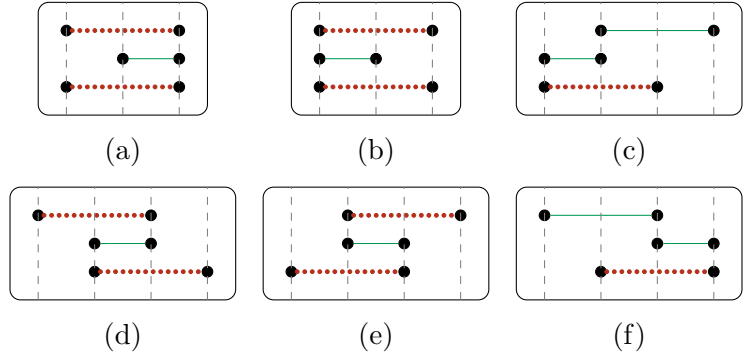
\begin{figure}[H]
    \centering
$$
\begin{array}{ccc}
\begin{tikzpicture}[scale=0.75]
        \draw[rounded corners] (-0.5, 0) rectangle (2.5, 2) {};
    
        \node (6) at (0,1.5) [circle, draw = black,fill=black, inner sep = 0.5mm] {};
        \node (7) at (2,1.5) [circle, draw = black,fill=black, inner sep = 0.5mm] {};
        \draw[decorate sep={0.5mm}{1mm},fill, BrickRed] (6)--(7);

        \node (12) at (1,1) [circle, draw = black,fill=black, inner sep = 0.5mm] {};
        \node (13) at (2,1) [circle, draw = black,fill=black, inner sep = 0.5mm] {};
        \draw[ForestGreen](12)--(13);
        
        \node (14) at (0,0.5) [circle, draw = black,fill=black, inner sep = 0.5mm] {};
        \node (15) at (2,0.5) [circle, draw = black,fill=black, inner sep = 0.5mm] {};
        \draw[decorate sep={0.5mm}{1mm},fill, BrickRed] (14)--(15);

        \draw[dashed,gray] (0,0)--(0,2);
        \draw[dashed,gray] (1,0)--(1,2);
        \draw[dashed,gray] (2,0)--(2,2);
        \node at (1, -0.65) {(a)};
    \end{tikzpicture} & \begin{tikzpicture}[scale=0.75]
        \draw[rounded corners] (-0.5, 0) rectangle (2.5, 2) {};
    
        \node (6) at (0,1.5) [circle, draw = black,fill=black, inner sep = 0.5mm] {};
        \node (7) at (2,1.5) [circle, draw = black,fill=black, inner sep = 0.5mm] {};
        \draw[decorate sep={0.5mm}{1mm},fill, BrickRed] (6)--(7);

        \node (12) at (0,1) [circle, draw = black,fill=black, inner sep = 0.5mm] {};
        \node (13) at (1,1) [circle, draw = black,fill=black, inner sep = 0.5mm] {};
        \draw[ForestGreen](12)--(13);
        
        \node (14) at (0,0.5) [circle, draw = black,fill=black, inner sep = 0.5mm] {};
        \node (15) at (2,0.5) [circle, draw = black,fill=black, inner sep = 0.5mm] {};
        \draw[decorate sep={0.5mm}{1mm},fill, BrickRed] (14)--(15);

        \draw[dashed,gray] (0,0)--(0,2);
        \draw[dashed,gray] (1,0)--(1,2);
        \draw[dashed,gray] (2,0)--(2,2);
        \node at (1, -0.65) {(b)};
    \end{tikzpicture} & \begin{tikzpicture}[scale=0.75]
        \draw[rounded corners] (-0.5, 0) rectangle (3.5, 2) {};
        
        \node (1) at (0,0.5) [circle, draw = black,fill=black, inner sep = 0.5mm] {};
        \node (2) at (2,0.5) [circle, draw = black,fill=black, inner sep = 0.5mm] {};
        \draw[decorate sep={0.5mm}{1mm},fill, BrickRed] (1)--(2);
        
        \node (3) at (0,1) [circle, draw = black,fill=black, inner sep = 0.5mm] {};
        \node (4) at (1,1) [circle, draw = black,fill=black, inner sep = 0.5mm] {};
        \draw[ForestGreen](3)--(4);
        
        \node (5) at (1,1.5) [circle, draw = black,fill=black, inner sep = 0.5mm] {};
        \node (6) at (3,1.5) [circle, draw = black,fill=black, inner sep = 0.5mm] {};
        \draw[ForestGreen](5)--(6);

        \draw[dashed,gray] (0,0)--(0,2);
        \draw[dashed,gray] (1,0)--(1,2);
        \draw[dashed,gray] (2,0)--(2,2);
        \draw[dashed,gray] (3,0)--(3,2);
        \node at (1.5, -0.65) {(c)};
    \end{tikzpicture} \\
    \begin{tikzpicture}[scale=0.75]
        \draw[rounded corners] (-0.5, 0) rectangle (3.5, 2) {};
        
        \node (1) at (1,0.5) [circle, draw = black,fill=black, inner sep = 0.5mm] {};
        \node (2) at (3,0.5) [circle, draw = black,fill=black, inner sep = 0.5mm] {};
        \draw[decorate sep={0.5mm}{1mm},fill, BrickRed] (1)--(2);
        
        \node (3) at (1,1) [circle, draw = black,fill=black, inner sep = 0.5mm] {};
        \node (4) at (2,1) [circle, draw = black,fill=black, inner sep = 0.5mm] {};
        \draw[ForestGreen](3)--(4);
        
        \node (5) at (0,1.5) [circle, draw = black,fill=black, inner sep = 0.5mm] {};
        \node (6) at (2,1.5) [circle, draw = black,fill=black, inner sep = 0.5mm] {};
        \draw[decorate sep={0.5mm}{1mm},fill, BrickRed] (5)--(6);

        \draw[dashed,gray] (0,0)--(0,2);
        \draw[dashed,gray] (1,0)--(1,2);
        \draw[dashed,gray] (2,0)--(2,2);
        \draw[dashed,gray] (3,0)--(3,2);
        \node at (1.5, -0.65) {(d)};
    \end{tikzpicture} & \begin{tikzpicture}[scale=0.75]
        \draw[rounded corners] (-0.5, 0) rectangle (3.5, 2) {};
        
        \node (1) at (1,1.5) [circle, draw = black,fill=black, inner sep = 0.5mm] {};
        \node (2) at (3,1.5) [circle, draw = black,fill=black, inner sep = 0.5mm] {};
        \draw[decorate sep={0.5mm}{1mm},fill, BrickRed] (1)--(2);
        
        \node (3) at (1,1) [circle, draw = black,fill=black, inner sep = 0.5mm] {};
        \node (4) at (2,1) [circle, draw = black,fill=black, inner sep = 0.5mm] {};
        \draw[ForestGreen](3)--(4);
        
        \node (5) at (0,0.5) [circle, draw = black,fill=black, inner sep = 0.5mm] {};
        \node (6) at (2,0.5) [circle, draw = black,fill=black, inner sep = 0.5mm] {};
        \draw[decorate sep={0.5mm}{1mm},fill, BrickRed] (5)--(6);

        \draw[dashed,gray] (0,0)--(0,2);
        \draw[dashed,gray] (1,0)--(1,2);
        \draw[dashed,gray] (2,0)--(2,2);
        \draw[dashed,gray] (3,0)--(3,2);
        \node at (1.5, -0.65) {(e)};
    \end{tikzpicture}&\begin{tikzpicture}[scale=0.75]
        \draw[rounded corners] (-0.5, 0) rectangle (3.5, 2) {};
        
        \node (1) at (0,1.5) [circle, draw = black,fill=black, inner sep = 0.5mm] {};
        \node (2) at (2,1.5) [circle, draw = black,fill=black, inner sep = 0.5mm] {};
        \draw[ForestGreen](1)--(2);
        
        \node (3) at (2,1) [circle, draw = black,fill=black, inner sep = 0.5mm] {};
        \node (4) at (3,1) [circle, draw = black,fill=black, inner sep = 0.5mm] {};
        \draw[ForestGreen](3)--(4);
        
        \node (5) at (1,0.5) [circle, draw = black,fill=black, inner sep = 0.5mm] {};
        \node (6) at (3,0.5) [circle, draw = black,fill=black, inner sep = 0.5mm] {};
        \draw[decorate sep={0.5mm}{1mm},fill, BrickRed] (5)--(6);

        \draw[dashed,gray] (0,0)--(0,2);
        \draw[dashed,gray] (1,0)--(1,2);
        \draw[dashed,gray] (2,0)--(2,2);
        \draw[dashed,gray] (3,0)--(3,2);
        \node at (1.5, -0.65) {(f)};
    \end{tikzpicture}
\end{array}
$$
    \caption{Minimal zero equivalences involving three operators}
    \label{fig:deg3zrels}
\end{figure}

It turns out that these are all the zero equivalences needed to generate the full list using the transformations from Section~\ref{sec:ops}. The next result summarizes all the minimal zero equivalences of degree three, together with their corresponding diagrammatic representation.

\begin{theorem}\label{thm:three0e}
    \begin{enumerate}
        \item[\textup{(i)}] For $a<b<c$, we have $\mathbf{v}_{bc}\mathbf{v}_{ab}\mathbf{v}_{bc}\equiv \mathbf{v}_{ab}\mathbf{v}_{bc}\mathbf{v}_{ab}\equiv \mathbf{0}$. $$\scalebox{0.8}{\begin{tikzpicture}[scale=0.75]
        \draw[rounded corners] (-0.5, 0) rectangle (2.5, 2) {};
    
        \node (6) at (1,1.5) [circle, draw = black,fill=black, inner sep = 0.5mm] {};
        \node (7) at (2,1.5) [circle, draw = black,fill=black, inner sep = 0.5mm] {};
        \draw[ForestGreen](6)--(7);

        \node (12) at (0,1) [circle, draw = black,fill=black, inner sep = 0.5mm] {};
        \node (13) at (1,1) [circle, draw = black,fill=black, inner sep = 0.5mm] {};
        \draw[ForestGreen](12)--(13);
        
        \node (14) at (1,0.5) [circle, draw = black,fill=black, inner sep = 0.5mm] {};
        \node (15) at (2,0.5) [circle, draw = black,fill=black, inner sep = 0.5mm] {};
        \draw[ForestGreen](14)--(15);

        \draw[dashed,gray] (0,0)--(0,2);
        \draw[dashed,gray] (1,0)--(1,2);
        \draw[dashed,gray] (2,0)--(2,2);
    \end{tikzpicture}}\hspace{0.25cm}
    \begin{tikzpicture}
        \node at (0,0){};
        \node at (0,0.5){\large{$\equiv$}};
    \end{tikzpicture}\hspace{0.25cm}
    \scalebox{0.8}{\begin{tikzpicture}[scale=0.75]
        \draw[rounded corners] (-0.5, 0) rectangle (2.5, 2) {};
    
        \node (6) at (0,1.5) [circle, draw = black,fill=black, inner sep = 0.5mm] {};
        \node (7) at (1,1.5) [circle, draw = black,fill=black, inner sep = 0.5mm] {};
        \draw[ForestGreen](6)--(7);

        \node (12) at (1,1) [circle, draw = black,fill=black, inner sep = 0.5mm] {};
        \node (13) at (2,1) [circle, draw = black,fill=black, inner sep = 0.5mm] {};
        \draw[ForestGreen](12)--(13);
        
        \node (14) at (0,0.5) [circle, draw = black,fill=black, inner sep = 0.5mm] {};
        \node (15) at (1,0.5) [circle, draw = black,fill=black, inner sep = 0.5mm] {};
        \draw[ForestGreen](14)--(15);

        \draw[dashed,gray] (0,0)--(0,2);
        \draw[dashed,gray] (1,0)--(1,2);
        \draw[dashed,gray] (2,0)--(2,2);
    \end{tikzpicture}}\hspace{0.25cm} 
    \begin{tikzpicture}
        \node at (0,0){};
        \node at (0,0.5){\large{$\equiv$}};
    \end{tikzpicture}\hspace{0.25cm}
    \begin{tikzpicture}
        \node at (0,0){};
        \node at (0,0.5){\large{$\mathbf{0}$}};
    \end{tikzpicture}$$

    \item[\textup{(ii)}] For $a<b<c$, we have $\mathbf{v}_{ca}\mathbf{v}_{bc}\mathbf{v}_{ca}\equiv \mathbf{v}_{ab}\mathbf{v}_{ca}\mathbf{v}_{ab}\equiv\mathbf{v}_{bc}\mathbf{v}_{ca}\mathbf{v}_{bc}\equiv \mathbf{v}_{ca}\mathbf{v}_{ab}\mathbf{v}_{ca}\equiv \mathbf{0}$. 
    $$\scalebox{0.8}{\begin{tikzpicture}[scale=0.75]
        \draw[rounded corners] (-0.5, 0) rectangle (2.5, 2) {};
    
        \node (6) at (0,1.5) [circle, draw = black,fill=black, inner sep = 0.5mm] {};
        \node (7) at (2,1.5) [circle, draw = black,fill=black, inner sep = 0.5mm] {};
        \draw[decorate sep={0.5mm}{1mm},fill, BrickRed] (6)--(7);

        \node (12) at (1,1) [circle, draw = black,fill=black, inner sep = 0.5mm] {};
        \node (13) at (2,1) [circle, draw = black,fill=black, inner sep = 0.5mm] {};
        \draw[ForestGreen](12)--(13);
        
        \node (14) at (0,0.5) [circle, draw = black,fill=black, inner sep = 0.5mm] {};
        \node (15) at (2,0.5) [circle, draw = black,fill=black, inner sep = 0.5mm] {};
        \draw[decorate sep={0.5mm}{1mm},fill, BrickRed] (14)--(15);

        \draw[dashed,gray] (0,0)--(0,2);
        \draw[dashed,gray] (1,0)--(1,2);
        \draw[dashed,gray] (2,0)--(2,2);
    \end{tikzpicture}}\hspace{0.25cm}\begin{tikzpicture}
        \node at (0,0){};
        \node at (0,0.5){\large{$\equiv$}};
    \end{tikzpicture}\hspace{0.25cm}\scalebox{0.8}{\begin{tikzpicture}[scale=0.75]
        \draw[rounded corners] (-0.5, 0) rectangle (2.5, 2) {};
    
        \node (6) at (0,1.5) [circle, draw = black,fill=black, inner sep = 0.5mm] {};
        \node (7) at (1,1.5) [circle, draw = black,fill=black, inner sep = 0.5mm] {};
        \draw[ForestGreen](6)--(7);

        \node (12) at (0,1) [circle, draw = black,fill=black, inner sep = 0.5mm] {};
        \node (13) at (2,1) [circle, draw = black,fill=black, inner sep = 0.5mm] {};
        \draw[decorate sep={0.5mm}{1mm},fill, BrickRed] (12)--(13);
        
        \node (14) at (0,0.5) [circle, draw = black,fill=black, inner sep = 0.5mm] {};
        \node (15) at (1,0.5) [circle, draw = black,fill=black, inner sep = 0.5mm] {};
        \draw[ForestGreen](14)--(15);

        \draw[dashed,gray] (0,0)--(0,2);
        \draw[dashed,gray] (1,0)--(1,2);
        \draw[dashed,gray] (2,0)--(2,2);
    \end{tikzpicture}}\hspace{0.25cm}\begin{tikzpicture}
        \node at (0,0){};
        \node at (0,0.5){\large{$\equiv$}};
    \end{tikzpicture}\scalebox{0.8}{\begin{tikzpicture}[scale=0.75]
        \draw[rounded corners] (-0.5, 0) rectangle (2.5, 2) {};
    
        \node (6) at (1,1.5) [circle, draw = black,fill=black, inner sep = 0.5mm] {};
        \node (7) at (2,1.5) [circle, draw = black,fill=black, inner sep = 0.5mm] {};
        \draw[ForestGreen](6)--(7);

        \node (12) at (0,1) [circle, draw = black,fill=black, inner sep = 0.5mm] {};
        \node (13) at (2,1) [circle, draw = black,fill=black, inner sep = 0.5mm] {};
        \draw[decorate sep={0.5mm}{1mm},fill, BrickRed] (12)--(13);
        
        \node (14) at (1,0.5) [circle, draw = black,fill=black, inner sep = 0.5mm] {};
        \node (15) at (2,0.5) [circle, draw = black,fill=black, inner sep = 0.5mm] {};
        \draw[ForestGreen](14)--(15);

        \draw[dashed,gray] (0,0)--(0,2);
        \draw[dashed,gray] (1,0)--(1,2);
        \draw[dashed,gray] (2,0)--(2,2);
    \end{tikzpicture}}\hspace{0.25cm}\begin{tikzpicture}
        \node at (0,0){};
        \node at (0,0.5){\large{$\equiv$}};
    \end{tikzpicture}\hspace{0.25cm}\scalebox{0.8}{\begin{tikzpicture}[scale=0.75]
        \draw[rounded corners] (-0.5, 0) rectangle (2.5, 2) {};
    
        \node (6) at (0,1.5) [circle, draw = black,fill=black, inner sep = 0.5mm] {};
        \node (7) at (2,1.5) [circle, draw = black,fill=black, inner sep = 0.5mm] {};
        \draw[decorate sep={0.5mm}{1mm},fill, BrickRed] (6)--(7);

        \node (12) at (0,1) [circle, draw = black,fill=black, inner sep = 0.5mm] {};
        \node (13) at (1,1) [circle, draw = black,fill=black, inner sep = 0.5mm] {};
        \draw[ForestGreen](12)--(13);
        
        \node (14) at (0,0.5) [circle, draw = black,fill=black, inner sep = 0.5mm] {};
        \node (15) at (2,0.5) [circle, draw = black,fill=black, inner sep = 0.5mm] {};
        \draw[decorate sep={0.5mm}{1mm},fill, BrickRed] (14)--(15);

        \draw[dashed,gray] (0,0)--(0,2);
        \draw[dashed,gray] (1,0)--(1,2);
        \draw[dashed,gray] (2,0)--(2,2);
    \end{tikzpicture}}\hspace{0.25cm}\begin{tikzpicture}
        \node at (0,0){};
        \node at (0,0.5){\large{$\equiv$}};
    \end{tikzpicture}\hspace{0.25cm}\begin{tikzpicture}
        \node at (0,0){};
        \node at (0,0.5){\large{$\mathbf{0}$}};
    \end{tikzpicture}$$
    \item[\textup{(iii)}] For $a<b<c<d$, we have $$\mathbf{v}_{ac}\mathbf{v}_{da}\mathbf{v}_{bd}\equiv \mathbf{v}_{bd}\mathbf{v}_{ab}\mathbf{v}_{ca}\equiv \mathbf{v}_{ca}\mathbf{v}_{bc}\mathbf{v}_{bd}\equiv \mathbf{v}_{db}\mathbf{v}_{cd}\mathbf{v}_{ac}\equiv \mathbf{v}_{bd}\mathbf{v}_{da}\mathbf{v}_{ac}\equiv \mathbf{v}_{ac}\mathbf{v}_{cd}\mathbf{v}_{db}\equiv \mathbf{v}_{db}\mathbf{v}_{bc}\mathbf{v}_{ca}\equiv \mathbf{v}_{ca}\mathbf{v}_{ab}\mathbf{v}_{bd}\equiv \mathbf{0}.$$ $$\scalebox{0.8}{\begin{tikzpicture}[scale=0.75]
        \draw[rounded corners] (-0.5, 0) rectangle (3.5, 2) {};
        
        \node (1) at (1,0.5) [circle, draw = black,fill=black, inner sep = 0.5mm] {};
        \node (2) at (3,0.5) [circle, draw = black,fill=black, inner sep = 0.5mm] {};
        \draw[ForestGreen](1)--(2);
        
        \node (3) at (0,1) [circle, draw = black,fill=black, inner sep = 0.5mm] {};
        \node (4) at (3,1) [circle, draw = black,fill=black, inner sep = 0.5mm] {};
        \draw[decorate sep={0.5mm}{1mm},fill, BrickRed] (3)--(4);
        
        \node (5) at (0,1.5) [circle, draw = black,fill=black, inner sep = 0.5mm] {};
        \node (6) at (2,1.5) [circle, draw = black,fill=black, inner sep = 0.5mm] {};
        \draw[ForestGreen](5)--(6);

        \draw[dashed,gray] (0,0)--(0,2);
        \draw[dashed,gray] (1,0)--(1,2);
        \draw[dashed,gray] (2,0)--(2,2);
        \draw[dashed,gray] (3,0)--(3,2);
    \end{tikzpicture}\hspace{0.25cm}\begin{tikzpicture}
        \node at (0,0){};
        \node at (0,0.75){\large{$\equiv$}};
    \end{tikzpicture}\hspace{0.25cm}\begin{tikzpicture}[scale=0.75]
        \draw[rounded corners] (-0.5, 0) rectangle (3.5, 2) {};
        
        \node (1) at (0,0.5) [circle, draw = black,fill=black, inner sep = 0.5mm] {};
        \node (2) at (2,0.5) [circle, draw = black,fill=black, inner sep = 0.5mm] {};
        \draw[decorate sep={0.5mm}{1mm},fill, BrickRed] (1)--(2);
        
        \node (3) at (0,1) [circle, draw = black,fill=black, inner sep = 0.5mm] {};
        \node (4) at (1,1) [circle, draw = black,fill=black, inner sep = 0.5mm] {};
        \draw[ForestGreen](3)--(4);
        
        \node (5) at (1,1.5) [circle, draw = black,fill=black, inner sep = 0.5mm] {};
        \node (6) at (3,1.5) [circle, draw = black,fill=black, inner sep = 0.5mm] {};
        \draw[ForestGreen](5)--(6);

        \draw[dashed,gray] (0,0)--(0,2);
        \draw[dashed,gray] (1,0)--(1,2);
        \draw[dashed,gray] (2,0)--(2,2);
        \draw[dashed,gray] (3,0)--(3,2);
    \end{tikzpicture}\hspace{0.25cm}\begin{tikzpicture}
        \node at (0,0){};
        \node at (0,0.75){\large{$\equiv$}};
    \end{tikzpicture}\hspace{0.25cm}\begin{tikzpicture}[scale=0.75]
        \draw[rounded corners] (-0.5, 0) rectangle (3.5, 2) {};
        
        \node (1) at (1,0.5) [circle, draw = black,fill=black, inner sep = 0.5mm] {};
        \node (2) at (3,0.5) [circle, draw = black,fill=black, inner sep = 0.5mm] {};
        \draw[decorate sep={0.5mm}{1mm},fill, BrickRed] (1)--(2);
        
        \node (3) at (1,1) [circle, draw = black,fill=black, inner sep = 0.5mm] {};
        \node (4) at (2,1) [circle, draw = black,fill=black, inner sep = 0.5mm] {};
        \draw[ForestGreen](3)--(4);
        
        \node (5) at (0,1.5) [circle, draw = black,fill=black, inner sep = 0.5mm] {};
        \node (6) at (2,1.5) [circle, draw = black,fill=black, inner sep = 0.5mm] {};
        \draw[decorate sep={0.5mm}{1mm},fill, BrickRed] (5)--(6);

        \draw[dashed,gray] (0,0)--(0,2);
        \draw[dashed,gray] (1,0)--(1,2);
        \draw[dashed,gray] (2,0)--(2,2);
        \draw[dashed,gray] (3,0)--(3,2);
    \end{tikzpicture}\hspace{0.25cm}\begin{tikzpicture}
        \node at (0,0){};
        \node at (0,0.75){\large{$\equiv$}};
    \end{tikzpicture}\hspace{0.25cm}\begin{tikzpicture}[scale=0.75]
        \draw[rounded corners] (-0.5, 0) rectangle (3.5, 2) {};
        
        \node (1) at (0,0.5) [circle, draw = black,fill=black, inner sep = 0.5mm] {};
        \node (2) at (2,0.5) [circle, draw = black,fill=black, inner sep = 0.5mm] {};
        \draw[ForestGreen](1)--(2);
        
        \node (3) at (2,1) [circle, draw = black,fill=black, inner sep = 0.5mm] {};
        \node (4) at (3,1) [circle, draw = black,fill=black, inner sep = 0.5mm] {};
        \draw[ForestGreen](3)--(4);
        
        \node (5) at (1,1.5) [circle, draw = black,fill=black, inner sep = 0.5mm] {};
        \node (6) at (3,1.5) [circle, draw = black,fill=black, inner sep = 0.5mm] {};
        \draw[decorate sep={0.5mm}{1mm},fill, BrickRed] (5)--(6);

        \draw[dashed,gray] (0,0)--(0,2);
        \draw[dashed,gray] (1,0)--(1,2);
        \draw[dashed,gray] (2,0)--(2,2);
        \draw[dashed,gray] (3,0)--(3,2);
    \end{tikzpicture}\hspace{0.25cm}\begin{tikzpicture}
        \node at (0,0){};
        \node at (0,0.75){\large{$\equiv$}};
    \end{tikzpicture}\hspace{0.25cm}\begin{tikzpicture}
        \node at (0,0){};
        \node at (0,0.75){\large{$\mathbf{0}$}};
    \end{tikzpicture}}$$

    $$\scalebox{0.8}{\begin{tikzpicture}[scale=0.75]
        \draw[rounded corners] (-0.5, 0) rectangle (3.5, 2) {};
        
        \node (1) at (1,1.5) [circle, draw = black,fill=black, inner sep = 0.5mm] {};
        \node (2) at (3,1.5) [circle, draw = black,fill=black, inner sep = 0.5mm] {};
        \draw[ForestGreen](1)--(2);
        
        \node (3) at (0,1) [circle, draw = black,fill=black, inner sep = 0.5mm] {};
        \node (4) at (3,1) [circle, draw = black,fill=black, inner sep = 0.5mm] {};
        \draw[decorate sep={0.5mm}{1mm},fill, BrickRed] (3)--(4);
        
        \node (5) at (0,0.5) [circle, draw = black,fill=black, inner sep = 0.5mm] {};
        \node (6) at (2,0.5) [circle, draw = black,fill=black, inner sep = 0.5mm] {};
        \draw[ForestGreen](5)--(6);

        \draw[dashed,gray] (0,0)--(0,2);
        \draw[dashed,gray] (1,0)--(1,2);
        \draw[dashed,gray] (2,0)--(2,2);
        \draw[dashed,gray] (3,0)--(3,2);
    \end{tikzpicture}\hspace{0.25cm}\begin{tikzpicture}
        \node at (0,0){};
        \node at (0,0.75){\large{$\equiv$}};
    \end{tikzpicture}\hspace{0.25cm}\begin{tikzpicture}[scale=0.75]
        \draw[rounded corners] (-0.5, 0) rectangle (3.5, 2) {};
        
        \node (1) at (0,1.5) [circle, draw = black,fill=black, inner sep = 0.5mm] {};
        \node (2) at (2,1.5) [circle, draw = black,fill=black, inner sep = 0.5mm] {};
        \draw[decorate sep={0.5mm}{1mm},fill, BrickRed] (1)--(2);
        
        \node (3) at (0,1) [circle, draw = black,fill=black, inner sep = 0.5mm] {};
        \node (4) at (1,1) [circle, draw = black,fill=black, inner sep = 0.5mm] {};
        \draw[ForestGreen](3)--(4);
        
        \node (5) at (1,0.5) [circle, draw = black,fill=black, inner sep = 0.5mm] {};
        \node (6) at (3,0.5) [circle, draw = black,fill=black, inner sep = 0.5mm] {};
        \draw[ForestGreen](5)--(6);

        \draw[dashed,gray] (0,0)--(0,2);
        \draw[dashed,gray] (1,0)--(1,2);
        \draw[dashed,gray] (2,0)--(2,2);
        \draw[dashed,gray] (3,0)--(3,2);
    \end{tikzpicture}\hspace{0.25cm}\begin{tikzpicture}
        \node at (0,0){};
        \node at (0,0.75){\large{$\equiv$}};
    \end{tikzpicture}\hspace{0.25cm}\begin{tikzpicture}[scale=0.75]
        \draw[rounded corners] (-0.5, 0) rectangle (3.5, 2) {};
        
        \node (1) at (1,1.5) [circle, draw = black,fill=black, inner sep = 0.5mm] {};
        \node (2) at (3,1.5) [circle, draw = black,fill=black, inner sep = 0.5mm] {};
        \draw[decorate sep={0.5mm}{1mm},fill, BrickRed] (1)--(2);
        
        \node (3) at (1,1) [circle, draw = black,fill=black, inner sep = 0.5mm] {};
        \node (4) at (2,1) [circle, draw = black,fill=black, inner sep = 0.5mm] {};
        \draw[ForestGreen](3)--(4);
        
        \node (5) at (0,0.5) [circle, draw = black,fill=black, inner sep = 0.5mm] {};
        \node (6) at (2,0.5) [circle, draw = black,fill=black, inner sep = 0.5mm] {};
        \draw[decorate sep={0.5mm}{1mm},fill, BrickRed] (5)--(6);

        \draw[dashed,gray] (0,0)--(0,2);
        \draw[dashed,gray] (1,0)--(1,2);
        \draw[dashed,gray] (2,0)--(2,2);
        \draw[dashed,gray] (3,0)--(3,2);
    \end{tikzpicture}\hspace{0.25cm}\begin{tikzpicture}
        \node at (0,0){};
        \node at (0,0.75){\large{$\equiv$}};
    \end{tikzpicture}\hspace{0.25cm}\begin{tikzpicture}[scale=0.75]
        \draw[rounded corners] (-0.5, 0) rectangle (3.5, 2) {};
        
        \node (1) at (0,1.5) [circle, draw = black,fill=black, inner sep = 0.5mm] {};
        \node (2) at (2,1.5) [circle, draw = black,fill=black, inner sep = 0.5mm] {};
        \draw[ForestGreen](1)--(2);
        
        \node (3) at (2,1) [circle, draw = black,fill=black, inner sep = 0.5mm] {};
        \node (4) at (3,1) [circle, draw = black,fill=black, inner sep = 0.5mm] {};
        \draw[ForestGreen](3)--(4);
        
        \node (5) at (1,0.5) [circle, draw = black,fill=black, inner sep = 0.5mm] {};
        \node (6) at (3,0.5) [circle, draw = black,fill=black, inner sep = 0.5mm] {};
        \draw[decorate sep={0.5mm}{1mm},fill, BrickRed] (5)--(6);

        \draw[dashed,gray] (0,0)--(0,2);
        \draw[dashed,gray] (1,0)--(1,2);
        \draw[dashed,gray] (2,0)--(2,2);
        \draw[dashed,gray] (3,0)--(3,2);
    \end{tikzpicture}\hspace{0.25cm}\begin{tikzpicture}
        \node at (0,0){};
        \node at (0,0.75){\large{$\equiv$}};
    \end{tikzpicture}\hspace{0.25cm}\begin{tikzpicture}
        \node at (0,0){};
        \node at (0,0.75){\large{$\mathbf{0}$}};
    \end{tikzpicture}}$$
    \end{enumerate}
\end{theorem}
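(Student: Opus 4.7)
The strategy is to derive every listed equivalence from the six seed relations of Proposition~\ref{prop:min3zero1} by repeatedly applying the equivalence-preserving transformations $\mathfrak{o}$, $\mathfrak{h}$, and $\rho$, each of which preserves zero equivalence by Theorems~\ref{thm:cycshift}(a),~\ref{thm:hshift}(a), and~\ref{thm:vshift}(a) respectively. Since every chain of the form $\mathbf{v}_1\equiv\mathbf{v}_2\equiv\cdots\equiv\mathbf{0}$ is established once each $\mathbf{v}_i\equiv\mathbf{0}$ is shown separately (transitivity of $\equiv$ then supplies the mutual equivalences), it suffices to place each composition appearing in (i)--(iii) inside the $\langle\mathfrak{o},\mathfrak{h},\rho\rangle$-orbit of some seed from Proposition~\ref{prop:min3zero1}.

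I would handle (i) and (ii) in one sweep, since both have support $\{a,b,c\}$. The cyclic shift $\mathfrak{o}$ sending $a\mapsto b\mapsto c\mapsto a$ carries $\mathbf{v}_{ca}\mathbf{v}_{bc}\mathbf{v}_{ca}$ (Proposition~\ref{prop:min3zero1}(i)) successively to $\mathbf{v}_{ab}\mathbf{v}_{ca}\mathbf{v}_{ab}$ and then $\mathbf{v}_{bc}\mathbf{v}_{ab}\mathbf{v}_{bc}$, accounting for one composition of (i) and two of (ii); the parallel $\mathfrak{o}$-orbit of $\mathbf{v}_{ca}\mathbf{v}_{ab}\mathbf{v}_{ca}$ (Proposition~\ref{prop:min3zero1}(ii)) supplies $\mathbf{v}_{ab}\mathbf{v}_{bc}\mathbf{v}_{ab}$ and $\mathbf{v}_{bc}\mathbf{v}_{ca}\mathbf{v}_{bc}$, yielding the remaining items. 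For (iii), where the support is $\{a,b,c,d\}$, I would verify that the eight listed compositions are exactly the union of the $\langle\mathfrak{o},\mathfrak{h},\rho\rangle$-orbits of the four seeds (iii)--(vi) of Proposition~\ref{prop:min3zero1}; this reduces to a direct orbit computation using the definitions of $\mathfrak{o}$, $\mathfrak{h}$, and $\rho$ on $\{a,b,c,d\}$, tracking how each transformation permutes the indices and reverses the operator order.

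Minimality of each relation, as asserted in the summarizing preamble to the theorem, is verified by inspection: for each three-operator product $\mathbf{v}_{a_3b_3}\mathbf{v}_{a_2b_2}\mathbf{v}_{a_1b_1}$ in the list, neither of the two-operator subproducts $\mathbf{v}_{a_3b_3}\mathbf{v}_{a_2b_2}$ nor $\mathbf{v}_{a_2b_2}\mathbf{v}_{a_1b_1}$ appears in any of the families of Theorem~\ref{thm:deg2zero}, so no degree-two zero relation is responsible for the degree-three vanishing.

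The main obstacle is to argue that the listed relations exhaust all minimal degree-three zero equivalences. By Lemma~\ref{lem:help1}, any minimal zero equivalence with support size $5$ or $6$ would already follow from a degree-two zero equivalence and hence is not minimal, so we are reduced to supports of sizes $2$, $3$, and $4$. Support size $2$ admits only iterates of $\mathbf{v}_{ab}$ and $\mathbf{v}_{ba}$, whose zero action is caught by degree-two relations already in Theorem~\ref{thm:deg2zero}. For supports of sizes $3$ and $4$, flattening via Theorem~\ref{thm:flat}(a) lets us assume the support is $[3]$ or $[4]$, and Proposition~\ref{prop:zeroscheck} reduces the check of $\mathbf{v}\equiv\mathbf{0}$ to a finite computation over $S_3$ or $S_4$ with $k\in[m-1]$. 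Quotienting by the action of $\langle\mathfrak{o},\mathfrak{h},\rho\rangle$ drastically shrinks the list of representatives, and the resulting exhaustive case check then confirms that every surviving zero equivalence is $\langle\mathfrak{o},\mathfrak{h},\rho\rangle$-equivalent to one of the six seeds of Proposition~\ref{prop:min3zero1}, completing the proof.
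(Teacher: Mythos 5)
Your overall approach matches the paper's: Theorem~\ref{thm:three0e} is stated in the paper without a written proof, preceded only by the remark that the transformations of Section~\ref{sec:ops} applied to the seeds of Proposition~\ref{prop:min3zero1} generate the full list; you make that orbit computation explicit, which is exactly the intended content. Your $\mathfrak{o}$-orbit analysis for items (i) and (ii) is correct, and your claim for (iii) is also correct once you account for a typo in the theorem statement: the listed composition $\mathbf{v}_{ca}\mathbf{v}_{bc}\mathbf{v}_{bd}$ should read $\mathbf{v}_{ca}\mathbf{v}_{bc}\mathbf{v}_{db}$ (this is what the accompanying diagram shows, and it equals seed (iv) of Proposition~\ref{prop:min3zero1}); with this correction the eight compositions of (iii) form a single $\langle\mathfrak{o},\mathfrak{h}\rangle$-orbit, e.g.\ $\mathfrak{h}$ carries seed (iii) to seed (vi) and $\mathfrak{o}$ chains each $4$-cycle.

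The one genuine gap is in the minimality argument. Following the framework the paper uses in Proposition~\ref{prop:arblength-minimal}, a degree-three zero equivalence $\mathbf{v}_{a_3b_3}\mathbf{v}_{a_2b_2}\mathbf{v}_{a_1b_1}\equiv\mathbf{0}$ can fail to be minimal in two ways: (1) a contiguous subproduct is already $\equiv\mathbf{0}$, which is what you check; or (2) one first applies a degree-two non-zero equivalence (a commutation from Proposition~\ref{prop:deg2rel}) to rearrange and \emph{then} a contiguous subproduct of the rewritten word is $\equiv\mathbf{0}$. Your check covers only (1). For the support-$\{a,b,c\}$ items this is harmless, since the commutations of Proposition~\ref{prop:deg2rel} all require support size four, so no rearrangement is available. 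But for the support-$\{a,b,c,d\}$ items in part (iii) this must be addressed: for example $\mathbf{v}_{bd}\mathbf{v}_{ca}\equiv\mathbf{0}$ by Theorem~\ref{thm:deg2zero}(iv), so in $\mathbf{v}_{bd}\mathbf{v}_{ab}\mathbf{v}_{ca}$ one must verify that neither $\mathbf{v}_{bd}\mathbf{v}_{ab}$ nor $\mathbf{v}_{ab}\mathbf{v}_{ca}$ admits a commutation that would bring $\mathbf{v}_{bd}$ and $\mathbf{v}_{ca}$ adjacent. (They share indices with $\mathbf{v}_{ab}$ so none does, but the argument must say this.) Similarly, the completeness step you describe --- reducing via Lemma~\ref{lem:help1} to supports of size $2$, $3$, $4$, flattening, and using Proposition~\ref{prop:zeroscheck} to reduce to a finite check --- is the right reduction, but the final exhaustive check must distinguish \emph{minimal} zero equivalences from non-minimal ones (e.g.\ $\mathbf{v}_{cd}\mathbf{v}_{ab}\mathbf{v}_{ab}\equiv\mathbf{0}$ is a degree-three zero but is a consequence of a degree-two one), so it needs to be phrased as a check that every minimal degree-three zero equivalence lies in the stated orbits, not merely every degree-three zero equivalence.
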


Moving to non-zero equivalences of degree three, we continue with our systematic approach; that is, we gather up all the non-zero compositions of degree three for which the first is quantum, and apply all the equivalence-preserving transformations from Section~\ref{sec:ops}. Next, we group the resulting collection of compositions into sets consisting of those with the same support and number of quantum operators. Then, within each set, we determine which compositions are equivalent. As a final step, we remove any equivalences found that result from applications of degree two equivalences.

As the number of sets of compositions that need to be considered is quite large, we settle for providing only a few examples illustrating our approach. To start, we have that $$C_1=\{\mathbf{v}_{12}\mathbf{v}_{21}\mathbf{v}_{12}\}$$ is the collection of nonzero compositions of three operators with one quantum and support $[2]$, $$C_2=\{\mathbf{v}_{21}\mathbf{v}_{12}\mathbf{v}_{21}\}$$ is the collection of nonzero compositions of three operators with two quantum and support $[2]$, and 
\begin{multline*}C_3=\{\mathbf{v}_{23}\mathbf{v}_{12}\mathbf{v}_{31},\mathbf{v}_{31}\mathbf{v}_{12}\mathbf{v}_{23},\mathbf{v}_{12}\mathbf{v}_{23}\mathbf{v}_{31},\mathbf{v}_{31}\mathbf{v}_{23}\mathbf{v}_{12},\mathbf{v}_{23}\mathbf{v}_{12}\mathbf{v}_{21},\mathbf{v}_{21}\mathbf{v}_{12}\mathbf{v}_{23},\mathbf{v}_{12}\mathbf{v}_{23}\mathbf{v}_{32},\\
\mathbf{v}_{32}\mathbf{v}_{23}\mathbf{v}_{12}, \mathbf{v}_{13}\mathbf{v}_{31}\mathbf{v}_{12}, \mathbf{v}_{13}\mathbf{v}_{31}\mathbf{v}_{23},\mathbf{v}_{13}\mathbf{v}_{31}\mathbf{v}_{13}, \mathbf{v}_{23}\mathbf{v}_{31}\mathbf{v}_{13},\mathbf{v}_{12}\mathbf{v}_{31}\mathbf{v}_{23}, \mathbf{v}_{23}\mathbf{v}_{31}\mathbf{v}_{12}\}
\end{multline*}
is the set of non-zero compositions of three operators with support $[3]$ and one quantum operator. Comparing compositions in $C_3$, we obtain the equivalences $$\mathbf{v}_{cb}\mathbf{v}_{bc}\mathbf{v}_{ab}\equiv \mathbf{v}_{ab}\mathbf{v}_{ca}\mathbf{v}_{ac},\quad\quad \mathbf{v}_{ac}\mathbf{v}_{ca}\mathbf{v}_{bc}\equiv \mathbf{v}_{bc}\mathbf{v}_{ab}\mathbf{v}_{ba},\quad\quad \mathbf{v}_{ba}\mathbf{v}_{ab}\mathbf{v}_{bc}\equiv \mathbf{v}_{bc}\mathbf{v}_{ca}\mathbf{v}_{ac},$$ and $$\mathbf{v}_{ac}\mathbf{v}_{ca}\mathbf{v}_{ab}\equiv \mathbf{v}_{ab}\mathbf{v}_{bc}\mathbf{v}_{cb},$$ none of which are consequences of degree-two nonzero equivalences, i.e., all four equivalences are minimal. On the other hand, comparing the remaining nonzero compositions of three operators with one quantum and support $[6]$, one finds the equivalence $\mathbf{v}_{12} \mathbf{v}_{43}\mathbf{v}_{56} \equiv \mathbf{v}_{56}\mathbf{v}_{12}\mathbf{v}_{43}$. As the equivalence $\mathbf{v}_{12}\mathbf{v}_{43}\mathbf{v}_{56}\equiv \mathbf{v}_{56} \mathbf{v}_{12}\mathbf{v}_{43}$ is a consequence of nonzero degree-two equivalences, we would not include it in our final list of minimal degree-three nonzero equivalences.

The following result summarizes the output of the process outlined above.
\begin{theorem}\label{thm:d3nz}
    Let $a<b<c<d$. The following is a complete list of the non-zero equivalences of degree three: 
    \begin{multicols}{2}
    \begin{enumerate}
        \item[\textup{(i)}] $\mathbf{v}_{cb}\mathbf{v}_{bc}\mathbf{v}_{ab}\equiv \mathbf{v}_{ab}\mathbf{v}_{ca}\mathbf{v}_{ac}$
        \item[\textup{(ii)}] $\mathbf{v}_{ac}\mathbf{v}_{ca}\mathbf{v}_{bc}\equiv \mathbf{v}_{bc}\mathbf{v}_{ab}\mathbf{v}_{ba}$
        \item[\textup{(iii)}] $\mathbf{v}_{ba}\mathbf{v}_{ab}\mathbf{v}_{ca}\equiv \mathbf{v}_{ca}\mathbf{v}_{bc}\mathbf{v}_{cb}$
        \item[\textup{(iv)}] $\mathbf{v}_{ba}\mathbf{v}_{ab}\mathbf{v}_{bc}\equiv \mathbf{v}_{bc}\mathbf{v}_{ca}\mathbf{v}_{ac}$
        \item[\textup{(v)}] $\mathbf{v}_{cb}\mathbf{v}_{bc}\mathbf{v}_{ca}\equiv \mathbf{v}_{ca}\mathbf{v}_{ab}\mathbf{v}_{ba}$
        \item[\textup{(vi)}] $\mathbf{v}_{ac}\mathbf{v}_{ca}\mathbf{v}_{ab}\equiv \mathbf{v}_{ab}\mathbf{v}_{bc}\mathbf{v}_{cb}$
        \item[\textup{(vii)}] $\mathbf{v}_{db}\mathbf{v}_{bc}\mathbf{v}_{ab}\equiv \mathbf{v}_{ab}\mathbf{v}_{da}\mathbf{v}_{ac}$
        \item[\textup{(viii)}] $\mathbf{v}_{ac}\mathbf{v}_{cd}\mathbf{v}_{bc}\equiv \mathbf{v}_{bc}\mathbf{v}_{ab}\mathbf{v}_{bd}$
        \item[\textup{(ix)}] $\mathbf{v}_{bd}\mathbf{v}_{da}\mathbf{v}_{cd}\equiv \mathbf{v}_{cd}\mathbf{v}_{bc}\mathbf{v}_{ca}$
        \item[\textup{(x)}] $\mathbf{v}_{ca}\mathbf{v}_{ab}\mathbf{v}^q_{da}\equiv \mathbf{v}_{da}\mathbf{v}_{cd}\mathbf{v}_{db}$
        \item[\textup{(xi)}] $\mathbf{v}_{ca}\mathbf{v}_{bc}\mathbf{v}_{cd}\equiv \mathbf{v}_{cd}\mathbf{v}_{da}\mathbf{v}_{bd}$
        \item[\textup{(xii)}] $\mathbf{v}_{db}\mathbf{v}_{cd}\mathbf{v}_{da}\equiv \mathbf{v}_{da}\mathbf{v}_{ab}\mathbf{v}_{ca}$
        \item[\textup{(xiii)}] $\mathbf{v}_{ac}\mathbf{v}_{da}\mathbf{v}_{ab}\equiv \mathbf{v}_{ab}\mathbf{v}_{bc}\mathbf{v}_{db}$
        \item[\textup{(xiv)}] $\mathbf{v}_{bd}\mathbf{v}_{ab}\mathbf{v}_{bc}\equiv \mathbf{v}_{bc}\mathbf{v}_{cd}\mathbf{v}_{ac}$
    \end{enumerate}
    \end{multicols}
\end{theorem}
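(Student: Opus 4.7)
The plan is to carry out, case by case, the systematic procedure outlined just before the statement of Theorem~\ref{thm:d3nz}, combining the equivalence-preserving transformations of Section~\ref{sec:ops} with the support/degree invariants of Lemma~\ref{lem:supp1} and Proposition~\ref{prop:supp2} to reduce the problem to a finite verification. First, by Theorem~\ref{thm:cycshift}~(b) we may assume the bottom-most operator in $\mathbf{v}$ is quantum (i.e., $a_1>b_1$), cutting the number of representatives roughly in half. Next, by Theorem~\ref{thm:flat}~(b), it suffices to work with flattened compositions, so $\supp(\mathbf{v})=[m]$ with $m\in\{2,3,4,5,6\}$ (the bound $m\le 6$ coming from three operators each contributing at most two support values). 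Finally, by Lemma~\ref{lem:supp1} and Proposition~\ref{prop:supp2}, any non-zero equivalence must match both the number of operators and the support, and by Remark~\ref{remark:q_k-power}~(1) also the number of quantum operators. This partitions the non-zero degree-three compositions into finitely many finite ``buckets,'' each of which can be enumerated explicitly.

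Within each bucket, I would then compare actions pairwise. By Proposition~\ref{prop:nzeroscheck}, $\mathbf{v}\equiv\mathbf{v}'$ if and only if $\underline{\mathbf{v}}\bullet_k u=\underline{\mathbf{v}'}\bullet_k u$ for all $u\in S_m$ and $k\in[m-1]$, so each check reduces to a finite computation on $S_m$ with $m\le 6$. For the positive half of the theorem (verifying equivalences (i)--(xiv)), I would trace the action of each side using Proposition~\ref{prop:covercond}: for a fixed pattern of positions of $a,b,c,d$ in $u$ and a fixed $k$, determine exactly when each intermediate cover relation fires, show the two chains of cover conditions are equivalent, and verify that the resulting permutations and $\mathbf{q}$-monomials match (the monomial bookkeeping coming from Remark~\ref{remark:q_k-power}~(2)). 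As illustrated by the proof of Proposition~\ref{prop:deg2rel}~(i) in the excerpt, these verifications reduce to a handful of elementary case splits on the relative ordering of the positions $u^{-1}(a),u^{-1}(b),u^{-1}(c),u^{-1}(d)$ and their location relative to $k$.

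For the negative half (the list is complete), I would, within each bucket, enumerate the remaining pairs $(\mathbf{v},\mathbf{v}')$ not related by one of (i)--(xiv) and for each exhibit a single $(u,k)$ witnessing $\mathbf{v}\bullet_k u\neq \mathbf{v}'\bullet_k u$, using the same strategy as in Case 1--Case 5 of the proof that Proposition~\ref{prop:deg2rel} is complete: find a support element that must lie on different sides of the bar $k$ under the two compositions. Then, to confirm \emph{minimality}, I would discard from the resulting list of equivalences any that already follow from the degree-two equivalences of Proposition~\ref{prop:deg2rel} applied to a single pair of adjacent operators; concretely, for each candidate equivalence $\mathbf{v}\equiv\mathbf{v}'$, check whether $\mathbf{v}'$ can be obtained from $\mathbf{v}$ by a single commutation of two adjacent generators. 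A pattern check shows all fourteen relations in Theorem~\ref{thm:d3nz} genuinely involve the middle generator changing, hence cannot be produced by a single degree-two move.

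The main obstacle is not conceptual but organizational: the bookkeeping of buckets (by $(m,\#\text{quantum ops})$) and of which candidate equivalences reduce to degree-two ones is extensive, since for $m=4,5,6$ the number of non-zero compositions in each bucket can be in the dozens. I expect the bulk of the work to be bucket-by-bucket enumeration (ideally aided by computer search to flag candidate equivalences) followed by the hand-verification of the surviving fourteen families in the manner just described, together with a careful argument that no equivalence can cross a bucket boundary, which is guaranteed by the support and quantum-count invariants cited above. The most delicate sub-case is for $m=3$ with one or two quantum operators, because there the supports are small enough that several candidates pass the bucket filter and only the fine $\mathbf{q}$-monomial accounting distinguishes them; this is precisely where Remark~\ref{remark:q_k-power}~(2) and the positional analysis underlying Proposition~\ref{prop:covercond} must be deployed most carefully.
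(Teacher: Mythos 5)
Your proposal takes essentially the same route as the paper: reduce via cyclic shift to compositions whose first operator is quantum, flatten to support $[m]$ with $m\le 6$, use the invariants of Lemma~\ref{lem:supp1}, Proposition~\ref{prop:supp2}, and Remark~\ref{remark:q_k-power}~(1) to partition into buckets, then enumerate, compare actions via Proposition~\ref{prop:nzeroscheck} and Proposition~\ref{prop:covercond}, and discard equivalences that already follow from degree-two ones. The paper's own treatment is likewise a description of this computational procedure rather than a fully written-out case check (it explicitly says the number of sets is too large and gives only a few illustrative buckets, such as $C_1,C_2,C_3$), so your level of rigor matches theirs.

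One small imprecision worth flagging: in your minimality argument you check whether the two sides are related by ``a single commutation of two adjacent generators,'' but the right criterion is reachability by a \emph{sequence} of degree-two non-zero equivalences (as in the paper's $\mathbf{v}_{12}\mathbf{v}_{43}\mathbf{v}_{56}\equiv \mathbf{v}_{56}\mathbf{v}_{12}\mathbf{v}_{43}$ example, which takes two commutations). Your observation that ``the middle generator changes'' in all fourteen relations does close this gap in substance---every degree-two non-zero equivalence is a commutation, hence preserves the multiset of operators, so any relation in which the operator multiset differs on the two sides (as in all of (i)--(xiv)) is automatically not a consequence of degree-two equivalences no matter how many are applied. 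You should state that stronger form explicitly rather than restricting to single moves.
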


\subsection{Degree 4 Equivalences}
In this section, we discuss minimal equivalence relations of degree four. The general approach is the same as in the cases of degree two and three equivalences. However, the number of compositions that need to be considered is becoming quite large. In addition, identifying those equivalences that are not minimal is becoming more challenging. Therefore, we start with two technical results that help us identify non-minimal equivalences as well as restrict our search.

\begin{lemma}\label{lem:supp2n-2help}
    Let $\mathbf{v}=\mathbf{v}_{a_4b_4}\mathbf{v}_{a_3b_3}\mathbf{v}_{a_2b_2}\mathbf{v}_{a_1b_1}$ with 
    \begin{itemize}
        \item[$(i)$] $|\{a_1,b_1\}\cap\{a_2,b_2\}|=1$,
        \item[$(ii)$] $|\{a_3,b_3\}\cap\{a_4,b_4\}|=1$, and
        \item[$(iii)$] $|\{a_1,b_1,a_2,b_2\}\cap\{a_3,b_3,a_4,b_4\}|=0$.
    \end{itemize}
    If $\mathbf{v}\equiv \mathbf{0}$, then it is a consequence of a degree two or degree three zero equivalence; that is, $\mathbf{v}\equiv\mathbf{v}_{a'_4b'_4}\mathbf{v}_{a'_3b'_3}\mathbf{v}_{a'_2b'_2}\mathbf{v}_{a'_1b'_1}$ where either $\mathbf{v}_{a'_{i+1}b'_{i+1}}\mathbf{v}_{a'_{i}b'_{i}}\equiv \mathbf{0}$ for $i\in [3]$ or $\mathbf{v}_{a'_{i+2}b'_{i+2}}\mathbf{v}_{a'_{i+1}b'_{i+1}}\mathbf{v}_{a'_{i}b'_{i}}\equiv \mathbf{0}$ for $i\in [2]$.
\end{lemma}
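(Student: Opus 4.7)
The plan is to proceed by case analysis, applying degree-two commutation relations to successively expose a consecutive zero-equivalent sub-composition in some rearrangement of $\mathbf{v}$. The key structural fact is that, combining Theorem~\ref{thm:deg2zero}(ii),(iv),(vi) with Proposition~\ref{prop:deg2rel}, any two operators with disjoint support either commute or are themselves zero-equivalent; by hypothesis~(iii) this applies to every ``cross-block'' pair $\mathbf{v}_{a_ib_i}\mathbf{v}_{a_jb_j}$ with $i\in\{1,2\}$ and $j\in\{3,4\}$. I would also observe at the outset that degree-three sub-compositions can be disregarded: under (i)--(iii), every three-operator sub-composition of $\mathbf{v}$ has support of size exactly $5$, while every zero equivalence listed in Theorem~\ref{thm:three0e} has support of size at most $4$, so no consecutive triple of any rearrangement can be zero-equivalent. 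Hence the desired equivalence must come from a degree-two sub-composition.

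If any of the three consecutive pairs $\mathbf{v}_{a_{i+1}b_{i+1}}\mathbf{v}_{a_ib_i}$ ($i\in[3]$) is zero-equivalent, we are done. Otherwise $\mathbf{v}_{a_3b_3}\mathbf{v}_{a_2b_2}\not\equiv\mathbf{0}$, and since these have disjoint supports they commute, yielding $\mathbf{v}\equiv\mathbf{v}_{a_4b_4}\mathbf{v}_{a_2b_2}\mathbf{v}_{a_3b_3}\mathbf{v}_{a_1b_1}$. If $\mathbf{v}_{a_4b_4}\mathbf{v}_{a_2b_2}\equiv\mathbf{0}$ or $\mathbf{v}_{a_3b_3}\mathbf{v}_{a_1b_1}\equiv\mathbf{0}$, we are done; otherwise these pairs also commute, producing $\mathbf{v}\equiv\mathbf{v}_{a_2b_2}\mathbf{v}_{a_4b_4}\mathbf{v}_{a_1b_1}\mathbf{v}_{a_3b_3}$. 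If the remaining consecutive cross-block pair $\mathbf{v}_{a_4b_4}\mathbf{v}_{a_1b_1}$ is zero-equivalent, done. Otherwise all four cross-block pairs commute, and a final commutation gives $\mathbf{v}\equiv\mathbf{u}_A\mathbf{u}_B$, where $\mathbf{u}_A:=\mathbf{v}_{a_2b_2}\mathbf{v}_{a_1b_1}$ and $\mathbf{u}_B:=\mathbf{v}_{a_4b_4}\mathbf{v}_{a_3b_3}$, with no consecutive pair of this rearrangement zero-equivalent.

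The main obstacle is this terminal case, in which I must derive a contradiction from $\mathbf{v}\equiv\mathbf{0}$ by showing $\mathbf{u}_A\mathbf{u}_B\not\equiv\mathbf{0}$. Proposition~\ref{prop:dis0} cannot be iterated directly, since $\mathbf{v}_{a_2b_2}$ shares a support element with $\mathbf{v}_{a_1b_1}$ by~(i). Instead I would construct a combined witness in $S_6$ directly. After flattening so that $A\cup B=[6]$, fix flattened witnesses $\underline{\mathbf{u}_A}\bullet_{k_A^*}u_A^*\neq 0$ and $\underline{\mathbf{u}_B}\bullet_{k_B^*}u_B^*\neq 0$ in $S_3$, set $k:=k_A^*+k_B^*$, and choose three positions in $[6]$ for the $A$-values with exactly $k_A^*$ of them in $[k]$, placing them in the arrangement that realizes $u_A^*$ under the order-preserving bijection $[3]\to A$; place the three $B$-values analogously in the remaining positions with exactly $k_B^*$ in $[k]$, realizing $u_B^*$. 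The hypothesis that every cross-block pair is non-zero, interpreted via Theorem~\ref{thm:deg2zero}, constrains the relative orderings of $A$-values versus $B$-values enough that each intermediate position between the two positions acted on by any operator of $\mathbf{u}_A$ or $\mathbf{u}_B$ satisfies the non-blocking condition of Proposition~\ref{prop:covercond}. Direct verification then shows $\mathbf{u}_B\bullet_k w\neq 0$; since $\mathbf{u}_B$ fixes every $A$-position and $A$-value, the same verification applied to the resulting permutation yields $\mathbf{u}_A\bullet_k(\mathbf{u}_B\bullet_k w)\neq 0$, giving the required contradiction.
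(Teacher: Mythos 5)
Your overall strategy runs parallel to the paper's: observe (via Theorem~\ref{thm:deg2zero} and Proposition~\ref{prop:deg2rel}) that cross-block pairs either commute or are themselves zero-equivalent, commute your way to the block form $\mathbf{u}_A\mathbf{u}_B$, and then produce an explicit witness. Your dismissal of degree-three zero relations by support counting is a genuine simplification the paper does not make: since the only non-zero equivalences of degree at most four with support $\le 4$ are the degree-two commutations (every equivalence in Theorems~\ref{thm:d3nz} and~\ref{thm:4zero} has support at most four, while every consecutive triple in any commutation-rearrangement of $\mathbf{v}$ has support exactly five), no consecutive triple can match any degree-three zero pattern of Theorem~\ref{thm:three0e}. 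The paper carries the degree-three clause of its hypothesis $(\star)$ but never invokes it.

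The gap is in your terminal witness construction. The claim that \emph{any} placement of the three $A$-positions and three $B$-positions satisfying the $k_A^*$/$k_B^*$ counting constraint yields a non-vanishing action is false: the cross-block non-zeroness constrains only the relative ordering of $A$-\emph{values} and $B$-\emph{values}, and says nothing about where the $A$-\emph{positions} sit relative to the span of a $B$-operator. Concretely, flatten Case~1 of the paper's proof with $a_2<b_2<a_4<b_4<a_3<a_1$, so $A=\{1,2,6\}$, $B=\{3,4,5\}$, $\underline{\mathbf{u}_A}=\mathbf{v}_{12}\mathbf{v}_{61}$, $\underline{\mathbf{u}_B}=\mathbf{v}_{34}\mathbf{v}_{53}$, both with flattened witness $u^*=(3,2,1)$ at $k^*=1$, hence $k=2$. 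Putting $A$-values at positions $\{1,3,5\}$ and $B$-values at positions $\{2,4,6\}$ satisfies all of your conditions and gives $w=(6,5,2,4,1,3)$; but after $\mathbf{v}_{12}\mathbf{v}_{61}$ one reaches $(2,5,1,4,6,3)$, and then $\mathbf{v}_{53}$ is blocked because position $3$ (strictly between the positions of $5$ and $3$) carries the value $1\notin(3,5)$, violating Proposition~\ref{prop:covercond}(b). The valid placement $\{1,5,6\}$ and $\{2,3,4\}$, with the $B$-positions contiguous, does work and is precisely what the paper's explicit witness $u_1=(a_1,a_3,b_4,a_4,b_2,a_2,\hdots)$ realizes. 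You would need to specify such a placement rule and verify it in each of the order-type cases, which is essentially what the paper's case analysis accomplishes; as written, your ``direct verification'' is an assertion rather than a proof.
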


\begin{remark}
    Note that the conditions on the values of the indices of $\mathbf{v}$ in Lemma~\ref{lem:supp2n-2help} essentially say that $\mathbf{v}$ is the composition of two disjoint compositions of degree two, each of them formed by two different operators whose supports share a value. 
\end{remark}

\begin{proof}
    Assume that
    \begin{align*}
        (\star)~~&\textup{if $\mathbf{v}\equiv\mathbf{v}_{a'_4b'_4}\mathbf{v}_{a'_3b'_3}\mathbf{v}_{a'_2b'_2}\mathbf{v}_{a'_1b'_1}$, then $\mathbf{v}_{a'_{i+1}b'_{i+1}}\mathbf{v}_{a'_{i}b'_{i}}\not\equiv \mathbf{0}$ for $i\in [3]$ and}\\
        &\textup{$\mathbf{v}_{a'_{i+2}b'_{i+2}}\mathbf{v}_{a'_{i+1}b'_{i+1}}\mathbf{v}_{a'_{i}b'_{i}}\not\equiv \mathbf{0}$ for $i\in [2]$.}
    \end{align*}
    We show that $\mathbf{v}\not\equiv \mathbf{0}$. Consider the representation $\mathbf{v}=\mathbf{v}_{a_4b_4}\mathbf{v}_{a_3b_3}\mathbf{v}_{a_2b_2}\mathbf{v}_{a_1b_1}$ with 
    \begin{itemize}
        \item[$(i)$] $|\{a_1,b_1\}\cap\{a_2,b_2\}|=1$,
        \item[$(ii)$] $|\{a_3,b_3\}\cap\{a_4,b_4\}|=1$, and
        \item[$(iii)$] $|\{a_1,b_1,a_2,b_2\}\cap\{a_3,b_3,a_4,b_4\}|=0$.
    \end{itemize}
    Note that since $|\{a_1,b_1\}\cap\{a_2,b_2\}|=1$, $|\{a_3,b_3\}\cap\{a_4,b_4\}|=1$, $\mathbf{v}_{a_2b_2}\mathbf{v}_{a_1b_1}\not\equiv 0$, and $\mathbf{v}_{a_4b_4}\mathbf{v}_{a_3b_3}\not\equiv 0$ by assumption, considering Theorem~\ref{thm:deg2zero}, it follows that $a_i>b_i$ for at most one $i\in[2]$ and at most one $i\in\{3,4\}$. Moreover, note that since $|\{a_1,b_1,a_2,b_2\}\cap\{a_3,b_3,a_4,b_4\}|=0$, combining Theorem~\ref{thm:deg2zero}~(ii), (iv), (vi) and Proposition~\ref{prop:deg2rel}, we have that 
$$(\star\star)~~\mathbf{v}_{a_4b_4}\mathbf{v}_{a_3b_3}\mathbf{v}_{a_2b_2}\mathbf{v}_{a_1b_1}\equiv \mathbf{v}_{a_4b_4}\mathbf{v}_{a_2b_2}\mathbf{v}_{a_3b_3}\mathbf{v}_{a_1b_1}
\equiv\mathbf{v}_{a_2b_2}\mathbf{v}_{a_4b_4}\mathbf{v}_{a_1b_1}\mathbf{v}_{a_3b_3}
\equiv\mathbf{v}_{a_2b_2}\mathbf{v}_{a_1b_1}\mathbf{v}_{a_4b_4}\mathbf{v}_{a_3b_3}.$$
    There are three cases to consider.
    \bigskip

    \noindent
    \underline{Case 1:} $a_1>b_1$ or $a_2>b_2$, and $a_3>b_3$ or $a_4>b_4$. We assume that $a_1>b_1$ and $a_3>b_3$, as the other cases follow similarly. Since $|\{a_1,b_1\}\cap\{a_{2},b_{2}\}|=1$ and $|\{a_3,b_3\}\cap\{a_4,b_4\}|=1$, considering Theorem~\ref{thm:deg2zero}, we must have either $b_1=a_2<b_2<a_1$ or $b_1<a_2<b_2=a_1$ and either $b_3=a_4<b_4<a_3$ or $b_3<a_4<b_4=a_3$. As the arguments are all similar, we assume that $b_1=a_2<b_2<a_1$ and $b_3=a_4<b_4<a_3$. Now, combining $(\star)$ and $(\star\star)$, we find that $\mathbf{v}_{a_ib_i}\mathbf{v}_{a_jb_j}\not\equiv \mathbf{0}$ and $\mathbf{v}_{a_ib_i}\mathbf{v}_{a_1b_1}\not\equiv \mathbf{0}$ for $i\in \{3,4\}$ and $j\in \{1,2\}$. Thus, since $|\{a_1,b_1,a_2,b_2\}\cap\{a_3,b_3,a_4,b_4\}|=0$, by
    Theorem~\ref{thm:deg2zero}, it must be the case that either 
    \begin{itemize}
        \item $b_2<a_4<a_3<a_1$, and the result follows since for $$u_1=(a_1,a_3,b_4,a_4,b_2,a_2,\hdots),$$ we have $\mathbf{v}\bullet_2u_1\neq 0$; or 
        \item $b_4<a_2<a_1<a_3$, and the result follows since for $$u_2=(a_3,a_1,b_2,a_2,b_4,a_4,\hdots),$$ we have $\mathbf{v}\bullet_2u_2\neq 0$.
    \end{itemize}
    \bigskip

    \noindent
    \underline{Case 2:} $a_i<b_i$ for $i\in \{3,4\}$ and either $a_1>b_1$ or $a_2>b_2$. We assume that $a_1>b_1$, as the other case follows similarly. Since $|\{a_1,b_1\}\cap\{a_2,b_2\}|=1$, considering Theorem~\ref{thm:deg2zero}, we must have either $b_1=a_2<b_2<a_1$ or $b_1<a_2<b_2=a_1$. Moreover, since $|\{a_3,b_3\}\cap\{a_4,b_4\}|=1$ with $a_3<b_3$ and $a_4<b_4$, once again considering Theorem~\ref{thm:deg2zero}, we must have either $a_3<b_3=a_4<b_4$ or $a_4<b_4=a_3<b_3$. As the arguments are all similar, we assume that $b_1=a_2<b_2<a_1$ and $a_3<b_3=a_4<b_4$. Now, since $\mathbf{v}_{a_ib_i}\mathbf{v}_{a_jb_j}\not\equiv \mathbf{0}$ and $\mathbf{v}_{a_ib_i}\mathbf{v}_{a_1b_1}\not\equiv \mathbf{0}$ for $i\in\{3,4\}$ and $j\in\{1,2\}$ as noted in Case 1 and $|\{a_1,b_1,a_2,b_2\}\cap\{a_3,b_3,a_4,b_4\}|=0$, applying Theorem~\ref{thm:deg2zero} it must be the case that either $b_4<a_2$, $a_2<a_3<b_4<b_2$, $b_2<a_3<b_4<a_1$, or $a_1<a_3$.
    In all cases, if $u=(a_3,a_1,b_2,a_2,a_4,b_4,\hdots)$, then $\mathbf{v}\bullet_2u\neq 0$.
    \bigskip

    \noindent
    \underline{Case 3:} $a_i<b_i$ for all $i\in [4]$. Since $|\{a_1,b_1\}\cap\{a_2,b_2\}|=1$ and $|\{a_3,b_3\}\cap\{a_4,b_4\}|=1$, considering Theorem~\ref{thm:deg2zero}, we must have 
    \begin{itemize}
        \item either $a_1<b_1=a_2<b_2$ or $a_2<b_2=a_1<b_1$ and
        \item either $a_3<b_3=a_4<b_4$ or $a_4<b_4=a_3<b_3$.
    \end{itemize}
    Since the arguments are all similar, we assume that $a_1<b_1=a_2<b_2$ and $a_3<b_3=a_4<b_4$. Now, since $\mathbf{v}_{a_ib_i}\mathbf{v}_{a_jb_j}\not\equiv \mathbf{0}$ and $\mathbf{v}_{a_ib_i}\mathbf{v}_{a_1b_1}\not\equiv \mathbf{0}$ for $i\in\{3,4\}$ and $j\in\{1,2\}$ as noted in Case 1 and $|\{a_1,b_1,a_2,b_2\}\cap\{a_3,b_3,a_4,b_4\}|=0$, applying Theorem~\ref{thm:deg2zero} we have that either
    \begin{itemize}
        \item $b_4<a_1$, $b_2<a_3$, $a_1<a_3<b_4<a_2$, or $a_2<a_3<b_4<a_2$, and the result follows since for $$u_1=(a_3,a_1,b_2,a_2,b_4,a_4,\hdots),$$ we have $\mathbf{v}\bullet_2u_1\neq 0$; or
        \item $a_3<a_1<b_2<a_4$, or $a_4<a_1<b_2<b_4$, and the result follows since for $$u_2=(a_1,a_3,b_4,a_4,b_2,a_2,\hdots),$$ we have $\mathbf{v}\bullet_2u_2\neq 0$. \qedhere
    \end{itemize}
\end{proof}

The next result is of a similar form to Lemma~\ref{lem:help1}. As the proof is similar to that of Lemma~\ref{lem:supp2n-2help}, we omit it.

\begin{lemma}\label{lem:suppbounddeg4z}
    Let $n\geq 4$ and consider a composition $\mathbf{v}=\mathbf{v}_{a_nb_n}\cdots\mathbf{v}_{a_1b_1}$ with $|\supp(\mathbf{v})|=2n-2$. If $\mathbf{v}\equiv \mathbf{0}$, then it is a consequence of a degree two or three zero equivalence; that is, $\mathbf{v}\equiv\mathbf{v}_{a'_nb'_n}\cdots\mathbf{v}_{a'_1b'_1}$ where either $\mathbf{v}_{a'_{i+1}b'_{i+1}}\mathbf{v}_{a'_{i}b'_{i}}\equiv \mathbf{0}$ for $i\in [n-1]$ or $\mathbf{v}_{a'_{i+2}b'_{i+2}}\mathbf{v}_{a'_{i+1}b'_{i+1}}\mathbf{v}_{a'_{i}b'_{i}}\equiv \mathbf{0}$ for $i\in [n-2]$.
\end{lemma}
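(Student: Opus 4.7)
The plan is to induct on $n$, mirroring the strategy used in Lemma~\ref{lem:supp2n-2help}, and to use the fact that the condition $|\supp(\mathbf{v})|=2n-2$ forces a very restricted ``overlap pattern'' among the supports of the operators. Since the $n$ supports together contribute $2n$ index-slots while only $2n-2$ distinct values appear, exactly two pair-identifications occur between slots. These identifications fall into one of three structural patterns: (A) two disjoint pairs of operators, each pair sharing exactly one index; (B) a chain of three operators in which the middle operator shares one index with each of the other two; or (C) a single pair of operators whose supports are identical.

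For the base case $n=4$, pattern (A) is exactly the content of Lemma~\ref{lem:supp2n-2help}. In patterns (B) and (C), one checks directly that the ``unused'' fourth operator has support disjoint from the other three. I would then apply Proposition~\ref{prop:dis0}: if the pairing of this isolated operator with any of the remaining three is already a zero equivalence, we are done; otherwise Proposition~\ref{prop:deg2rel} allows the isolated operator to be commuted to an end of the product, reducing to the case of a length-three product equivalent to $\mathbf{0}$. Such a length-three zero composition must appear on the list of degree-three zero equivalences from Theorem~\ref{thm:three0e}, possibly after applying the transformations from Section~\ref{sec:ops}, which is the desired conclusion.

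For the inductive step $n\geq 5$, since at most four operators participate in the two identifications, there must exist $i\in[n]$ whose support $\{a_i,b_i\}$ is disjoint from every other $\{a_j,b_j\}$. If $\mathbf{v}_{a_ib_i}\mathbf{v}_{a_jb_j}\equiv\mathbf{0}$ for some $j\neq i$, then Proposition~\ref{prop:dis0} finishes the argument. Otherwise, the degree-two non-zero equivalences in Proposition~\ref{prop:deg2rel}(i)--(vi), together with Theorem~\ref{thm:deg2zero}~(ii),~(iv),~(vi), permit $\mathbf{v}_{a_ib_i}$ to be commuted past every other operator in $\mathbf{v}$ without changing its equivalence class. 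Removing this isolated operator leaves a composition of $n-1$ operators, still equivalent to $\mathbf{0}$, whose support has size $2(n-1)-2$; the induction hypothesis then applies.

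The main obstacle is the base case in pattern (B): after commuting out the isolated fourth operator, one is left with a length-three zero composition whose three supports form an overlap chain, and it must be verified that every such chain-configured zero lies (up to the transformations of Section~\ref{sec:ops}) on the explicit list of Theorem~\ref{thm:three0e}. This requires a finite but careful enumeration of the relative orders of the four indices involved and cross-checking each surviving case against the minimal degree-three zero equivalences. Pattern (C) is comparatively mild once the horizontal flip, vertical flip, and cyclic shift are used to reduce the pair of same-support operators to a small number of canonical shapes already handled in the degree-two analysis. Throughout, the bookkeeping is identical in spirit to the argument given for Lemma~\ref{lem:supp2n-2help}: one must simultaneously track which commutations are genuinely available and which sub-products could themselves witness the desired degree-two or degree-three zero.
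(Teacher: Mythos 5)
Your plan is essentially sound and would work with careful execution, but it is worth pointing out a few issues, one of which undermines the ``main obstacle'' you identify.

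First, the paper itself offers no proof here, only the remark that it is ``similar to that of Lemma~\ref{lem:supp2n-2help}.'' That lemma is proved by a direct case analysis in which, for every overlap pattern not witnessed by a low-degree zero, an explicit permutation is constructed on which the composition acts non-trivially. Your induction-plus-commuting route is a genuinely different and arguably cleaner scaffolding, and the reduction ``find an isolated operator, commute it out via $(\star)$ from the proof of Proposition~\ref{prop:dis0}, apply the inductive hypothesis'' is valid: the isolated operator always commutes with every other one (either both orders give the same non-zero product by Theorem~\ref{thm:deg2zero}~(ii),~(iv),~(vi) and Proposition~\ref{prop:deg2rel}, or both orders are $\equiv\mathbf{0}$), and $|\supp(\mathbf{v}')|=2(n-1)-2$ so the inductive hypothesis applies once you conclude $\mathbf{v}'\equiv\mathbf{0}$ from the contrapositive of Proposition~\ref{prop:dis0}.

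Two concrete gaps. (1) Your three patterns do not explicitly account for the configuration in which a single value appears in \emph{three} different operators; with $2n-2$ distinct values among $2n$ slots, the two wasted slots can come from a value of multiplicity three, not only from two values each of multiplicity two. You should check whether your pattern (B) is meant to include this ``star'' configuration, and if not, add it; the argument (commute out the isolated operator, reduce) goes through unchanged, so this is an omission in the taxonomy rather than in the method. (2) You state that pattern (A) at $n=4$ ``is exactly'' Lemma~\ref{lem:supp2n-2help}, but that lemma fixes the two overlapping pairs to be the first two operators and the last two. For a general pattern-(A) arrangement you must first commute the four operators into that block form, which again requires invoking $(\star)$; this is routine but should be said.

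Finally, the ``main obstacle'' you flag is not an obstacle. The conclusion of the lemma only asks that, after rewriting, some consecutive pair or triple in $\mathbf{v}$ be $\equiv\mathbf{0}$; it does not ask that the triple appear on the minimal list in Theorem~\ref{thm:three0e}. Once you have commuted the isolated fourth operator to the outside and applied the contrapositive of Proposition~\ref{prop:dis0}, you already know the remaining consecutive triple $\mathbf{v}'$ is $\equiv\mathbf{0}$, and that triple \emph{is} the desired degree-three witness. There is nothing to cross-check against Theorem~\ref{thm:three0e}; the enumeration you anticipate having to do is unnecessary.
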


Finally, we summarize the output of a full study of the degree four equivalences as done for degree three equivalences in Section~\ref{sec:order3}. The fact that the equivalences are in fact minimal is established in Theorem~\ref{thm:arblength}, where generalizations of arbitrary degree are considered.

\begin{theorem}\label{thm:4zero}
The minimal degree four equivalences consist of the compositions illustrated in Figure~\ref{fig:d4} along with any related to them by the application of the equivalence-preserving transformations of Section~\ref{sec:ops}.

\begin{figure}[H]
    \centering
    \begin{equation*}
\begin{array}{ccc}
\text{Zero equivalence} & \qquad & \text{Non-zero equivalence} \\
\begin{tikzpicture}[scale=0.75]
        \draw[rounded corners] (-0.5, 0) rectangle (4.5, 2.5) {};
        
        \node (1) at (0,0.5) [circle, draw = black,fill=black, inner sep = 0.5mm] {};
        \node (2) at (3,0.5) [circle, draw = black,fill=black, inner sep = 0.5mm] {};
        \draw[decorate sep={0.5mm}{1mm},fill, BrickRed] (1)--(2);
        
        \node (3) at (0,1) [circle, draw = black,fill=black, inner sep = 0.5mm] {};
        \node (4) at (1,1) [circle, draw = black,fill=black, inner sep = 0.5mm] {};
        \draw[ForestGreen](3)--(4);
        
        \node (5) at (1,1.5) [circle, draw = black,fill=black, inner sep = 0.5mm] {};
        \node (6) at (2,1.5) [circle, draw = black,fill=black, inner sep = 0.5mm] {};
        \draw[ForestGreen](5)--(6);

        \node (7) at (2,2) [circle, draw = black,fill=black, inner sep = 0.5mm] {};
        \node (8) at (4,2) [circle, draw = black,fill=black, inner sep = 0.5mm] {};
        \draw[ForestGreen](7)--(8);
        
        \draw[dashed,gray] (0,0)--(0,2.5);
        \draw[dashed,gray] (1,0)--(1,2.5);
        \draw[dashed,gray] (2,0)--(2,2.5);
        \draw[dashed,gray] (3,0)--(3,2.5);
        \draw[dashed,gray] (4,0)--(4,2.5);
    \end{tikzpicture}\hspace{0.25cm}\hspace{0.25cm}\begin{tikzpicture}
        \node at (0,0) {};
        \node at (0,1){\large{$\equiv \hspace{0.25cm} \mathbf{0}$}};
    \end{tikzpicture}
     &\hspace{0.5cm} &
\begin{tikzpicture}[scale=0.75]
        \draw[rounded corners] (-0.5, 0) rectangle (3.5, 2.5) {};
        
        \node (1) at (0,0.5) [circle, draw = black,fill=black, inner sep = 0.5mm] {};
        \node (2) at (3,0.5) [circle, draw = black,fill=black, inner sep = 0.5mm] {};
        \draw[decorate sep={0.5mm}{1mm},fill, BrickRed] (1)--(2);
        
        \node (3) at (0,1) [circle, draw = black,fill=black, inner sep = 0.5mm] {};
        \node (4) at (1,1) [circle, draw = black,fill=black, inner sep = 0.5mm] {};
        \draw[ForestGreen](3)--(4);
        
        \node (5) at (1,1.5) [circle, draw = black,fill=black, inner sep = 0.5mm] {};
        \node (6) at (2,1.5) [circle, draw = black,fill=black, inner sep = 0.5mm] {};
        \draw[ForestGreen](5)--(6);

        \node (7) at (0,2) [circle, draw = black,fill=black, inner sep = 0.5mm] {};
        \node (8) at (2,2) [circle, draw = black,fill=black, inner sep = 0.5mm] {};
        \draw[decorate sep={0.5mm}{1mm},fill, BrickRed] (7)--(8);
        
        \draw[dashed,gray] (0,0)--(0,2.5);
        \draw[dashed,gray] (1,0)--(1,2.5);
        \draw[dashed,gray] (2,0)--(2,2.5);
        \draw[dashed,gray] (3,0)--(3,2.5);
    \end{tikzpicture}\hspace{0.25cm}\begin{tikzpicture}
        \node at (0,0){};
        \node at (0,0.75){\large{$\equiv$}};
    \end{tikzpicture}\hspace{0.25cm}\begin{tikzpicture}[scale=0.8]
        \draw[rounded corners] (-0.5, 0) rectangle (3.5, 2.5) {};
        
        \node (1) at (1,0.5) [circle, draw = black,fill=black, inner sep = 0.5mm] {};
        \node (2) at (3,0.5) [circle, draw = black,fill=black, inner sep = 0.5mm] {};
        \draw[decorate sep={0.5mm}{1mm},fill, BrickRed] (1)--(2);
        
        \node (3) at (1,1) [circle, draw = black,fill=black, inner sep = 0.5mm] {};
        \node (4) at (2,1) [circle, draw = black,fill=black, inner sep = 0.5mm] {};
        \draw[ForestGreen](3)--(4);
        
        \node (5) at (2,1.5) [circle, draw = black,fill=black, inner sep = 0.5mm] {};
        \node (6) at (3,1.5) [circle, draw = black,fill=black, inner sep = 0.5mm] {};
        \draw[ForestGreen](5)--(6);

        \node (7) at (0,2) [circle, draw = black,fill=black, inner sep = 0.5mm] {};
        \node (8) at (3,2) [circle, draw = black,fill=black, inner sep = 0.5mm] {};
        \draw[decorate sep={0.5mm}{1mm},fill, BrickRed] (7)--(8);
        
        \draw[dashed,gray] (0,0)--(0,2.5);
        \draw[dashed,gray] (1,0)--(1,2.5);
        \draw[dashed,gray] (2,0)--(2,2.5);
        \draw[dashed,gray] (3,0)--(3,2.5);
    \end{tikzpicture}
\end{array}
\end{equation*}
    \caption{Minimal equivalences of degree four}
    \label{fig:d4}
\end{figure}
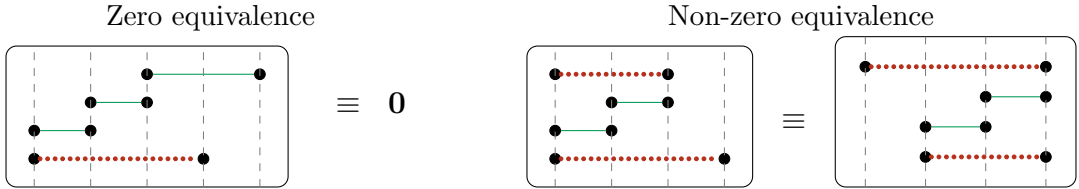
\end{theorem}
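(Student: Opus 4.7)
The plan is to establish both (i) that the two equivalences displayed in Figure~\ref{fig:d4}, together with their images under the transformations of Section~\ref{sec:ops}, are genuinely minimal, and (ii) that no other minimal degree four equivalences exist. For direction (i), the verification for each specific composition reduces, by Theorem~\ref{thm:flat} and Propositions~\ref{prop:zeroscheck} and~\ref{prop:nzeroscheck}, to a finite computation: compare the two actions on every $u \in S_m$ and every $k \in [m-1]$ after flattening. Minimality then amounts to checking that neither equivalence can be recovered by concatenating degree two and three moves drawn from Proposition~\ref{prop:deg2rel}, Theorems~\ref{thm:deg2zero},~\ref{thm:three0e}, and~\ref{thm:d3nz} with the symmetries of Section~\ref{sec:ops}; this too is a finite search.

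For direction (ii), the strategy is a complete classification modulo the equivalence-preserving transformations. By Theorem~\ref{thm:flat} we may restrict to compositions whose support is $[m]$ with $2 \le m \le 8$. When $m \in \{7,8\}$, Lemma~\ref{lem:help1} shows that any zero equivalence is forced by a degree two zero equivalence, and when $m = 6$ the same conclusion follows from Lemma~\ref{lem:suppbounddeg4z} (possibly via a degree three zero equivalence); in either case the zero equivalence is not minimal. An analogous support-based argument for non-zero equivalences, using the degree two and three classifications together with Proposition~\ref{prop:dis0}, handles non-zero candidates of support size at least $6$: in such a composition there must be a pair of operators with disjoint indices, which by Proposition~\ref{prop:deg2rel} commute, letting one factor the equivalence through a strictly lower degree relation.

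It remains to treat the supports $m \in \{2,3,4,5\}$ exhaustively, exactly as was done for degree three in Section~\ref{sec:order3}. For each such $m$, enumerate all degree four compositions with support $[m]$, normalize using cyclic shift, horizontal flip, and vertical flip (Theorems~\ref{thm:cycshift},~\ref{thm:hshift},~\ref{thm:vshift}), partition by number of quantum operators (Lemma~\ref{lem:supp1} and Proposition~\ref{prop:supp2}), and within each block compare the finite action tables $\mathbf{v} \bullet_k u$ for $u \in S_m$ and $k \in [m-1]$. After discarding those coincidences already explained by iterated applications of the known degree two and three equivalences together with the symmetries of Section~\ref{sec:ops}, one is left with precisely the two representatives in Figure~\ref{fig:d4}.

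The main obstacle is the combinatorial explosion in the supports $m = 4$ and $m = 5$, where many non-zero compositions must be cross-compared and then sifted for minimality. The transformations of Section~\ref{sec:ops} reduce the case count substantially, but the final step of distinguishing genuinely new coincidences from those inherited from lower degrees requires careful bookkeeping, and in practice is best carried out with computer assistance, in direct parallel with the degree three enumeration of Section~\ref{sec:order3}.
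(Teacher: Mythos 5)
Your overall strategy matches what the paper does: reduce to flattened compositions with support $[m]$ for $2 \le m \le 8$, rule out large supports via Lemma~\ref{lem:help1} and Lemma~\ref{lem:suppbounddeg4z}, and then perform the exhaustive comparison of action tables for small $m$, in direct parallel with the degree three enumeration of Section~\ref{sec:order3}. The one place your route differs is minimality of the two surviving representatives: you propose a finite search against iterated degree two and three moves, whereas the paper explicitly defers this to Theorem~\ref{thm:arblength}, which proves minimality for the degree-$n$ families of both relations at once (via Propositions~\ref{prop:arblength-minimal} and~\ref{prop:minnzrel}). Both are valid for $n=4$; the paper's choice has the advantage of handling every degree simultaneously, while yours is self-contained at degree four.

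One step of your argument is stated too loosely. For non-zero candidates of support size at least $6$, you claim that a disjoint pair of operators can always be commuted by Proposition~\ref{prop:deg2rel} and that this "lets one factor the equivalence through a strictly lower degree relation." Disjointness of support is necessary for a degree two commutation but not sufficient — the six interleaving patterns in Proposition~\ref{prop:deg2rel} must actually hold, and several disjoint-support patterns are zero relations (Theorem~\ref{thm:deg2zero}). Moreover, even when the two operators do commute, bringing them adjacent may pass them through non-disjoint operators, and it takes a further argument (analogous to the bookkeeping in Proposition~\ref{prop:dis0} and Lemma~\ref{lem:help1}) to conclude that any non-zero coincidence with such a block is inherited from lower degree. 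For $m=6,7,8$ with four operators the conclusion is correct, but your sketch does not quite establish it; the paper sidesteps this by treating the large-support non-zero cases as part of the exhaustive computation rather than by a separate structural lemma.
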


While we could continue as above and move to equivalences of degrees five, six, etc, evidently, we need more than just such experimental evidence to arrive at a complete list of equivalences. In particular, a better understanding of the quantum $k$-Bruhat order seems crucial (see the end of Section~\ref{sec:arblength} for a discussion). Without this, we can merely conjecture a complete list of equivalences. In this regard, we believe that the equivalences found above, along with two generalizations of arbitrary degree discussed in the following section, do, indeed, form a complete list of equivalences.

\section{Equivalences of arbitrary degree}\label{sec:arblength}

In this section, we establish two families of minimal relations of arbitrary degree, one a zero relation and the other non-zero, generalizing the minimal degree-four equivalences found in the previous section. Specifically, we dedicate the remainder of this section, as well as the paper, to the proof of the following result.

\begin{theorem}\label{thm:arblength}
Let $n\geq 4$ and consider $a_1<a_2<\cdots<a_n$. Then, we have the following minimal equivalences:
\begin{enumerate}
    \item[$(i)$] $\mathbf{v}_{a_{n-2}a_n}\mathbf{v}_{a_{n-3}a_{n-2}}\cdots\mathbf{v}_{a_1a_2}\mathbf{v}_{a_{n-1}a_1}\equiv \mathbf{0}$; and
    \item[$(ii)$] $\mathbf{v}_{a_{n-1}a_1}\mathbf{v}_{a_{n-2}a_{n-1}}\mathbf{v}_{a_{n-3}a_{n-2}}\cdots \mathbf{v}_{a_{1}a_{2}}\mathbf{v}_{a_n,a_1}\equiv \mathbf{v}_{a_na_1}\mathbf{v}_{a_{n-1}a_{n}}\mathbf{v}_{a_{n-2}a_{n-1}}\cdots \mathbf{v}_{a_{2}a_{3}}\mathbf{v}_{a_na_2}$.
\end{enumerate}
\end{theorem}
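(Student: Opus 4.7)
The plan is to prove both parts by direct trace, applying Proposition~\ref{prop:covercond} at each step. By Theorem~\ref{thm:flat}, it suffices to treat the case $\{a_1,\ldots,a_n\}=[n]$; under this reduction every interval $(a_j,a_{j+1})$ is empty, so the classical ``no intermediate value'' conditions are automatic and only the quantum cover conditions generate nontrivial constraints. Writing $p_j=u^{-1}(a_j)$, the analysis reduces to tracking which value sits at which position after each step.

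For (i), let $\mathbf{v}:=\mathbf{v}_{a_{n-2}a_n}\mathbf{v}_{a_{n-3}a_{n-2}}\cdots\mathbf{v}_{a_1a_2}\mathbf{v}_{a_{n-1}a_1}$ and assume for contradiction that $\mathbf{v}\bullet_k u\neq 0$. The rightmost quantum $\mathbf{v}_{a_{n-1}a_1}$ forces $p_{n-1}\leq k<p_1$ and constrains every position in $(p_{n-1},p_1)$ to carry a value in $(a_1,a_{n-1})$; in particular $a_n$ is excluded from that interval, so $p_n\leq p_{n-1}$ or $p_n\geq p_1$. The successive classical operators push $a_j$ onto position $p_{n-1}$ for $j=2,\ldots,n-2$, leaving $a_{n-1}$ at $p_1$ and $a_n$ at its original position $p_n$. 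The final classical operator $\mathbf{v}_{a_{n-2}a_n}$ requires $a_n$ at a position strictly greater than $k$, and combined with the previous constraint this forces $p_n>p_1$. But then $a_{n-1}$ at position $p_1$ sits strictly between $a_{n-2}$ (at $p_{n-1}$) and $a_n$ (at $p_n$), and since $a_{n-1}\in(a_{n-2},a_n)$ the classical cover condition for $\mathbf{v}_{a_{n-2}a_n}$ fails, giving the desired contradiction.

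For (ii), I plan to trace each side to identify its nonzero-ness conditions and its output. For the LHS $\mathbf{v}_{a_{n-1}a_1}\mathbf{v}_{a_{n-2}a_{n-1}}\cdots\mathbf{v}_{a_1a_2}\mathbf{v}_{a_na_1}$, the opening quantum $\mathbf{v}_{a_na_1}$ forces $p_n\leq k<p_1$ together with $(p_n,p_1)\subseteq\{p_2,\ldots,p_{n-1}\}$ (any other position in that interval would carry a value outside $\{a_2,\ldots,a_{n-1}\}$). The classical chain next places $a_j$ at position $p_n$ and $a_{j-1}$ at position $p_j$ for $j=2,\ldots,n-1$; the closing quantum $\mathbf{v}_{a_{n-1}a_1}$ succeeds precisely when $p_2<p_1$, since otherwise $a_n$, seated at $p_1$ after step~1, would obstruct the interval $(p_n,p_2)$. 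Under these conditions the output is $\mathbf{q}_{p_n,p_1}\mathbf{q}_{p_n,p_2}\cdot w$, with $w(p_1)=a_n$, $w(p_2)=a_{n-1}$, $w(p_j)=a_{j-1}$ for $3\leq j\leq n-1$, and $w(p_n)=a_1$. Tracing the RHS $\mathbf{v}_{a_na_1}\mathbf{v}_{a_{n-1}a_n}\cdots\mathbf{v}_{a_2a_3}\mathbf{v}_{a_na_2}$ similarly produces the same output, with opening-quantum constraint $(p_n,p_2)\subseteq\{p_3,\ldots,p_{n-1}\}$ and closing-quantum constraint $(p_n,p_1)\subseteq\{p_2,\ldots,p_{n-1}\}$.

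The main obstacle is confirming that the two sides impose \emph{equivalent} nonzero-ness conditions on $(u,k)$; the verification hinges on observing that the LHS's single constraint $(p_n,p_1)\subseteq\{p_2,\ldots,p_{n-1}\}$ implies the RHS's step-1 constraint (every $l\in(p_n,p_2)\subseteq(p_n,p_1)$ is some $p_j$, and cannot be $p_2$ by openness of the interval), while conversely the RHS's step-$n$ constraint is exactly the LHS's constraint. Once equivalence is established, minimality will follow because no proper contiguous subcomposition of either side is equivalent to $\mathbf{0}$ under the classifications in Theorems~\ref{thm:deg2zero},~\ref{thm:three0e}, and~\ref{thm:4zero}, and because the distinct endpoint quantum operators on the LHS and RHS of (ii) involve index pairs which no commutation-type non-zero equivalence of lower degree can interchange.
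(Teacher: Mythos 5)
Your direct trace of the zero-ness in part (i) and the equality of actions in part (ii) follows the same computational approach as the paper's Propositions 6.2 and 6.4 (the former cites~\cite{qkB1} for zero-ness, so your independent derivation is additional but correct). Two caveats on the trace itself: first, the claim that after flattening ``every interval $(a_j,a_{j+1})$ is empty, so the classical `no intermediate value' conditions are automatic'' needs the permutation to live in $S_n$, not merely the support to be $[n]$; this restriction to $u\in S_n$ is legitimate, but you must invoke Proposition~\ref{prop:nzeroscheck} (or~\ref{prop:zeroscheck}), not just Theorem~\ref{thm:flat}, to license it. Second, for (ii) you need to exhibit an explicit $(u,k)$ making both sides non-zero, so that the equivalence is genuinely non-zero; this is a small step but cannot be skipped.

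The serious gap is minimality, which is part of the theorem's claim and which your final paragraph does not establish. Your argument appeals to the classifications in Theorems~\ref{thm:deg2zero}, \ref{thm:three0e}, and~\ref{thm:4zero}, but these only classify equivalences of degree two, three, and four, whereas the contiguous subcompositions of an $n$-term word can have any degree up to $n-1$. More importantly, minimality as defined in Definition~6.1 is not the assertion that no contiguous subcomposition of the \emph{given} word is equivalent to $\mathbf{0}$ (or satisfies a lower-degree non-zero relation); it also requires ruling out the possibility that one can first rewrite the word by applying non-zero equivalences and \emph{then} find such a subcomposition. The paper's Propositions 6.2 and 6.5 address both possibilities, and they do so by observing that every relevant subcomposition is (after cyclic shift and flattening) a composition of purely \emph{classical} operators, so that the complete classification of equivalences for $\mathcal{M}_n$ from~\cite{BS2} (Theorem~\ref{thm:BS2}, Definition~\ref{def:monoid}) applies in arbitrary degree. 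Your proposal never invokes the classical~\cite{BS2} results, and the assertion that ``no commutation-type non-zero equivalence of lower degree can interchange'' the endpoint quantum operators is not a proof. Without the appeal to~\cite{BS2}, the minimality claim is unsupported.
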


\begin{figure}[H]
    \centering
    $$\begin{array}{ccc}
    \text{Zero equivalence} & \qquad & \text{Non-zero equivalence} \\
    \begin{tikzpicture}[scale=0.8]
        \draw[rounded corners] (-0.5, 0) rectangle (5.5, 3) {};
        
        \node (1) at (0,0.5) [circle, draw = black,fill=black, inner sep = 0.5mm] {};
        \node (2) at (4,0.5) [circle, draw = black,fill=black, inner sep = 0.5mm] {};
        \draw[decorate sep={0.5mm}{1mm},fill, BrickRed] (1)--(2);
        
        \node (3) at (0,1) [circle, draw = black,fill=black, inner sep = 0.5mm] {};
        \node (4) at (1,1) [circle, draw = black,fill=black, inner sep = 0.5mm] {};
        \draw[ForestGreen](3)--(4);

        \draw[decorate sep={0.3mm}{1mm},fill,black] (1.25,1.25)--(1.75,1.75);
        
        \node (5) at (2,2) [circle, draw = black,fill=black, inner sep = 0.5mm] {};
        \node (6) at (3,2) [circle, draw = black,fill=black, inner sep = 0.5mm] {};
        \draw[ForestGreen](5)--(6);

        \node (7) at (3,2.5) [circle, draw = black,fill=black, inner sep = 0.5mm] {};
        \node (8) at (5,2.5) [circle, draw = black,fill=black, inner sep = 0.5mm] {};
        \draw[ForestGreen](7)--(8);
        
        \draw[dashed,gray] (0,0)--(0,3);
        \draw[dashed,gray] (1,0)--(1,3);
        \draw[dashed,gray] (2,0)--(2,3);
        \draw[dashed,gray] (3,0)--(3,3);
        \draw[dashed,gray] (4,0)--(4,3);
        \draw[dashed,gray] (5,0)--(5,3);
    \end{tikzpicture}\hspace{0.25cm}\begin{tikzpicture}
        \node at (0,0) {};
        \node at (0,1){\large{$\equiv \hspace{0.25cm} \mathbf{0}$}};
    \end{tikzpicture}
     &\hspace{0.5cm} &
\begin{tikzpicture}[scale=0.8]
        \draw[rounded corners] (-0.5, 0) rectangle (4.5, 3) {};
        
        \node (1) at (0,0.5) [circle, draw = black,fill=black, inner sep = 0.5mm] {};
        \node (2) at (4,0.5) [circle, draw = black,fill=black, inner sep = 0.5mm] {};
        \draw[decorate sep={0.5mm}{1mm},fill, BrickRed] (1)--(2);
        
        \node (3) at (0,1) [circle, draw = black,fill=black, inner sep = 0.5mm] {};
        \node (4) at (1,1) [circle, draw = black,fill=black, inner sep = 0.5mm] {};
        \draw[ForestGreen](3)--(4);
        
        \draw[decorate sep={0.3mm}{1mm},fill,black] (1.25,1.25)--(1.75,1.75);

        \node (7) at (2,2) [circle, draw = black,fill=black, inner sep = 0.5mm] {};
        \node (8) at (3,2) [circle, draw = black,fill=black, inner sep = 0.5mm] {};
        \draw[ForestGreen](7)--(8);

        \node (9) at (0,2.5) [circle, draw = black,fill=black, inner sep = 0.5mm] {};
        \node (10) at (3,2.5) [circle, draw = black,fill=black, inner sep = 0.5mm] {};
        \draw[decorate sep={0.5mm}{1mm},fill, BrickRed] (9)--(10);
        
        \draw[dashed,gray] (0,0)--(0,3);
        \draw[dashed,gray] (1,0)--(1,3);
        \draw[dashed,gray] (2,0)--(2,3);
        \draw[dashed,gray] (3,0)--(3,3);
        \draw[dashed,gray] (4,0)--(4,3);
    \end{tikzpicture}\hspace{0.25cm}\begin{tikzpicture}
        \node at (0,0){};
        \node at (0,1){\large{$\equiv$}};
    \end{tikzpicture}\hspace{0.25cm}\begin{tikzpicture}[scale=0.8]
        \draw[rounded corners] (-0.5, 0) rectangle (4.5, 3) {};
        
        \node (1) at (1,0.5) [circle, draw = black,fill=black, inner sep = 0.5mm] {};
        \node (2) at (4,0.5) [circle, draw = black,fill=black, inner sep = 0.5mm] {};
        \draw[decorate sep={0.5mm}{1mm},fill, BrickRed] (1)--(2);
        
        \node (3) at (1,1) [circle, draw = black,fill=black, inner sep = 0.5mm] {};
        \node (4) at (2,1) [circle, draw = black,fill=black, inner sep = 0.5mm] {};
        \draw[ForestGreen](3)--(4);

        \draw[decorate sep={0.3mm}{1mm},fill,black] (2.25,1.25)--(2.75,1.75);

        \node (7) at (3,2) [circle, draw = black,fill=black, inner sep = 0.5mm] {};
        \node (8) at (4,2) [circle, draw = black,fill=black, inner sep = 0.5mm] {};
        \draw[ForestGreen](7)--(8);

        \node (9) at (0,2.5) [circle, draw = black,fill=black, inner sep = 0.5mm] {};
        \node (10) at (4,2.5) [circle, draw = black,fill=black, inner sep = 0.5mm] {};
        \draw[decorate sep={0.5mm}{1mm},fill, BrickRed] (9)--(10);
        
        \draw[dashed,gray] (0,0)--(0,3);
        \draw[dashed,gray] (1,0)--(1,3);
        \draw[dashed,gray] (2,0)--(2,3);
        \draw[dashed,gray] (3,0)--(3,3);
        \draw[dashed,gray] (4,0)--(4,3);
    \end{tikzpicture}
    \end{array}$$
    \caption{Equivalences of arbitrary degree}
    \label{fig:arbdegequiv}
\end{figure}
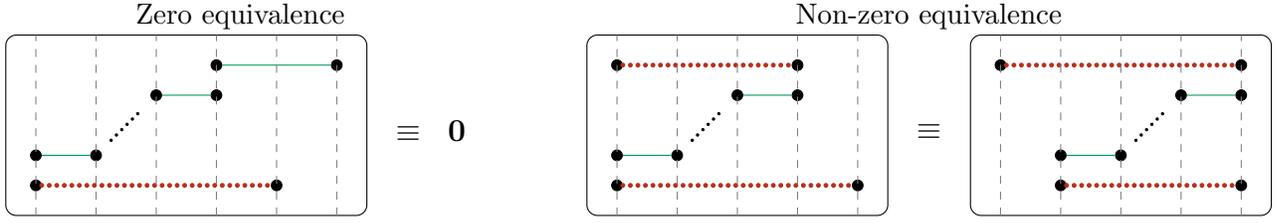

The zero equivalences of Theorem~\ref{thm:arblength} appear in~\cite{qkB1}, where the authors show that they are, indeed, equivalent to zero. Thus, it remains to show that they are also minimal.

\begin{prop}\label{prop:arblength-minimal}
For $n\geq 4$ and $a_1<a_2<\cdots<a_n$, the following zero equivalence is minimal $$\mathbf{v}_{a_{n-2}a_n}\mathbf{v}_{a_{n-3}a_{n-2}}\cdots\mathbf{v}_{a_1a_2}\mathbf{v}_{a_{n-1}a_1}\equiv \mathbf{0}.$$
\end{prop}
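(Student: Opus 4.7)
The plan is to show that no composition equivalent to $\mathbf{v}:=\mathbf{v}_{a_{n-2}a_n}\mathbf{v}_{a_{n-3}a_{n-2}}\cdots\mathbf{v}_{a_1a_2}\mathbf{v}_{a_{n-1}a_1}$ via non-zero equivalences contains a contiguous zero sub-composition of degree less than $n-1$. By Lemma~\ref{lem:supp1} and Proposition~\ref{prop:supp2}, every rewrite $\mathbf{v}'$ obtained from $\mathbf{v}$ through a chain of non-zero equivalences has degree $n-1$ and support $\{a_1,\ldots,a_n\}$, so the analysis reduces to controlling the operator sequencing under this fixed combinatorial data.

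I would first verify directly that no proper contiguous sub-composition of $\mathbf{v}$ itself is zero. For length-two sub-compositions, this reduces to a case analysis against Theorem~\ref{thm:deg2zero}: each adjacent pair in $\mathbf{v}$ is of the form $\mathbf{v}_{a_ia_{i+1}}\mathbf{v}_{a_{i-1}a_i}$ or is one of the two boundary pairs involving $\mathbf{v}_{a_{n-1}a_1}$ or $\mathbf{v}_{a_{n-2}a_n}$, and none matches any of the listed zero patterns. For longer proper sub-compositions, I would exhibit a witness permutation: taking $a_i=i$ without loss of generality via Theorem~\ref{thm:flat}, the choice $u^\ast=(n-1,1,2,\ldots,n-2,n)$ with $k^\ast=1$ allows each proper prefix of $\mathbf{v}$ to act non-trivially. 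Tracing through, the quantum $\mathbf{v}_{(n-1)1}$ parks $n-1$ at position $2$, after which each classical $\mathbf{v}_{i(i+1)}$ cleanly slides the value $i+1$ leftward past the growing tail, until the final operator $\mathbf{v}_{(n-2)n}$ is blocked precisely because $n-1$ now lies strictly between the positions of $n-2$ and $n$. The vertical flip of Theorem~\ref{thm:vshift}~(b) produces a symmetric witness for proper suffixes, and the cyclic shift of Theorem~\ref{thm:cycshift}~(b) reduces interior sub-compositions to prefixes and suffixes.

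The most delicate step is to rule out zero contiguous sub-compositions in arbitrary rewrites $\mathbf{v}'\equiv\mathbf{v}$. I plan to proceed by induction on $n$, with base case $n=4$ supplied by Theorem~\ref{thm:three0e}. For the inductive step, the key structural observation is that consecutive operators in $\mathbf{v}$ share exactly one index, tracing out the cycle $a_{n-1}\to a_1\to a_2\to\cdots\to a_{n-2}\to a_n$ on the support; this rigid overlap structure severely restricts which non-zero equivalences from Proposition~\ref{prop:deg2rel} and Theorem~\ref{thm:d3nz} can actually be applied to contiguous sub-compositions of $\mathbf{v}$. The main obstacle will be controlling the cascading enumeration of rewrites, as operators far apart in $\mathbf{v}$ may in principle be brought adjacent through chains of lower-degree non-zero equivalences; the plan is to track the multiset of index pairs $\{a_i,b_i\}$ as an edge-labeled graph on the support and to argue that its cycle structure is preserved by all admissible rewrites, thereby preventing any adjacent pair in a rewrite from matching a zero pattern of smaller degree and, via the inductive hypothesis, any shorter zero sub-composition at all.
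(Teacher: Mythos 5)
Your overall strategy agrees with the paper's: first show no proper contiguous sub-composition of $\mathbf{v}=\mathbf{v}_{a_{n-2}a_n}\mathbf{v}_{a_{n-3}a_{n-2}}\cdots\mathbf{v}_{a_1a_2}\mathbf{v}_{a_{n-1}a_1}$ is equivalent to $\mathbf{0}$, then argue that no sequence of non-zero rewrites can create a zero sub-composition. Your witness $u^\ast=(n-1,1,2,\ldots,n-2,n)$ with $k^\ast=1$ does verify that all proper prefixes (the sub-compositions containing the quantum operator) act non-trivially, and it is a valid alternative to the paper's witness $(n,n-1\,|\,1,2,\ldots,n-2)$ with $k=2$.

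However, there are two problems. First, the auxiliary transformations you invoke are misapplied. Proper suffixes and interior sub-compositions of $\mathbf{v}$ omit the sole quantum operator $\mathbf{v}_{a_{n-1}a_1}$ and are therefore purely classical; the paper simply cites the complete classical classification from~\cite{BS2}, so no witness (and hence no vertical flip) is needed for them. Moreover, the cyclic shift $\mathfrak{o}$ acts only on the index set, not on the ordering of the operators, so it cannot ``reduce interior sub-compositions to prefixes and suffixes'' as you claim.

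Second, and more seriously, the core step of controlling arbitrary rewrites $\mathbf{v}'\equiv\mathbf{v}$ is a plan rather than a proof. Your proposed invariant---that the edge-multiset graph's ``cycle structure is preserved by all admissible rewrites''---is false in general: both the classical braid relations of~\cite{BS2} (Definition~\ref{def:monoid}(i)) and the degree-three equivalences in Theorem~\ref{thm:d3nz} genuinely alter the support graph's edges (e.g.\ $\mathbf{v}_{13}\mathbf{v}_{34}\mathbf{v}_{23}\equiv\mathbf{v}_{23}\mathbf{v}_{12}\mathbf{v}_{24}$, as noted in Remark~\ref{rem:equivcomp}'s vicinity). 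The proposed induction on $n$ is also unspecified: you do not explain how the $(n-1)$-case controls rewrites at size $n$. The paper sidesteps all of this: after flattening, every proper contiguous sub-composition of $\mathbf{v}$ that could participate in a rewrite is either a classical path $\mathbf{v}_{i,i+1}\mathbf{v}_{i-1,i}\cdots\mathbf{v}_{j,j+1}$ or a single cyclic shift away from one (this is where $\mathfrak{o}$ is actually used---to convert the one quantum-containing prefix into a classical path); the~\cite{BS2} classification then says such paths admit \emph{no} nontrivial non-zero equivalences, because consecutive operators share exactly one index (blocking both commutation and braid moves), so no rewrite of $\mathbf{v}$ exists at all and minimality is immediate. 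You would need to supply something like this observation to close the gap, because without it the enumeration of rewrites you allude to is genuinely open-ended.
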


\begin{proof}
    Let $\mathbf{v}=\mathbf{v}_{a_{n-2}a_n}\mathbf{v}_{a_{n-3}a_{n-2}}\cdots\mathbf{v}_{a_1a_2}\mathbf{v}_{a_{n-1}a_1}$ with $a_1<a_2<\cdots<a_n$. There are two ways in which the relation $\mathbf{v}\equiv \mathbf{0}$ could be non-minimal:
    \begin{itemize}
        \item[(1)] there exists a contiguous subcomposition of $\mathbf{v}$ equivalent to $\mathbf{0}$ or
        \item[(2)] one can apply non-zero equivalences to $\mathbf{v}$ to form an expression satisfying (1). 
    \end{itemize}
    For (1), considering the results concerning compositions of classical operators in~\cite{BS2}, it follows that 
    $$\mathbf{v}_{a_{n-i}a_{n-i+1}}\mathbf{v}_{a_{n-i+1}a_{n-i+2}}\cdots \mathbf{v}_{a_{n-j}a_{n-j+1}}\not\equiv \mathbf{0}$$ 
    for $3\le i<j\le n-1$ and 
    $$\mathbf{v}_{a_{n-2}a_{n}}\mathbf{v}_{a_{n-3}a_{n-2}}\cdots \mathbf{v}_{a_{n-i}a_{n-i+1}}\not\equiv \mathbf{0}$$ \
    for $3\le i\le n-1$. 
    It remains to consider compositions of the form $\mathbf{v}_{a_{i}a_{i+1}}\cdots \mathbf{v}_{a_1a_2}\mathbf{v}_{a_{n-1}a_1}$ for $1\le i\le n-3$. Considering Theorem~\ref{thm:flat}, it suffices to show that $\mathbf{v}_i=\mathbf{v}_{i,i+1}\cdots \mathbf{v}_{12}\mathbf{v}_{n-1,1}\not\equiv \mathbf{0}$ for $1\le i\le n-3$. To this end, note that 
    $$\mathbf{v}_i\bullet_2(n,n-1|1,2,3,\hdots,i,i+1,i+2,\hdots,n-2)=q_2(n,i+1|n-1,1,2,\hdots,i-1,i,i+2,\hdots,n-2)\neq 0.$$ 
    Thus, if $\mathbf{v}\equiv \mathbf{0}$ is non-minimal, then it is a consequence of (2).

    If one can apply nontrivial non-zero equivalences to $v$, then they must apply to a subcomposition of one of the following forms: $$\mathbf{v}_{a_{n-i}a_{n-i+1}}\mathbf{v}_{a_{n-i+1}a_{n-i+2}}\cdots \mathbf{v}_{a_{n-j}a_{n-j+1}}$$ for $3\le i<j\le n-1$, 
    $$\mathbf{v}_{a_{n-2}a_{n}}\mathbf{v}_{a_{n-3}a_{n-2}}\cdots \mathbf{v}_{a_{n-i}a_{n-i+1}}$$ 
    for $3\le i\le n-1$, or $$\mathbf{v}_{a_{i}a_{i+1}}\cdots \mathbf{v}_{a_1a_2}\mathbf{v}_{a_{n-1}a_1}$$ for $1\le i\le n-3$. Equivalently, applying Theorem~\ref{thm:flat}, there must exist nontrivial non-zero equivalences involving either an composition of operators of the form $$\mathbf{v}_{i,i+1}\mathbf{v}_{i-1,i}\cdots \mathbf{v}_{j,j+1}$$ for $1\le j<i\le n-1$ or of the form $$\mathbf{v}_{i,i+1}\cdots \mathbf{v}_{12}\mathbf{v}_{n-1,1}$$ for $1\le i\le n-3$. Note that cyclically shifting the latter composition of operators, we obtain $$\mathbf{v}_{i+1,i+2}\cdots \mathbf{v}_{23}\mathbf{v}_{12}.$$ Once again, considering the results concerning compositions of classical operators in~\cite{BS2}, it follows that there exists no nontrivial non-zero equivalences involving either of the two possible subcompositions of $\mathbf{v}$ described above. Therefore, the equivalence $\mathbf{v}\equiv \mathbf{0}$ must be minimal.
\end{proof}

Applying the cyclic shift transformation to the composition of Proposition~\ref{prop:arblength-minimal}, we obtain the following list of zero equivalences of arbitrary degree. See Figure~\ref{fig:cycshiftarblengz} for an illustration of these compositions for $n=4$.

\begin{corollary}\label{cor:pathovercs}
    Let $n\geq 4$ and consider $a_1<a_2<\cdots<a_n$. Then the following equivalences are minimal
    \begin{enumerate}
        \item[$(i)$] $\mathbf{v}_{a_{n-1}a_1}\mathbf{v}_{a_{n-2}a_{n-1}}\cdots\mathbf{v}_{a_2a_3}\mathbf{v}_{a_na_2}\equiv \mathbf{0}$,
        \item[$(ii)$] $\mathbf{v}_{a_na_2}\mathbf{v}_{a_{n-1}a_n}\cdots\mathbf{v}_{a_3a_4}\mathbf{v}_{a_1a_3}\equiv \mathbf{0}$, and
        \item[$(iii)$] $\mathbf{v}_{a_{i-1}a_{i+1}}\mathbf{v}_{a_{i-2}a_{i-1}}\cdots\mathbf{v}_{a_1a_2}\mathbf{v}_{a_na_1}\mathbf{v}_{a_{n-1}a_n}\cdots\mathbf{v}_{a_{i+2}a_{i+3}}\mathbf{v}_{a_ia_{i+2}}\equiv \mathbf{0}$ for $1<i<n-1$.
    \end{enumerate}
\end{corollary}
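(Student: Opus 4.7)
The plan is to derive each of (i), (ii), and (iii) directly from Proposition~\ref{prop:arblength-minimal} by iterating the cyclic shift $\mathfrak{o}_S$ where $S = \{a_1, a_2, \ldots, a_n\}$. Since $\mathfrak{o}_S$ acts on indices by $a_j \mapsto a_{j+1}$ (with $a_{n+1} := a_1$), a direct index chase shows that applying $\mathfrak{o}_S$ once to the composition
$$\mathbf{v} \;=\; \mathbf{v}_{a_{n-2}a_n}\mathbf{v}_{a_{n-3}a_{n-2}}\cdots\mathbf{v}_{a_1a_2}\mathbf{v}_{a_{n-1}a_1}$$
of Proposition~\ref{prop:arblength-minimal} produces the composition in (i), a second application produces (ii), and for each $i$ with $1<i<n-1$, applying $\mathfrak{o}_S^{\,i+1}$ produces the composition in (iii). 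Theorem~\ref{thm:cycshift}(a) then immediately gives that all of these compositions are equivalent to $\mathbf{0}$.

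For minimality I would use that $\mathfrak{o}_S$ restricts to a bijection on compositions with support exactly $S$, with inverse $\mathfrak{o}_S^{\,n-1}$, and that by Theorem~\ref{thm:cycshift}(a)–(b) both $\mathfrak{o}_S$ and $\mathfrak{o}_S^{\,n-1}$ preserve zero and non-zero equivalences. Suppose, toward a contradiction, that one of the zero equivalences in (i)–(iii) were not minimal. By Definition in Section~\ref{sec:ldequiv}, this would mean that the corresponding zero relation is a consequence of an equivalence of strictly smaller degree, i.e., it can be derived by combining (a) non-zero equivalences rewriting contiguous subcompositions, and (b) a lower-degree zero equivalence applied to a subcomposition of the rewritten expression. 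Pulling every step of such a derivation back through $\mathfrak{o}_S^{\,n-1}$ would yield a derivation of $\mathbf{v}\equiv\mathbf{0}$ from the same lower-degree ingredients, contradicting the minimality established in Proposition~\ref{prop:arblength-minimal}.

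The main point requiring care is the compatibility between the cyclic shift and the notion of ``consequence'' used to define minimality. Concretely, I would need to observe that if $\mathbf{u}$ is a contiguous subcomposition of $\mathbf{w}$, then $\mathfrak{o}_S(\mathbf{u})$ is the corresponding contiguous subcomposition of $\mathfrak{o}_S(\mathbf{w})$, and that replacing $\mathbf{u}$ inside $\mathbf{w}$ by an equivalent $\mathbf{u}'$ commutes with applying $\mathfrak{o}_S$, because $\mathfrak{o}_S$ acts operator-by-operator on the underlying word. Once this bookkeeping is explicit, the argument reduces to saying that the chain of rewrites witnessing non-minimality on one side transports, unchanged in length and structure, to the other side, so minimality is preserved by $\mathfrak{o}_S$. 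This is where both parts of Theorem~\ref{thm:cycshift} are genuinely used, rather than only part~(a). Beyond this, no new verification is needed, and the corollary follows.
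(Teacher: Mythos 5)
Your derivation of (i)--(iii) from Proposition~\ref{prop:arblength-minimal} via iterated cyclic shift is correct, and the index chase checks out: $\mathfrak{o}_S$, $\mathfrak{o}_S^2$, and $\mathfrak{o}_S^{\,i+1}$ applied to the base composition do produce (i), (ii), and (iii) respectively, and Theorem~\ref{thm:cycshift}(a) gives the zero equivalences. This is the same route the paper takes, and you are more explicit than the paper about why minimality should transfer.

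However, there is a real gap at exactly the point you flag. You want to pull a non-minimal derivation of, say, $\mathfrak{o}_S(\mathbf{v})\equiv\mathbf{0}$ back through $\mathfrak{o}_S^{\,n-1}$ and land on a lower-degree derivation of $\mathbf{v}\equiv\mathbf{0}$. For the intermediate rewriting steps this is fine: by Proposition~\ref{prop:supp2} every full-degree word in the chain has support exactly $S$, so the $\mathfrak{o}$ of Theorem~\ref{thm:cycshift} coincides with $\mathfrak{o}_S$ on them and both directions of the theorem apply as you say. The problem is the final step, where a contiguous subcomposition $\mathbf{u}$ of lower degree is supposed to be equivalent to $\mathbf{0}$. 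Such a $\mathbf{u}$ may well have $\supp(\mathbf{u})=T\subsetneq S$. The paper's operator $\mathfrak{o}$ is always defined with respect to the support of the word it is applied to, so $\mathfrak{o}(\mathbf{u})$ uses $\mathfrak{o}_T$, not $\mathfrak{o}_S|_T$, and the two differ in general. Consequently the statement ``$\mathbf{u}\equiv\mathbf{0}$ iff $\mathfrak{o}_S|_T(\mathbf{u})\equiv\mathbf{0}$'' is not an instance of Theorem~\ref{thm:cycshift}(a) (nor of part~(b)), contrary to what your last paragraph asserts.

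This is fixable but requires an explicit extra observation. When $a_n\notin T$ the restriction $\mathfrak{o}_S|_T$ is order-preserving, and the relabeling results of Section~5.1 (Theorem~\ref{thm:flat} together with Theorem~\ref{thm:taunzequiv} and Proposition~\ref{prop:tauzequiv}) show it preserves zero and non-zero equivalence. When $a_n\in T$, write $T'=\mathfrak{o}_S(T)$ and let $\phi:T\to T'$ be the unique order-preserving bijection; then $\mathfrak{o}_S|_T=\mathfrak{o}_{T'}\circ\phi$, a composition of an order-preserving relabeling followed by a genuine cyclic shift on $T'$, and again both pieces preserve equivalence. Once you record this factorization, the transport argument goes through and the corollary follows; without it, the appeal to ``both parts of Theorem~\ref{thm:cycshift}'' does not actually cover the case you most need.
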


    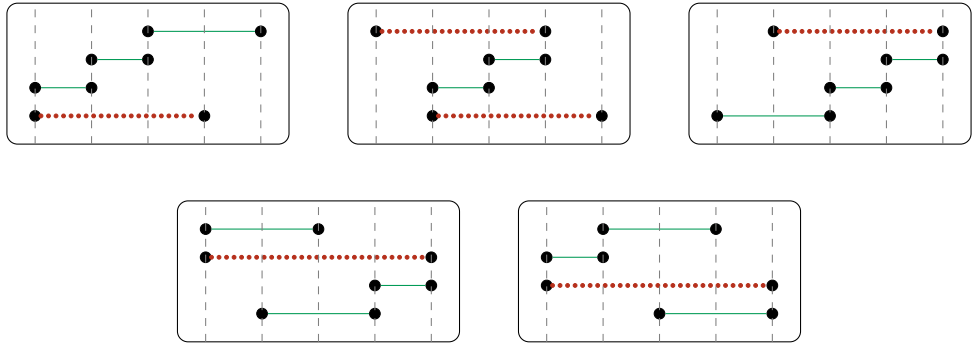
\begin{figure}[H]
        $$\begin{tikzpicture}[scale=0.75]
        \draw[rounded corners] (-0.5, 0) rectangle (4.5, 2.5) {};
        
        \node (1) at (0,0.5) [circle, draw = black,fill=black, inner sep = 0.5mm] {};
        \node (2) at (3,0.5) [circle, draw = black,fill=black, inner sep = 0.5mm] {};
        \draw[decorate sep={0.5mm}{1mm},fill, BrickRed] (1)--(2);
        
        \node (3) at (0,1) [circle, draw = black,fill=black, inner sep = 0.5mm] {};
        \node (4) at (1,1) [circle, draw = black,fill=black, inner sep = 0.5mm] {};
        \draw[ForestGreen](3)--(4);
        
        \node (5) at (1,1.5) [circle, draw = black,fill=black, inner sep = 0.5mm] {};
        \node (6) at (2,1.5) [circle, draw = black,fill=black, inner sep = 0.5mm] {};
        \draw[ForestGreen](5)--(6);

        \node (7) at (2,2) [circle, draw = black,fill=black, inner sep = 0.5mm] {};
        \node (8) at (4,2) [circle, draw = black,fill=black, inner sep = 0.5mm] {};
        \draw[ForestGreen](7)--(8);
        
        \draw[dashed,gray] (0,0)--(0,2.5);
        \draw[dashed,gray] (1,0)--(1,2.5);
        \draw[dashed,gray] (2,0)--(2,2.5);
        \draw[dashed,gray] (3,0)--(3,2.5);
        \draw[dashed,gray] (4,0)--(4,2.5);
    \end{tikzpicture}\quad\quad \begin{tikzpicture}[scale=0.75]
        \draw[rounded corners] (-0.5, 0) rectangle (4.5, 2.5) {};
        
        \node (1) at (1,0.5) [circle, draw = black,fill=black, inner sep = 0.5mm] {};
        \node (2) at (4,0.5) [circle, draw = black,fill=black, inner sep = 0.5mm] {};
        \draw[decorate sep={0.5mm}{1mm},fill, BrickRed] (1)--(2);
        
        \node (3) at (1,1) [circle, draw = black,fill=black, inner sep = 0.5mm] {};
        \node (4) at (2,1) [circle, draw = black,fill=black, inner sep = 0.5mm] {};
        \draw[ForestGreen](3)--(4);
        
        \node (5) at (2,1.5) [circle, draw = black,fill=black, inner sep = 0.5mm] {};
        \node (6) at (3,1.5) [circle, draw = black,fill=black, inner sep = 0.5mm] {};
        \draw[ForestGreen](5)--(6);

        \node (7) at (0,2) [circle, draw = black,fill=black, inner sep = 0.5mm] {};
        \node (8) at (3,2) [circle, draw = black,fill=black, inner sep = 0.5mm] {};
        \draw[decorate sep={0.5mm}{1mm},fill, BrickRed] (7)--(8);
        
        \draw[dashed,gray] (0,0)--(0,2.5);
        \draw[dashed,gray] (1,0)--(1,2.5);
        \draw[dashed,gray] (2,0)--(2,2.5);
        \draw[dashed,gray] (3,0)--(3,2.5);
        \draw[dashed,gray] (4,0)--(4,2.5);
    \end{tikzpicture}\quad\quad \begin{tikzpicture}[scale=0.75]
        \draw[rounded corners] (-0.5, 0) rectangle (4.5, 2.5) {};
        
        \node (1) at (0,0.5) [circle, draw = black,fill=black, inner sep = 0.5mm] {};
        \node (2) at (2,0.5) [circle, draw = black,fill=black, inner sep = 0.5mm] {};
        \draw[ForestGreen](1)--(2);
        
        \node (3) at (2,1) [circle, draw = black,fill=black, inner sep = 0.5mm] {};
        \node (4) at (3,1) [circle, draw = black,fill=black, inner sep = 0.5mm] {};
        \draw[ForestGreen](3)--(4);
        
        \node (5) at (3,1.5) [circle, draw = black,fill=black, inner sep = 0.5mm] {};
        \node (6) at (4,1.5) [circle, draw = black,fill=black, inner sep = 0.5mm] {};
        \draw[ForestGreen](5)--(6);

        \node (7) at (1,2) [circle, draw = black,fill=black, inner sep = 0.5mm] {};
        \node (8) at (4,2) [circle, draw = black,fill=black, inner sep = 0.5mm] {};
        \draw[decorate sep={0.5mm}{1mm},fill, BrickRed] (7)--(8);
        
        \draw[dashed,gray] (0,0)--(0,2.5);
        \draw[dashed,gray] (1,0)--(1,2.5);
        \draw[dashed,gray] (2,0)--(2,2.5);
        \draw[dashed,gray] (3,0)--(3,2.5);
        \draw[dashed,gray] (4,0)--(4,2.5);
    \end{tikzpicture}$$

    $$\begin{tikzpicture}[scale=0.75]
        \draw[rounded corners] (-0.5, 0) rectangle (4.5, 2.5) {};
        
        \node (1) at (1,0.5) [circle, draw = black,fill=black, inner sep = 0.5mm] {};
        \node (2) at (3,0.5) [circle, draw = black,fill=black, inner sep = 0.5mm] {};
        \draw[ForestGreen](1)--(2);
        
        \node (3) at (3,1) [circle, draw = black,fill=black, inner sep = 0.5mm] {};
        \node (4) at (4,1) [circle, draw = black,fill=black, inner sep = 0.5mm] {};
        \draw[ForestGreen](3)--(4);
        
        \node (5) at (0,1.5) [circle, draw = black,fill=black, inner sep = 0.5mm] {};
        \node (6) at (4,1.5) [circle, draw = black,fill=black, inner sep = 0.5mm] {};
        \draw[decorate sep={0.5mm}{1mm},fill, BrickRed] (5)--(6);

        \node (7) at (0,2) [circle, draw = black,fill=black, inner sep = 0.5mm] {};
        \node (8) at (2,2) [circle, draw = black,fill=black, inner sep = 0.5mm] {};
        \draw[ForestGreen](7)--(8);
        
        \draw[dashed,gray] (0,0)--(0,2.5);
        \draw[dashed,gray] (1,0)--(1,2.5);
        \draw[dashed,gray] (2,0)--(2,2.5);
        \draw[dashed,gray] (3,0)--(3,2.5);
        \draw[dashed,gray] (4,0)--(4,2.5);
    \end{tikzpicture}\quad\quad \begin{tikzpicture}[scale=0.75]
        \draw[rounded corners] (-0.5, 0) rectangle (4.5, 2.5) {};
        
        \node (1) at (2,0.5) [circle, draw = black,fill=black, inner sep = 0.5mm] {};
        \node (2) at (4,0.5) [circle, draw = black,fill=black, inner sep = 0.5mm] {};
        \draw[ForestGreen](1)--(2);
        
        \node (3) at (0,1) [circle, draw = black,fill=black, inner sep = 0.5mm] {};
        \node (4) at (4,1) [circle, draw = black,fill=black, inner sep = 0.5mm] {};
        \draw[decorate sep={0.5mm}{1mm},fill, BrickRed] (3)--(4);
        
        \node (5) at (0,1.5) [circle, draw = black,fill=black, inner sep = 0.5mm] {};
        \node (6) at (1,1.5) [circle, draw = black,fill=black, inner sep = 0.5mm] {};
        \draw[ForestGreen](5)--(6);

        \node (7) at (1,2) [circle, draw = black,fill=black, inner sep = 0.5mm] {};
        \node (8) at (3,2) [circle, draw = black,fill=black, inner sep = 0.5mm] {};
        \draw[ForestGreen](7)--(8);
        
        \draw[dashed,gray] (0,0)--(0,2.5);
        \draw[dashed,gray] (1,0)--(1,2.5);
        \draw[dashed,gray] (2,0)--(2,2.5);
        \draw[dashed,gray] (3,0)--(3,2.5);
        \draw[dashed,gray] (4,0)--(4,2.5);
    \end{tikzpicture}$$
    \caption{Equivalences of Corollary~\ref{cor:pathovercs} for $n=4$}
    \label{fig:cycshiftarblengz}
    \end{figure}

Moving to the non-zero equivalence of Theorem~\ref{thm:arblength}, we split the proof into two parts. In Proposition~\ref{prop:newrel} we show that the compositions are equivalent, and in Proposition~\ref{prop:minnzrel} we show that the equivalence is minimal.

\begin{prop}\label{prop:newrel}
Let $n\geq 4$ and consider $a_1<a_2<\cdots<a_n$. Then 
$$\mathbf{v}_{a_{n-1}a_1}\mathbf{v}_{a_{n-2}a_{n-1}}\mathbf{v}_{a_{n-3}a_{n-2}}\cdots \mathbf{v}_{a_{1}a_{2}}\mathbf{v}_{a_n,a_1}\equiv \mathbf{v}_{a_na_1}\mathbf{v}_{a_{n-1}a_{n}}\mathbf{v}_{a_{n-2}a_{n-1}}\cdots \mathbf{v}_{a_{2}a_{3}}\mathbf{v}_{a_na_2}.$$
\end{prop}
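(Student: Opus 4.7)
The plan is to first reduce to the flattened case via Theorem~\ref{thm:flat}~(b), setting $a_i = i$, and then to invoke Proposition~\ref{prop:nzeroscheck} so that the equivalence need only be checked on every $u\in S_n$ and $k\in[n-1]$. Write
$$\mathbf{v} := \mathbf{v}_{n-1,1}\mathbf{v}_{n-2,n-1}\cdots\mathbf{v}_{1,2}\mathbf{v}_{n,1}, \qquad \mathbf{v}' := \mathbf{v}_{n,1}\mathbf{v}_{n-1,n}\cdots\mathbf{v}_{2,3}\mathbf{v}_{n,2},$$
fix $u \in S_n$, set $p_i = u^{-1}(i)$, and fix $k \in [n-1]$. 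The goal is to show that $\mathbf{v}\bullet_k u$ and $\mathbf{v}'\bullet_k u$ are nonzero under exactly the same conditions on $u$ and $k$, and, when nonzero, produce the same element of $S_n[\mathbf{q}]$.

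The main computation is a step-by-step application of Proposition~\ref{prop:covercond}. For $\mathbf{v}$, the initial operator $\mathbf{v}_{n,1}$ has its interior condition on $(1,n)$ automatically satisfied (since $1$ and $n$ sit at the endpoints of the interval), so it is applicable iff $p_n \le k < p_1$; it places $1$ at $p_n$, $n$ at $p_1$, and contributes a factor $q_{p_n,p_1}$. Each subsequent classical $\mathbf{v}_{j,j+1}$ has empty interior interval $(j,j+1)=\emptyset$, so it applies precisely when $p_{j+1} > k$, sending the current value $j$ at $p_n$ to $p_{j+1}$ and leaving $j+1$ at $p_n$. A straightforward induction on $j$ then shows that after the entire classical chain, the positions $p_1, \ldots, p_{n-1}, p_n$ hold the values $n, 1, 2, \ldots, n-2, n-1$ respectively. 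The terminal quantum $\mathbf{v}_{n-1,1}$ finds $n-1$ at $p_n$ and $1$ at $p_2$; its interior condition demands values in $(1, n-1)$ at positions strictly between $p_n$ and $p_2$. The values $2, \ldots, n-2$ all satisfy this automatically, so the condition reduces to $p_1 \notin (p_n, p_2)$ (as $n$ is now at $p_1$). Given $p_1 > k \ge p_n$, this amounts exactly to $p_1 > p_2$. Hence $\mathbf{v}\bullet_k u \ne 0$ iff
$$p_n \le k < p_1, p_2, \ldots, p_{n-1} \quad\text{and}\quad p_1 > p_2,$$
in which case the output is the permutation $w$ defined by $w(p_1)=n$, $w(p_2)=n-1$, $w(p_j)=j-1$ for $3\le j\le n-1$, and $w(p_n)=1$, scaled by $q_{p_n,p_1}q_{p_n,p_2}$.

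An entirely parallel trace of $\mathbf{v}'$ will yield the same conditions and output: $\mathbf{v}_{n,2}$ requires $p_n \le k < p_2$ together with $p_1 \notin (p_n,p_2)$ (by the identical argument involving value $1$), contributing $q_{p_n,p_2}$; the classical chain $\mathbf{v}_{2,3}, \ldots, \mathbf{v}_{n-2,n-1}$ applies iff $p_3, \ldots, p_{n-1} > k$; the classical $\mathbf{v}_{n-1,n}$ is then automatic since $n$ already occupies $p_2 > k$; and the terminal $\mathbf{v}_{n,1}$, whose interior condition is vacuous after the chain (the only values remaining at interior positions lie in $(1,n)$), contributes the remaining factor $q_{p_n,p_1}$ and produces the same permutation $w$. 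The main obstacle is the careful bookkeeping in these two traces, particularly ensuring that in each case the sole subtle constraint beyond the placement conditions $p_n \le k < p_1, \ldots, p_{n-1}$ is precisely the single inequality $p_1 > p_2$ arising from each side's quantum operator; this is best organized by tracking the positions of $1$ and $n$ explicitly through the classical portion of each composition.
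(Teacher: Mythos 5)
Your proof is correct and follows essentially the same approach as the paper's: a step-by-step application of Proposition~\ref{prop:covercond} tracking the positions of $1$ and $n$ through each composition and matching the resulting applicability conditions and outputs. The one minor streamlining is your use of Proposition~\ref{prop:nzeroscheck} to restrict to $u\in S_n$, which makes the paper's auxiliary conditions on values outside $[1,n]$ (the $(c_1)$/$(c_2)$ clauses) vacuous; the paper instead works with general $w\in S_N$, $N\ge n$, and carries those conditions explicitly.
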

\begin{proof}
Considering Theorem~\ref{thm:flat}, it suffices to show that $$\mathbf{v}_{n-1,1}\mathbf{v}_{n-2,n-1}\mathbf{v}_{n-3,n-2}\cdots \mathbf{v}_{12}\mathbf{v}_{n,1}\equiv \mathbf{v}_{n,1}\mathbf{v}_{n-1,n}\mathbf{v}_{n-2,n-1}\cdots \mathbf{v}_{23}\mathbf{v}_{n,2}.$$ Let's start with 
$$\mathbf{v^*_1}=\mathbf{v}_{n-1,1}\mathbf{v}_{n-2,n-1}\mathbf{v}_{n-3,n-2}\cdots \mathbf{v}_{12}\mathbf{v}_{n,1}.$$
For $w\in S_N$ with $N\ge n$, assume that $$\mathbf{v^*_1}\bullet_kw\neq 0$$ and $w^{-1}(i)=j_i$ for $1\le i\le n$.
\begin{enumerate}
    \item[$(1)$] Since $\mathbf{v^*_1}\bullet_kw\neq 0$, it follows that $\mathbf{v}_{n,1}\bullet_kw\neq 0$. Applying Proposition~\ref{prop:covercond}, we have that $\mathbf{v}_{n,1}\bullet_kw\neq 0$ if and only if 
    \begin{itemize}
        \item $j_n\le k<j_1$ and
        \item $w^{-1}(s)<j_n$ or $w^{-1}(s)>j_1$ for $s\in [N] \backslash [1,n]$.
    \end{itemize}
    Note that in $\mathbf{v}_{n,1}\bullet_kw$, we have $1$ in position $j_n\le k$, $n$ in position $j_1>k$, and $i$ in position $j_i$ for $2\le i\le n-1$. In addition, we gain a coefficient of $\mathbf{q}_{j_n,j_1}$.

    \item[$(2)$] As in $(1)$, we must have that $\mathbf{v}_{12}\bullet_k(\mathbf{v}_{n,1}\bullet_kw)\neq 0$. Considering the form of $\mathbf{v}_{n,1}\bullet_kw$ noted above and applying Proposition~\ref{prop:covercond}, we find that $\mathbf{v}_{12}\bullet_k(\mathbf{v}_{a_na_1}\bullet_kw)\neq 0$ if and only if $k<j_2$.
    Note that in $\mathbf{v}_{12}\mathbf{v}_{n,1}\bullet_kw$, we have $1$ in position $j_2$, $2$ in position $j_n$, $n$ in position $j_1$, and $i$ in position $j_i$ for $3\le i\le n-1$.

    \item[(3)] The same reasoning used in (2) above can be applied recursively to $$\mathbf{v}_{i,i+1}\bullet_k(\mathbf{v}_{i-1,i}\cdots \mathbf{v}_{12}\mathbf{v}_{n,1}\bullet_kw)$$ for $2\le i\le n-2$. In this way, we find that $$\mathbf{v}_{i,i+1}\bullet_k(\mathbf{v}_{i-1,i}\cdots \mathbf{v}_{12}\mathbf{v}_{n,1}\bullet_kw)\neq 0$$ if and only if $k<j_{i+1}$ for $2\le i\le n-2$. Note that in $\mathbf{v}_{n-2,n-1}\mathbf{v}_{n-3,n-2}\cdots \mathbf{v}_{12}\mathbf{v}_{n,1}\bullet_kw$, we have $i$ in position $j_{i+1}$ for $1\le i\le n-2$, $n-1$ in position $j_n$, and $n$ in position $j_1$.

    \item[$(4)$] Finally, since $\mathbf{v^*_1}\bullet_kw\neq 0$, we must have $$\mathbf{v}_{n-1,1}\bullet_k(\mathbf{v}_{n-2,n-1}\mathbf{v}_{n-3,n-2}\cdots \mathbf{v}_{12}\mathbf{v}_{n,1}\bullet_kw)\neq 0.$$ Keeping in mind that 1 is in position $j_2$ and $n$ position $j_1$ of $\mathbf{v}_{n-2,n-1}\mathbf{v}_{n-3,n-2}\cdots \mathbf{v}_{12}\mathbf{v}_{n,1}\bullet_kw$, applying Proposition~\ref{prop:covercond}, we find that $$\mathbf{v}_{n-1,1}\bullet_k(\mathbf{v}_{n-2,n-1}\mathbf{v}_{n-3,n-2}\cdots \mathbf{v}_{12}\mathbf{v}_{n,1}\bullet_kw)\neq 0$$ if and only if
    \begin{itemize}
        \item $j_1>j_2>k$ and
        \item $w^{-1}(s)<j_n$ or $w^{-1}(s)>j_2$ for $s\in [N]\backslash [1,n-1]$.
    \end{itemize}
    Note that in $\mathbf{v^*_1}\bullet_kw$, we have $1$ in position $j_n$, $i$ in position $j_{i+1}$ for $2\le i\le n-2$, $n-1$ in position $j_2$, and $n$ in position $j_1$. In addition, we gain a coefficient of $\mathbf{q}_{j_n,j_2}$.
\end{enumerate}
Thus, $\mathbf{v^*_1}\bullet_kw\neq 0$ if and only if
\begin{enumerate}
    \item[$(a_1)$] $j_n\le k<j_1,\hdots,j_{n-1}$,
    \item[$(b_1)$] $j_1>j_2$, and
    \item[$(c_1)$] $w^{-1}(s)<j_n$ or $w^{-1}(s)>j_1$ for $s\in [N]\backslash [1,n]$.
\end{enumerate}
Moreover, in this case, we have $\mathbf{v^*_1}\bullet_kw=\mathbf{q}^\alpha u$, where $u^{-1}(1)=j_n$, $u^{-1}(i)=j_{i+1}$ for $2\le i\le n-2$, $u^{-1}(n-1)=j_2$, $u^{-1}(n)=j_1$, $u^{-1}(s)=w^{-1}(s)$ for $s\in [N]\backslash [n]$, and $\mathbf{q}^\alpha=\mathbf{q}_{j_n,j_2}\mathbf{q}_{j_n,j_1}$.

Now, let $$\mathbf{v^*_2}=\mathbf{v}_{n,1}\mathbf{v}_{n-1,n}\mathbf{v}_{n-2,n-1}\cdots \mathbf{v}_{23}\mathbf{v}_{n,2}$$ and assume $\mathbf{v^*_2}\bullet_kw\neq 0$ for $w\in S_N$ as above.
\begin{enumerate}
    \item[$(1)$] Since $\mathbf{v^*_2}\bullet_kw\neq 0$, it follows that $\mathbf{v}_{n,2}\bullet_kw\neq 0$. Applying Proposition~\ref{prop:covercond}, we have that $\mathbf{v}_{n,2}\bullet_kw\neq 0$ if and only if 
    \begin{itemize}
        \item $j_n\le k<j_2$,
        \item $w^{-1}(s)<j_n$ or $w^{-1}(s)>j_2$ for $s\in [N] \backslash [2,n]$.
    \end{itemize}
    As a consequence of the second condition above, we have $j_1<j_n$ or $j_1>j_2$. Note that in $\mathbf{v}_{n,2}\bullet_kw$, we have $2$ in position $j_n\le k$, $i$ in position $j_i$ for $i\in [n]\backslash\{2,n\}$, and $n$ in position $j_2>k$. In addition, we gain a coefficient of $\mathbf{q}_{j_n,j_2}$.
    
    \item[$(2)$] As in $(2)$, we must have that $\mathbf{v}_{23}\bullet_k(\mathbf{v}_{n,2}\bullet_kw)\neq 0$. Considering the form of $\mathbf{v}_{n,2}\bullet_kw$ noted above and applying Proposition~\ref{prop:covercond}, $\mathbf{v}_{23}\bullet_k(\mathbf{v}_{n,2}\bullet_kw)\neq 0$ if and only if $k<j_3$. Note that in $\mathbf{v}_{23}\mathbf{v}_{n,2}\bullet_kw$, we have $1$ in position $j_1<j_n$ or $j_1>j_2$, $2$ in position $j_3> k$, $3$ in position $j_n\le k$, $i$ in position $j_i$ for $i\in [n]\backslash\{1,2,3,n\}$, and $n$ in position $j_2>k$.

    \item[$(3)$] The reasoning used in the case above can be applied recursively to $$\mathbf{v}_{i,i+1}\bullet_k(\mathbf{v}_{i-1,i}\cdots \mathbf{v}_{23}\mathbf{v}_{n,2}\bullet_kw)$$ for $3\le i\le n-2$. In this way, we find that $$\mathbf{v}_{i,i+1}\bullet_k(\mathbf{v}_{i-1,i}\cdots \mathbf{v}_{23}\mathbf{v}_{n,2}\bullet_kw)\neq 0$$ if and only if $k<j_{i+1}$ for $2\le i\le n-2$. Note that in $\mathbf{v}_{n-2,n-1}\cdots \mathbf{v}_{23}\mathbf{v}_{n,2}\bullet_kw$, we have $1$ in position $j_1<j_n$ or $j_1>j_2$, $i$ in position $j_{i+1}$ for $2\le i\le n-2$, $n-1$ in position $j_n$, and $n$ in position $j_2$.

    \item[$(4)$] Now, since $n$ is in position $j_2$ and $n-1$ in position $j_n$ of $\mathbf{v}_{n-2,n-1}\cdots \mathbf{v}_{23}\mathbf{v}_{n,2}\bullet_kw$ with $j_n<j_2$, we have that $$\mathbf{v}_{n-1,n}\bullet_k(\mathbf{v}_{n-2,n-1}\cdots \mathbf{v}_{23}\mathbf{v}_{n,2}\bullet_kw)\neq 0.$$ Moreover, in $\mathbf{v}_{n-1,n}\bullet_k(\mathbf{v}_{n-2,n-1}\cdots \mathbf{v}_{23}\mathbf{v}_{n,2}\bullet_kw)$, we have $1$ in position $j_1<j_n$ or $j_1>j_2$, $i$ in position $j_{i+1}$ for $2\le i\le n-2$, $n-1$ in position $j_2$, and $n$ in position $j_n$.

    \item[$(5)$] Finally, applying Proposition~\ref{prop:covercond}, we have that $$\mathbf{v}_{n,1}\bullet_k(\mathbf{v}_{n-1,n}\mathbf{v}_{n-2,n-1}\cdots \mathbf{v}_{23}\mathbf{v}_{n,2}\bullet_kw)\neq 0$$ if and only if
    \begin{itemize}
        \item $k<j_2<j_1$, and
        \item $w^{-1}(s)<j_n$ or $w^{-1}(s)>j_1$ for $s\in [N]\backslash [1,n]$.
    \end{itemize}
    Note that in $\mathbf{v^*_2}\bullet_kw$, we have $1$ in position $j_n$, $i$ in position $j_{i+1}$ for $1\le i\le n-2$, $n-1$ in position $j_2$, and $n$ in position $j_1$. In addition, we gain a coefficient of $\mathbf{q}_{j_n,j_1}$.
\end{enumerate}
Thus, $\mathbf{v^*_2}\bullet_kw\neq 0$ if and only if
\begin{enumerate}
    \item[$(a_2)$] $j_n\le k< j_1,\hdots,j_{n-1}$,
    \item[$(b_2)$] $j_1>j_2$, and
    \item[$(c_2)$] $w^{-1}(s)<j_n$ or $w^{-1}(s)>j_1$ for $s\in [N]\backslash [1,n]$.
\end{enumerate}
Moreover, in this case, we have $\mathbf{v^*_2}\bullet_kw=\mathbf{q}^\alpha u$, where $u^{-1}(1)=j_n$, $u^{-1}(i)=j_{i+1}$ for $2\le i\le n-2$, $u^{-1}(n-1)=j_2$, $u^{-1}(n)=j_1$, $u^{-1}(s)=w^{-1}(s)$ for $s\in [N]\backslash [n]$, and $\mathbf{q}^\alpha=\mathbf{q}_{j_n,j_2}\mathbf{q}_{j_n,j_1}$.

Comparing conditions for $\mathbf{v}_1^*\bullet_kw\neq 0$ and $\mathbf{v}_2^*\bullet_kw\neq 0$, we see that they are exactly the same; that is, $\mathbf{v_1^*}\bullet_kw\neq 0$ if and only if $\mathbf{v_2^*}\bullet_kw\neq 0$. Moreover, our work above shows that in this case $\mathbf{v_1^*}\bullet_kw=\mathbf{v_2^*}\bullet_kw$. To see that, $\mathbf{v^*_1}$ and $\mathbf{v_2^*}$ are both non-zero, taking $w^*=(\hdots a_n~|~a_2~a_1~a_3\hdots a_{n-1}\hdots)$ it is straightforward to verify that $\mathbf{v_1^*}\bullet_kw^*=\mathbf{v_2^*}\bullet_kw^*\neq 0$. The result follows.
\end{proof}

\begin{prop}\label{prop:minnzrel}
Let $n\geq 4$ and consider $a_1<a_2<\hdots<a_n$. The following  equivalence is minimal 
$$\mathbf{v}_{a_{n-1}a_1}\mathbf{v}_{a_{n-2}a_{n-1}}\mathbf{v}_{a_{n-3}a_{n-2}}\cdots \mathbf{v}_{a_{1}a_{2}}\mathbf{v}_{a_n,a_1}\equiv \mathbf{v}_{a_na_1}\mathbf{v}_{a_{n-1}a_{n}}\mathbf{v}_{a_{n-2}a_{n-1}}\cdots \mathbf{v}_{a_{2}a_{3}}\mathbf{v}_{a_na_2}.$$
\end{prop}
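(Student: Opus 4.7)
The plan is to mirror the structure of the proof of Proposition~\ref{prop:arblength-minimal}, adapted to a non-zero equivalence. After flattening by Theorem~\ref{thm:flat}~(b), we may assume $a_i = i$ for all $i\in[n]$, so it suffices to show that
$$\mathbf{v}_L := \mathbf{v}_{n-1,1}\mathbf{v}_{n-2,n-1}\cdots \mathbf{v}_{12}\mathbf{v}_{n,1} \equiv \mathbf{v}_{n,1}\mathbf{v}_{n-1,n}\cdots \mathbf{v}_{23}\mathbf{v}_{n,2} =: \mathbf{v}_R$$
is minimal. Proposition~\ref{prop:newrel} already supplies $\mathbf{v}_L\equiv\mathbf{v}_R\not\equiv\mathbf{0}$, so only minimality remains.

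Suppose for contradiction that there is a rewriting chain $\mathbf{v}_L=\mathbf{u}_0,\mathbf{u}_1,\ldots,\mathbf{u}_m=\mathbf{v}_R$ in which each $\mathbf{u}_{t+1}$ is obtained from $\mathbf{u}_t$ by replacing some proper contiguous subcomposition by one equivalent to it of degree strictly less than $n$. Since $\mathbf{v}_L,\mathbf{v}_R\not\equiv\mathbf{0}$, no step can invoke a zero equivalence (otherwise some $\mathbf{u}_t=\mathbf{0}$, forcing both endpoints to be $\mathbf{0}$). Moreover, Lemma~\ref{lem:supp1} and Proposition~\ref{prop:supp2} fix the degree and the support of every $\mathbf{u}_t$ to be $n$ and $\{1,\ldots,n\}$, respectively. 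The task thus reduces to ruling out every non-trivial non-zero equivalence of degree less than $n$ that could apply to a proper contiguous subcomposition of any $\mathbf{u}_t$.

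I would then catalog the proper contiguous subcompositions of $\mathbf{v}_L$ and $\mathbf{v}_R$, splitting them into two types: (I) purely classical runs $\mathbf{v}_{j,j+1}\mathbf{v}_{j-1,j}\cdots\mathbf{v}_{i,i+1}$, and (II) runs containing exactly one of the two quantum operators together with a classical chain. For type (I), the classical theory of~\cite{BS2} shows that no non-trivial non-zero equivalence applies. For type (II), I would flatten and apply Theorem~\ref{thm:cycshift} to normalize the support to $[m]$ for some $m<n$, and then invoke the classifications of non-zero equivalences of degrees $2$, $3$, and $4$ (Proposition~\ref{prop:deg2rel}, Theorem~\ref{thm:d3nz}, Theorem~\ref{thm:4zero}) together with an inductive appeal to Proposition~\ref{prop:newrel} at smaller values of $n$, to enumerate all non-trivial rewrites available to such a normalized subcomposition.

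The principal obstacle is verifying that each non-trivial rewrite available for type (II) either (i) merely commutes two disjoint operators (preserving the diagrammatic shape of $\mathbf{u}_t$), or (ii) produces a composition from which $\mathbf{v}_R$ cannot be reached by any further degree-$<n$ moves. I expect to package the obstruction in (ii) as a combinatorial invariant of compositions on support $\{1,\ldots,n\}$ with exactly two quantum operators, built from the relative cyclic positions of the two quantum edges within the classical chain; cf. Figure~\ref{fig:arbdegequiv}. Verifying that this invariant is preserved by every catalogued non-zero rewrite of degree less than $n$ while taking different values on $\mathbf{v}_L$ and $\mathbf{v}_R$ would complete the contradiction and so establish minimality. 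The hardest step, then, will be isolating the right invariant and checking its preservation against each family of lower-degree non-zero equivalences uncovered in Sections~\ref{sec:ldequiv} and~\ref{sec:arblength}.
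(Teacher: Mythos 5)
Your high-level framing is sound: flatten via Theorem~\ref{thm:flat}~(b), observe that all rewriting steps must be non-zero (so that Lemma~\ref{lem:supp1} and Proposition~\ref{prop:supp2} pin down degree and support), split proper contiguous subcompositions into purely classical runs (handled by the classical theory of~\cite{BS2}) and runs containing one of the two quantum operators. Up to this point you are on the same track as the paper.

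The gap is in how you propose to handle the quantum-containing subcompositions. You imagine there \emph{are} nontrivial non-zero rewrites available to these pieces and plan to tame them with a to-be-constructed invariant, conceding that ``isolating the right invariant and checking its preservation'' is the hardest unresolved step. But this step cannot be filled the way you describe: the only catalogued non-zero equivalences in the paper are of degree $2$, $3$, and $4$ plus the single family of Proposition~\ref{prop:newrel}, while the subcompositions you must analyze have degree up to $n-1$ for arbitrary $n$, and the paper does not provide a complete classification at those degrees (that is precisely what Conjecture~\ref{conj} is about). So ``enumerating all non-trivial rewrites available'' is not something you can actually do with the tools provided, and the invariant is never exhibited. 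As written, your proposal does not constitute a proof.

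The paper sidesteps all of this. Having reduced to proper contiguous subcompositions, it shows these admit \emph{no} nontrivial non-zero rewrites at all, which makes any invariant argument unnecessary. For the subcompositions starting at the bottom quantum operator (your type~(II)), it applies the cyclic shift of Theorem~\ref{thm:cycshift} to turn them into purely classical staircases and appeals to~\cite{BS2}. The one remaining shape $\widehat{\mathbf{v}}=\mathbf{v}_{n-1,n}\mathbf{v}_{n-2,n-1}\cdots\mathbf{v}_{12}\mathbf{v}_{n,1}$ is handled by a direct forced-operator argument: evaluate at the test permutation $(n\,|\,1,2,\ldots,n-1)$ with $k=1$, and use Proposition~\ref{prop:covercond} together with Proposition~\ref{prop:BS} to show, inductively in the position, that each operator of any equivalent composition is uniquely determined, so $\widehat{\mathbf{v}}$ has no nontrivial equivalent. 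You should replace the speculative invariant step with this concrete computation; the rest of your scaffolding can stay.
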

\begin{proof}
    Let $$\mathbf{v}_1=\mathbf{v}_{a_{n-1}a_1}\mathbf{v}_{a_{n-2}a_{n-1}}\mathbf{v}_{a_{n-3}a_{n-2}}\cdots \mathbf{v}_{a_{1}a_{2}}\mathbf{v}_{a_n,a_1}$$ and $$\mathbf{v}_2=\mathbf{v}_{a_na_1}\mathbf{v}_{a_{n-1}a_{n}}\mathbf{v}_{a_{n-2}a_{n-1}}\cdots \mathbf{v}_{a_{2}a_{3}}\mathbf{v}_{a_na_2}.$$ It suffices to show that no subcomposition of $\mathbf{v}_1$ or $\mathbf{v}_2$ consisting of contiguous operators satisfies a nontrivial, non-zero equivalence. In fact, considering Theorems~\ref{thm:flat},~\ref{thm:hshift}, and~\ref{thm:vshift}, it suffices to show that compositions of operators of the form  
    \begin{enumerate}
        \item[(1)] $\mathbf{v}_{i,i+1}\mathbf{v}_{i-1,i}\cdots\mathbf{v}_{j,j+1}$ for $1\le i<j$,
        \item[(2)] $\mathbf{v}_{i,i+1}\mathbf{v}_{i-1,i}\cdots\mathbf{v}_{12}\mathbf{v}_{n,1}$ for $1\le i<n-2$, and
        \item[(3)] $\mathbf{v}_{n-1,n}\mathbf{v}_{n-2,n-1}\cdots \mathbf{v}_{12}\mathbf{v}_{n,1}$
    \end{enumerate}
    satisfy no nontrivial, non-zero equivalences. Note that relations of the form (2) can be cyclically shifted to one of the form (1). Consequently, applying the work concerning compositions of classical operators in~\cite{BS2}, we find that there are no nontrivial, non-zero equivalences involving operators of the form (1) or (2). As for (3), letting $$\mathbf{\widehat{v}}=\mathbf{v}_{n-1,n}\mathbf{v}_{n-2,n-1}\cdots \mathbf{v}_{12}\mathbf{v}_{n,1},$$ note that $$\mathbf{\widehat{v}}\bullet_1(n|1,2,\hdots,n-1)=q_1(n|n-1,1,2,\hdots,n-2).$$ Assume that $\mathbf{v}^*\equiv \mathbf{\widehat{v}}$. Then there exists $\{a_1,b_1,\hdots,a_n,b_n\}=[n]$ such that $$\mathbf{v}^*=\mathbf{v}_{a_nb_n}\cdots\mathbf{v}_{a_1b_1}$$ and  $$\mathbf{v}^*\bullet_1(n|1,2,\hdots,n-1)=q_1(n|n-1,1,2,\hdots,n-2).$$ Evidently, considering the form of $(n|1,2,\hdots,n-1)$ and the action $\bullet_1$, we have that $\mathbf{v}_{a_1b_1}=\mathbf{v}_{n,1}$. Thus, 
    \begin{align*}
        \mathbf{v}_{a_nb_n}\cdots\mathbf{v}_{a_2b_2}\bullet_1(1|n,2,\hdots,n-1)&=\mathbf{v}_{n-1,n}\mathbf{v}_{n-2,n-1}\cdots \mathbf{v}_{12}\bullet_1(1|n,2,\hdots,n-1)\\
        &=(n|n-1,1,2,\hdots,n-2).
    \end{align*}
    We show that $\mathbf{v}_{a_ib_i}=\mathbf{v}_{i-1,i}$ for $2\le i\le n$ in increasing order of $i$. For $i=2$, considering the form of $(1|n,2,\hdots,n-1)$, we have that either $\mathbf{v}_{a_2b_2}=\mathbf{v}_{1,n}$ or $\mathbf{v}_{12}$. Applying Proposition~\ref{prop:BS} and Remark~\ref{rem:relbrels}, we find that $$\mathbf{v}_{1,n}\bullet_1(1|n,2,\hdots,n-1)=(n|1,2,\hdots,n-1)\nless^\mathbf{q}_1(n|n-1,1,2,\hdots,n-2).$$ Consequently, $\mathbf{v}_{a_2,b_2}=\mathbf{v}_{12}$. Now, assume that $\mathbf{v}_{a_ib_i}=\mathbf{v}_{i-1,i}$ for all $2\le i\le j<n-1$ where $2<j<n-1$. Then $$\mathbf{v}_{a_jb_j}\cdots\mathbf{v}_{a_2b_2}\bullet_1(1|n,2,\hdots,n-1)=(j|n,1,2\hdots,j-1,j+1,j+2,\hdots,n-1)$$ and
    \begin{align*}
        \mathbf{v}_{a_nb_n}\cdots\mathbf{v}_{a_{j+1}b_{j+1}}\bullet_1&(j|n,1,2\hdots,j-1,j+1,j+2,\hdots,n-1)\\
        &=\mathbf{v}_{n-1,n}\cdots \mathbf{v}_{j,j+1}\bullet_1(j|n,1,2\hdots,j-1,j+1,j+2,\hdots,n-1)\\
        &=(n|n-1,1,2,\hdots,n-2).
    \end{align*}
    Considering the form of $(j|n,1,2\hdots,j-1,j+1,j+2,n-1)$, we have that either $\mathbf{v}_{a_{j+1}b_{j+1}}=\mathbf{v}_{j,n}$ or $\mathbf{v}_{j,j+1}$. Once again applying Proposition~\ref{prop:BS} and Remark~\ref{rem:relbrels}, 
    \begin{align*}
        \mathbf{v}_{j,n}\bullet_1(j|n,1,2\hdots,j-1,j+1,j+2,\hdots,n-1)&=(n|j,1,2\hdots,j-1,j+1,j+2,\hdots,n-1)\\
        &\nless^\mathbf{q}_1(n|n-1,1,2,\hdots,n-2).
    \end{align*}
    Consequently, $\mathbf{v}_{a_{j+1}b_{j+1}}=\mathbf{v}_{j,j+1}$. Finally, we move to $i=n$. Note that our work above shows that $\mathbf{v}_{a_jb_j}=\mathbf{v}_{j-1,j}$ for $2\le j\le n-1$, i.e., we showed that $\mathbf{v}^*=\mathbf{v}_{a_nb_n}\mathbf{v}_{n-2,n-1}\cdots\mathbf{v}_{12}\mathbf{v}_{n,1}$. Thus, since $$\mathbf{\widehat{v}}\bullet_1(n|1,2,\hdots,n)=\mathbf{v}^*\bullet_1(n|1,2,\hdots,n)\neq 0,$$ it follows that $\mathbf{v}_{a_nb_n}=a_{n-1,n}$. Therefore, $\mathbf{\widehat{v}}=\mathbf{v}^*$ and there are no nontrivial, non-zero equivalences involving compositions of operators of the form (3). The result follows.
\end{proof}

Applying the cyclic shift transformation to the equivalence of Proposition~\ref{prop:minnzrel}, we obtain the following list of zero equivalences of arbitrary degree. See Figure~\ref{fig:arblengcycshift} for an illustration of these compositions for $n=4$.

\begin{corollary}\label{cor:cycofnew}
Let $n\geq 4$ and consider $a_1<a_2<\cdots<a_n$. Then the following equivalences are minimal
\begin{enumerate}
    \item[$(i)$] $\mathbf{v}_{a_{n}a_2}\mathbf{v}_{a_{n-1}a_{n}}\cdots \mathbf{v}_{a_{2}a_{3}}\mathbf{v}_{a_{1}a_{2}}\equiv \mathbf{v}_{a_1a_2}\mathbf{v}_{a_na_1}\mathbf{v}_{a_{n-1}a_{n}}\cdots \mathbf{v}_{a_{3}a_{4}}\mathbf{v}_{a_{1}a_{3}}$,
    \item[$(ii)$] for $2\le j\le n-2$,
    \begin{multline*}
        \mathbf{v}_{a_{j-1}a_{j+1}}\mathbf{v}_{a_{j-2}a_{j-1}}\cdots\mathbf{v}_{a_2a_3}\mathbf{v}_{a_1a_2}\mathbf{v}_{a_na_1}\mathbf{v}_{a_{n-1}a_n}\cdots\mathbf{v}_{a_{j+1}a_{j+2}}\mathbf{v}_{a_ja_{j+1}}\\
        \equiv \mathbf{v}_{a_ja_{j+1}}\cdots \mathbf{v}_{a_2a_3}\mathbf{v}_{a_1a_2}\mathbf{v}_{a_na_1}\mathbf{v}_{a_{n-1}a_n}\cdots\mathbf{v}_{a_{j+4}a_{j+5}}\mathbf{v}_{a_{j+3}a_{j+4}}\mathbf{v}_{a_{j}a_{j+3}},
    \end{multline*}
    \item[$(iii)$] $\mathbf{v}_{a_{n-2}a_n}\mathbf{v}_{a_{n-3}a_{n-2}}\cdots\mathbf{v}_{a_2a_3}\mathbf{v}_{a_1a_2}\mathbf{v}_{a_na_1}\mathbf{v}_{a_{n-1}a_{n}}\equiv \mathbf{v}_{a_{n-1}a_n}\cdots \mathbf{v}_{a_2a_3}\mathbf{v}_{a_1a_2}\mathbf{v}_{a_{n-1}a_1}$.
\end{enumerate}
\end{corollary}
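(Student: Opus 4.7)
The plan is to derive all three families of equivalences in Corollary~\ref{cor:cycofnew} from Proposition~\ref{prop:minnzrel} by applying the cyclic shift transformation $\mathfrak{o}$ (with respect to the support $S=\{a_1<\cdots<a_n\}$) an appropriate number of times, and then to establish minimality by transporting the minimality statement of Proposition~\ref{prop:minnzrel} back across these shifts.

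First, I would observe that the common support of both sides of the equivalence in Proposition~\ref{prop:minnzrel} is $S=\{a_1,\ldots,a_n\}$, so $\mathfrak{o}_S$ sends $a_i\mapsto a_{i+1}$ for $i<n$ and $a_n\mapsto a_1$. By Theorem~\ref{thm:cycshift}\textup{(b)}, repeated application of $\mathfrak{o}_S$ preserves the non-zero equivalence, producing an orbit of equivalences. I would then compute $\mathfrak{o}_S$ applied to each side of Proposition~\ref{prop:minnzrel}. Applying $\mathfrak{o}_S$ once to
$$\mathbf{v}_{a_{n-1}a_1}\mathbf{v}_{a_{n-2}a_{n-1}}\cdots\mathbf{v}_{a_1a_2}\mathbf{v}_{a_na_1}\equiv \mathbf{v}_{a_na_1}\mathbf{v}_{a_{n-1}a_n}\cdots\mathbf{v}_{a_2a_3}\mathbf{v}_{a_na_2}$$
each index $a_i$ shifts to $a_{i+1}$ (with $a_n\mapsto a_1$), which directly produces statement (i). Iterating the shift $j$ additional times shifts indices by $j+1$ positions modulo $n$, and a careful bookkeeping of where the two ``quantum'' indices (those indexed $a_{n-1}a_1$ on the left bottom and $a_na_2$ on the right bottom, plus the $a_na_1$'s at the ends) land will yield the formulas in (ii) and (iii). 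Specifically, (ii) corresponds to applying the shift a total of $j+1$ times for each $2\le j\le n-2$, while (iii) corresponds to $n-1$ applications (equivalently, one application of $\mathfrak{o}_S^{-1}$).

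For minimality, I would argue by contradiction using the bijectivity of $\mathfrak{o}_S$ on compositions of operators with support $S$ together with the fact from Theorem~\ref{thm:cycshift} that $\mathfrak{o}_S$ preserves both zero and non-zero equivalences. If one of the equivalences in Corollary~\ref{cor:cycofnew} were a consequence of a lower-degree equivalence, that lower-degree equivalence would involve some contiguous subcomposition. Applying $\mathfrak{o}_S^{-1}$ the corresponding number of times would pull this subcomposition back to a contiguous subcomposition of one of the sides in Proposition~\ref{prop:minnzrel} (since $\mathfrak{o}_S$ is applied operator-wise and hence respects contiguity), producing a lower-degree equivalence that would witness non-minimality of the equivalence in Proposition~\ref{prop:minnzrel}, contradicting that result.

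The main obstacle I anticipate is the index bookkeeping when establishing that each of (i), (ii), (iii) is realized as an explicit iterate of $\mathfrak{o}_S$; in particular, the ``wrap-around'' behavior when the shift crosses the pair $a_n\mapsto a_1$ must be handled case-by-case, since the positions of the quantum operators $\mathbf{v}_{a_{n-1}a_1}$, $\mathbf{v}_{a_na_1}$, and $\mathbf{v}_{a_na_2}$ in the equivalence of Proposition~\ref{prop:minnzrel} transform differently depending on the iterate. A clean way to organize this is to parametrize the orbit by a single index $j\in\{0,1,\ldots,n-1\}$ counting the number of shift iterations and to check that $j=1$ gives (i), $j\in\{2,\ldots,n-2\}$ gives (ii), and $j=n-1$ gives (iii); once this parametrization is in place, the minimality argument above applies uniformly.
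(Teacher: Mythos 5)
Your approach is exactly the paper's: the paper's entire justification for this corollary is the one sentence preceding it, namely ``Applying the cyclic shift transformation to the equivalence of Proposition~\ref{prop:minnzrel}, we obtain the following list,'' and you correctly identify that one applies $\mathfrak{o}_S$ repeatedly with Theorem~\ref{thm:cycshift}(b) providing the preservation of the non-zero equivalence and hence of minimality. Your derivation of (i) as a single shift and (iii) as $\mathfrak{o}_S^{-1}$ checks out, and your parenthetical re-parametrization (``$j$ iterations gives (ii) with parameter $j$, for $j\in\{2,\ldots,n-2\}$'') is the correct bookkeeping; note that your earlier sentence saying ``(ii) corresponds to applying the shift a total of $j+1$ times'' is off by one and inconsistent with that re-parametrization. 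One small refinement worth making explicit in the minimality argument: when you pull a putative lower-degree equivalence of a subcomposition back through $\mathfrak{o}_S^{-1}$, the restriction of $\mathfrak{o}_S^{-1}$ to the subcomposition's support $S'\subsetneq S$ is not literally $\mathfrak{o}_{S'}^{-1}$, but it is either an order-preserving bijection (if $a_1\notin S'$) or $\mathfrak{o}_{T}^{-1}$ post-composed with an order-preserving bijection (if $a_1\in S'$, where $T$ is the image), and in both cases this is a composition of the equivalence-preserving transformations of Section~\ref{sec:ops} (Theorem~\ref{thm:flat} and Theorem~\ref{thm:cycshift}), so the pullback of a lower-degree equivalence is still a valid lower-degree equivalence. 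With that observation your contradiction argument goes through and the approach is the same as the paper's.
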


\begin{figure}[H]
    \centering
    \begin{equation*}
\begin{array}{ccc}
\begin{tikzpicture}[scale=0.75]
        \draw[rounded corners] (-0.5, 0) rectangle (3.5, 2.5) {};
        
        \node (1) at (0,0.5) [circle, draw = black,fill=black, inner sep = 0.5mm] {};
        \node (2) at (1,0.5) [circle, draw = black,fill=black, inner sep = 0.5mm] {};
        \draw[ForestGreen] (1)--(2);
        
        \node (3) at (1,1) [circle, draw = black,fill=black, inner sep = 0.5mm] {};
        \node (4) at (2,1) [circle, draw = black,fill=black, inner sep = 0.5mm] {};
        \draw[ForestGreen](3)--(4);
        
        \node (5) at (2,1.5) [circle, draw = black,fill=black, inner sep = 0.5mm] {};
        \node (6) at (3,1.5) [circle, draw = black,fill=black, inner sep = 0.5mm] {};
        \draw[ForestGreen](5)--(6);

        \node (7) at (1,2) [circle, draw = black,fill=black, inner sep = 0.5mm] {};
        \node (8) at (3,2) [circle, draw = black,fill=black, inner sep = 0.5mm] {};
        \draw[decorate sep={0.5mm}{1mm},fill, BrickRed] (7)--(8);
        
        \draw[dashed,gray] (0,0)--(0,2.5);
        \draw[dashed,gray] (1,0)--(1,2.5);
        \draw[dashed,gray] (2,0)--(2,2.5);
        \draw[dashed,gray] (3,0)--(3,2.5);
    \end{tikzpicture}\hspace{0.25cm}\begin{tikzpicture}
        \node at (0,0){};
        \node at (0,0.75){\large{$\equiv$}};
    \end{tikzpicture}\hspace{0.25cm}\begin{tikzpicture}[scale=0.8]
        \draw[rounded corners] (-0.5, 0) rectangle (3.5, 2.5) {};
        
        \node (1) at (0,0.5) [circle, draw = black,fill=black, inner sep = 0.5mm] {};
        \node (2) at (2,0.5) [circle, draw = black,fill=black, inner sep = 0.5mm] {};
        \draw[ForestGreen] (1)--(2);
        
        \node (3) at (2,1) [circle, draw = black,fill=black, inner sep = 0.5mm] {};
        \node (4) at (3,1) [circle, draw = black,fill=black, inner sep = 0.5mm] {};
        \draw[ForestGreen](3)--(4);
        
        \node (5) at (0,1.5) [circle, draw = black,fill=black, inner sep = 0.5mm] {};
        \node (6) at (3,1.5) [circle, draw = black,fill=black, inner sep = 0.5mm] {};
        \draw[decorate sep={0.5mm}{1mm},fill, BrickRed](5)--(6);

        \node (7) at (0,2) [circle, draw = black,fill=black, inner sep = 0.5mm] {};
        \node (8) at (1,2) [circle, draw = black,fill=black, inner sep = 0.5mm] {};
        \draw[ForestGreen] (7)--(8);
        
        \draw[dashed,gray] (0,0)--(0,2.5);
        \draw[dashed,gray] (1,0)--(1,2.5);
        \draw[dashed,gray] (2,0)--(2,2.5);
        \draw[dashed,gray] (3,0)--(3,2.5);
    \end{tikzpicture}
     &\hspace{0.5cm} &
\begin{tikzpicture}[scale=0.75]
        \draw[rounded corners] (-0.5, 0) rectangle (3.5, 2.5) {};
        
        \node (1) at (1,0.5) [circle, draw = black,fill=black, inner sep = 0.5mm] {};
        \node (2) at (2,0.5) [circle, draw = black,fill=black, inner sep = 0.5mm] {};
        \draw[ForestGreen] (1)--(2);
        
        \node (3) at (2,1) [circle, draw = black,fill=black, inner sep = 0.5mm] {};
        \node (4) at (3,1) [circle, draw = black,fill=black, inner sep = 0.5mm] {};
        \draw[ForestGreen](3)--(4);
        
        \node (5) at (0,1.5) [circle, draw = black,fill=black, inner sep = 0.5mm] {};
        \node (6) at (3,1.5) [circle, draw = black,fill=black, inner sep = 0.5mm] {};
        \draw[decorate sep={0.5mm}{1mm},fill, BrickRed](5)--(6);

        \node (7) at (0,2) [circle, draw = black,fill=black, inner sep = 0.5mm] {};
        \node (8) at (2,2) [circle, draw = black,fill=black, inner sep = 0.5mm] {};
        \draw[ForestGreen] (7)--(8);
        
        \draw[dashed,gray] (0,0)--(0,2.5);
        \draw[dashed,gray] (1,0)--(1,2.5);
        \draw[dashed,gray] (2,0)--(2,2.5);
        \draw[dashed,gray] (3,0)--(3,2.5);
    \end{tikzpicture}\hspace{0.25cm}\begin{tikzpicture}
        \node at (0,0){};
        \node at (0,0.75){\large{$\equiv$}};
    \end{tikzpicture}\hspace{0.25cm}\begin{tikzpicture}[scale=0.8]
        \draw[rounded corners] (-0.5, 0) rectangle (3.5, 2.5) {};
        
        \node (1) at (1,0.5) [circle, draw = black,fill=black, inner sep = 0.5mm] {};
        \node (2) at (3,0.5) [circle, draw = black,fill=black, inner sep = 0.5mm] {};
        \draw[ForestGreen] (1)--(2);
        
        \node (3) at (0,1) [circle, draw = black,fill=black, inner sep = 0.5mm] {};
        \node (4) at (3,1) [circle, draw = black,fill=black, inner sep = 0.5mm] {};
        \draw[decorate sep={0.5mm}{1mm},fill, BrickRed](3)--(4);
        
        \node (5) at (0,1.5) [circle, draw = black,fill=black, inner sep = 0.5mm] {};
        \node (6) at (1,1.5) [circle, draw = black,fill=black, inner sep = 0.5mm] {};
        \draw[ForestGreen](5)--(6);

        \node (7) at (1,2) [circle, draw = black,fill=black, inner sep = 0.5mm] {};
        \node (8) at (2,2) [circle, draw = black,fill=black, inner sep = 0.5mm] {};
        \draw[ForestGreen] (7)--(8);
        
        \draw[dashed,gray] (0,0)--(0,2.5);
        \draw[dashed,gray] (1,0)--(1,2.5);
        \draw[dashed,gray] (2,0)--(2,2.5);
        \draw[dashed,gray] (3,0)--(3,2.5);
    \end{tikzpicture}
\end{array}
\end{equation*}
$$\begin{tikzpicture}[scale=0.75]
        \draw[rounded corners] (-0.5, 0) rectangle (3.5, 2.5) {};
        
        \node (1) at (2,0.5) [circle, draw = black,fill=black, inner sep = 0.5mm] {};
        \node (2) at (3,0.5) [circle, draw = black,fill=black, inner sep = 0.5mm] {};
        \draw[ForestGreen] (1)--(2);
        
        \node (3) at (0,1) [circle, draw = black,fill=black, inner sep = 0.5mm] {};
        \node (4) at (3,1) [circle, draw = black,fill=black, inner sep = 0.5mm] {};
        \draw[decorate sep={0.5mm}{1mm},fill, BrickRed](3)--(4);
        
        \node (5) at (0,1.5) [circle, draw = black,fill=black, inner sep = 0.5mm] {};
        \node (6) at (1,1.5) [circle, draw = black,fill=black, inner sep = 0.5mm] {};
        \draw[ForestGreen](5)--(6);

        \node (7) at (1,2) [circle, draw = black,fill=black, inner sep = 0.5mm] {};
        \node (8) at (3,2) [circle, draw = black,fill=black, inner sep = 0.5mm] {};
        \draw[ForestGreen] (7)--(8);
        
        \draw[dashed,gray] (0,0)--(0,2.5);
        \draw[dashed,gray] (1,0)--(1,2.5);
        \draw[dashed,gray] (2,0)--(2,2.5);
        \draw[dashed,gray] (3,0)--(3,2.5);
    \end{tikzpicture}\hspace{0.25cm}\begin{tikzpicture}
        \node at (0,0){};
        \node at (0,0.75){\large{$\equiv$}};
    \end{tikzpicture}\hspace{0.25cm}\begin{tikzpicture}[scale=0.8]
        \draw[rounded corners] (-0.5, 0) rectangle (3.5, 2.5) {};
        
        \node (1) at (0,0.5) [circle, draw = black,fill=black, inner sep = 0.5mm] {};
        \node (2) at (2,0.5) [circle, draw = black,fill=black, inner sep = 0.5mm] {};
        \draw[decorate sep={0.5mm}{1mm},fill, BrickRed] (1)--(2);
        
        \node (3) at (0,1) [circle, draw = black,fill=black, inner sep = 0.5mm] {};
        \node (4) at (1,1) [circle, draw = black,fill=black, inner sep = 0.5mm] {};
        \draw[ForestGreen](3)--(4);
        
        \node (5) at (1,1.5) [circle, draw = black,fill=black, inner sep = 0.5mm] {};
        \node (6) at (2,1.5) [circle, draw = black,fill=black, inner sep = 0.5mm] {};
        \draw[ForestGreen](5)--(6);

        \node (7) at (2,2) [circle, draw = black,fill=black, inner sep = 0.5mm] {};
        \node (8) at (3,2) [circle, draw = black,fill=black, inner sep = 0.5mm] {};
        \draw[ForestGreen] (7)--(8);
        
        \draw[dashed,gray] (0,0)--(0,2.5);
        \draw[dashed,gray] (1,0)--(1,2.5);
        \draw[dashed,gray] (2,0)--(2,2.5);
        \draw[dashed,gray] (3,0)--(3,2.5);
    \end{tikzpicture}$$
    \caption{Equivalences of Corollary~\ref{cor:cycofnew} for $n=4$}
    \label{fig:arblengcycshift}
\end{figure}
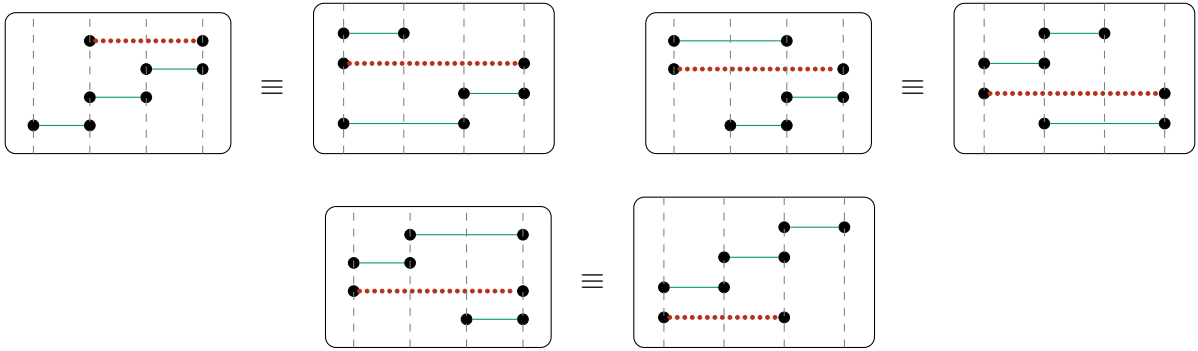

Based on experimental evidence, we conjecture that the zero and non-zero equivalences presented in this study form a complete list of equivalences.

\begin{conj}\label{conj}
    The complete collection of minimal equivalences satisfied by the elements of $\mathcal{F}_n^{\bf q}$ consists of those equivalences illustrated in Figure~\ref{fig:conj} along with all those related to them by the application of the equivalence-preserving transformations of Section~\ref{sec:ops}.
    
    \begin{figure}[H]
        \centering
        $$\begin{array}{ccc}
    \begin{tikzpicture}[scale=0.65]
    \draw[rounded corners] (-0.5, 0) rectangle (3.5, 1.5) {};
        
    \node (1) at (0,1) [circle, draw = black,fill=black, inner sep = 0.5mm] {};
        \node (2) at (1,1) [circle, draw = black,fill=black, inner sep = 0.5mm] {};
        \draw[ForestGreen](1)--(2);
        
        \node (3) at (2,0.5) [circle, draw = black,fill=black, inner sep = 0.5mm] {};
        \node (4) at (3,0.5) [circle, draw = black,fill=black, inner sep = 0.5mm] {};
        \draw[ForestGreen](3)--(4);

        \draw[dashed,gray] (0,0)--(0,1.5);
        \draw[dashed,gray] (1,0)--(1,1.5);
        \draw[dashed,gray] (2,0)--(2,1.5);
        \draw[dashed,gray] (3,0)--(3,1.5);
    \end{tikzpicture}\hspace{0.15cm}\begin{tikzpicture}
        \node at (0,0){};
        \node at (0,0.5){\large{$\equiv$}};
    \end{tikzpicture}\hspace{0.15cm}
    \begin{tikzpicture}[scale=0.65]
        \draw[rounded corners] (-0.5, 0) rectangle (3.5, 1.5) {};
        
        \node (1) at (0,0.5) [circle, draw = black,fill=black, inner sep = 0.5mm] {};
        \node (2) at (1,0.5) [circle, draw = black,fill=black, inner sep = 0.5mm] {};
        \draw[ForestGreen](1)--(2);
        
        \node (3) at (2,1) [circle, draw = black,fill=black, inner sep = 0.5mm] {};
        \node (4) at (3,1) [circle, draw = black,fill=black, inner sep = 0.5mm] {};
        \draw[ForestGreen](3)--(4);

        \draw[dashed,gray] (0,0)--(0,1.5);
        \draw[dashed,gray] (1,0)--(1,1.5);
        \draw[dashed,gray] (2,0)--(2,1.5);
        \draw[dashed,gray] (3,0)--(3,1.5);
    \end{tikzpicture} & \hspace{0.5cm} &
    \begin{tikzpicture}[scale=0.65]
        \draw[rounded corners] (-0.5, 0) rectangle (3.5, 1.5) {};
        
        \node (1) at (1,0.5) [circle, draw = black,fill=black, inner sep = 0.5mm] {};
        \node (2) at (2,0.5) [circle, draw = black,fill=black, inner sep = 0.5mm] {};
        \draw[ForestGreen](1)--(2);
        
        \node (3) at (0,1) [circle, draw = black,fill=black, inner sep = 0.5mm] {};
        \node (4) at (3,1) [circle, draw = black,fill=black, inner sep = 0.5mm] {};
        \draw[ForestGreen](3)--(4);

        \draw[dashed,gray] (0,0)--(0,1.5);
        \draw[dashed,gray] (1,0)--(1,1.5);
        \draw[dashed,gray] (2,0)--(2,1.5);
        \draw[dashed,gray] (3,0)--(3,1.5);
    \end{tikzpicture}\hspace{0.15cm}
    \begin{tikzpicture}
        \node at (0,0){};
        \node at (0,0.5){\large{$\equiv$}};
    \end{tikzpicture}\hspace{0.15cm}
    \begin{tikzpicture}[scale=0.65]
        \draw[rounded corners] (-0.5, 0) rectangle (3.5, 1.5) {};
        
        \node (1) at (1,1) [circle, draw = black,fill=black, inner sep = 0.5mm] {};
        \node (2) at (2,1) [circle, draw = black,fill=black, inner sep = 0.5mm] {};
        \draw[ForestGreen](1)--(2);
        
        \node (3) at (0,0.5) [circle, draw = black,fill=black, inner sep = 0.5mm] {};
        \node (4) at (3,0.5) [circle, draw = black,fill=black, inner sep = 0.5mm] {};
        \draw[ForestGreen](3)--(4);

        \draw[dashed,gray] (0,0)--(0,1.5);
        \draw[dashed,gray] (1,0)--(1,1.5);
        \draw[dashed,gray] (2,0)--(2,1.5);
        \draw[dashed,gray] (3,0)--(3,1.5);
    \end{tikzpicture} \\
    \begin{tikzpicture}[scale=0.65]
        \draw[rounded corners] (-0.5, 0) rectangle (3.5, 2) {};
        
        \node (1) at (1,0.5) [circle, draw = black,fill=black, inner sep = 0.5mm] {};
        \node (2) at (2,0.5) [circle, draw = black,fill=black, inner sep = 0.5mm] {};
        \draw[ForestGreen] (1)--(2);
        
        \node (3) at (2,1) [circle, draw = black,fill=black, inner sep = 0.5mm] {};
        \node (4) at (3,1) [circle, draw = black,fill=black, inner sep = 0.5mm] {};
        \draw[ForestGreen] (3)--(4);
        
        \node (5) at (0,1.5) [circle, draw = black,fill=black, inner sep = 0.5mm] {};
        \node (6) at (2,1.5) [circle, draw = black,fill=black, inner sep = 0.5mm] {};
        \draw[ForestGreen] (5)--(6);
        
        \draw[dashed,gray] (0,0)--(0,2);
        \draw[dashed,gray] (1,0)--(1,2);
        \draw[dashed,gray] (2,0)--(2,2);
        \draw[dashed,gray] (3,0)--(3,2);
    \end{tikzpicture}\hspace{0.15cm}\begin{tikzpicture}
        \node at (0,0){};
        \node at (0,0.65){\large{$\equiv$}};
    \end{tikzpicture}\hspace{0.15cm}
    \begin{tikzpicture}[scale=0.65]
        \draw[rounded corners] (-0.5, 0) rectangle (3.5, 2) {};
        
        \node (1) at (1,0.5) [circle, draw = black,fill=black, inner sep = 0.5mm] {};
        \node (2) at (3,0.5) [circle, draw = black,fill=black, inner sep = 0.5mm] {};
        \draw[ForestGreen] (1)--(2);
        
        \node (3) at (0,1) [circle, draw = black,fill=black, inner sep = 0.5mm] {};
        \node (4) at (1,1) [circle, draw = black,fill=black, inner sep = 0.5mm] {};
        \draw[ForestGreen] (3)--(4);
        
        \node (5) at (1,1.5) [circle, draw = black,fill=black, inner sep = 0.5mm] {};
        \node (6) at (2,1.5) [circle, draw = black,fill=black, inner sep = 0.5mm] {};
        \draw[ForestGreen] (5)--(6);
        
        \draw[dashed,gray] (0,0)--(0,2);
        \draw[dashed,gray] (1,0)--(1,2);
        \draw[dashed,gray] (2,0)--(2,2);
        \draw[dashed,gray] (3,0)--(3,2);
    \end{tikzpicture} &\hspace{0.5cm} &
    \begin{tikzpicture}[scale=0.65]
        \draw[rounded corners] (-0.5, 0) rectangle (2.5, 2) {};
        
        \node (1) at (0,0.5) [circle, draw = black,fill=black, inner sep = 0.5mm] {};
        \node (2) at (1,0.5) [circle, draw = black,fill=black, inner sep = 0.5mm] {};
        \draw[decorate sep={0.5mm}{1mm},fill, BrickRed] (1)--(2);
        
        \node (3) at (0,1) [circle, draw = black,fill=black, inner sep = 0.5mm] {};
        \node (4) at (1,1) [circle, draw = black,fill=black, inner sep = 0.5mm] {};
        \draw[ForestGreen](3)--(4);
        
        \node (5) at (1,1.5) [circle, draw = black,fill=black, inner sep = 0.5mm] {};
        \node (6) at (2,1.5) [circle, draw = black,fill=black, inner sep = 0.5mm] {};
        \draw[ForestGreen](5)--(6);

        \draw[dashed,gray] (0,0)--(0,2);
        \draw[dashed,gray] (1,0)--(1,2);
        \draw[dashed,gray] (2,0)--(2,2);
    \end{tikzpicture}\hspace{0.15cm}\begin{tikzpicture}
        \node at (0,0){};
        \node at (0,0.65){\large{$\equiv$}};
    \end{tikzpicture}\hspace{0.15cm}
    \begin{tikzpicture}[scale=0.65]
        \draw[rounded corners] (-0.5, 0) rectangle (2.5, 2) {};
        
        \node (1) at (0,1) [circle, draw = black,fill=black, inner sep = 0.5mm] {};
        \node (2) at (2,1) [circle, draw = black,fill=black, inner sep = 0.5mm] {};
        \draw[decorate sep={0.5mm}{1mm},fill, BrickRed] (1)--(2);
        
        \node (3) at (0,1.5) [circle, draw = black,fill=black, inner sep = 0.5mm] {};
        \node (4) at (2,1.5) [circle, draw = black,fill=black, inner sep = 0.5mm] {};
        \draw[ForestGreen](3)--(4);
        
        \node (5) at (1,0.5) [circle, draw = black,fill=black, inner sep = 0.5mm] {};
        \node (6) at (2,0.5) [circle, draw = black,fill=black, inner sep = 0.5mm] {};
        \draw[ForestGreen](5)--(6);

        \draw[dashed,gray] (0,0)--(0,2);
        \draw[dashed,gray] (1,0)--(1,2);
        \draw[dashed,gray] (2,0)--(2,2);
    \end{tikzpicture} \\
    \begin{tikzpicture}[scale=0.65]
        \node at (0,0) {};
        \draw[rounded corners] (-0.5, 0) rectangle (3.5, 1.5) {};
        
        \node (1) at (1,1) [circle, draw = black,fill=black, inner sep = 0.5mm] {};
        \node (2) at (3,1) [circle, draw = black,fill=black, inner sep = 0.5mm] {};
        \draw[ForestGreen](1)--(2);
        
        \node (3) at (0,0.5) [circle, draw = black,fill=black, inner sep = 0.5mm] {};
        \node (4) at (2,0.5) [circle, draw = black,fill=black, inner sep = 0.5mm] {};
        \draw[ForestGreen](3)--(4);

        \draw[dashed,gray] (0,0)--(0,1.5);
        \draw[dashed,gray] (1,0)--(1,1.5);
        \draw[dashed,gray] (2,0)--(2,1.5);
        \draw[dashed,gray] (3,0)--(3,1.5);
    \end{tikzpicture}\hspace{0.15cm}\begin{tikzpicture}
        \node at (0,0){};
        \node at (0,0.5){\large{$\equiv$}};
    \end{tikzpicture}\hspace{0.15cm}
    \begin{tikzpicture}[scale=0.65]
        \node at (0,0) {};
        \draw[rounded corners] (-0.5, 0) rectangle (2.5, 1.5) {};
        
        \node (1) at (0,1) [circle, draw = black,fill=black, inner sep = 0.5mm] {};
        \node (2) at (2,1) [circle, draw = black,fill=black, inner sep = 0.5mm] {};
        \draw[ForestGreen](1)--(2);
        
        \node (3) at (0,0.5) [circle, draw = black,fill=black, inner sep = 0.5mm] {};
        \node (4) at (1,0.5) [circle, draw = black,fill=black, inner sep = 0.5mm] {};
        \draw[ForestGreen](3)--(4);

        \draw[dashed,gray] (0,0)--(0,1.5);
        \draw[dashed,gray] (1,0)--(1,1.5);
        \draw[dashed,gray] (2,0)--(2,1.5);
    \end{tikzpicture}\hspace{0.15cm}\begin{tikzpicture}
        \node at (0,0){};
        \node at (0,0.5){\large{$\equiv$}};
    \end{tikzpicture}\hspace{0.15cm}
    \begin{tikzpicture}[scale=0.65]
        \node at (0,0) {};
        \draw[rounded corners] (-0.5, 0) rectangle (1.5, 1.5) {};
        
        \node (1) at (0,1) [circle, draw = black,fill=black, inner sep = 0.5mm] {};
        \node (2) at (1,1) [circle, draw = black,fill=black, inner sep = 0.5mm] {};
        \draw[ForestGreen](1)--(2);
        
        \node (3) at (0,0.5) [circle, draw = black,fill=black, inner sep = 0.5mm] {};
        \node (4) at (1,0.5) [circle, draw = black,fill=black, inner sep = 0.5mm] {};
        \draw[ForestGreen](3)--(4);

        \draw[dashed,gray] (0,0)--(0,1.5);
        \draw[dashed,gray] (1,0)--(1,1.5);
    \end{tikzpicture}\hspace{0.15cm}\begin{tikzpicture}
        \node at (0,0){};
        \node at (0,0.5){\large{$\equiv \hspace{0.15cm} \mathbf{0}$}};
    \end{tikzpicture} &\hspace{0.5cm} &
    \begin{tikzpicture}[scale=0.65]
        \draw[rounded corners] (-0.5, 0) rectangle (2.5, 2) {};
    
        \node (6) at (1,1.5) [circle, draw = black,fill=black, inner sep = 0.5mm] {};
        \node (7) at (2,1.5) [circle, draw = black,fill=black, inner sep = 0.5mm] {};
        \draw[ForestGreen](6)--(7);

        \node (12) at (0,1) [circle, draw = black,fill=black, inner sep = 0.5mm] {};
        \node (13) at (1,1) [circle, draw = black,fill=black, inner sep = 0.5mm] {};
        \draw[ForestGreen](12)--(13);
        
        \node (14) at (1,0.5) [circle, draw = black,fill=black, inner sep = 0.5mm] {};
        \node (15) at (2,0.5) [circle, draw = black,fill=black, inner sep = 0.5mm] {};
        \draw[ForestGreen](14)--(15);

        \draw[dashed,gray] (0,0)--(0,2);
        \draw[dashed,gray] (1,0)--(1,2);
        \draw[dashed,gray] (2,0)--(2,2);
    \end{tikzpicture}\hspace{0.15cm}\begin{tikzpicture}
        \node at (0,0){};
        \node at (0,0.65){\large{$\equiv$}};
    \end{tikzpicture}\hspace{0.15cm}
    \begin{tikzpicture}[scale=0.65]
        \draw[rounded corners] (-0.5, 0) rectangle (3.5, 2) {};
        
        \node (1) at (0,0.5) [circle, draw = black,fill=black, inner sep = 0.5mm] {};
        \node (2) at (2,0.5) [circle, draw = black,fill=black, inner sep = 0.5mm] {};
        \draw[decorate sep={0.5mm}{1mm},fill, BrickRed] (1)--(2);
        
        \node (3) at (0,1) [circle, draw = black,fill=black, inner sep = 0.5mm] {};
        \node (4) at (1,1) [circle, draw = black,fill=black, inner sep = 0.5mm] {};
        \draw[ForestGreen](3)--(4);
        
        \node (5) at (1,1.5) [circle, draw = black,fill=black, inner sep = 0.5mm] {};
        \node (6) at (3,1.5) [circle, draw = black,fill=black, inner sep = 0.5mm] {};
        \draw[ForestGreen](5)--(6);

        \draw[dashed,gray] (0,0)--(0,2);
        \draw[dashed,gray] (1,0)--(1,2);
        \draw[dashed,gray] (2,0)--(2,2);
        \draw[dashed,gray] (3,0)--(3,2);
    \end{tikzpicture}\begin{tikzpicture}
        \node at (0,0){};
        \node at (0,0.65){\large{$\equiv \hspace{0.15cm} \mathbf{0}$}};
    \end{tikzpicture} \\
    \begin{tikzpicture}[scale=0.65]
        \draw[rounded corners] (-0.5, 0) rectangle (4.5, 3) {};
        
        \node (1) at (0,0.5) [circle, draw = black,fill=black, inner sep = 0.5mm] {};
        \node (2) at (4,0.5) [circle, draw = black,fill=black, inner sep = 0.5mm] {};
        \draw[decorate sep={0.5mm}{1mm},fill, BrickRed] (1)--(2);
        
        \node (3) at (0,1) [circle, draw = black,fill=black, inner sep = 0.5mm] {};
        \node (4) at (1,1) [circle, draw = black,fill=black, inner sep = 0.5mm] {};
        \draw[ForestGreen](3)--(4);
        
        \draw[decorate sep={0.3mm}{1mm},fill,black] (1.25,1.25)--(1.75,1.75);

        \node (7) at (2,2) [circle, draw = black,fill=black, inner sep = 0.5mm] {};
        \node (8) at (3,2) [circle, draw = black,fill=black, inner sep = 0.5mm] {};
        \draw[ForestGreen](7)--(8);

        \node (9) at (0,2.5) [circle, draw = black,fill=black, inner sep = 0.5mm] {};
        \node (10) at (3,2.5) [circle, draw = black,fill=black, inner sep = 0.5mm] {};
        \draw[decorate sep={0.5mm}{1mm},fill, BrickRed] (9)--(10);
        
        \draw[dashed,gray] (0,0)--(0,3);
        \draw[dashed,gray] (1,0)--(1,3);
        \draw[dashed,gray] (2,0)--(2,3);
        \draw[dashed,gray] (3,0)--(3,3);
        \draw[dashed,gray] (4,0)--(4,3);
    \end{tikzpicture}\hspace{0.15cm}\begin{tikzpicture}
        \node at (0,0){};
        \node at (0,1){\large{$\equiv$}};
    \end{tikzpicture}\hspace{0.15cm}
    \begin{tikzpicture}[scale=0.65]
        \draw[rounded corners] (-0.5, 0) rectangle (4.5, 3) {};
        
        \node (1) at (1,0.5) [circle, draw = black,fill=black, inner sep = 0.5mm] {};
        \node (2) at (4,0.5) [circle, draw = black,fill=black, inner sep = 0.5mm] {};
        \draw[decorate sep={0.5mm}{1mm},fill, BrickRed] (1)--(2);
        
        \node (3) at (1,1) [circle, draw = black,fill=black, inner sep = 0.5mm] {};
        \node (4) at (2,1) [circle, draw = black,fill=black, inner sep = 0.5mm] {};
        \draw[ForestGreen](3)--(4);

        \draw[decorate sep={0.3mm}{1mm},fill,black] (2.25,1.25)--(2.75,1.75);

        \node (7) at (3,2) [circle, draw = black,fill=black, inner sep = 0.5mm] {};
        \node (8) at (4,2) [circle, draw = black,fill=black, inner sep = 0.5mm] {};
        \draw[ForestGreen](7)--(8);

        \node (9) at (0,2.5) [circle, draw = black,fill=black, inner sep = 0.5mm] {};
        \node (10) at (4,2.5) [circle, draw = black,fill=black, inner sep = 0.5mm] {};
        \draw[decorate sep={0.5mm}{1mm},fill, BrickRed] (9)--(10);
        
        \draw[dashed,gray] (0,0)--(0,3);
        \draw[dashed,gray] (1,0)--(1,3);
        \draw[dashed,gray] (2,0)--(2,3);
        \draw[dashed,gray] (3,0)--(3,3);
        \draw[dashed,gray] (4,0)--(4,3);
    \end{tikzpicture} &\hspace{0.5cm} &
    \begin{tikzpicture}[scale=0.65]
        \draw[rounded corners] (-0.5, 0) rectangle (5.5, 3) {};
        
        \node (1) at (0,0.5) [circle, draw = black,fill=black, inner sep = 0.5mm] {};
        \node (2) at (4,0.5) [circle, draw = black,fill=black, inner sep = 0.5mm] {};
        \draw[decorate sep={0.5mm}{1mm},fill, BrickRed] (1)--(2);
        
        \node (3) at (0,1) [circle, draw = black,fill=black, inner sep = 0.5mm] {};
        \node (4) at (1,1) [circle, draw = black,fill=black, inner sep = 0.5mm] {};
        \draw[ForestGreen](3)--(4);

        \draw[decorate sep={0.3mm}{1mm},fill,black] (1.25,1.25)--(1.75,1.75);
        
        \node (5) at (2,2) [circle, draw = black,fill=black, inner sep = 0.5mm] {};
        \node (6) at (3,2) [circle, draw = black,fill=black, inner sep = 0.5mm] {};
        \draw[ForestGreen](5)--(6);

        \node (7) at (3,2.5) [circle, draw = black,fill=black, inner sep = 0.5mm] {};
        \node (8) at (5,2.5) [circle, draw = black,fill=black, inner sep = 0.5mm] {};
        \draw[ForestGreen](7)--(8);
        
        \draw[dashed,gray] (0,0)--(0,3);
        \draw[dashed,gray] (1,0)--(1,3);
        \draw[dashed,gray] (2,0)--(2,3);
        \draw[dashed,gray] (3,0)--(3,3);
        \draw[dashed,gray] (4,0)--(4,3);
        \draw[dashed,gray] (5,0)--(5,3);
    \end{tikzpicture}\hspace{0.15cm}
    \begin{tikzpicture}
        \node at (0,0){};
        \node at (0,0.5){\large{$\equiv \hspace{0.15cm} \mathbf{0}$}};
    \end{tikzpicture}
    \end{array}$$
        \caption{Equivalences conjectured to generate all those defining $\mathcal{F}_n^{\bf q}$}
        \label{fig:conj}
    \end{figure}
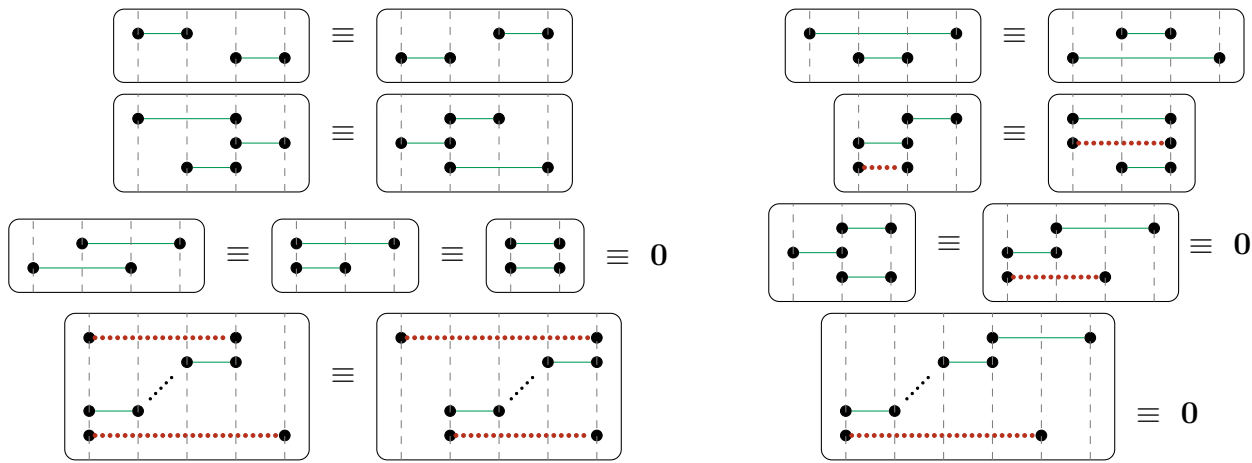
\end{conj}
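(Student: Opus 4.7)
The plan is to split the proof along the two parts of the theorem and, within each part, separate the ``equivalence holds'' claim from the ``minimality'' claim.

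For part (i), the zero equivalence itself has already been recorded in~\cite{qkB1}, so the remaining burden is minimality. By Theorem~\ref{thm:flat} I would flatten to the case $a_i = i$. There are two ways the equivalence could fail to be minimal: (1) some contiguous subcomposition is already zero, or (2) applying a non-zero equivalence to the composition yields one admitting a contiguous zero subcomposition. For (1), the proper contiguous subcomposition shapes are the purely classical chain $\mathbf{v}_{i,i+1}\mathbf{v}_{i-1,i}\cdots\mathbf{v}_{j,j+1}$ and the mixed shape $\mathbf{v}_{i,i+1}\cdots\mathbf{v}_{12}\mathbf{v}_{n,1}$; the former is non-zero by the classical results in~\cite{BS2}, and for the latter I would exhibit an explicit permutation on which the action is non-zero, e.g.\ $(n,n-1\,|\,1,2,\ldots,n-2)$ with $k=2$. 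For (2), I would invoke Theorem~\ref{thm:cycshift} to cyclically shift any mixed subcomposition to a purely classical one, after which the classical monoid results of~\cite{BS2} again rule out any non-trivial rearrangement.

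For part (ii), I would prove the equivalence by flattening to support $[n]$ via Theorem~\ref{thm:flat} and then directly computing the action of both sides on an arbitrary $w\in S_N$ at level $k$. Writing $j_i=w^{-1}(i)$, I would apply Proposition~\ref{prop:covercond} step by step to each composition, determining the conditions on the $j_i$'s needed for non-vanishing, the resulting permutation, and the accumulated $\mathbf{q}$-monomial. The goal is to show that both sides require the same conditions ($j_n \le k < j_1,\ldots,j_{n-1}$ together with $j_1 > j_2$ and a ``spread'' condition for the values outside $[1,n]$) and produce the same output with the same coefficient $\mathbf{q}_{j_n,j_2}\mathbf{q}_{j_n,j_1}$. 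A single test permutation then confirms the composition is non-zero.

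For the minimality of (ii), by Theorems~\ref{thm:flat},~\ref{thm:hshift},~\ref{thm:vshift} I would reduce to ruling out non-trivial equivalences on proper contiguous subcompositions. After symmetries, these fall into two types: chains of consecutive classical operators, handled by~\cite{BS2}, and the ``classical staircase capped by the quantum operator'' $\mathbf{v}_{n-1,n}\cdots\mathbf{v}_{12}\mathbf{v}_{n,1}$. For the latter, I would compute its action on $(n|1,2,\ldots,n-1)$ at level $1$ and then argue inductively that any other composition of the same degree producing the same action on this input must agree operator-by-operator with it, using Proposition~\ref{prop:BS} and Remark~\ref{rem:relbrels} to eliminate the alternative quantum/classical options at each stage. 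The main obstacle I anticipate is the bookkeeping in part (ii): the operators on the two sides appear in different orders with different indices, so matching both the permutation-level conditions and the accumulated $q$-powers requires a careful case analysis of how the initial quantum operator interacts with the subsequent classical staircase. A secondary difficulty is the uniqueness-by-induction step of the minimality argument, where at each index one must rule out every classical and quantum alternative to the next operator in the capped staircase rather than merely compute its action.
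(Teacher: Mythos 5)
The statement you were asked to address is Conjecture~\ref{conj}, which asserts \emph{completeness}: that the displayed equivalences, together with everything reachable from them by the equivalence-preserving transformations of Section~\ref{sec:ops}, exhaust \emph{all} minimal equivalences satisfied by elements of $\mathcal{F}_n^{\bf q}$. This is not a theorem in the paper; it is an open conjecture, and the authors explicitly state at the end of Section~\ref{sec:arblength} that they cannot prove it and that doing so appears to require a much better structural understanding of the quantum $k$-Bruhat order (in the classical case the analogue, Theorem~\ref{thm:BS2}(a), relied on a canonical choice of maximal chain between related elements, and no such tool is currently available in the quantum setting).

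Your proposal instead proves Theorem~\ref{thm:arblength}: that the two displayed families of arbitrary-degree compositions really are equivalences and really are minimal. That argument does track the paper's Propositions~\ref{prop:arblength-minimal},~\ref{prop:newrel}, and~\ref{prop:minnzrel} quite closely (flatten via Theorem~\ref{thm:flat}, rule out nonzero contiguous subcompositions using the classical results of~\cite{BS2} after a cyclic shift, verify the nonzero equivalence by a step-by-step application of Proposition~\ref{prop:covercond}, and establish rigidity of the ``capped staircase'' by an inductive uniqueness argument). But establishing that some equivalences hold and are minimal is only a necessary condition for the conjecture, not the substance of it. The genuinely hard direction --- showing that any composition $\mathbf{v}\equiv\mathbf{v}'$ (or $\mathbf{v}\equiv\mathbf{0}$) in $\mathcal{F}_n^{\bf q}$ can be reduced to the identity (or to an explicit zero pattern) by repeatedly applying only the relations on the list --- is entirely absent from your writeup, and indeed from the paper. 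Your proposal therefore does not prove the stated result; it proves a supporting theorem that the paper already proves, and the conjecture itself remains, as the authors note, out of reach with present techniques.
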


In the classical case, an understanding of the chain structure of the $k$-Bruhat order played a pivotal role in proving the classical version of Conjecture~\ref{conj}, i.e., Theorem~\ref{thm:BS2}~(a). Specifically, the proof utilized a canonical choice of maximal chain between related elements in the $k$-Bruhat order. For the quantum $k$-Bruhat order, we can't even tell when two elements are related in general, let alone identify a choice of maximal chain between two elements. In future work, the present authors plan to study the quantum $k$-Bruhat order further in hopes of proving Conjecture~\ref{conj}.

\bibliographystyle{abbrv}
\bibliography{Quantum}

@article {BS1,
    AUTHOR = {Bergeron, N. and Sottile, F.},
     TITLE = {Schubert polynomials, the {B}ruhat order, and the geometry of flag manifolds.},
   JOURNAL = {Duke Math. J.},
  FJOURNAL = {Duke Mathematical Journal},
    NUMBER = {94},
      YEAR = {1998},
     PAGES = {373--423},
}

@article {BS2,
    AUTHOR = {Bergeron, N. and Sottile, F.},
     TITLE = {A monoid for the {G}rassmannian {B}ruhat order.},
   JOURNAL = {European J. Combin.},
  FJOURNAL = {The European Journal of Combinatorics},
    VOLUME={20},
    NUMBER = {3},
      YEAR = {1999},
     PAGES = {197--211},
}

@article {qkB1,
    AUTHOR = {Benedetti, C. and  Bergeron, N. and Colmenarejo, L. and Saliola, F.V. and Sottile, F.},
     TITLE = {A quantum {M}urnaghan--{N}akayama rule for the flag manifold},
   JOURNAL = {Algebraic Combinatorics},
  FJOURNAL = {Algebraic Combinatorics},
    VOLUME={8},
    NUMBER = {3},
      YEAR = {2025},
     PAGES = {619--653},
}

@article {Quantum,
    AUTHOR = {Fomin, S. and Gelfand, S. and Postnikov, A.},
     TITLE = {Quantum {S}chubert polynomials.},
   JOURNAL = {J. Amer. Math. Soc.},
    VOLUME={10},
    NUMBER = {3},
      YEAR = {1997},
     PAGES = {565--596},
}

@article {Monk,
    AUTHOR = {Monk, D.},
     TITLE = {The geometry of flag manifolds.},
   JOURNAL = {Proc. London Math. Soc.},
    VOLUME={9},
      YEAR = {1959},
     PAGES = {253--286},
}

@book {McDonald,
    AUTHOR = {Monk, D.},
     TITLE = {Notes on {S}chubert polynomials},
   PUBLISHER = {Publications LACIM},
    CITY={Montreal},
    YEAR={1991}
}

@inproceedings{Macdonald,
  title={Notes on Schubert polynomials},
  author={Ian G. MacDonald},
  year={1991},
  url={https://api.semanticscholar.org/CorpusID:117370502}
}

@article {Schubert,
    AUTHOR = {Lascoux, A. and Sch\"utzenberger, M.P.},
     TITLE = {Polyn\^omes de {S}chubert.},
   JOURNAL = {C. R. Math. Acad. Sci. Paris, S\'er. I Math.},
    VOLUME={294},
    NUMBER = {13},
      YEAR = {1982},
     PAGES = {447--450},
}

@unpublished{Saliola,
  AUTHOR = {Saliola, F.V.},
     TITLE = {Private communication}, 
}

@article {Sottile,
    AUTHOR = {Sottile, F.},
     TITLE = {Pieri’s formula for flag manifolds and {S}chubert polynomials},
   JOURNAL = {Ann. Inst. Fourier},
    VOLUME={46},
      YEAR = {1996},
     PAGES = {89--110},
}

@article {Ciocan,
    AUTHOR = {Ciocan-Fontanine, I.},
     TITLE = {Quantum cohomology of flag varieties},
   JOURNAL = {Intern. Math. Research Notices},
    NUMBER = {6},
      YEAR = {1995},
     PAGES = {263--277},
}

@article {GK1,
    AUTHOR = {Givental, A. and Kim, B.},
     TITLE = {Quantum cohomology of flag manifolds and {T}oda lattices},
   JOURNAL = {Comm. Math. Phys.},
    VOLUME = {168},
    NUMBER = {3},
      YEAR = {1995},
     PAGES = {609--641},
}

@article {K1,
    AUTHOR = {Kim, B.},
     TITLE = {Quantum cohomology of partial flag manifolds and a residue formula for their intersection pairing},
   JOURNAL = {Intern. Math. Research Notices},
    NUMBER = {1},
      YEAR = {1995},
     PAGES = {1--16},
}

@article {K2,
    AUTHOR = {Kim, B.},
     TITLE = {On equivariant quantum cohomology},
   JOURNAL = {Intern. Math. Research Notices},
    NUMBER = {17},
      YEAR = {1996},
     PAGES = {841--851},
}

@article {K3,
    AUTHOR = {Kim, B.},
     TITLE = {Quantum cohomology of flag manifolds G/B and quantum {T}oda lattices},
   JOURNAL = {Ann. of Math.},
VOLUME = {149},
    NUMBER = {1},
      YEAR = {1999},
     PAGES = {129--148},
}

@article {KM,
    AUTHOR = {Kontsevich, M. and Manin, Y.},
     TITLE = {Gromov-{W}itten classes, quantum cohomology, and enumerative geometry},
   JOURNAL = {Comm. Math. Phys.},
VOLUME = {164},
    NUMBER = {3},
      YEAR = {1994},
     PAGES = {525--562},
}

@article {RT,
    AUTHOR = {Ruan, Y. and Tian, G.},
     TITLE = {Mathematical theory of quantum cohomology},
   JOURNAL = {J. Diff. Geom.},
VOLUME = {42},
    NUMBER = {2},
      YEAR = {1995},
     PAGES = {259--367},
}

@article {Kirillov,
    AUTHOR = {Kirillov, A.N.},
     TITLE = {Quantum Schubert polynomials and quantum {S}chur functions},
   JOURNAL = {Internat. J. Algebra Comput.},
VOLUME = {9},
    NUMBER = {03n04},
      YEAR = {1999},
     PAGES = {385--404},
}

@unpublished{qBP,
  AUTHOR = {Le, T. and Ouyang, S. and Tao, L. and Restivo, J. and Zhang, A.},
     TITLE = {Quantum bumpess pipe dreams}, 
  note={In: \href{https://arxiv.org/abs/2403.16168}{arXiv:2403.16168} (2024)},
}

@article {Winkel,
    AUTHOR = {Winkel, R.},
     TITLE = {On the multiplication of {S}chubert polynomials},
   JOURNAL = {Adv. in Appl. Math.},
VOLUME = {20},
    NUMBER = {1},
      YEAR = {1998},
     PAGES = {73--97},
}

@article {Kirillov2,
    AUTHOR = {Kirillov, A.N. and Maeno, T.},
     TITLE = {Quantum double {S}chubert polynomials, quantum {S}chubert polynomials and {V}afa-{I}ntriligator formula},
   JOURNAL = {Discrete Math.},
VOLUME = {217},
    NUMBER = {1-3},
      YEAR = {2000},
     PAGES = {191--223},
}

@article {AssafSchu,
    AUTHOR = {Assaf, S. H.},
     TITLE = {A bijective proof of {K}ohnert's rule for {S}chubert
              polynomials},
   JOURNAL = {Comb. Theory},
  FJOURNAL = {Combinatorial Theory},
    VOLUME = {2},
      YEAR = {2022},
    NUMBER = {1},
     PAGES = {Paper No. 5, 9},
       DOI = {10.5070/c62156877},
       URL = {https://doi.org/10.5070/c62156877},
}

@article {Winkel1,
    AUTHOR = {Winkel, R.},
     TITLE = {Diagram rules for the generation of {S}chubert polynomials},
   JOURNAL = {J. Combin. Theory Ser. A},
  FJOURNAL = {Journal of Combinatorial Theory. Series A},
    VOLUME = {86},
      YEAR = {1999},
    NUMBER = {1},
     PAGES = {14--48},
       DOI = {10.1006/jcta.1998.2931},
       URL = {https://doi.org/10.1006/jcta.1998.2931},
}

@article {Winkel2,
    AUTHOR = {Winkel, R.},
     TITLE = {A derivation of {K}ohnert's algorithm from {M}onk's rule},
   JOURNAL = {S\'{e}m. Lothar. Combin.},
  FJOURNAL = {S\'{e}minaire Lotharingien de Combinatoire},
    VOLUME = {48},
      YEAR = {2002},
     PAGES = {Art. B48f, 14},
}

@article {BB93,
    AUTHOR = {Bergeron, N. and Billey, S.},
     TITLE = {R{C}-graphs and {S}chubert polynomials},
JOURNAL = {Exp. Math.},
    VOLUME = {2},
    NUMBER = {4},
    YEAR = {1993},
    PAGES = {257--269},
}

@article {BJS93,
    AUTHOR = {Billey, S. and Jockusch, W. and Stanley, R.},
     TITLE = {Some combinatorial properties of {S}chubert polynomials},
JOURNAL = {J. Algebraic Combin.},
  FJOURNAL = {Journal of Algebraic Combinatorics},
    VOLUME = {2},
    NUMBER = {4},
    YEAR = {1993},
    PAGES = {345--374},
}

@article {SF94,
    AUTHOR = {Fomin, S. and and Stanley, R.},
     TITLE = {Schubert polynomials and the nil{C}oxeter algebra},
JOURNAL = {Adv. Math.},
    VOLUME = {103},
    NUMBER = {2},
    YEAR = {1994},
    PAGES = {196--207},
}

\end{document}